\documentclass[11pt]{article}
\usepackage{amsfonts,amsmath}
\usepackage{latexsym}
\usepackage{amsthm}
\usepackage{amscd}
\usepackage{commath}
\usepackage{epsfig}
\usepackage{amssymb}
\usepackage{caption}
\usepackage{enumitem}
\usepackage{mathtools}
\usepackage{booktabs}
\usepackage[table]{xcolor}
\usepackage[toc,page]{appendix}
\usepackage{array}
\newcolumntype{P}[1]{>{\centering\arraybackslash}p{#1}}
\usepackage{soul}

\usepackage[english]{babel}

\usepackage[letterpaper,top=2cm,bottom=2cm,left=2.5cm,right=2.5cm,marginparwidth=1.5cm]{geometry}

\usepackage[colorlinks=true, allcolors=blue,bookmarksnumbered]{hyperref}
\hypersetup{
	citecolor = red!80!black
}
\usepackage{color,soul}
\setstcolor{red}
\usepackage{float,subcaption}
\usepackage{graphicx}
\usepackage{stackengine}
\usepackage{tikz}
\usetikzlibrary{intersections}
\usepackage[export]{adjustbox}
\usepackage{array}

\usepackage{microtype}
\usepackage[percent]{overpic}

\usepackage{lineno}

\newtheorem{defn}{Definition}[section]

\newtheorem{theorem}[defn]{Theorem}
\newtheorem{prop}[defn]{Proposition}
\newtheorem{lem}[defn]{Lemma}
\newtheorem{cor}[defn]{Corollary}
\theoremstyle{remark}
\newtheorem{remark}[defn]{Remark}
\newtheorem{question}[defn]{Question}

\newtheorem{example}[defn]{Example}
\newtheorem{claim}{Claim}

\usepackage[capitalise]{cleveref}
\crefname{model}{Model}{Models}
\crefname{submodel}{Model}{Models}
\crefname{algorithm}{Algorithm}{Algorithms}
\crefname{equation}{}{}
\crefname{problem}{Problem}{Problems}
\crefname{thm}{Theorem}{Theorems}
\crefname{cor}{Corollary}{Corollaries}
\Crefname{cor}{Corollary}{Corollaries}
\crefname{proposition}{Proposition}{Propositions}
\crefname{prop}{Proposition}{Propositions}
\crefname{defn}{Definition}{Definitions}
\crefname{lem}{Lemma}{Lemmas}
\crefname{conj}{Conjecture}{Conjectures}
\crefname{question}{question}{questions}
\Crefname{question}{Question}{Questions}
\AddToHook{env/cor/begin}{\crefalias{defn}{cor}}
\AddToHook{env/lem/begin}{\crefalias{defn}{lem}}
\AddToHook{env/thm/begin}{\crefalias{defn}{thm}}
\AddToHook{env/prop/begin}{\crefalias{defn}{prop}}
\AddToHook{env/theorem/begin}{\crefalias{defn}{theorem}}
\AddToHook{env/example/begin}{\crefalias{defn}{example}}
\AddToHook{env/problem/begin}{\crefalias{defn}{problem}}
\AddToHook{env/question/begin}{\crefalias{defn}{question}}
\AddToHook{env/remark/begin}{\crefalias{defn}{remark}}
\AddToHook{env/defn/begin}{\crefalias{defn}{defn}}

\numberwithin{equation}{section}
\numberwithin{figure}{section}

\newcommand{\bb}{\begin{equation}}
	\newcommand{\ee}{\end{equation}}

\newcommand{\origin}{\mathbf{o}}

\newcolumntype{?}{!{\vrule width 1.5pt }}
\newlength\savedwidth

\newcommand{\tightoverset}[2]{%
	\mathop{#2}\limits^{\vbox to -.5ex{\kern-0.75ex\hbox{$#1$}\vss}}}

\newcommand{\dfn}[1]{\textbf{\textit{#1}}}

\newcommand\HH{\mathbb{H}} 

\newcommand{\R}{\mathbb{R}}

\newcommand\ST{\,;\;}
\newcommand\stdp{\,:\;}

\newcommand\Vor{\mathrm{Vor}}  

\newcommand{\mynorm}[1]{\left| #1 \right|}
\newcommand{\lip}[0]{\mathrm{Lip}}
\newcommand{\corona}[1]{\widehat{\partial #1}}
\newcommand{\restrict}[2]{{
		\left.\kern-\nulldelimiterspace 
		#1 
		\vphantom{\big|} 
		\right|_{#2} 
}}

\newcommand{\tv}[1]{||#1||_{\mathrm{TV}}}
\newcommand{\myprob}[1]{\mathbb P \left[ #1 \right]}

\newcommand{\edge}[2]{\mathcal{E}_{#1}^{#2}}
\newcommand{\edgeone}[1]{\mathcal{E}_1^{#1}}
\newcommand{\edgezero}[1]{\mathcal{E}_0^{#1}}
\newcommand{\floor}[1]{\lfloor #1 \rfloor}

\def\rlabel #1 #2{\begin{equation} \label{#1} #2 \end{equation}}

\def\rproof{\begin{proof}}
	
	\def\Qed{\end{proof}}

\newcommand{\rtree}[1]{\mathcal{T}^{(#1)}}

\newcommand{\treedist}[1]{\mathrm{d}_{\mathcal{T}^{(#1)}}}

\newcommand{\bs}[1]{\boldsymbol{#1}}

\makeatletter 
\tikzset{ 
	reuse path/.code={\pgfsyssoftpath@setcurrentpath{#1}} 
} 
\tikzset{even odd clip/.code={\pgfseteorule}, 
	protect/.code={ 
		\clip[overlay,even odd clip,reuse path=#1] 
		(current bounding box.south west) rectangle (current bounding box.north east)
		; 
}} 
\makeatother 
\usetikzlibrary{3d,perspective,quotes,angles}

\makeatletter
\let\@fnsymbol\@alph
\makeatother
\title{\textsc{Convergence towards Ideal Poisson--Voronoi tessellations with a focus on Diestel--Leader graphs}}
\author{
	Matteo \textsc{D'ACHILLE}\thanks{Institut Élie Cartan de Lorraine, CNRS, Universit\'e de Lorraine, F-57070 Metz, France\newline $_{}$\hfill  \href{mailto:matteo.d-achille@univ-lorraine.fr}{\texttt{matteo.d-achille@univ-lorraine.fr}}}
	\hspace{4pt}\&
	Ali \textsc{KHEZELI}\thanks{School of Mathematics, Institute for Research in Fundamental Sciences, Tehran, Iran\newline $_{}$\hfill  \href{mailto:alikhezeli@gmail.com}{\texttt{alikhezeli@gmail.com}}} }
\date{}

\begin{document}
\maketitle

\begin{abstract}
We provide necessary and sufficient conditions for convergence towards a unique IPVT on any proper pointed measured metric space. The conditions are that the volume function, when composed with $\log$, is regularly varying and that the limit of the uniform probability measure on a large ball exists in the horocompactification. As an application we prove convergence towards a unique IPVT for higher rank symmetric spaces, which solves an open problem of \cite{MiMe23}. Versions of the general theorem are provided for graphs and edge-measured graphs, where a natural parameter $\xi$ appears. We prove independence on $\xi$ in a specific sense under mild assumptions, which answers an open problem of~\cite{IPVT}. As a main example, we show that the latter holds for the IPVT of Diestel--Leader graphs. We also focus on further properties of this example, in particular, that its IPVT cells are distinguishable, providing the first Cayley graph with this property.
\end{abstract}

\tableofcontents

\section{Introduction and Main Results}
	\paragraph{Ideal Poisson--Voronoi Tessellations} 
	Poisson--Voronoi tessellations are among the most widely studied topics of random geometry. Recently, low-intensity limits of these tessellations have been studied, which have found significant interest and applications. In~\cite{bhupatiraju}, the limits of some observable are studied when the underlying space is a regular tree. \cite{BCP} considered marginals of low-intensity Poisson--Voronoi {tessellations} on the hyperbolic plane, and used it to bound from above the Cheeger constant of closed hyperbolic surfaces in the large-genus limit. It was also conjectured in~\cite{BCP} that, in the low-intensity limit, a random tessellation of the hyperbolic plane is obtained, which is now called an \dfn{ideal Poisson--Voronoi tessellation (IPVT)}.\footnote{It is referred to as ``pointless Poisson--Voronoi tessellation'' in~\cite{BCP}.} This conjecture for the hyperbolic plane was proved rigorously in~\cite{IPVT} as a consequence of a general convergence theorem for a sequence of Voronoi diagrams in an arbitrary metric space. In~\cite{IPVT}, the authors also considered the $d$-dimensional real hyperbolic space, $d\geq 2$, for which they discovered a surprisingly simple Poissonian description of the zero cell (i.e., the size-biased typical cell); and provided further results on the IPVT of infinite regular trees (see \Cref{fig:ipvtt2} for a portrait of a low-intensity Poisson--Voronoi tessellation of the infinite 3-regular tree in three samples). 

    \begin{figure}[!hbtp]
		 \includegraphics[width=.32\linewidth]{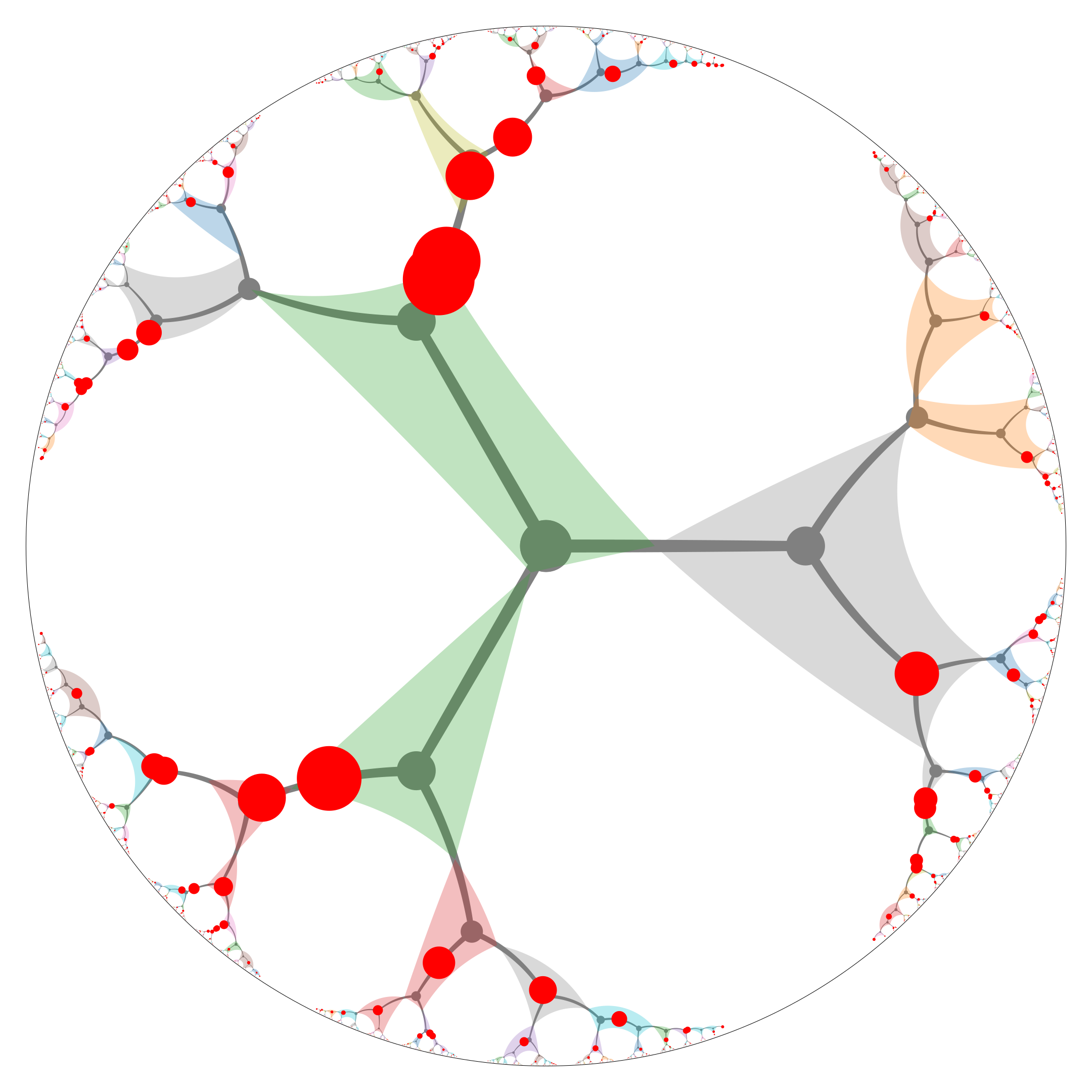}
		 \includegraphics[width=.32\linewidth]{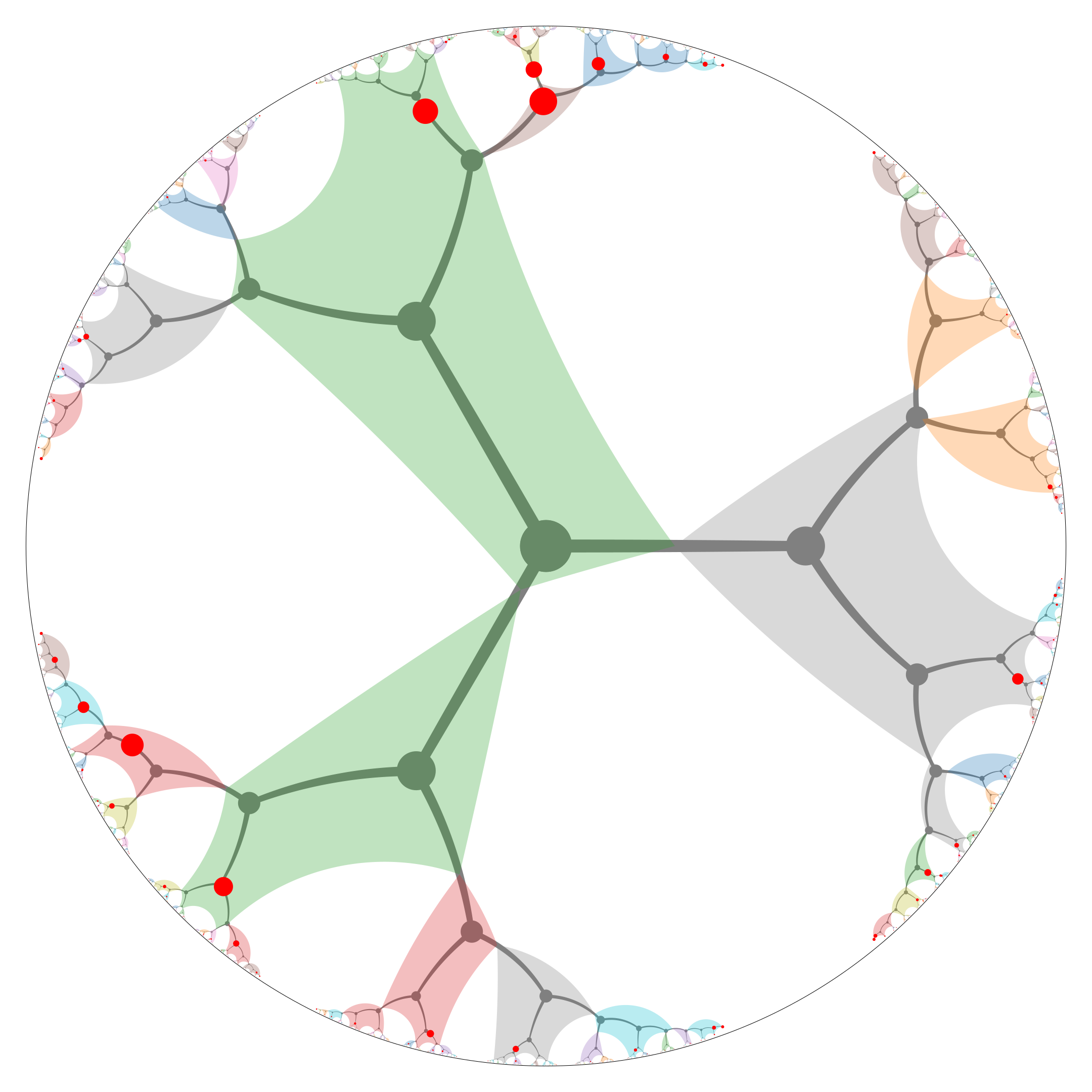}
		 \includegraphics[width=.32\linewidth]{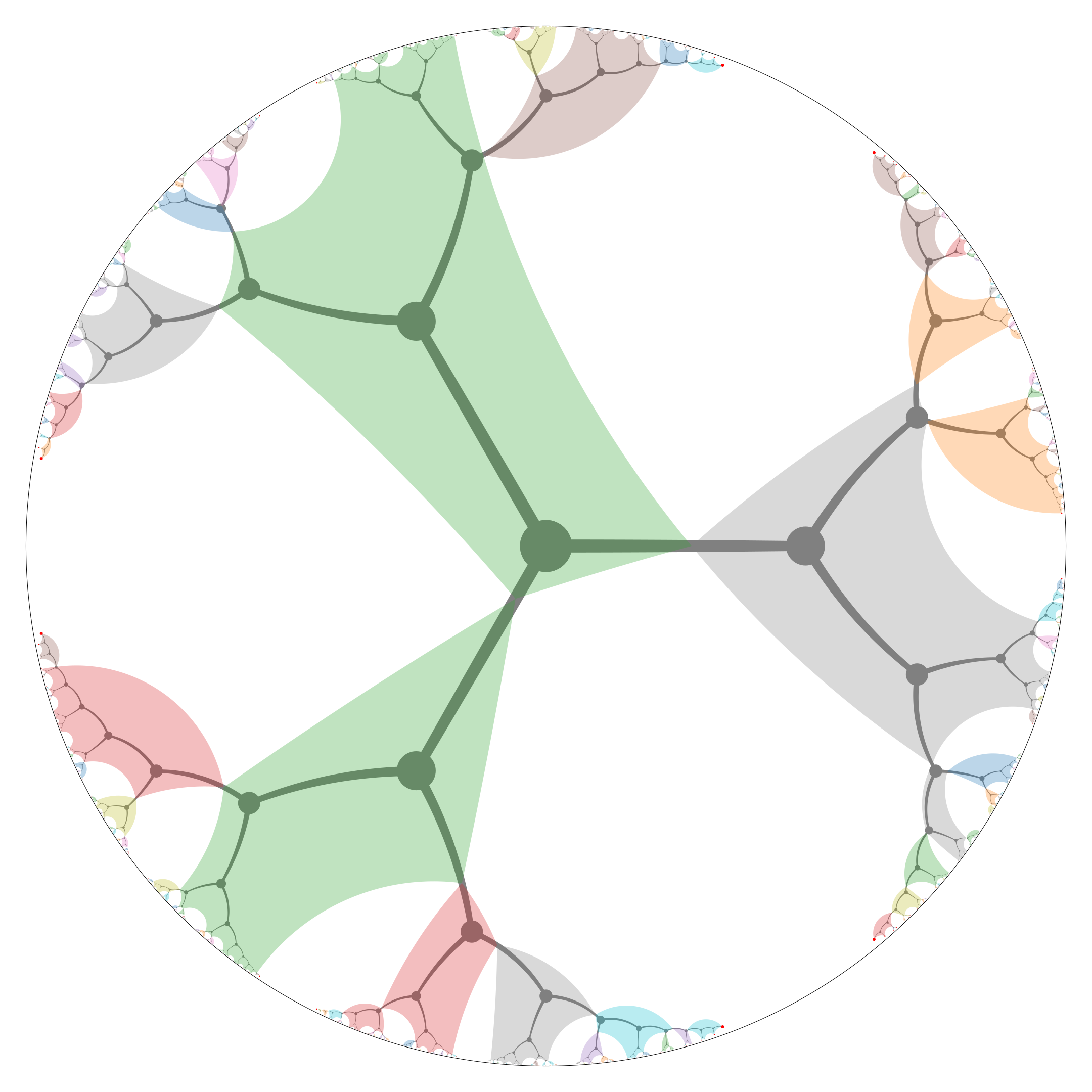}
		\caption{Portraits of Poisson--Voronoi tessellations of $\rtree{2}$ of intensities $2^{-1}, 2^{-3}$ and $2^{-60}$ embedded in the disk model of $\HH^2$. The nuclei are represented by hyperbolic discs of constant radius. The three samples are coupled similarly to the coupling in \Cref{thm.ioxi}. An animation is available at \href{https://alikhezeli.github.io/gallery/treeIPVT.html}{https://alikhezeli.github.io/gallery/treeIPVT.html}, which shows a limiting periodic behavior (see \Cref{thm.ctfemg}) and the independence on $\xi$ mentioned in \Cref{thm.ioxi}.}
		\label{fig:ipvtt2}
	\end{figure}
	
	In order to illustrate further the general convergence theorem {for diagrams} provided in~\cite[Theorem 2.3]{IPVT}, the first author constructed the IPVT of the Cartesian product of two hyperbolic planes endowed with the $L^{1}$ metric in~\cite{IPVTH2H2}. 
	More recently, in~\cite{AT26} all face volume densities  of the IPVT of hyperbolic space of any dimension have been obtained explicitly via low-intensity limits, thanks to a new Blaschke--Petkantschin formula for the finite intensity Poisson--Voronoi tessellation.
	IPVTs and related constructions have found numerous and possibly surprising applications, which have led to the resolution of some open problems. These include some cases of Gaborieau's fixed price conjecture in measured group theory~\cite{MiMe23, BB25, AK25} and constructing examples which have the sparse unique infinite cluster property in percolation theory~\cite{GR25,AGKRW25}.

\paragraph{Convergence Towards Ideal Voronoi Diagrams}	
The approach of~\cite{IPVT} is based on a deterministic convergence criterion for a sequence of Voronoi diagrams in an arbitrary proper metric space $E$ (Theorem~2.3 of~\cite{IPVT}). This criterion assumes that the nuclei points of the diagram, ordered by increasing distance from an arbitrary point $\origin\in E$ (called the \textit{origin}), converge in the \textit{horocompactification} $\bar E$ of $E$ towards a sequence of points in the boundary $\partial E\coloneqq \bar E\setminus E$ (see \Cref{subsec:boundary}). Also, it assumes that the distances of the nuclei to $\origin$, shifted by the first one\footnote{There are other natural choices for shifting the distances as well.  
		For example, in~\cite{MiMe23} the distances are shifted by a constant. But the choices are not much different in considering convergence or tightness, see \Cref{subsec:nuclei}.} (called therein \emph{proto-delays}), converge in $\mathbb R$. With these assumptions, a limiting point process is obtained on the \textit{extended boundary} $\partial E\times \mathbb R$. A point of the extended boundary is usually denoted by a pair $(\theta,\delta)$, where $\delta\in\mathbb R$ is called the \textit{delay} and $\theta\in\partial E$ . Then, if a certain nondegeneracy condition (mentioned in this paper after \Cref{def:voronoi}) is satisfied, the Voronoi diagrams converge (in the product Fell topology) to \emph{ideal Voronoi diagrams}. 

        On the other hand, in~\cite{MiMe23} the authors use subsequential limits of the intensity measures (i.e., the volume measure, scaled and shifted appropriately) to obtain a measure on the extended boundary and to construct an ideal tessellation\footnote{{We emphasize that for the random limit diagram to be a \emph{tessellation} one usually means a Voronoi diagram which is a.s.~locally-finite, normal and face-to-face (see~\cite{bookScWe08}). These properties have been proved in \cite[Proposition 3.7]{IPVT} for the $d$-dimensional real hyperbolic space, $d\geq 2$. Both in \cite{MiMe23} and in \cite{IPVTH2H2} (and in subsequent literature) the authors refer to the limit Voronoi diagram of a Poisson point process as IPVT. The term \textit{weak tessellation} is used in the present paper to stress that the cells may have large overlaps in general, and that no face-to-face property is meaningful.}} in the special case of symmetric spaces. The existence of a convergent subsequence is shown by a precompactness argument based solely on a certain volume growth condition in~\cite[Proposition~3.3]{MiMe23}, which is extended in the present work in \Cref{prop:precompact}. 
	The convergence of the intensity measures is also proved in Theorem~3.6 of~\cite{MiMe23}, however, the convergence of low-intensity Poisson--Voronoi diagrams was not studied there (see \cite[Section 1.1]{MiMe23} and also \cite[Page 6]{WMC}). We prove this convergence here using an application of Theorem~2.3 of~\cite{IPVT} on a class of CAT(0) spaces (see \Cref{cor:cat0}):

\begin{prop}[\textsc{Convergence to IPVT on Symmetric Spaces}]
		\label{prop:symmetric}
		In symmetric spaces, low-intensity Poisson--Voronoi diagrams converge to a unique IPVT.
	\end{prop}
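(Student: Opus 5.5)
The plan is to combine the main necessary-and-sufficient criterion announced in the abstract with the geometry of symmetric spaces $X=G/K$ of noncompact type. That criterion asks for two things:
\begin{enumerate}[label=(\roman*)]
\item $\log V(r)$ is regularly varying, where $V(r)$ is the volume of the ball of radius $r$ about $\origin$;
\item the uniform probability measures $\mu_r$ on $B(\origin,r)$ converge in the horocompactification $\bar X$.
\end{enumerate}
Equivalently, via Theorem~2.3 of~\cite{IPVT} as specialized to CAT(0) spaces in \Cref{cor:cat0}, I would check that the ordered nuclei of a Poisson process of intensity $\lambda\to0$ converge in $\bar X$, with proto-delays converging in distribution, and that the nondegeneracy condition holds. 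Once (i) and (ii) are established, the general theorem yields convergence of the low-intensity Poisson--Voronoi diagrams. Uniqueness follows because the limiting Poisson process on $\partial X\times\mathbb R$ is determined by the limit of the rescaled intensity measures.

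\textbf{Step 1: volume growth.} I will use the classical Knapp/Helgason asymptotic for $V(r)$. In polar coordinates $x=k\exp(H)\origin$, with $H$ in the closed Weyl chamber $\bar{\mathfrak a}^+$, the volume density is $\prod_{\alpha>0}\sinh(\alpha(H))^{m_\alpha}$. This gives
\[
V(r)\asymp r^{(\ell-1)/2}\,e^{2|\rho| r},
\]
where $\ell$ is the rank and $\rho$ is the half-sum of positive roots. Hence $\log V(r)=2|\rho|r+O(\log r)$, which is regularly varying of index $1$. This settles (i), and also the growth hypothesis of \Cref{prop:precompact}.

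\textbf{Step 2: convergence of the uniform measures.} I would proceed in three sub-steps.
\begin{itemize}
\item Write $\mu_r$ in the $KA^+K$ decomposition. The $K$-part is Haar measure. Because the density concentrates where $\langle\rho,H\rangle$ is maximal subject to $|H|\le r$, the normalized radial part concentrates near the direction $H_\rho=\rho/|\rho|$, which lies in the open Weyl chamber. It has Gaussian fluctuations of order $\sqrt r$ transversal to $\rho$, and an exponential tail of order $1$ in the radial defect $r-|H|$.
\item Use the known description of the horofunction (Busemann) boundary of a symmetric space with the Riemannian metric, due to Guivarc'h--Ji--Taylor and Haettel et al. For a regular direction, points $k\exp(H)\origin$ with $H/|H|\to H_\rho$ converge in $\bar X$ to the Busemann function of the geodesic ray $k\exp(tH_\rho)\origin$. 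This holds even though the transversal displacement grows like $\sqrt r$: the angle tends to zero, and in CAT(0) spaces Busemann limits depend only on the asymptotic direction of sequences whose angle to a fixed ray tends to zero.
\item Combine the two: $\mu_r$ converges to the pushforward of Haar measure on $K$ under $k\mapsto \xi_{k}$, where $\xi_k$ is the Busemann point of the ray $k\exp(tH_\rho)\origin$. This is $K$-invariant measure on the orbit $K/M\subset\partial X$ (the Furstenberg boundary), which gives (ii).
\end{itemize}

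\textbf{Step 3: nondegeneracy.} Distinct limiting nuclei are almost surely at distinct boundary points, since $K/M$ carries a non-atomic measure. The Busemann functions of distinct points of $K/M$ differ, so the ideal Voronoi cells are almost surely nondegenerate, as required by \Cref{cor:cat0}.

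The main obstacle is Step 2. It requires showing that the $\sqrt r$-scale transversal spread of the $H$-component does not affect the horofunction limit. It also requires handling the limit for directions in $\mathfrak a^+$ as $r\to\infty$ uniformly in $k$. I would first try to prove it directly via the CAT(0) comparison: $|d(x,y)-d(x,\origin)-b_\xi(y)|\to0$ whenever the angle at $\origin$ between $x$ and the ray to $\xi$ tends to $0$ while $d(\origin,x)\to\infty$. If that comparison is not uniform enough, I would instead use the explicit formula for Busemann functions, $b_{\xi_k}(g\origin)=-\langle H_\rho, H(g^{-1}k)\rangle$ via the Iwasawa projection.
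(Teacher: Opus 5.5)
Step~1 verifies the wrong condition. In \Cref{thm.scfc} (see \Cref{def:reg}), ``log-regularly-varying'' means that $F\circ\log$ is regularly varying, i.e.\ $\lim_{r\to\infty}F(r+r_0)/F(r)=e^{br_0}$ for every real $r_0$, with $b>0$. It does not mean that $\log F$ is regularly varying. Your version follows from any two-sided bound $F\asymp r^{a}e^{br}$, but it does not give the ratio limit. The vertex-measured tree $\rtree{p}$ has $\log F(r)=r\ln p+O(1)$, yet $F(r+\tfrac12)/F(r)$ equals $1$ for $r\in\mathbb N$ and tends to $p$ along $r\in\mathbb N+\tfrac12$. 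Full convergence genuinely fails there, as the paper notes before \Cref{thm.ctfg}. So ``$\log V(r)=2|\rho|r+O(\log r)$, regularly varying of index $1$'' does not settle (i). The repair is immediate: Knapp's asymptotic is an exact equivalence $V(r)\sim C\,r^{(\ell-1)/2}e^{2|\rho|r}$. This is also what your Laplace-method computation in Step~2 yields, and it is the fact the paper cites as Lemma~4.4 of \cite{MiMe23}. It gives $V(r+r_0)/V(r)\to e^{2|\rho|r_0}$, so (i) holds with $b=2|\rho|>0$.

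With this correction your argument is sound, and its skeleton matches the paper's. Both establish (i)--(ii) of \Cref{thm.scfc} and then get nondegeneracy from \Cref{cor:cat0}, since a symmetric space is a non-branching CAT(0) manifold. Diffuseness of $\nu=\nu_0\otimes\beta$ is automatic because $\beta$ has a density, so the nonatomicity of $\nu_0$ in your Step~3 is more than needed. The difference is how (i)--(ii) are obtained. The paper computes nothing: it quotes Theorem~3.6 of \cite{MiMe23}, that $F(t)^{-1}\iota_t^*\mu$ has a unique limit, and converts this into (i)--(ii) via the implication (ii)$\Rightarrow$(iii) of \Cref{lem:criterion1}. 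You re-derive that input from the $KA^+K$ polar decomposition and Laplace's method. This is longer but self-contained. It also identifies $\nu_0$ explicitly as the $K$-invariant probability measure on the $K$-orbit of the endpoint of the ray in the regular direction $\rho/|\rho|$, i.e.\ on the Furstenberg boundary $K/M$. The paper's proof does not spell this out.

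One justification in Step~2 needs correcting, though the conclusion stands. In a general CAT(0) space, a sequence whose angle at $\origin$ with a ray tends to $0$ need not converge to the ray's endpoint; in a tree, angle $0$ only means a common initial segment. It holds here because $X$ is a Hadamard manifold. Via $\exp_{\origin}$ and a radial reparametrization, its visual compactification is the closed unit ball of $T_{\origin}X$. By \cite[Theorem~II.8.13]{BH}, the visual compactification is the horocompactification $\bar X$. Hence $(k,v,t)\mapsto k\exp(tv)\origin$, with $v$ in the unit sphere of $\mathfrak a$, extends continuously to $t=\infty$ with values in $\bar X$. Once the law of $(k,H/|H|,|H|)$ under $\mu_r$ converges to $\mathrm{Haar}_K\otimes\delta_{\rho/|\rho|}\otimes\delta_{\infty}$, the continuous mapping theorem gives $\mu_r\to\nu_0$. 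Since $K$ is compact, the uniformity in $k$ you list as the main obstacle, and the Iwasawa-projection fallback, are not needed.
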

	
	{The proof is given in \Cref{subsec:symmetric}.}
	The idea of subsequential convergence has been used in the further works \cite{GR25,AGKRW25,AK25,BB25}. In the present paper, we focus on the \emph{full convergence} of low-intensity Poisson--Voronoi diagrams, which is more difficult and depends crucially on the geometry of the underlying space. Our first main theorem is the following. 
	{Here, and throughout the paper, unless explicitly mentioned {otherwise}, we let $B_r(\origin)$ (resp. $S_r(\origin)$) be the closed ball (resp. sphere) of radius $r$ centered at $\origin$.}
	Given a measure $\mu$ on $E$, denote by $F(r)\coloneqq \mu(B_r(\origin))$ its volume. We say that $F$ is \textbf{log-regularly-varying} if $F \circ \log$ is regularly-varying; i.e., for all $r_0\in\mathbb R$, the limit of $F(r+r_0)/F(r)$ as $r\to +\infty$ exists (see \Cref{def:reg} for further details). By well-known results about regularly-varying functions, the limit is then equal to $e^{br_0}$ for some $b\in\mathbb R$, which is called the \textbf{index} of $F$.

    \begin{theorem}[\textsc{Full Convergence of Nuclei}]\label{thm.scfc}
		Let $(E,\origin,\mu)$ be a proper pointed measured metric space and, for $\lambda >0$, let $X^{(\lambda)}$ be a Poisson point process with intensity measure $\lambda\mu$. Then, $X^{(\lambda)}$ converges weakly (after a suitable shift) to a nontrivial point process $(\Theta_i,\Delta_i)_{i\geq 1}$ in the extended boundary if and only if:
		\begin{enumerate}[label=(\roman*)]
			\item \label{thm.scfc-i} the volume function $F$ is log-regularly-varying with positive index (say $b$), and
			\item \label{thm.scfc-ii} the probability measure $F(r)^{-1} \restrict{\mu}{B_r(\origin)}$, regarded as a probability measure on $\bar E$, converges weakly to a probability measure on $\partial E$ (say $\nu_0$) as $r\to\infty$. 
		\end{enumerate}  
		
		If these conditions hold, then $(\Theta_i)_{i\geq 1}$ are i.i.d.~points on $\partial E$ with distribution $\nu_0$ and $(\Delta_i)_{i\geq 1}$ is an independent Poisson point process on $\mathbb R$ with intensity measure proportional to the measure $\beta$ defined by $\beta((-\infty,r])\coloneqq e^{br}$. If, in addition, the nondegeneracy condition provided just after \Cref{def:voronoi} holds a.s., then the corresponding Poisson--Voronoi diagrams also converge to the Poisson--Voronoi diagram of $(\Theta_i,\Delta_i)_{i\geq 1}$.
	\end{theorem}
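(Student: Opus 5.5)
The plan is to reduce everything to the mapping theorem for Poisson processes. Write the points of $X^{(\lambda)}$ as pairs $(x,d(\origin,x))$ and shift the distances by $r_\lambda$, chosen with $\lambda F(r_\lambda)=1$; to handle atoms or jumps of $F$, take $r_\lambda=\inf\{r:\lambda F(r)\geq 1\}$. The point process $\{(x,d(\origin,x)-r_\lambda)\}_{x\in X^{(\lambda)}}$ is then Poisson on $E\times\mathbb R\subset \bar E\times\mathbb R$, and its intensity is the push-forward $m_\lambda$ of $\lambda\mu$ under $x\mapsto (x,d(\origin,x)-r_\lambda)$. Since $\bar E\times(-\infty,c]$ is compact, a Poisson process converges weakly in the vague topology exactly when its intensity measures converge vaguely on $\bar E\times\mathbb R$ (tested against compactly supported continuous functions, so with $\delta$ bounded above). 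Any such limit is again Poisson with intensity the limit $m$. Everything therefore reduces to showing that $m_\lambda\to m$ vaguely, with $m=\nu_0\otimes c\beta$ supported on $\partial E\times\mathbb R$, if and only if (i) and (ii) hold.

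For sufficiency, evaluate $m_\lambda(G\times(-\infty,s])=\lambda\mu(B_{r_\lambda+s}\cap G)$. By (ii) this is asymptotically $\lambda F(r_\lambda+s)\,\nu_0(G)$ for $\nu_0$-continuity sets $G$. By (i), $\lambda F(r_\lambda+s)\to e^{bs}$. Here I need $F(r_\lambda)\lambda\to1$, which follows from regular variation: the ratio $F(r^-)/F(r)\to1$ by monotonicity squeezed between $F(r-\epsilon)/F(r)\to e^{-b\epsilon}$. Products of such sets form a convergence-determining class, so $m_\lambda\to\nu_0\otimes\beta$ and the limit is supported on the boundary because $\nu_0(E)=0$. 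The limiting Poisson process with product intensity $\nu_0\otimes\beta$ is exactly the marking description: the delays form a Poisson process with intensity $\beta$, and the directions are i.i.d.\ with law $\nu_0$. Since $\beta$ has finite mass on half-lines and $b>0$, the delays can be enumerated increasingly and the process is nontrivial.

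For necessity, assume some shift $r_\lambda$ gives weak convergence to a nontrivial process. I first argue the limit must be Poisson, as a limit of Poisson processes with $m_\lambda$ tight on sets $\{\delta\le c\}$. Nontriviality together with Laplace functionals then gives vague convergence $m_\lambda\to m$ with $m\neq0$ and $m(\bar E\times(-\infty,c])<\infty$. Set $g(s)=m(\bar E\times(-\infty,s])$. Then $\lambda F(r_\lambda+s)\to g(s)$ at continuity points. A standard convergence-of-types argument follows: if $g$ were constant or degenerate the limit would be trivial, so $g$ is nondecreasing, nontrivial and finite. Since $\lambda\mapsto r_\lambda$ can be compared along $\lambda'=\lambda g(s_0)$, I get $g(s+t)/g(s)$ determined independently of $s$. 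This makes $g$ exponential, $g(s)=ce^{bs}$, and $F(r+t)/F(r)\to e^{bt}$ along all $r$, using the monotonic interpolation of $r_\lambda$, which is continuous in $\lambda$ up to jumps. Finiteness of $g$ together with nontriviality forces $b>0$. For (ii), vague convergence restricted to $\{\delta\le 0\}$ gives convergence of $\lambda\mu|_{B_{r_\lambda}}$ to $m(\cdot\times(-\infty,0])$. Because $\lambda F(r_\lambda)\to1$, normalization gives (ii) along the radii $r_\lambda$, and by monotone interpolation with the regular variation this extends to all $r$. The limit measure lives on $\partial E$ because any compact $K\subset E$ has $\lambda\mu(K)\to0$ as $\lambda\to0$. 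One must also check that $r_\lambda\to\infty$, which holds otherwise the mass would stay in $E$.

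The last claim, convergence of the Voronoi diagrams under nondegeneracy, follows from Theorem~2.3 of~\cite{IPVT}, via Skorokhod coupling of the ordered nuclei with their proto-delays. The main obstacle I expect is the necessity direction, specifically deducing exact log-regular variation of $F$ along all $r$ rather than only along $r_\lambda$, and showing the limit intensity is a product measure. I would handle this through the uniqueness-of-type lemma for monotone functions.
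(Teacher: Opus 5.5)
Your sufficiency direction is correct and is essentially the paper's argument. You test directly on $G\times[-\infty,s]$ using (ii) at radius $r_\lambda+s$, whereas the paper first proves convergence on annuli. The Voronoi part is the same appeal to Theorem~2.3 of \cite{IPVT} via Skorokhod coupling.

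\textbf{The gap.} The necessity direction has a genuine gap, exactly where you suspect it. The limit only gives information at radii of the form $r_\lambda+s$: namely $\lambda F(r_\lambda+s)\to g(s)$, and $\lambda\mu|_{B_{r_\lambda+s}}$ converges to the projection of $m|_{\{\delta\le s\}}$. To reach all radii you must show that the set $\{r_\lambda\}$ has no large gaps, and your sketch does not.
\begin{itemize}
\item For an arbitrary admissible shift, $\lambda\mapsto r_\lambda$ need not be monotone. Even the canonical shift $\inf\{r:\lambda F(r)\ge1\}$ jumps across every interval on which $F$ is constant. So ``continuous in $\lambda$ up to jumps'' is the statement to be proved, not a tool.
\item The same issue affects your derivation that $g$ is exponential. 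To compare $r_{c\lambda}$ with $r_\lambda$ you first need $r_{c\lambda}-r_\lambda$ to stay bounded, so that subsequential limits exist.
\item It also affects the extension of (ii). If the gaps are only bounded, then for $r=r_\lambda+s$ the normalized ball measure tends to the normalized projection of $m|_{\{\delta\le s\}}$. This equals the $s=0$ limit only if that projection is known not to depend on $s$, e.g.\ if $m$ is a product. Regular variation of $F$ alone does not help, since the annulus between $r_\lambda$ and $r_\lambda+s$ carries a fraction $\approx 1-e^{-bs}$ of the mass.
\item ``Nontrivial'' does not exclude $g=0$ on a half-line. Yet you divide by $g$ and normalize by $\lambda F(r_\lambda)$, which tends to $g(0)$ (not $1$) for a general shift.
\end{itemize}

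\textbf{How the paper closes it.} The paper handles this in Claims~1--3 of the proof of Lemma~\ref{lem:criterion1}. It uses the exact identity $\mu_{\lambda}=(\lambda/\lambda')\,\sigma^*_{t(\lambda)-t(\lambda')}\mu_{\lambda'}$ together with $g(-\infty)=0$.
\begin{itemize}
\item Positivity of $g$ comes from comparing $\lambda$ with $\lambda/k$.
\item If $\lambda'/\lambda\to1$ but $t(\lambda)-t(\lambda')\to\infty$, one gets $g(-R)\ge g^-(0)-\epsilon$ for every $R$, a contradiction.
\item If instead $t(\lambda)-t(\lambda')$ tends to some $R\in(0,\infty)$, the nonzero limit would be invariant under a nontrivial delay shift, which is impossible.
\end{itemize}
Hence the jumps of $t(\lambda)$ vanish, and the intervals $[t(\lambda)-\frac1{2n},t(\lambda)+\frac1{2n}]$ cover all large reals. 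Regular variation (via the fact that a limit of $F(r+s)/F(r)$ existing for all but countably many $s$ exists for all $s$) and convergence of the ball measures then transfer to every radius (Claims~4--5).

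\textbf{How your route could be completed.} Your convergence-of-types route can be finished with the same identity.
\begin{itemize}
\item Monotonicity of $F$, $g(-\infty)=0$ and $m\neq0$ force $r_{c\lambda}-r_\lambda$ to stay bounded for each $c>0$.
\item Any subsequential limit $\tau$ then gives the exact relation $m(A\times(-\infty,s+\tau])=c^{-1}m(A\times(-\infty,s])$ for all Borel $A$ and all $s$. This yields positivity, $g(s)=g(0)e^{bs}$, and the product form of $m$ at once.
\item The same monotonicity argument bounds $|r_{\lambda'}-r_\lambda|$ uniformly for $\lambda'\in[\lambda/2,\lambda]$, i.e.\ the $r_\lambda$ have bounded gaps.
\item Bounded gaps, together with the product form and local uniform convergence to the continuous $g$, suffice to pass to all radii.
\end{itemize}
These steps are the substance of the necessity proof, and your proposal does not contain them.
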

	
	In this theorem, the intensity measure of the limiting point process has a product form, see~\Cref{eq:nu}.
	The proof of \Cref{thm.scfc} is given in \Cref{subsec:fullConvergence}. The topology used for the convergence in the statement will be defined in \Cref{subsec:precompact}, which in particular, prevents the delays escaping to $-\infty$.
	\Cref{thm.scfc} is useful for proving convergence because verifying convergence in $\bar E$ is simpler than in the extended boundary, and the theorem takes care of the delay part. In particular, \Cref{thm.scfc} can be used to obtain a simple proof of full convergence in the well-known cases of hyperbolic spaces and their products (\cite{IPVT} and~\cite{IPVTH2H2}). \Cref{thm.scfc} and the analogous version for graphs given in \Cref{thm.ctfg,thm.ctfemg} are  useful tools for proving convergence, for example in the case of products of two or more metric spaces, such as  $\HH^2 \times \HH^2$, $\HH^2 \times \R$, regular trees, product of regular trees, and Diestel--Leader graphs. These will be discussed in \Cref{sec:ex}.
	
\paragraph{Convergence for Graphs} On the other hand, IPVTs of graphs have also been of great interest; e.g., regular trees \cite{bhupatiraju,IPVT}, products of regular trees \cite{MiMe23,AGKRW25,M26} and Cayley graphs \cite{AK25,BB25}. In this case, one might replace the Poisson point process of nuclei with a Bernoulli point process on the vertices. We prove in \Cref{prop:bernoulli} that this does not affect the limiting behavior.	
	However, the volume function $F$ of graphs cannot be log-regularly-varying due to the natural singularities at integers. Hence, by \Cref{thm.ctfg}, full convergence of nuclei cannot hold. The strongest hope is having a one-parameter family of limiting point processes, as in the following Theorem:
	
	\begin{theorem}[\textsc{Full Convergence for Graphs}]\label{thm.ctfg}
		Assume that $E=G$ is a graph, $\mu$ is any measure on the vertices of $G$ and $\lambda_n\downarrow 0$ such that $F(n)^{-1}\leq \lambda_n \leq F(n-1)^{-1}$. Then, $X^{(\lambda_n)}$ converges weakly (after a suitable shift) to a nonempty point process on the extended boundary (say $(\Theta_i,\Delta_i)_{i\geq 1}$) if and only if:
		\begin{enumerate}[label=(\roman*)]
			\item $\restrict{F}{\mathbb N}$ is log-regularly-varying with positive index (say $b$),
			\item the probability measure $F(n)^{-1} \restrict{\mu}{B_n(\origin)}$ converges weakly to a probability measure on $\partial G$ (say $\nu_0$) as $n\to\infty$, and
			\item $\lim_{n \to \infty} \lambda_n F(n) = e^{b\xi}$ for some $\xi$. 
		\end{enumerate}
		If these conditions hold, then $(\Theta_i)_{i\geq 1}$ are i.i.d. points on $\partial G$ with distribution $\nu_0$ and $(\Delta_i)_{i\geq 1}$ is an independent Poisson point process on $\mathbb Z$ with intensity measure $\beta^{(\xi)}(i)\coloneqq (p-1)p^{i-1+\xi}$, where $p\coloneqq e^b$. In addition, the corresponding Poisson--Voronoi diagrams also converge to the IPVT corresponding to $(\Theta_i,\Delta_i)_{i\geq 1}$, {which we denote by ${\rm IPVT}_{\xi}(G)$}, \underline{without} any non-degeneracy condition.
	\end{theorem}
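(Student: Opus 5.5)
The plan is to reduce \Cref{thm.ctfg} to the mechanism behind \Cref{thm.scfc}, keeping track of the lattice structure of distances. On a graph all distances from $\origin$ are integers, so the natural shift is deterministic. I take the shifted delay of a nucleus $x$ to be $d(\origin,x)-n$ and its boundary point to be the image of $x$ in $\bar G$. The shifted process then lives on $\bar G\times\mathbb Z$, and with the precompactness topology of \Cref{subsec:precompact} its convergence is determined by the mean measures on sets of the form $A\times(-\infty,k]$, with $A\subset\bar G$ a continuity set. By the Poisson property these mean measures are
\[
\lambda_n\,\mu\bigl(A\cap B_{n+k}(\origin)\bigr)=\lambda_n F(n)\cdot\frac{F(n+k)}{F(n)}\cdot\frac{\mu(A\cap B_{n+k}(\origin))}{F(n+k)} .
\]

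For sufficiency, condition (iii) gives convergence of the first factor to $e^{b\xi}=p^{\xi}$. Condition (i) makes the second factor tend to $p^{k}$, and condition (ii) makes the third tend to $\nu_0(A)$. A Poisson process converges weakly in this setting if and only if its mean measure converges vaguely, with no mass escaping to delay $-\infty$; the latter holds because $F(n-m)/F(n)\to p^{-m}$ and the counts on $(-\infty,-m]$ therefore vanish. The limit is thus Poisson with intensity $\nu_0\otimes\beta^{(\xi)}$, where $\beta^{(\xi)}((-\infty,k])=p^{k+\xi}$, whose increments are $(p-1)p^{i-1+\xi}$. The product form means the $\Theta_i$ are i.i.d.\ with law $\nu_0$ and independent of the delays.

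For necessity, assume convergence to a nonempty limit with some deterministic shift $s_n$, which must be integer up to a bounded correction. Taking $A=\bar G$, the total counts up to delay $k$ converge, so $\lambda_n F(n+s_n+k)$ converges to a finite limit $c_k$. Nonemptiness together with the no-escape topology forces $c_k\in(0,\infty)$ for large $k$ and $c_k\to0$ as $k\to-\infty$. I will argue that $s_n$ may be absorbed, namely that it is eventually constant, using the sandwich $F(n)^{-1}\le\lambda_n\le F(n-1)^{-1}$: $\lambda_nF(n)\ge1$ and $\lambda_nF(n-1)\le1$ pin the threshold index near $n$. Then ratios $c_{k+1}/c_k=\lim F(m+1)/F(m)$ give log-regular variation along $\mathbb N$ with some limit $p$. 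From $c_k\to0$ and finiteness one gets $p>1$, i.e.\ positive index, and $\lim\lambda_nF(n)$ exists, which is (iii). The boundary marginals, normalized by the total mass, give (ii).

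The remaining assertion is convergence of the Voronoi diagrams without nondegeneracy. The idea is that on a graph, with distinct integer delays and Busemann functions taking integer values up to a constant, ties are discrete events. Moreover, Fell convergence of cells as vertex sets in a locally finite graph reduces to eventual equality on finite sets. For a vertex $v$, its cell membership is determined by comparing $d(v,x)-d(\origin,x)+\text{delay}$ across finitely many relevant nuclei. Since $d(v,x)-d(\origin,x)$ stabilizes to the horofunction value once $x$ is close enough to its boundary limit, these integer quantities are eventually equal to their limits. Ties are therefore preserved rather than broken in the limit, because the comparison is exact.

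The main obstacle I expect is the necessity direction, specifically ruling out exotic shifts $s_n$ and oscillation. Along $\mathbb N$ the ratio $F(n+1)/F(n)$ could a priori converge only along the subsequences selected by $\lambda_n$. The fact that every $n$ is used, via the sandwich condition, is what I would exploit to force a single limit.
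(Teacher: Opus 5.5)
Your sufficiency argument and your treatment of the Voronoi diagrams follow the paper. They rest on convergence of the intensity measures on the clopen sets $\lip_1^{\leq k}$ together with \Cref{prop:subseq}. They also use the fact that integer-valued horofunctions make all comparisons exact, which is why \cite[Theorem~2.3]{IPVT} needs no nondegeneracy on graphs.

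\textbf{The gap.} It is the step you flagged yourself: that $s_n$ is eventually constant. This cannot be extracted from the sandwich, and it is false. Passing to the limit in $\lambda_nF(n-1)\leq 1\leq \lambda_nF(n)$ along a subsequence on which $s_n=s$ gives only $c_{-s-1}\leq 1\leq c_{-s}$. This yields two bounds:
\begin{itemize}
\item $s_n$ is bounded above, since otherwise $c_k\geq 1$ for all $k$;
\item $s_n$ is bounded below provided $\sup_k c_k>1$, which requires the limit to have infinite mass, i.e.\ to be nonempty a.s.\ and not merely with positive probability.
\end{itemize}
Nothing prevents $s_n$ from switching between values at which $c$ equals exactly $1$. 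Two examples show this.

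\emph{Counting measure.} On $\rtree{p}$ with counting measure, take $\lambda_{2m}=\lambda_{2m+1}=F(2m)^{-1}$ (lower $\lambda_{2m+1}$ slightly if you want strict monotonicity) and $t_n=2\lfloor n/2\rfloor$. Then $\lambda_nF(t_n+k)\to p^k$ along all $n$, so $\iota_{t_n}X^{(\lambda_n)}$ converges to the Poisson process with intensity $\nu_0\otimes\beta^{(0)}$. Yet $s_n$ alternates between $0$ and $-1$, and $\lambda_nF(n)$ alternates between $1$ and approximately $p$, so (iii) fails.

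\emph{Weighted spheres.} Now let the weights be constant on spheres, chosen so that $F(2m)=p^{2m}$ and $F(2m+1)=ap^{2m}$ with $1<a<p^2$ and $a\neq p$. The same $\lambda_n$ and $t_n$ still give a limit, namely $\nu_0$ times a $2$-periodic delay measure of infinite mass. But $F(m+1)/F(m)$ alternates between $a$ and $p^2/a$, so even (i) fails.

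In your argument this surfaces in the next step. The ratios $c_{k+1}/c_k$ control $F(m+1)/F(m)$ only for $m\in\{n+s_n+k\}$, a set that need not contain all large integers.

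\textbf{Relation to the paper.} The paper's proof makes the same reduction in one line (``by considering the measure of $\lip_1^{\leq 0}$, $t_n$ should be asymptotically equal to $n+c$''). So, with an arbitrary shift, the ``only if'' part is false as stated, and this defect is not peculiar to your proposal.

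\textbf{What your argument does prove.} Once this step is dropped, it proves the version in which the shift is $\iota_n$ (equivalently $t_n=n+c$). There $\lambda_nF(n+k+1)$ and $\lambda_nF(n+k)$ converge along the same $n$, so $F(m+1)/F(m)\to c_{k+1}/c_k$ along all large $m$. All $c_k$ are positive: $c_0\geq 1$, and if $c_{k-1}=0<c_k$, the single sequence $F(m+1)/F(m)$ would tend both to $\infty$ and to $c_{k+1}/c_k$. Then $c_k\to 0$ as $k\to-\infty$ forces $p>1$, and (ii) and (iii) follow as you say. This is \Cref{thm:critArithmetic} with $r=1$. For arbitrary shifts, under (i) one can only conclude (iii) modulo $\xi\sim\xi+1$.
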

	
	\noindent
	The proof \Cref{thm.ctfg} is given in \Cref{subsec:graphs}.

\paragraph{Convergence for Edge Measure-Graphs.} We also provide an analogue of \Cref{thm.ctfg} for edge-measured graphs, which are regarded as a union of unit-length segments equipped with the Lebesgue measure on each edge. 
	For this, let $\alpha^{(n)}$ be the probability measure on $G^2$ which represents the uniform measure on the set of edges in $B_n(\origin)$ which are not contained in $B_{n-1}(\origin)$. 
	We assume that $\alpha^{(n)}$ is symmetric with respect to $(x,y)\mapsto(y,x)$.
	For simplicity, we first state the result in the bipartite case: 
	
	\begin{theorem}[\textsc{Full Convergence  for Edge-Measured Bipartite Graphs}]\label{thm.ctfemg}
		Let $E$ be the edge-measured graph corresponding to a bipartite graph $G$. Consider the measure $\alpha^{(n)}$ on $G^2$, defined above, and let $\lambda_n\downarrow 0$ such that $F(n)^{-1}\leq \lambda_n \leq F(n-1)^{-1}$. Then, $X^{(\lambda_n)}$ converges weakly (after a suitable shift) to a nonempty point process $(\Theta_i,\Delta_i)_{i\geq 1}$ on the extended boundary if and only if:
		\begin{enumerate}[label=(\roman*)]
			\item \label{thm.ctfemg-item1} $\restrict{F}{\mathbb N}$ is log-regularly-varying with positive index $b$,
			\item \label{thm.ctfemg-item2} the measure $\alpha^{(n)}$ converges weakly as $n\to\infty$ to a probability measure $\alpha$ on $(\partial G)^2$,
			\item \label{thm.ctfemg-item3} and $\lim_{n\to \infty} \lambda_n F(n) = e^{b\xi}$ for some $\xi$.
		\end{enumerate}
		If these conditions hold, then $(\Delta_i)_{i\geq 1}$ is a Poisson point process on $\mathbb R$ with intensity measure 
		$$
		\beta^{(\xi)}\coloneqq \sum_{j\in \mathbb Z} (p-1)p^{j+\xi} \mathrm{Leb}_{[j,j+1]} \; ,
		$$ where $p\coloneqq e^b$ and $\mathrm{Leb}_{[j,j+1]}$ is the Lebesgue measure on $[j,j+1]$.
		In addition, conditionally on $(\Delta_i)_{i\geq 1}$, the points $(\Theta_i)_{i\geq 1}$ are independent points on $\partial E$ such that $\Theta_i$ has distribution $\nu_{\{\Delta_i\}}$, where $\{\cdot\}$ denotes the fractional part and, for all $t\in[0,1)$, $\nu_t$ is a measure on $\partial E$ determined by $\alpha$. In addition, the corresponding Poisson--Voronoi diagrams also converge to the IPVT of $(\Theta_i,\Delta_i)_{i\geq 1}$, which we denote by ${\rm IPVT}_{\xi}(E)$.
	\end{theorem}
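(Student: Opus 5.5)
The plan is to reduce the convergence of $X^{(\lambda_n)}$ to the convergence of its intensity measures, viewed as measures on the extended boundary. For a Poisson process, weak convergence of the shifted point process is equivalent to vague convergence of the image of $\lambda_n\mu$ under the embedding $x\mapsto(x, d(\origin,x)-s_n)$. Here the shift is $s_n=n$ and the embedding goes into $\bar E\times\mathbb R$. The convergence must be vague on sets $\bar E\times(-\infty,r]$, and the limit must have finite mass on such sets, which is what rules out delays escaping to $-\infty$. This is the same mechanism used for \Cref{thm.scfc} and \Cref{thm.ctfg}, so I will reuse the precompactness statement \Cref{prop:precompact} and the Poisson/intensity equivalence established there.

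First I compute the intensity on the edge space. A point on an edge $\{x,y\}$ with $|x|=k-1$ and $|y|=k$ (bipartiteness forces every edge to join consecutive spheres) sits at distance $k-1+t$ from $\origin$, where $t\in[0,1]$ is its position along the edge. The number of edges between $S_{k-1}$ and $S_k$ is $F(k)-F(k-1)$, with $F$ measuring the total edge length. The image of $\lambda_n\mu$ restricted to delays in $[j,j+1]$, with $j=k-1-n$, is therefore
\[
\lambda_n\,(F(k)-F(k-1))\,\mathrm{Leb}_{[j,j+1]}\otimes\bigl(\text{law of the point at position }t\text{ on an }\alpha^{(k)}\text{-edge}\bigr).
\]
Under (i) and (iii), $\lambda_n(F(n+j+1)-F(n+j))\to e^{b\xi}(p^{j+1}-p^{j})=(p-1)p^{j+\xi}$, up to the index convention in the statement. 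This gives the claimed $\beta^{(\xi)}$.

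Next comes the spatial part. Under (ii), $\alpha^{(k)}\to\alpha$ on $(\partial G)^2$. A point at fraction $t$ along an edge $(x,y)$ lies at bounded distance from both endpoints, so in $\bar E$ it converges to the same boundary point as $x$ and as $y$. Horofunction limits do, however, retain information about the fractional position. Since the distance to $\origin$ of a point on an edge is exactly $k-1+t$, the Busemann function at that point equals $\min$ over endpoints of $(\text{endpoint Busemann value}+\text{length along the edge})$. I therefore define $\nu_t$ as the pushforward of $\alpha$ under the continuous map $(\theta_x,\theta_y)\mapsto\theta_t$. This map sends a pair of boundary functions, one for the inner and one for the outer endpoint, to the horofunction $z\mapsto\min(\theta_x(z)+t,\theta_y(z)+1-t)$, suitably normalised. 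The symmetry assumption on $\alpha^{(n)}$ makes it irrelevant which endpoint is called inner. Continuity of this map in the product topology, together with weak convergence, yields convergence of the spatial marginals uniformly in $t$. The conditional independence structure $\Theta_i\sim\nu_{\{\Delta_i\}}$ then follows from the product/disintegration form of a Poisson limit.

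For necessity, I argue that the limit intensity must have mass $\lambda_n(F(n+j)-F(n+j-1))$ converging for every $j$. Taking ratios over $j$ gives (i), and a single $j$ gives (iii). Convergence of the delay-$[j,j+1]$ spatial marginals forces the convergence of $\alpha^{(k)}$ along the relevant indices, which gives (ii). One point needs care here: the pair $(\theta_x,\theta_y)$ must be recoverable from the family $(\nu_t)_t$, at least at $t=0$ and $t=1$. The subtlety is that the marginals only see one endpoint at a time, so I will recover the joint law by using the limit as $t\to$ endpoints together with a subsequence argument via \Cref{prop:precompact}. I expect this to be the main obstacle, and I may need to exploit that the map $t\mapsto\theta_t$ interpolates between the two endpoints so that the pair is determined.

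Finally, convergence of the Voronoi diagrams uses Theorem~2.3 of~\cite{IPVT}. Nondegeneracy holds a.s. because the delays have a diffuse law (Lebesgue on each unit interval). Ties between two nuclei therefore occur only on a null set, as in the argument for \Cref{thm.ctfg}.
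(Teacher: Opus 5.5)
Your sufficiency direction follows the paper's route (the paper gets it from \Cref{thm:criterion-edge} with $\alpha_0=0$, $c_1=1$). The necessity of (ii), however, has a genuine gap, and it is exactly where bipartiteness is needed.

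Write $f,g\in\partial G$ for the normalized limits of the inner and outer endpoints of an edge. The limit $\nu$ only tells you that $\restrict{\nu}{\lip_1^{[0,1]}}$ is the image of $\alpha\otimes\mathrm{Leb}$ under $(f,g,t)\mapsto\edgeone{f,g,t}$. So for each delay $t$ you see the law of the single function $\edgeone{f,g,t}$. Nothing records which points at different delays lie on the same curve $t\mapsto\edgeone{f,g,t}$.

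Your plan to use $t\to 0$ and $t\to 1$ cannot recover the pair. Since $g\le f\le g+2$ pointwise, $\edgeone{f,g,0}=f$ and $\edgeone{f,g,1}=g+1$. These two slices therefore give only the two one-dimensional marginals of $\alpha$, and appealing to ``interpolation'' does not reassemble the joint law.

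The missing idea is that a single interior slice map is injective. In a bipartite graph, $f(x)-g(x)\in\{0,2\}$ at every vertex $x$. Fix $t\in(0,1)\setminus\{1/2\}$ and set $h(x)\coloneqq\edgeone{f,g,t}(x)$. Then $h(x)=f(x)+t$ if $f(x)=g(x)$, and $h(x)=f(x)-t$ if $f(x)=g(x)+2$. Since $t\not\equiv -t \pmod 1$, the fractional part of $h(x)$ tells you which case occurs, and from that you read off both $f(x)$ and $g(x)$.

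Hence the decomposition \eqref{eq:edge-measured-decomposition} is unique, and only then does your subsequence argument close. The measures $\alpha^{(n)}$ are precompact on the compact space $\overline G^2$. Every subsequential limit gives a decomposition of the same $\nu$, so all subsequential limits coincide. This is the content of \Cref{lem:criterion-edge-converse}.

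There are also two smaller errors in the sufficiency part.

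\emph{Common boundary limit.} Your claim that a point on the edge converges in $\bar E$ to the same boundary point as both endpoints is false. Bounded distance does not force a common horoboundary limit: $x_n$ and $y_n$ may converge to different points of $\partial G$. That is precisely the case where $\alpha$ is off the diagonal, as for $\rtree{p}\times\rtree{q}$ with $p>q$. If your claim held, $\nu_t$ would never depend on $t$. Your later definition of $\nu_t$ through $\edgeone{f,g,t}$ is the right one, so just drop that sentence.

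\emph{Orientation.} The orientation of the pair is not irrelevant, since swapping inner and outer endpoints changes $\edgeone{f,g,t}$. What saves you is that in the limit the inner horofunction dominates the outer one ($g\le f$). So the ordered pair is determined by the unordered one, except on the diagonal, where it does not matter.
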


Heuristically, $\nu_t$ is obtained from $\alpha$ by the operation which, given an edge $(x,y)$, returns a point on the edge with distance $t$ from $x$. This heuristic is formalized on the boundary of $G$ in \Cref{subsec:partialEgraphs} and the precise formula of $\nu_t$ is given in~\eqref{eq:thm:criterion-edge:nu}. 

	\Cref{thm:criterion-edge} provides an extended version of \Cref{thm.ctfemg} by allowing weights on the edges, and also covers the non-bipartite case. 
	In this case, the uniform measure on the \textit{same-level edges} (i.e., the edges whose end points have the same distance to $\origin$) are also assumed to converge to a probability measure on $(\partial G)^2$, say $\alpha_0$. This probability measure will be used in \Cref{thm.ioxi} below.

\paragraph{Independence on $\xi$.} Despite the fact that the nuclei process cannot fully converge in the case of graphs, \cite{bhupatiraju} observed the remarkable fact that, in the case of edge-measured regular trees, if one forgets the nuclei and considers only the restriction of the tessellations on the vertices, then the limiting distributions do not depend on the parameter $\xi$. This fact is highly nontrivial and is proved in that case of edge-measured regular trees by extensive calculations. It is asked in~\cite[Page 46]{IPVT} whether this independence on $\xi$ can hold in Cayley graphs other than trees as well. We answer this problem affirmatively by the following Theorem and provide a simple proof using a coupling technique. In the statement,  $G^{1/2}$ denotes the set of vertices and midpoints of the edges of $G$:
	
	\begin{theorem}[\textsc{Independence on $\xi$ (Short Version)}]\label{thm.ioxi}
		Let $E$ be the edge-measured graph corresponding to a graph $G$. Assume that $\restrict{F}{\mathbb N}$ is log-regularly-varying with index $b>0$ and that the limiting measures $\alpha$ and $\alpha_0$ defined above exist. If $\alpha_0=0$ and  $\alpha$ is supported on the diagonal of $(\partial G)^2$, then the distribution of {$\restrict{{\rm IPVT}_{\xi}(E)}{(G^{1/2})}$} does not depend on $\xi$. In addition, low-intensity Poisson--Voronoi diagrams on $E$, restricted to $G^{1/2}$, converge weakly. Moreover, there exists a coupling of ${\rm IPVT}_{\xi}(E)$ for all $\xi\in\mathbb R$ that is continuous and 1-periodic in $\xi$ such that $\forall \xi: \restrict{{\rm IPVT}_{\xi}(E)}{(G^{1/2})} = \restrict{{\rm IPVT}_{0}(E)}{(G^{1/2})}$ a.s.
	\end{theorem}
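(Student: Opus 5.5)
We work with the limiting nuclei process given by \Cref{thm:criterion-edge} (the extended version of \Cref{thm.ctfemg}). Since $\alpha_0=0$, there are no same-level edges in the limit. Since $\alpha$ is supported on the diagonal of $(\partial G)^2$, a limiting nucleus is a pair $(\Theta,\Delta)$ where $\Theta$ lies on an ``ideal edge'' whose two endpoints coincide in $\partial G$. Write $\Theta=(\theta,t)$ with $\theta\in\partial G$ the common endpoint and $t=\{\Delta\}$ the position along that edge.

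The key observation concerns the horofunction of such a point, $h_\Theta(v)$ for $v\in G^{1/2}$. It should equal $h_\theta(v)+c(t)$, where $c(t)\in\{t,\,1-t\}$ depends only on the orientation, i.e.\ on which end of the edge is closer to $\origin$. This orientation is fixed by the convention in~\eqref{eq:thm:criterion-edge:nu}; in the bipartite-like orientation it is $h_\theta(v)+t$ up to a constant. Consequently, for $v\in G^{1/2}$ the ``ideal distance'' $h_\Theta(v)+\Delta$ equals $h_\theta(v)+\lfloor\Delta\rfloor+\{\Delta\}+c(\{\Delta\})$. If $c(t)=-t$ after normalization, this is $h_\theta(v)+\lfloor\Delta\rfloor$ up to a global constant; in the other orientation it is $h_\theta(v)+\lceil\Delta\rceil$ plus a constant. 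I would first verify this precise formula from the definition of $\nu_t$, since the whole proof hinges on the fact that the combined quantity depends on $\Delta$ only through an integer part. This is where the diagonal support of $\alpha$ is used: both endpoints give the same $h_\theta$, so the fractional shift cancels exactly on $G^{1/2}$.

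The coupling follows from this. Take a Poisson process $(\Delta_i)$ with intensity $\beta^{(\xi)}=\sum_j (p-1)p^{j+\xi}\mathrm{Leb}_{[j,j+1]}$. Its integer parts $J_i=\lfloor\Delta_i\rfloor$ form a Poisson process on $\mathbb Z$ with intensity $(p-1)p^{j+\xi}$, and the fractional parts are i.i.d.\ uniform, independent of $(J_i)$. The marks $\theta_i$ are i.i.d.\ with law given by the boundary projection of $\nu_t$, which is independent of $t$ because $\alpha$ is diagonal. To construct all $\xi$ at once, I would use the standard coupling for a Poisson process with intensity $e^{b\delta}$ on $\mathbb R$. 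Take one process $(\Gamma_i,\theta_i)$ with intensity $e^{b\delta}d\delta\otimes\nu_0$ and push it forward by the measurable map $\delta\mapsto\Delta^{(\xi)}(\delta)$, chosen so that it transports $e^{b\delta}d\delta$ to $\beta^{(\xi)}$, is continuous in $\xi$, and satisfies $\Delta^{(\xi+1)}=\Delta^{(\xi)}+1$ up to a relabelling. The key point is to make $\lfloor\Delta^{(\xi)}\rfloor$ independent of $\xi$ modulo a global integer shift. To do this, choose the map so that on each level set it only moves the fractional coordinate: the mass of $\beta^{(\xi)}$ on $[j,j+1]$ is $(p-1)p^{j+\xi}$, while $e^{b\delta}$ puts mass $\propto p^{j+1}-p^j$ on $[j,j+1]$. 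These masses differ by the global factor $p^{\xi}$, which can be absorbed by a global translation of the $\Gamma$'s by $\xi$ combined with a within-cell rearrangement.

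With this coupling, the Voronoi cells restricted to $G^{1/2}$ are determined by the minimizers of $h_{\theta_i}(v)+J_i$. A common additive constant does not change the argmin, so the restriction is a.s.\ the same for all $\xi$. This gives independence on $\xi$ and the stated coupling identity, and 1-periodicity and continuity follow from the construction. Finally, convergence of the low-intensity diagrams restricted to $G^{1/2}$ follows from a subsequence argument. Any sequence $\lambda\to0$ has a subsequence with $\lambda F(n)\to e^{b\xi}$ along appropriate $n$, by the precompactness in \Cref{prop:precompact}. Along that subsequence \Cref{thm:criterion-edge} gives convergence to ${\rm IPVT}_\xi(E)$, whose restriction has a law that does not depend on $\xi$.

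The main obstacle is the first step: the exact cancellation formula for $h_\Theta$ on $G^{1/2}$, including its consistency at midpoints and ties. Ties are handled without a nondegeneracy assumption because the argmin depends only on the integer-valued data. I would also need to construct the within-cell rearrangement map carefully so that continuity in $\xi$ holds.
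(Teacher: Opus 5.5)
\textbf{There is a genuine gap: your key observation is false, and everything else is built on it.}

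With $\alpha$ diagonal you have $\edgeone{f,f,t}=f+t$ and $\pi(f+t)=f$. So a limiting nucleus is simply $(\Theta_i,\Delta_i)$ with $\Theta_i\in\partial G$ and horofunction $d_{\Theta_i}+\Delta_i$. There is no orientation-dependent correction $c(t)$, and the fractional part of $\Delta_i$ does not cancel on $G^{1/2}$.

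What the diagonal assumption actually gives is twofold: $\Theta_i$ is independent of $\Delta_i$, and $d_{\Theta_i}(v)-d_{\Theta_j}(v)\in\mathbb Z$ for $v\in G^{1/2}$. Hence $v\in C_i$ iff $\Delta_i-\Delta_j\le d_{\Theta_j}(v)-d_{\Theta_i}(v)$ for all $j$. The restricted diagram is therefore a function of $(\Theta_i)_i$ and $(\lfloor\Delta_i-\Delta_j\rfloor)_{i,j}$. That is, it depends on the integer parts \emph{together with the relative order of the fractional parts}, not on $(\theta_i,\lfloor\Delta_i\rfloor)$ alone.

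Your argmin of $d_{\theta_i}(v)+J_i$ has ties with positive probability, e.g.\ at $\origin$ whenever the two smallest delays share the same integer part. By contrast, in ${\rm IPVT}_\xi(E)$ every point of $G^{1/2}$ lies in exactly one cell a.s., because the delays are diffuse. It is precisely the fractional parts that break those ties. So your remark that ties are harmless because only integer data matter is backwards, and the reduction in your last paragraph fails.

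\textbf{Consequently, your coupling targets the wrong invariant.} Keeping $\lfloor\Delta^{(\xi)}\rfloor$ fixed up to a deterministic global integer shift fails on three counts:
\begin{itemize}
\item It is impossible for $\xi\notin\mathbb Z$: the integer-part process of $\beta^{(\xi)}$, with intensity $(p-1)p^{j+\xi}$ on $\mathbb Z$, is not an integer translate of that of $\beta^{(0)}$.
\item Translating the $\Gamma$'s by the non-integer amount needed to absorb $p^{\xi}$ moves points across integer levels.
\item Even granting it, a within-level rearrangement of the fractional parts changes their order, and hence changes the diagram.
\end{itemize}

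The right requirement is that the coupling preserve $\lfloor\Delta_i-\Delta_j\rfloor$ for all pairs. This holds for \emph{any} strictly increasing $u$ with $u(t+1)=u(t)+1$: if $a-b\in[k,k+1)$, then $u(b)+k\le u(a)<u(b)+k+1$.

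The paper takes $u(\cdot,\xi)=g_\xi^{-1}\circ g_0$, where $g_\xi(t)=\beta^{(\xi)}((-\infty,t))=p^{\xi+\lfloor t\rfloor}(1+(p-1)\{t\})$. Since $g_\xi(t+1)=p\,g_\xi(t)$, this map:
\begin{itemize}
\item commutes with unit translations;
\item pushes $\beta^{(0)}$ to $\beta^{(\xi)}$;
\item is continuous in $\xi$;
\item gives $\Delta^{(\xi+1)}=\Delta^{(\xi)}-1$ (note the sign; you wrote $+1$).
\end{itemize}

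Your own set-up can be repaired the same way. The monotone (quantile) transport from $e^{b\delta}d\delta$ to $\beta^{(\xi)}$ also commutes with $t\mapsto t+1$, so simply drop the rearrangement. Alternatively, the paper's first proof shows directly that the law of $(\lfloor\Delta_i-\Delta_j\rfloor)_{i,j}$ is $\xi$-free, via the telescoping integral \eqref{eq.mts}.

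Finally, both the convergence step and continuity in $\xi$ also need that a.s.\ no point of $G^{1/2}$ lies on a cell boundary. Restriction to $G^{1/2}$ is not Fell-continuous in general, so this has to be checked.
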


	\Cref{thm:xi} provides an extended version of \Cref{thm.ioxi}.
	The condition of being supported on the diagonal ensures that the measures $\nu_t$ in \Cref{thm.ctfemg} do not depend on $t$, which gives a nice decomposition of the intensity measure (see~\eqref{eq:thm:xi-beta}) and implies that the nuclei of the limiting point process are independent from the delays. See also \Cref{rem:off-diagonal}.
	{In fact, this condition is not too strong and is expected to hold in many examples. Assuming $\alpha_0=0$, this condition means that \textit{thin $1\times n$ rectangles} are rare, see \Cref{lem:rectangle}.
		In particular, the next proposition applies the last theorem to some examples. In the statement, ${\rm DL}(p,q)$ denotes the Diestel--Leader graph with parameters $p$ and $q$, which will be defined shortly, and $\rtree{p}$ denotes the $(p+1)$-regular tree.
		\begin{cor}
			\label{cor:xi-examples}
			The following graphs satisfy the independence on $\xi$ mentioned in \Cref{thm.ioxi}:
			\begin{enumerate}[label=(\roman*)]
				\item The Diestel--Leader graph ${\rm DL}(p,q)$, which in the case $p=q$, is a Cayley graph of the lamplighter group $\mathbb{Z}_{p} \wr \mathbb{Z}$.
				\item The direct product $\rtree{p}\times \rtree{p}$, which is a Cayley graph of the product of two free groups if $p$ is even.\footnote{Note that if $p$ is odd, $\rtree{p}\times \rtree{p}$ is the Cayley graph of $(\mathbb{Z}_2 * \cdots * \mathbb{Z}_2)^2$ where duplicated edges are removed.}
			\end{enumerate}
			In contrast, $\rtree{p}\times \rtree{q}$, when $p>q\geq 1$, does not satisfy independence on $\xi$.
		\end{cor}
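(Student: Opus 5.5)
Our plan is to verify the hypotheses of \Cref{thm.ioxi} for each of the two positive examples, and to exhibit an explicit dependence on $\xi$ for $\rtree{p}\times\rtree{q}$ with $p>q$. For all three graphs the first hypothesis is immediate. The spheres in ${\rm DL}(p,q)$ grow like $C\max(p,q)^n$. In $\rtree{p}\times\rtree{p}$ (graph distance, i.e.\ $L^1$ on the factors) $|S_n|\asymp n p^n$. Hence $F(n+1)/F(n)$ converges, and $\restrict{F}{\mathbb N}$ is log-regularly-varying with index $b=\log\max(p,q)$, respectively $\log p$.

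Both positive examples are bipartite, so there are no same-level edges and $\alpha_0=0$ trivially. The remaining point is to show that $\alpha$ exists and is supported on the diagonal of $(\partial G)^2$. By \Cref{lem:rectangle}, when $\alpha_0=0$ this reduces to showing that a uniformly chosen edge $(x,y)$ crossing from $S_{n-1}$ to $S_n$ has, with probability tending to one, endpoints whose horofunctions agree on any fixed ball. Equivalently, ``thin $1\times n$ rectangles'' (geodesics from $\origin$ to $x$ and to $y$ that diverge early) are rare. We also need the law of this horofunction to converge, which will follow from the explicit description of typical points of large spheres.

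For $\rtree{p}\times\rtree{p}$ a typical point of $S_n$ is $(u,v)$ with $|u|=k$, $|v|=n-k$ and $k/n$ roughly uniform on $[0,1]$, because the count $p^k\cdot p^{n-k}$ does not depend on $k$. The horofunction at $(u,v)$ is determined on a fixed ball by the directions of $u$ and $v$ and by the sign pattern of $k$ versus $n-k$ locally. Since both coordinates go to infinity, a step $x\to y$ extends exactly one coordinate and leaves the limiting Busemann function (a sum of the two tree Busemann functions) unchanged. This gives diagonal support.

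For ${\rm DL}(p,q)$ we use the horocyclic-product description. A vertex is a pair $(u,v)\in\rtree{p}\times\rtree{q}$ with $h(u)+h(v)=0$, and the distance formula involves the confluents. A typical point of a large sphere has both tree coordinates far away. Moving along one edge changes the height by $\pm1$ in a single tree direction, which again produces the same boundary point. Writing out this combinatorics carefully is the main technical step for the positive cases.

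For the contrast $\rtree{p}\times\rtree{q}$ with $p>q$, the count at splitting $k$ is $p^kq^{n-k}$. It is therefore concentrated at $k=n-O(1)$: the $\rtree{q}$-coordinate stays bounded, with a geometric law. An edge that moves in the $\rtree{q}$-coordinate then has endpoints with genuinely different horofunctions, so $\alpha$ charges the off-diagonal. We would then show that the law of, for instance, the cell containing $\origin$, restricted to $G^{1/2}$, depends on $\xi$. The plan is to compute an explicit observable, such as the probability that two neighbouring vertices $(\origin_1,\origin_2)$ and $(\origin_1,w)$ lie in the same cell. This reduces to comparing the nearest delays in \Cref{thm.ctfemg}, with intensity $\beta^{(\xi)}$ and $\Theta_i\sim\nu_{\{\Delta_i\}}$, where $\nu_t$ genuinely varies with $t$. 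We expect this explicit non-constancy in $\xi$ to be the main obstacle. The fallback is to exploit the fractional-part dependence of $\nu_t$, which interpolates along the $\rtree{q}$-edge, and to show that some single-vertex ownership probability varies over one period in $\xi$.
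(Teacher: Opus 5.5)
Your strategy is the paper's: bipartiteness gives $\alpha_0=0$, diagonal support of $\alpha$ comes from the structure of typical points of large spheres, and for $\rtree{p}\times\rtree{q}$ with $p>q$ the bounded $\rtree{q}$-coordinate forces off-diagonal mass. The $\rtree{p}\times\rtree{p}$ case is essentially the paper's argument. By \Cref{lem:productBoundary} the limit there is simply $d_\theta+d_{\theta'}$, so your ``sign pattern'' remark plays no role in the $L^1$ product, and \Cref{prop:product-L1} supplies the limiting law $\nu_0\otimes\nu_0$.

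The real problem is part (i). The criterion you give for ${\rm DL}(p,q)$, that both tree coordinates are far away, does not imply that one edge step preserves the boundary point. By \Cref{prop.gbdl-conv}, if $d(x_n,\origin)\to\infty$, $d(x'_n,\origin')\to\infty$ and $h(x_n)=i$ is fixed, then $(x_n,x'_n)\to\rho_i$. A neighbour converges instead to $\rho_{i\pm1}\neq\rho_i$, since a ${\rm DL}$ edge moves both coordinates at once and changes $h$ by $\pm1$.

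What is actually needed is that the mass of $S_r$ concentrates on $U_r^{a,b}=S_r^{b,r-a-2b,+}$ with $a,b$ bounded, and by symmetry on the analogous part of $S_r^-$ when $p=q$. In other words, the confluent $x\wedge\origin$ stays at bounded height while $h(x)\to\infty$. Then $x\to\omega\neq\omega_0$ and the limit is $\rho_\omega$, which depends only on the $T$-coordinate (\Cref{prop.gbdl}). The neighbour's $T$-coordinate is adjacent to $x$, so it converges to the same $\omega$.

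This concentration, together with the negligibility of $S_r^0$ and (for $p>q$) of $S_r^-$, is exactly the sphere computation in the proof of \Cref{thm.dl-convergence}. That computation shows $\alpha=\mathrm{diag}^*\nu_0$, and it is what the paper invokes. You defer it as ``the main technical step'', so as written part (i) is not proved, and the heuristic you offer in its place is not the right one. Citing that computation closes the gap.

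For the contrast, your off-diagonal argument coincides with the paper's, and it only shows that the hypothesis of \Cref{thm.ioxi} fails. The paper draws the conclusion that independence on $\xi$ does not hold directly from this. You rightly note that failure of a sufficient condition does not by itself give dependence on $\xi$. However, the observable you propose is never computed: you flag it as the main obstacle and offer only a vague fallback. Nothing in the proposal shows that the $t$-dependence of $\nu_t$ survives restriction to $G^{1/2}$.

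So on this point you establish exactly what the paper's argument establishes. If the claim is read as failure of the criterion of \Cref{thm.ioxi}, your first step suffices. If it is read literally as dependence on $\xi$ of the law of $\restrict{{\rm IPVT}_{\xi}(E)}{G^{1/2}}$, the missing ingredient is precisely that computation.
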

		The proof will be given in \Cref{subsec:regularTree} for products of trees, and at the end of \Cref{subsec:dl-conv} for Diestel--Leader graphs.
	}

\paragraph{IPVT of the Diestel--Leader Graph.} 
	{Diestel--Leader graphs ${\rm DL}(p,q)$ are a family of transitive graphs which have interesting properties in various aspects, like amenability, behavior of the simple random walk, and non-quasi-invariance with Cayley graphs (when $p\neq q$), to name a few {(see \emph{e.g.}, \cite{woess2005,bookLyPe16})}. In this paper, we describe their IPVT and obtain further interesting properties. In particular, we have claimed in \Cref{cor:xi-examples} that they satisfy independence on $\xi$, mentioned above. We will also show in \Cref{prop:distinguishable} that the cells of the IPVT are \textit{distinguishable} (in the directed version), in contrast to regular trees and the hyperbolic space (see \cite{M26}), which provide the first known Cayley graphs satisfying this property.
		
		We recall briefly the definition of ${\rm DL}(p,q)$, see \Cref{sec:dl} for further details. Let $T=\rtree{p}$ be the $(p+1)$-regular tree and $T'=\rtree{q}$. Fix ends $\omega_0$ of $T$ and $\omega'_0$ of $T'$. The vertices of ${\rm DL}(p,q)$ are $(x,x')\in T\times T'$ such that $d_{\omega_0}(x)+d_{\omega'_0}(x')=0$, where $d_{\omega_0}$ denotes the horofunction (or \textit{height function}) corresponding to $\omega_0$. The edges are also defined naturally, see \Cref{def:dl}. We will describe the horoboundary $\partial {\rm DL}(p,q)$ in \Cref{prop.gbdl,prop.gbdl-conv,prop:DL-top}. Roughly speaking, as sets, one can write $\partial {\rm DL}(p,q) = \overline T\sqcup\overline {T'}\sqcup\mathbb Z$ (see also Figure~\ref{fig:DLboundary}).} {By proving sharp estimates on the volume growth (see Equations~\eqref{eq:dl-growth1}  to~\eqref{eq:dl-growth2edge}) and calculating the limit of the uniform measure on large balls, we prove:}
	
	\begin{theorem}[IPVT of ${\rm DL}(p,q)$]
		\label{thm.dl-convergence}
		The Diestel--Leader graph ${\rm DL}(p,q)$ satisfies the conditions of \Cref{thm.ctfg}. If $p\geq q$, then $\restrict{F}{\mathbb N}$ is log-regularly-varying with index $\ln p$. In addition, the measure $\nu_0$ on $\partial {\rm DL}(p,q)$ has the following description:
		\begin{enumerate}[label=(\roman*)]
			\item If $p>q$, then $\nu_0$ is concentrated on $\partial T\subseteq \partial {\rm DL}(p,q)$ and is the harmonic measure on $\partial T$. In this case, {${\rm IPVT}_{\xi}({\rm DL}(p,q)) = \pi^{-1}({\rm IPVT}_{\xi'}(T))$ for some $\xi'$}, where $\pi:{\rm DL}(p,q)\to T$ denotes the projection on the first coordinate.
			\item If $p=q$, then $\nu_0$ is the average of the harmonic measures on $\partial T$ and $\partial T'$. 
		\end{enumerate}
		In addition, the edge-measured version of ${\rm DL}(p,q)$ satisfies \Cref{thm.ctfemg} with the same growth index $\ln p$, and the measures $\nu_t$ in \Cref{thm.ctfemg} are identical to the measure $\nu_0$ of the vertex-measured version.
	\end{theorem}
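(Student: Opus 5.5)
We reduce the statement to checking the three hypotheses of \Cref{thm.ctfg}, and those of \Cref{thm.ctfemg} for the edge-measured version. Condition (iii) of \Cref{thm.ctfg} can always be met by choosing $\lambda_n$ suitably, possibly along a subsequence, so the real content is (i) and (ii).

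\emph{Volume growth.} Write a vertex as $(x,x')$ with $h(x)=d_{\omega_0}(x)=-d_{\omega'_0}(x')$. The distance in ${\rm DL}(p,q)$ between $\origin=(o,o')$ and $(x,x')$ is a known horocyclic-product formula: if $a=h(x\wedge o)$ is the height of the confluent of $x$ and $o$ in $T$ and $a'$ the analogous quantity in $T'$, then $d=$ (length of a path going up to level $a$, down to level $-a'$, back to $h(x)$) minimised over the two orders.

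We therefore count vertices by the parameters $(a,a',h)$. Fixing $(a,a',h)$, the number of choices is of order $p^{a-h}\,q^{a'+h}$, up to bounded factors for the first branching step. For $p>q$ the sum over $\{d\le n\}$ is dominated by the configurations maximising the $p$-exponent, giving $F(n)\sim C p^{n}$, hence index $\ln p$. For $p=q$ one gets $F(n)\sim C n^{k}p^{n}$, polynomial prefactors from the free parameters; this is still log-regularly-varying with index $\ln p$. The main work, and what I expect to be the main obstacle, is obtaining these sharp asymptotics with the exact constant $C$ (including parity effects from bipartiteness) rather than mere $\asymp$ bounds. I would handle it by writing $F(n)$ as an explicit finite geometric-type sum and evaluating it exactly.

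\emph{Limit measure.} Using the description of $\partial{\rm DL}(p,q)$ from \Cref{prop.gbdl,prop.gbdl-conv,prop:DL-top}, I identify where a typical point of $B_n$ goes.

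For $p>q$, the dominant configurations have $a\approx n/2$ and $x$ descending about $n/2$ levels, with $x'$ essentially within $O(1)$ of $o'$ by the geometric decay of $q/p$. Conditioned on the tree coordinate, $x$ is uniform among descendants. Hence the horofunction limit depends only on the end of $T$ reached, and the law converges to the harmonic (uniform descending) measure on $\partial T$.

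The identity ${\rm IPVT}_\xi({\rm DL}) = \pi^{-1}({\rm IPVT}_{\xi'}(T))$ then follows because the limiting horofunctions at $\partial T$ points are $\xi\mapsto b_\theta(\pi(\cdot))$ plus a constant. Comparing delays with those of the tree PPP (with its own volume normalisation) gives the shift $\xi'$.

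For $p=q$, the symmetry $(x,x')\mapsto(x',x)$ together with the fact that the two symmetric dominant regimes each carry asymptotic mass $1/2$ gives the average. Mixed configurations landing in the $\mathbb Z$ part or the interior of $\overline T$ carry vanishing mass, by the polynomial-factor computation above.

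\emph{Edge-measured version.} The same counting applies to edges between levels $n-1$ and $n$. Such an edge crosses between consecutive distance spheres, so $\alpha^{(n)}$ is asymptotically the image of the sphere measure under $x\mapsto(x,\text{parent})$. Since both endpoints converge to the same boundary point, $\alpha$ is supported on the diagonal with marginal $\nu_0$. By the formula for $\nu_t$ this gives $\nu_t=\nu_0$ for every $t$, and the growth index is unchanged because the edge count is comparable to the vertex count at each level.
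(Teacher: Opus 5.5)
\textbf{There is a genuine gap.} Your overall framework matches the paper's: stratify by confluent heights, decompose the sphere and pass to the limit stratum by stratum, use the swap symmetry for $p=q$, and project to $T$ for the IPVT identity. But the step everything rests on, identifying the dominant configurations, is wrong. You take $x'$ within $O(1)$ of $\origin'$. That forces $h(x)=-h'(x')=O(1)$, so $x$ sits at bounded height at tree-distance about $n$ from $\origin$, with its confluent about $n/2$ levels up. There are only $O(p^{n/2})$ such vertices, essentially the paper's $S_n^0$. Moreover, $x\to\omega_0$ while $x'$ stays bounded, so by \Cref{prop.gbdl-conv} these points converge to the isolated points $\rho'_{x'_0}$ of $\overline{T'}\subseteq\partial{\rm DL}(p,q)$. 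Taken literally, your regime would put $\nu_0$ on $T'$ rather than on $\partial T$. In the edge version it would also give an off-diagonal $\alpha$, since neighbours converge to $\rho'_{y'_0}$ with $y'_0\sim x'_0$. So the claim that ``the end of $T$ reached'' is harmonically distributed has no support in your argument.

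The correct picture is as follows. The order-$p^n$ part of $S_n$ is $S_n^+$ near the corner $-h(x\wedge\origin)=b$, $-h'(x'\wedge\origin')=n-a-2b$, with $a,b$ bounded. There, $x$ lies about $n$ levels below the ancestor of $\origin$ at height $-b$, and $x'$ lies $a$ levels below the ancestor of $\origin'$ at height $a+2b-n$. Hence $x'\to\omega'_0$ and $x\to\omega\neq\omega_0$, with limit $\rho_\omega$. Exact counting gives
$p^{-n}\mynorm{U_n^{a,b}}\to \frac{p-1_{\{b>0\}}}{p}\cdot\frac{q-1_{\{a>0\}}}{q}\,q^{a}p^{-2a-b}$, which factorizes in $(a,b)$. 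The $b$-marginal, proportional to $\frac{p-1_{\{b>0\}}}{p}p^{-b}$, equals $\eta(V_b)$, where $V_b$ is the set of ends whose confluent with $\origin$ has height $-b$. This matching is what produces the full harmonic measure. ``$x$ uniform among descendants'' only gives $\eta$ conditioned on each $V_b$ (for $b=0$, $\eta$ restricted to descendants of $\origin$).

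The same computation shows the maximizing corner is unique with geometric decay away from it. So there is no polynomial prefactor for $p=q$: $\mynorm{S_n}\sim\frac{2(p+1)^2}{p^2}p^n$.

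For the edge-measured version, two further points need fixing:
\begin{itemize}
\item A vertex of $U_n^{a,b}$ has $p$ neighbours in $S_{n-1}$ if $a>0$ and only one if $a=0$. So $\alpha^{(n)}$ is a reweighting of the sphere measure, not its image under a parent map. The marginal stays $\eta$ only because this weight depends on $a$ alone, which again uses the factorization.
\item Comparability of edge and vertex counts does not give log-regular variation. You need the actual limit of $p^{-n}\mynorm{S'_n}$, which follows from the same stratum computation.
\end{itemize}
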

	\noindent

	We will also prove further geometric properties of ${\rm IPVT}({\rm DL}(p,q))$. In \Cref{prop:intersection}, {using the mass transport principle on a specific subset of nuclei}, we show that two cells can have empty, finite or infinite intersection (or adjacency). Also, in \Cref{prop:dl-biinfinite}, we show that the IPVT cells do not contain bi-infinite geodesics.

	\noindent \textbf{Structure of the paper}. 
	{In \Cref{sec:ipvt}, we define the horoboundary and the IPVT of an arbitrary rooted measured metric space. \Cref{sec:full} studies precompactness of Poisson--Voronoi tessellations, and also proves the full convergence theorem towards a unique IPVT (\Cref{thm.scfc}). In \Cref{sec:graphs}, we prove extended versions of the full convergence theorems for graphs (\Cref{thm.ctfg,thm.ctfemg}) and independence on $\xi$ (\Cref{thm.ioxi}). In \Cref{sec:ex}, the general theorems are used to prove convergence to IPVT in various settings, like CAT(0) spaces, symmetric spaces, and products. It also studies further properties of the IPVT of trees, like one-endedness and the description of the zero-cell via a percolation. The properties of the IPVT of Diestel--Leader graphs are provided in \Cref{sec:dl}. Finally, problems and future research directions are discussed in \Cref{sec:problems}.
	}
	
	\medskip
	\noindent \textbf{Acknowledgements}. We thank Nicolas Curien and Mikolaj Fraczyk for useful comments on an earlier version of this manuscript and for suggesting Question \ref{q:mf}.  M.D'A. is grateful to the IHES for kind hospitality in the spring of 2025 when part of this work has been done. Both authors acknowledge the conference ``Probability and Geometry in, on and of non-Euclidean space'' at CIRM Luminy, where this work was started. The work of M.D'A.~has been partially supported by the ERC Consolidator Grant SuperGRandMA (Grant No.~101087572) and by the ANR project LOUCCOUM (ANR-24-CE40-7809).

\section{IPVT of Measured Metric Spaces}
	\label{sec:ipvt}	
	
	{In this section, we provide the general definition of IPVT and the horoboundary. These definitions build on the abstract deterministic framework of \cite[Section 2]{IPVT}, on ideas of~\cite{MiMe23}, and classical notions from stochastic geometry, in particular, point processes of closed subsets}.

	\subsection{Boundary and Extended Boundary}
	\label{subsec:boundary}
	
	Let $(E,d)$‌ be a boundedly-compact (sometimes called proper or Heine-Borel, see~\cite{WJ87}) metric space; i.e., every bounded closed subset of $E$‌ is compact. Fix an element $\origin\in E$‌ and call it the \dfn{origin}.
	A‌ \dfn{shifted distance function} is a function on $E$‌ of the form $d(\cdot, x)-t$, where $x\in E$‌ and $t\in \mathbb R$. Let $\mathrm{SD}(E)$ denote the space of shifted distance functions, which is a subset of the space $\lip_1(E)$ of 1-Lipschitz functions on $E$. The latter is equipped with the topology of uniform convergence on compact sets, which is equivalent to pointwise convergence. Consider the equivalence relation on $\lip_1(E)$ by regarding every $f\in\lip_1(E)$ equivalent to $f+t$ for all $t\in\mathbb R$. Let $\mathrm{SD}(E)/\mathbb R$‌ and $\lip_1(E)/\mathbb R$ denote the quotients of $\mathrm{SD}(E)$‌ and $\lip_1(E)$ by this equivalence relation and equip them with the quotient topology. For every $f\in \lip_1(E)$, the function $f(\cdot)-f(\origin)$ is the unique function in the equivalence class of $f$‌ whose value at $\origin$ is zero. So, $\lip_1(E)/\mathbb R$ is homeomorphic to $\lip_1^0(E,\origin)$, where the latter is the set of $f\in\lip_1(E)$ such that $f(\origin) = 0$. For brevity of notations, we use the symbols $\lip_1, \lip_1^0$, etc when $E$ and $\origin$ are known.

    The mapping $E\to \mathrm{SD}$ defined by $x\mapsto (y\mapsto d(x,y))$ induces a homeomorphism from $E$ to $\mathrm{SD}(E)/\mathbb R$, which is denoted by $\iota$ (the greek letter \emph{iota}). The \dfn{horocompactification} of $E$ is the closure of $\iota(E)=\mathrm{SD}(E)/\mathbb R$‌ in $\lip_1(E)/\mathbb R$ and is denoted by $\bar E$ (notice that this is indeed a compact set since we have assumed that $E$‌ is boundedly-compact). The \dfn{horoboundary} (also called \dfn{Gromov boundary} in \cite{IPVT}) of $E$‌ is the boundary of $\iota(E)$‌ and is denoted by $\partial E$. So, $\bar E$‌ is a disjoint union of $E$‌ and $\partial E$. 
	
	\smallskip
	The \dfn{extended horoboundary} of $E$‌ is the boundary of $\mathrm{SD}(E)$ in $\lip_1$ and is denoted by $\corona{E}$. Let also $\overline{\mathrm{SD}}(E)=\mathrm{SD}(E)\cup\corona{E}$ denote the closure of $\mathrm{SD}(E)$ in $\lip_1$. Elements of $\corona{E}$ are pointwise limits of functions of the form $d(\cdot, x_n)-t_n$ such that $d(x_n,\origin)\to\infty$ for sequences $(x_{n})_{n\geq 1}$ and $(t_{n})_{n\geq 1}$ as $n \to \infty$, and are called \dfn{horofunctions}. When the origin $\origin$ is fixed, for every $\theta\in\partial E$, there exists a unique $d_{\theta}\in\corona{E}$ which represents $\theta$ and satisfies $d_{\theta}(\origin)=0$.
	The terminology of \textit{extended horoboundary} is chosen because every element of $\corona{E}$ can be uniquely represented as $d_{\theta}+\delta$, where $\theta\in\partial E$‌ is a boundary point and $\delta\in\mathbb R$‌ is called the \dfn{delay}. In fact, this gives a homeomorphism between $\corona{E}$ and $\partial E\times \mathbb R$. So, following \cite[Section 2.1]{IPVT}, elements of the extended horoboundary $\corona{E}$ are denoted by pairs $(\theta,\delta)$‌ in this paper (when the origin is known).

	\begin{remark}[\textsc{Non-Quasi-Isometry-Invariance}]
		\label{rem:totallydisconnected}
		The horoboundary is not invariant under quasi-isometries. For example, if $E$ is the vertex set of a graph, then $\partial E$ is totally disconnected (since the space of integer-valued elements of $\lip_1^0(E)$ is totally disconnected). When $E$ is the graph of a mosaic of the hyperbolic plane $\mathbb H^2$ by congruent regular $p$-gons in which each vertex has degree $q$, $\frac{1}{p}+\frac{1}{q}<1/2$, then $\partial E$ is different from $\partial \mathbb H^2$ (the latter being homeomorphic to the unit circle $\mathbb{S}^{1}$). This might seem counter intuitive since a regular mosaic is quasi-isometric to $\mathbb H^2$. As a result, the horoboundary is not quasi-isometry-invariant. 
		\\
		As another example, the horoboundary of a Cayley graph depends crucially on the choice of the generating set of the group. For instance, if $G_1$ and $G_2$ are the Cayley graphs of $\mathbb Z^2$ corresponding to the generating sets $\{(1,0),(0,1)\}$ and $\{(1,0),(0,1),(1,1)\}$ respectively, then $\partial G_1$ has 4 non-isolated points, while $\partial G_2$ has 6 non-isolated points.
	\end{remark}

    \subsection{Limits of the Poisson--Voronoi Diagram}
	\label{subsubsec:ipvt}
	
	Now, assume a boundedly-finite (i.e., Radon) Borel measure $\mu$ is given on $E$ such that $\mu(E)=\infty$. Throughout, we will make  use of the \dfn{volume function}: it is the non-decreasing, right-continuous non-negative function defined by 
	\[F(r)\coloneqq\mu(B_r(\origin)),\] 
	where $B_r(\origin)$ is the closed $d$-ball of radius $r$ centered at $\origin$. 
	\begin{defn}[\textsc{Nuclei Process}]
		\label{def:PPP}
		For each $0<\lambda<\infty$, the \textbf{nuclei process} is a marked Poisson point process $X^{(\lambda)}$ with intensity measure $\lambda \mu$ and with uniform i.i.d.~marks\footnote{We don't use a symbol for the marks because they are only used to break the ties shortly.} in the unit interval $[0,1]$. Given the origin $\origin$, sort the points of $X^{(\lambda)}$‌ by increasing distance from $\origin$; i.e., $X^{(\lambda)}_1$ is the point nearest to $\origin$, $X^{(\lambda)}_2$ is the second nearest point to $\origin$, and so on. If there are points with equal distance to $\origin$, sort by marks to break the ties.
	\end{defn}
	
	In fact, one can forget the marks in the unordered version. Given the nuclei process $X^{(\lambda)}$, our goal is to study the limit as $\lambda\downarrow 0$ of its Voronoi diagram, whose definition we recall now.
	
	\begin{defn}[\textsc{Voronoi Cells and Diagram}]
		\label{def:voronoi}
		Given a sequence $\Phi \coloneqq (f_i)_{i\geq 1}$ in $\corona{E}$, the \dfn{Voronoi cells} $(C_i)_{i\geq 1}$ are defined by 
		$$
		C_i\coloneqq\left\{y\in E \ST f_i(y)\leq f_j(y), \; \forall j \geq 1 \right\} \; .
		$$
		The \dfn{Voronoi diagram} of $\Phi$ is $\Vor(\Phi)\coloneqq(C_i)_{i\geq 1}$.	
	\end{defn}
	Note that Voronoi cells can overlap in general. The \dfn{strict Voronoi cells} are disjoint sets defined by the strict inequality, namely $C_i^{\circ}\coloneqq \{y\in E \ST f_i(y)< f_j(y), \forall j\neq i\}$.\footnote{Another option is to break the ties using the marks of the points, as in~\cite{GR25,AGKRW25}, which leads to a partition of $E$. But we don't use this definition here.}. The Voronoi diagram (or just $\Phi$) is called \dfn{nondegenerate} if $C_i=\overline{C_i^{\circ}}, \forall i$.
	Note that the strict Voronoi cells are open provided that $(f_i)_i$ is \textbf{admissible} in the sens of \cite{MiMe23,IPVT}; i.e., $f_{i}(\origin)\to +\infty$. Admissibility also implies that the cells $(C_i)_{i \geq 1}$ are locally-finite (i.e., for every radius $T\geq 0$, only finitely many cells intersect $B_T(\origin)$). In this case, we say that the Voronoi diagram is a \textbf{(weak) tessellation} in the sense of a locally-finite family of closed subsets of $E$ that cover $E$ (but may have large overlaps).

    To study limits of Voronoi diagrams, {as in~\cite{AK25},} we use the notion of \dfn{point processes of closed subsets of $E$}. Such a point process is a random locally-finite unordered collection of elements of $\mathcal F(E)$, where $\mathcal F(E)$ denotes the space of nonempty closed subsets of $E$ equipped with the Fell topology. Indeed, admissibility of $\Phi$ implies that $\Vor(\Phi)$ (after forgetting the order of $(f_i)_{i \geq 1}$) satisfies this property. Note that the point process is not necessarily simple; i.e., two or more cells might be identical. See \cite[Section 3.6]{bookScWe08} for more detail on this topology. In addition, one might consider the pair $(\Phi,\Vor(\Phi))$ (together with the mapping between the nuclei and the cells, which is omitted in the notation) as a \dfn{point process of pointed closed subsets}; i.e., a point process in $\widehat{\partial E}\times \mathcal F(E)$ (see \Cref{prop:diagramConvergence}).
	
	Alternatively, as in~\cite{IPVT}, one might consider $\Vor(\Phi)$ as a random element of $(\mathcal F(E)\cup\{\emptyset\})^{\mathbb N}$ (while keeping the order of $(f_i)_{i\geq 1}$) and use the notion of convergence in the product topology on $(\mathcal F(E)\cup\{\emptyset\})^{\mathbb N}$. In this paper, we prefer to use the abovementioned topology because it enables directly automorphism-invariance (see \Cref{lem:autInv} below). Actually, under some natural conditions, the two topologies are identical; see \Cref{def:generic,cor:generic}.
	
	\begin{defn}[\textsc{IPVT}]
		\label{def:ipvt}
		Let $X^{(\lambda)}$ be the nuclei process as in \Cref{def:PPP}. Assume $\lambda_n\to 0$ is a sequence for which $\Vor(X^{(\lambda_n)})$ is convergent (as a point process of closed subsets). Then, the limit is called an \textbf{ideal Poisson--Voronoi tessellation (IPVT)} of $(E,\mu)$. If, in addition, $\lim_{\lambda\to 0} \Vor(X^{(\lambda)})$ exists, it is called \dfn{the IPVT} of $(E,\mu)$. 
	\end{defn}
	
	For example, in $\mathbb R^d$ (equipped with the Lebesgue measure), the IPVT exists and is trivial: It contains a single cell that occupies the whole $\mathbb R^d$.  {But this is not the case in hyperbolic spaces, as it is recalled later.}

	\begin{lem}[\textsc{Automorphism-Invariance}]
		\label{lem:autInv}
		Any IPVT is a random weak tessellation whose distribution is invariant under the automorphisms of $(E,\mu)$.
	\end{lem}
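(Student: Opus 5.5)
The plan is to prove the two claims separately: first invariance of the law, then the weak-tessellation property of the limit.

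\emph{Invariance.} Let $g$ be an automorphism of $(E,\mu)$, i.e.\ an isometry of $(E,d)$ with $g_*\mu=\mu$. The Poisson process with intensity $\lambda\mu$ is then invariant in law under $g$, with i.i.d.\ marks carried along. The unordered Voronoi diagram commutes with $g$: the cell of the nucleus $x$ is $\{y: d(y,x)\le d(y,x')\ \forall x'\}$, so $\Vor(gX^{(\lambda)})=g\Vor(X^{(\lambda)})$ as collections of closed sets. The ordering by distance to $\origin$ plays no role, because we work with unordered point processes of closed subsets. Hence $g\Vor(X^{(\lambda_n)})$ and $\Vor(X^{(\lambda_n)})$ have the same law for every $n$. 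Next, the map $F\mapsto gF$ is a homeomorphism of $\mathcal F(E)$ in the Fell topology, since $g$ is a homeomorphism of $E$ preserving compact and open sets. It therefore induces a continuous map on the space of locally-finite counting measures on $\mathcal F(E)$ with the vague topology. Taking limits along $\lambda_n$ and applying the continuous mapping theorem gives that the IPVT is $g$-invariant in law.

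\emph{Weak tessellation.} The limit is by definition a point process of closed sets, so local finiteness in $\mathcal F(E)$ is automatic, because the limit lives in the space of locally-finite collections. Two properties still need checking: every element is nonempty, and the cells cover $E$. Nonemptiness holds because the process lives on $\mathcal F(E)$, which excludes $\emptyset$. For covering, I would show that for each compact $K\subseteq E$ the set $\mathcal C_K=\{\mathcal M:\ \bigcup_{C\in\mathcal M}C\supseteq K\}$ is closed in the space of locally-finite counting measures. By Portmanteau, $\mathbb P(\text{limit}\in\mathcal C_K)\ge\limsup_n\mathbb P(\Vor(X^{(\lambda_n)})\in\mathcal C_K)=1$. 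Finite diagrams always cover $E$: nuclei exist a.s.\ since $\mu(E)=\infty$, and the minimum of the distance is attained by local finiteness.

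\emph{Main obstacle.} The hard step is closedness of $\mathcal C_K$, since mass can escape to infinity in $\mathcal F(E)$. I would work with the set of $\mathcal M$ whose union misses some point of $K$ and show it is open. Suppose $y\in K$ is not covered by $\mathcal M$. Only finitely many cells of $\mathcal M$ hit $\overline{B_1(y)}$, and these are at positive distance from $y$. So some small ball $B_\epsilon(y)$ meets no cell of $\mathcal M$; this is the event $\mathcal M(\{F:F\cap \overline{B_\epsilon(y)}\ne\emptyset\})=0$, and $\{F: F\cap \overline{B_\epsilon}\neq\emptyset\}$ is Fell-closed. Openness of the event $\{\mathcal M(A)=0\}$ for such closed $A$ is delicate: vague convergence gives $\limsup \mathcal M_n(A)\le \mathcal M(A)$ only when $A$ is relatively compact in $\mathcal F(E)$. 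I would handle this by taking the topology of $\mathcal F(E)$ as the Fell topology on the compactification that includes $\emptyset$. Then $\{F:F\cap \overline{B_\epsilon}\neq\emptyset\}$ is compact, bounded away from $\emptyset$, so the upper-semicontinuity applies. This yields closedness of $\mathcal C_K$ and completes the proof.
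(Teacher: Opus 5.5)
Your proposal is correct and takes the paper's route: the unordered finite-intensity diagrams are automorphism-invariant, so their weak limits are too, and the set of weak tessellations is closed, so the limit is one almost surely. Where the paper only says ``it can be seen'' that this set is closed, you prove it, via the compactness of $\{F: F\cap \overline{B_\epsilon(y)}\neq\emptyset\}$ in $\mathcal F(E)$ and the resulting openness of the event that some point is uncovered.
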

	\begin{proof}
		Under the topology described above, it can be seen that the set of weak tessellations is closed. It implies that any IPVT is a random weak tessellation. Also, the distribution of (the unordered version of) $\Vor(X^{(\lambda)})$ is invariant under the automorphisms of $(E,\mu)$, even if $\origin$ is not preserved. This implies that any weak limit of $\Vor(X^{(\lambda)})$ is also automorphism-invariant.
	\end{proof}

	\begin{remark}
		\label{rem:descendingpath}
		If $E$ is a geodesic space, $\theta$ is a horofunction and $x\in E$, one can show that there exists an infinite geodesic $\gamma$ starting from $x$ such that $\forall t\geq 0: \theta(\gamma(t))=\theta(x)-t$. However, $\gamma$ might not always converge to $\theta$. This geodesic might be helpful for proving or disproving non-degeneracy (see e.g., \Cref{lem:nondegenerate-cat0}). 
	\end{remark}

    \begin{remark}[\textsc{Star-Like Cells}]
		In every Voronoi diagram, the cells are star-like in the following sense: If $\theta$ is the center of a cell $C$ and $x\in C$, then $C$ contains the set $[x,\theta] \coloneqq  \{y: d(x,y)+d_\theta(y) = d_{\theta}(x)\}$.
		In particular, if $E$ is a graph or a geodesic space and $\theta\in\partial E$, then \Cref{rem:descendingpath} implies that $C$ contains an infinite geodesic in $[x,\theta]$ starting from $x$. However, $C$ might or might not contain a bi-infinite geodesic.
	\end{remark}
	\begin{remark}
		It is useful to mention some statements that do not necessarily hold for Voronoi diagrams, even if $E$ is a geodesic space: The cells are not necessarily geodesically convex not even connected (see the example in \Cref{subsec:Busemann}). Even if the cells are connected, the intersection of two cells might be disconnected. The closure of a cell might contain points in $\partial E$ other than the nucleus of the cell (e.g., when the cell contains a bi-infinite geodesic). Two cells might have unbounded intersection, which is a major feature used in~\cite{MiMe23}. 
	\end{remark}

	\subsection{Limits of the Nuclei Process}
	\label{subsec:nuclei}
	
	As mentioned in the introduction, Poisson--Voronoi diagrams may have a nontrivial limit despite that the nuclei escape to infinity. In this subsection, we describe how to \textit{shift} the distance functions, as in~\cite{IPVT,MiMe23}, to obtain a nontrivial limit of the nuclei process. {A detailed description of the topology is given in \Cref{subsec:precompact}}.
	
	For every $t\in\mathbb R$, consider the embedding $\iota_t:E\to \mathrm{SD}(E)$ defined by
	\[\iota_t(x)\coloneqq (y\mapsto d(x,y)-t).\] 
	It is clear that $\Vor(\iota_t(X^{(\lambda)}))= \Vor(X^{(\lambda)})$ for all $t$. So, if there exists some (possibly random) $t=t(\lambda)$ such that $\iota_{t(\lambda)}(X^{(\lambda)})$ converges weakly to a nontrivial point process on $\corona{E}$ (see \Cref{subsec:precompact} for the description of the topology), possibly along a subsequence, this limiting point process might be used as a way to study the limiting behavior of $\Vor(X^{(\lambda)})$.

    Note that adding a constant to $t(\lambda)$ does not affect the behavior of the tessellation, but it affects the limit of $\iota_{t(\lambda)}(X^{(\lambda)})$. 
	
	For the above goal, it is important to choose the shift suitably to keep track of the closest point of $X^{(\lambda)}$ to the origin. There are two natural choices for $t(\lambda)$:
	\begin{itemize}
		\item In~\cite{IPVT}, the choice in the general case is $t(\lambda)\coloneqq d(X^{(\lambda)}_1, \origin)$, where $X^{(\lambda)}_1$ is the nucleus nearest to $\origin$;
		\item In~\cite{MiMe23}, and in the present paper, $t(\lambda)$ is deterministic and is defined by $t(\lambda)\coloneqq F^{-1}(\lambda)\coloneqq  \inf\{r: \lambda \cdot F(r)\geq 1\}$.
	\end{itemize}
	The benefit of the first choice is that the delay of $X^{(\lambda)}_1$ is controlled (preventing it from escaping to $\pm \infty$). The second choice is also natural since $\iota_t(X^{(\lambda)})$ is a Poisson point process on $\lip_1$ with intensity measure $\lambda \iota_t^*\mu$, and hence, the measure of $\lip_1^{\leq 0}$ is controlled by the second choice. More importantly, since the second choice does not depend on $\origin$, the limiting distributions are automatically invariant under the automorphisms of $(E,\mu)$; see also \Cref{lem:autInv}. However, this choice does not a priori prevent the delays from escaping to $-\infty$ (which is the case in the Euclidean space).
	
	Nevertheless, it is easy to see that, under nice growth conditions (see \Cref{prop:precompact}), the two choices are not much different since $d(X^{(\lambda)}_1, \origin)- \inf\{r: \lambda \cdot F(r)\geq 1\}$ is tight. 
	In this case, precompactness of $\iota_{t(\lambda)}(X^{(\lambda)})$ holds in either definition (\Cref{prop:precompact}). Also, under the stronger assumptions of \Cref{thm.scfc},  full-convergence holds with both definitions, see also \Cref{lem:criterion2}.

	\begin{remark}
		There are some examples where $\Vor(X^{(\lambda)})$ converges while $\iota_{t(\lambda)}(X^{(\lambda)})$ cannot fully converge for any choice of $t(\lambda)$, see \Cref{thm.ioxi,thm:xi}.
	\end{remark}

    \subsection{On the Relation between Horofunctions, Busemann Functions and the Visual Boundary of CAT(0) Spaces}
	\label{subsec:Busemann}

    In any metric space $E$, if $\gamma$ is an infinite geodesic, then the function $x\mapsto \lim_{t\to\infty} d(x,\gamma(t))-t$ is well defined and is called the \dfn{Busemann function} corresponding to $\gamma$. If the limit is uniform on bounded subsets of $E$ (which holds in graphs and in CAT(0) spaces), then every Busemann function is a horofunction. 
	But not necessarily all horofunctions are Busemann functions, even if $E$ is a proper geodesic metric space. In the last case, for every $\theta\in\partial E$, there exists an infinite geodesic $\gamma$ such that $\gamma(0)=\origin$ and $d_{\theta}(\gamma(t))=-t, \forall t$, but $\gamma$ does not necessarily converge to $\theta$; e.g., when $E=\mathbb R^2$ is endowed with the $L^1$ metric.  Another example is given by $E=\cup_{n \geq 1} C_n\subseteq \mathbb R^2$ , where $C_n$ is the square whose opposite corners are the points $(0,0)$ and $(n,n)$, and the distance is given by the shortest-path metric. Clearly, there is no geodesic that converges to $\theta \coloneqq \lim_{n\to \infty} (n,n)\in\partial E$. In this case, $d_{\theta}$ is not a Busemann function. {Similar \textit{traps} exist in the Diestel--Leader graph as well (see \Cref{sec:dl}). Indeed, only the horofunctions $\rho_{\omega}$ and $\rho'_{\omega'}$ defined in \Cref{prop.gbdl-conv} are Busemann functions.}
	
	However, in a complete CAT(0) space, the two notions are equivalent: \cite[Theorem~II.8.13]{BH} shows indeed that the horofunctions are precisely the Busemann functions and  the horoboundary $\partial E$, defined in \Cref{sec:ipvt}, coincides with the \textit{visual boundary} of $E$. The latter is the set of equivalence classes of infinite geodesic rays $\gamma:[0,\infty)\to E$, where $\gamma$ is equivalent to $\gamma'$ if $\sup d(\gamma(t),\gamma'(t))<\infty$. See \Cref{subsec:cat0} for more on the IPVT of CAT(0) spaces.

\section{General Convergence Results}
	\label{sec:full}
	
	\subsection{Subsequential Convergence and Pre-Compactness}
	\label{subsec:precompact}
	
	As discussed in \Cref{subsec:nuclei}, the shifted nuclei process $\iota_{t(\lambda)}(X^{(\lambda)})$ is used to study the limiting tessellations.  
	As mentioned therein, we need to prevent the delays from escaping to $-\infty$. For this goal, we \textit{compactify $\lip_1\equiv \lip_1^0\times\mathbb R$ at delay $-\infty$} by replacing it with $\lip_1^0\times [-\infty,\infty)$. This is useful since every point process in $\lip_1^0\times[-\infty,\infty)$ (e.g., $\iota_{t(\lambda)}(X^{(\lambda)})$) is automatically admissible, where the latter is defined after \Cref{def:PPP}. Equivalently, \cite{IPVT,BB25} consider the map from $\lip_1^0\times\mathbb R$ to $\lip_1^0\times [0,\infty)$ defined by $(f,\delta)\mapsto (f,e^{\delta})$ and consider the vague topology for measures on $\lip_1^0\times [0,\infty)$.

	We always study the limits of the shifted nuclei processes when they are regarded as point processes on $\lip_1^0\times [-\infty,\infty)$. This is described further as follows. Let $\mathcal M$ be the space of Borel measures $\varphi$ on $\lip_1$ such that $\varphi(\lip_1^{\leq t})<\infty, \forall t$. Let $\mathcal M'$ be the space of Radon measures on $\lip_1^0\times [-\infty,+\infty)$. By considering the map from $\lip_1$‌ to $\lip_1^0\times [-\infty,+\infty)$, $\mathcal M$ can be identified with a subset of $\mathcal M'$ (those elements of $\mathcal M'$ that vanish on $\lip_1^0\times\{-\infty\}$).
	\begin{defn}[\textsc{Topology on $\mathcal{M}$}]
		We always equip $\mathcal M$ with the topology induced from the vague convergence on $\mathcal M'$. 
	\end{defn}
	Roughly speaking, this is the vague topology at $+\infty$ and the weak topology at $-\infty$.
	This allows one to define the weak convergence of probability measures on $\mathcal M$ or $\mathcal M'$ and weak convergence of point processes on $\lip_1^0\times[-\infty,\infty)$.

	\begin{prop}[\textsc{Converging Subsequences}]
		\label{prop:subseq}
		Assume that $t_n\in\mathbb R$ and $\lambda_n>0$ for all $n\in\mathbb N$ such that $\lambda_n\to 0$. Then, the following two statements are equivalent:
		\begin{enumerate}[label=(\roman*)]
			\item \label{prop:subseq:X} The shifted Poisson point process $\iota_{t_n}(X^{(\lambda_n)})$ converges weakly as $\lambda\downarrow 0$ to a point process on $\lip_1$ with respect to the topology of $\mathcal M$,  and its limit is nonempty with positive probability.
			\item \label{prop:subseq:mu} The sequence of measures $\lambda_n \iota_{t_n}^*\mu$‌ converges in $\mathcal M$ and its limit is nonzero.
		\end{enumerate}
		If these conditions hold, then:
		\begin{enumerate}[label=(\roman*)]
			\setcounter{enumi}{2}
			\item \label{prop:subseq:corona} $t_n\to\infty$ and the measure $\nu\coloneqq\lim_n \lambda_n \iota_{t_n}^*\mu$ is supported on $\corona{E}$.
			\item \label{prop:subseq:pp} The weak limit of $\iota_{t_n} X^{(\lambda_n)}$ is a (marked) Poisson point process on $\corona{E}$ with intensity measure $\nu$, namely $\Phi\coloneqq(\Theta_i,\Delta_i)_{i \geq 1}$.
			\item \label{prop:subseq:orderedpoints} \textsc{[Convergence of Ordered Points]} Assuming that either $E$ is a graph (with the graph-distance metric) or that the delays $(\Delta_i)_{i\geq 1}$ are distinct a.s., if the points $(\Theta_i,\Delta_i)_{i\geq 1}$ are ordered with the same rule as in \Cref{def:PPP}, then the joint distribution of the ordered sequence of points $\left(\iota_{t_n}X^{(\lambda_n)}_i\right)_{i\geq 1}$ converges to that of $(\Theta_i,\Delta_i)_{i\geq 1}$ as $n\to\infty$.
		\end{enumerate}
	\end{prop}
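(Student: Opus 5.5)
The plan is to view $\iota_{t_n}(X^{(\lambda_n)})$ as a Poisson point process on the locally compact space $S\coloneqq\lip_1^0\times[-\infty,\infty)$, with intensity measure $m_n\coloneqq\lambda_n\iota_{t_n}^*\mu\in\mathcal M\subseteq\mathcal M'$, and to reduce everything to Laplace functionals. For a nonnegative continuous compactly supported $g$ on $S$ we have
$$\mathbb E\exp\Bigl(-\sum_{x}g(x)\Bigr)=\exp\Bigl(-\int (1-e^{-g})\,dm_n\Bigr),$$
and $1-e^{-g}$ ranges over (essentially) all nonnegative $C_c(S)$ functions with values in $[0,1)$.

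\textbf{\ref{prop:subseq:mu}$\Rightarrow$\ref{prop:subseq:X}.} Vague convergence $m_n\to\nu$ in $\mathcal M'$ gives convergence of the exponent, hence of the Laplace functionals. By the standard criterion for point processes on a locally compact second countable space (Kallenberg), this yields weak convergence to a Poisson process with intensity $\nu$. Since $\nu\ne0$, the limit is nonempty with positive probability. We also need $\nu$ to vanish on $\lip_1^0\times\{-\infty\}$, so that the limit lives in $\lip_1$. This should follow from $\nu\in\mathcal M$, which is part of the hypothesis "converges in $\mathcal M$".

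\textbf{\ref{prop:subseq:X}$\Rightarrow$\ref{prop:subseq:mu}.} Suppose the Poisson processes converge weakly to some $\Psi$. For each $g$, the quantity $\int(1-e^{-g})\,dm_n=-\log L_n(g)$ then converges to $-\log L_\Psi(g)$. This limit is finite provided $L_\Psi(g)>0$. The main obstacle is to make sure the limit neither degenerates to an infinite mass nor leaks to $-\infty$.

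Mass at $-\infty$ is controlled because $S$ includes $-\infty$, so compact sets of $S$ contain neighbourhoods of $\lip_1^0\times\{-\infty\}$. Infinite mass on a compact set $K$ would force $\mathbb P(\Psi(K)=0)=0$. I would argue this is impossible for a locally finite $\Psi$ by using the functionals $g_s=s\,h$ with $s\downarrow0$: the limit $L_\Psi(sh)\to1$ forces $\int(1-e^{-sh})\,dm_n$ to be small uniformly in $n$. This gives uniform boundedness $\sup_n m_n(h)<\infty$ for $h\in C_c^+$. Hence $(m_n)$ is vaguely relatively compact. Every subsequential limit has the same Laplace data, and $\varphi\mapsto\int(1-e^{-g})\,d\varphi$ over all $g$ determines $\varphi$, so $m_n$ converges. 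The limit is nonzero because $\Psi$ is nonempty with positive probability, and it lies in $\mathcal M$ because $\Psi$ is a process on $\lip_1$.

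\textbf{\ref{prop:subseq:corona}.} For a compact set $K\subseteq E$ and any fixed bound, $\mu(K)<\infty$ while $\lambda_n\to0$, so $m_n$ gives vanishing mass to $\iota_{t_n}(K)$. I would first show $t_n\to\infty$. If $t_n$ stayed bounded along a subsequence, the delays $d(\origin,x)-t_n$ of points of $\iota_{t_n}(K)$ would stay bounded, and every compact set of $\lip_1$ inside $\overline{\mathrm{SD}}$ is reached only from bounded regions of $E$. Mass reaching any compact set of $S$ would then be at most $\lambda_n\mu(B_R)\to0$, giving $\nu=0$, a contradiction. Once $t_n\to\infty$, any compact window of delays $[a,b]$ corresponds to points with $d(x,\origin)\in[t_n+a,t_n+b]\to\infty$. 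Limits of such $\iota_{t_n}(x)$ lie in $\corona{E}$ by definition of horofunctions, so $\nu$ is supported on the closed set $\corona{E}$.

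\textbf{\ref{prop:subseq:pp}.} This is immediate from the two previous parts.

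\textbf{\ref{prop:subseq:orderedpoints}.} The ordering by distance to $\origin$ is the ordering by delay $f(\origin)$, since $\iota_t(x)(\origin)=d(x,\origin)-t$. The map sending a locally finite configuration in $S$ to its delay-ordered sequence is continuous at configurations with distinct delays, so the continuous mapping theorem applies.

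For graphs, delays lie in the lattice $\mathbb Z-t_n$ and ties occur, but they are broken by marks. I would treat this by viewing the limit along the fractional parts $\{t_n\}$: a converging $m_n$ forces integer-spaced delay atoms, so each level set is separated by open gaps. On each level, the i.i.d.\ uniform marks make the within-level order a uniform random permutation, both before and after the limit. Joint convergence of the finitely many points at each level together with their marks then gives convergence of the ordered sequence.
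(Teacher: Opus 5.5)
Your proposal is correct and follows the paper's route. The paper only asserts that (i)$\Leftrightarrow$(ii) follows from $\iota_{t_n}X^{(\lambda_n)}$ being Poisson with intensity $\lambda_n\iota_{t_n}^*\mu$, and your Laplace-functional argument (uniform bounds via $g=sh$, vague relative compactness, uniqueness of the limit) spells that claim out; parts (iii)--(v) match the paper's arguments: a bounded-above subsequence of $t_n$ forces $\nu=0$, mass over bounded regions of $E$ vanishes, and the ordering map is continuous thanks to the compactification at delay $-\infty$, with marks breaking ties in the graph case.
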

	
	\begin{proof}
		The equivalence of~\ref{prop:subseq:X} and~\ref{prop:subseq:mu} is implied by the fact that $\iota_{t_n}X^{(\lambda_n)}$ is a Poisson point process with intensity measure $\mu_n \coloneqq \lambda_n \iota_{t_n}^*\mu$. Assume that these conditions hold. 
		
		\ref{prop:subseq:corona}. If $t_n\not\to\infty$, one might pass to a subsequence and assume $t_n\leq t, \forall n$. For all $t'\in\mathbb R$, one has $\mu_n\left(\lip_1^{\leq t'}\right) = \lambda_n F(t_n+t')\leq \lambda_n F(t+t')$. This implies that $\nu=0$, which is a contradiction. 
		One also has $\mu_n(B_r(\origin)\times \mathbb R) = \lambda_n \mu(B_r(\origin))\to 0$ for every $r>0$. This implies that $\nu$ is supported on $\corona{E}$.
		
		\ref{prop:subseq:pp}. Since $\mu_n$ converges weakly to $\nu$, one obtains that $\mathrm{Poiss(\mu_n)}$ also converges weakly to $\mathrm{Poiss}(\nu)$, where $\mathrm{Poiss(\cdot)}$ denotes a Poisson point process with intensity measure $(\cdot)$. 
		
		\ref{prop:subseq:orderedpoints}.  The claim is implied by the fact that the marks of the points are distinct a.s.~and the ordering procedure is continuous on the event where the delays are distinct (note that, for the last continuity to hold, it is important that we have considered the vague topology on $\mathcal M'$ to prevent points from escaping to $-\infty$). If $E$ is a graph, the argument is similar and there is no need for having distinct delays (for the mentioned continuity, it is enough that the marks are distinct).
	\end{proof}

    \begin{prop}[\textsc{Convergence of Voronoi Diagrams}] 
		\label{prop:diagramConvergence}
		In \Cref{prop:subseq}, 
		\begin{enumerate}[label=(\roman*)]
			\item\label{prop:diagramConvergence:unordered} If either $E$ is a graph (with the graph-distance metric) or $\Vor(\Phi)$ is non-degenerate a.s., then $\Vor(X^{(\lambda_n)})$ converges weakly to $\Vor(\Phi)$. Moreover, the distribution of $(\iota_{t_n} X^{(\lambda_n)}, \Vor(X^{(\lambda_n)}))$ converges weakly to that of $(\Phi,\Vor(\Phi))$ as point processes of pointed closed subsets. 
			\item\label{prop:diagramConvergence:ordered} If in addition, either $E$ is a graph or the delays in $\Phi$ are distinct a.s., then the last convergence holds for the ordered versions as well; i.e., holds in $\left(\widehat{\partial E} \times \mathcal F(E)\right)^{\mathbb N}$.
			\item\label{prop:diagramConvergence:full} Also, similar claims hold assuming full convergence as $\lambda\downarrow 0$.
		\end{enumerate}
	\end{prop}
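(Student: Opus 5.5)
The plan is to derive the diagram convergence from the point-process convergence in \Cref{prop:subseq} through a continuous mapping argument, combined with the Skorokhod representation theorem. By \Cref{prop:subseq}\ref{prop:subseq:pp}, $\Phi_n\coloneqq\iota_{t_n}X^{(\lambda_n)}$ converges weakly to $\Phi$ as point processes on $\lip_1^0\times[-\infty,\infty)$, and the limit charges no mass at delay $-\infty$. We will pass to an almost sure coupling in which $\Phi_n\to\Phi$ in $\mathcal M'$. Since $\Phi$ has finitely many points with delay $\le t$ for every $t$ and none at $-\infty$, vague convergence of counting measures gives a matching: for each $t$ not an atom of the delays, for large $n$ the points of $\Phi_n$ with delay $\le t$ are in bijection with those of $\Phi$, and the matched functions converge uniformly on compact sets. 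This uses that pointwise convergence in $\lip_1$ equals uniform convergence on compacts.

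Next we prove a deterministic continuity lemma. Assume $\Phi_n\to\Phi$ in the above sense, $\Phi$ is admissible, and either $\Vor(\Phi)$ is nondegenerate or $E$ is a graph. Then the pointed cells converge in the Fell topology, locally. Fix a compact $K=B_T(\origin)$. Since all $f$ are $1$-Lipschitz, only functions with $f(\origin)\le \min_g g(\origin)+2T$ can attain the minimum on $K$. Hence only finitely many nuclei matter, uniformly in $n$ for large $n$, and this gives local finiteness and no escape of cells. For matched $f_i^{(n)}\to f_i$, the upper half of Fell convergence goes as follows: if $y_n\in C_i^{(n)}$ and $y_n\to y$, then $f_i(y)\le f_j(y)$ by uniform convergence, so $y\in C_i$. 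For the lower half (every $y\in C_i$ is a limit of $y_n\in C_i^{(n)}$), we use nondegeneracy: $y$ is a limit of points of $C_i^\circ$. At a point $z\in C_i^\circ\cap K$ there is a strict margin over the finitely many relevant competitors, so $z\in C_i^{(n)}$ for large $n$. A diagonal argument then finishes this half. In the graph case, $f_i$ takes values in a discrete set (shifted integers). Uniform convergence on a finite ball then eventually gives exact equality of differences $f_i-f_j$ on $K$, so cells coincide on $K$ for large $n$ without any nondegeneracy. The degenerate-cell issue is also fine here, because the unordered point process of cells allows multiplicities. Since $\Vor(X^{(\lambda_n)})=\Vor(\Phi_n)$ by shift invariance, this lemma yields \ref{prop:diagramConvergence:unordered}, including the pointed version $(\Phi_n,\Vor(\Phi_n))\to(\Phi,\Vor(\Phi))$.

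For \ref{prop:diagramConvergence:ordered}, we combine the above with \Cref{prop:subseq}\ref{prop:subseq:orderedpoints}. Under distinct delays, or in the graph case via distinct marks, the matching can be taken order-preserving for large $n$, so each coordinate converges and the product topology on $(\widehat{\partial E}\times\mathcal F(E))^{\mathbb N}$ follows. Part \ref{prop:diagramConvergence:full} follows by applying the sequential statements to every sequence $\lambda_n\downarrow0$.

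The main obstacle is the lower semicontinuity step in the deterministic lemma. We must show uniformly that only finitely many competitors matter, and we must handle points of $C_i$ on the boundary via nondegeneracy. We also need to check carefully that the Fell topology with the vague topology at $+\infty$ tolerates cells that are born or lost at large delay. These are exactly the cells not meeting $K$, by the $2T$ bound above.
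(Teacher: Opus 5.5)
Your proposal is correct and follows essentially the paper's route. It uses a Skorokhod coupling to get almost sure convergence of the nuclei: these are matched up to a permutation of the first $N$ points, and the matching is order-preserving under distinct delays or, in the graph case, distinct marks. A deterministic continuity step for the Voronoi map then does the work, under nondegeneracy or integrality of distance differences in graphs, and the subsequence principle gives the full limit as $\lambda\downarrow 0$.

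The only real difference is that you prove the deterministic step yourself: the $2T$ localisation on $B_T(\origin)$, the upper Kuratowski bound, and the strict margin on $C_i^{\circ}$ followed by a diagonal argument. The paper instead cites \cite[Theorem~2.3]{IPVT} for the ordered version and derives the unordered one by permuting the first $N$ nuclei.
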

	Note that the joint distribution of $(\Phi,\Vor(\Phi))$
	is considered in this Proposition since the nuclei are not necessarily determined by the tessellation; e.g., in the trivial cases like the Canopy tree which have empty cell in the ordered version of the IPVT (see \Cref{ex:canopy}).
	
	\begin{proof}
		\ref{prop:diagramConvergence:ordered}.
		By Part~\ref{prop:subseq:orderedpoints} of \Cref{prop:subseq} and Skorokhod's representation theorem, there exists a coupling of $X^{(\lambda_1)}, X^{(\lambda_2)},\ldots$ and $(\Theta_i,\Delta_i)_i$ such that $\forall i: \iota_{t_n} X_i^{\lambda_n}\to (\Theta_i,\Delta_i)$ a.s. If $\Vor(\Phi)$ is nondegenerate a.s., the claim then follows by \cite[Theorem~2.3]{IPVT}. If $E$ is a graph, the claim is implied by the fact that the proof of~\cite[Theorem~2.3]{IPVT} does not require non-degeneracy in the case of graphs.

		\ref{prop:diagramConvergence:unordered}. 
		By Skorokhod's representation theorem, there exists a coupling of $X^{(\lambda_1)}, X^{(\lambda_2)},\ldots$ and $\Phi$ such that $\iota_{t_n} X^{\lambda_n}\to \Phi$ a.s. If $E$ is a graph, this implies the convergence of the ordered points, and the claim is implied by~\ref{prop:diagramConvergence:ordered}. If $\Vor(\Phi)$ is nondegenerate a.s., the claim is obtained by a slight modification of \cite[Theorem~2.3]{IPVT}. For this, it is enough to restrict attention to the first $N$ nuclei (given any $N$), and note that, for large enough $n$, there exists some permutation $\pi=\pi_{n,N}$ of $\{1,\ldots,N\}$ such that $\forall i\leq N: \lim_n \iota_{t_n}X^{\lambda_n}_{\pi(i)}=\Phi_i$.
		
		\ref{prop:diagramConvergence:full}. If the claim is false, then by precompactness, one can find a subsequence $\lambda_n\downarrow 0$ such that $(\iota_{t_n} X^{(\lambda_n)}, \Vor(X^{(\lambda_n)}))$ converges to something different from $(\Phi,\Vor(\Phi))$. This contradicts the previous parts.
	\end{proof}
	
	For the next proposition, we choose the natural shift mentioned in \Cref{subsec:nuclei}:
	\begin{equation}
		\label{eq:t}
		\mu_{\lambda}\coloneqq \lambda \iota_{t(\lambda)}^*\mu, \quad t(\lambda)\coloneqq  \inf\{r: \lambda \cdot F(r)\geq 1\}.
	\end{equation}
	This choice ensures that: 
	\begin{equation}
		\label{eq:F}
		\begin{array}{rcl}
			\mu_{\lambda}(\lip_1^{<0})=& \lambda F^-(t(\lambda))&\leq 1,\\
			\mu_{\lambda}(\lip_1^{\leq 0})=& \lambda F(t(\lambda))&\geq 1,
		\end{array}
	\end{equation}
	where $F^-$ denotes the left limit of $F$.
	Note also that $t(\lambda)$ is non-increasing and right-continuous in $\lambda$. Also, $t(F(r)^{-1})=r$ except if $F(r-\epsilon)=F(r)$‌ for some $\epsilon>0$.
	
	\begin{prop}[\textsc{Pre-Compactness}]
		\label{prop:precompact}
		Assume that  there exists a real number $ r_0>0$‌ such that $p_1\coloneqq \liminf_r F(r+r_0)/F(r)>1$ and $p_2\coloneqq \limsup_r F(r+r_0)/F(r)<\infty$. Then: 
		\begin{enumerate}[label=(\roman*)]
			\item \label{prop:precompact:precompact} If $t(\lambda)$ is defined by~\eqref{eq:t}, then the set of measures $\mu_{\lambda}\coloneqq \lambda\iota_{t(\lambda)}^*\mu \in\mathcal M$ is relatively compact.
			\item \label{prop:precompact:ineq} Every subsequential limit $\nu$ of $\mu_{\lambda}$, as $\lambda\to 0$, satisfies
			\begin{eqnarray*}
				p_1^{j}\leq \nu\left(\lip_1^{\leq jr_0}\right), &&\nu\left(\lip_1^{<jr_0}\right) \leq p_2^j,\\
				p_2^{-j}\leq \nu\left(\lip_1^{\leq -jr_0}\right), && \nu\left(\lip_1^{<-jr_0}\right) \leq p_1^{-j}.
			\end{eqnarray*}
			for all non negative $j\in \mathbb Z^{\geq 0}$. In particular, if $p\coloneqq \lim_r F(r+r_0)/F(r)$ exists, then 
			\begin{equation*}
				\forall j\in\mathbb Z: p^j\leq \nu\left(\lip_1^{\leq jr_0}\right)\leq p^{j+1}.
			\end{equation*}
			\item \label{prop:precompact:tessellation} The joint distributions of $(\iota_{t(\lambda)}X^{(\lambda)}, \Vor(X^{(\lambda)}))$, $\lambda>0$, are precompact,
			and hence, there exists an IPVT.
		\end{enumerate}
	\end{prop}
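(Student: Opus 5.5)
The plan is to prove part~\ref{prop:precompact:ineq}-type bounds first for the finite measures $\mu_\lambda$, uniformly in small $\lambda$, and then deduce relative compactness from them. The key identity is $\mu_\lambda(\lip_1^{\leq s}) = \lambda F(t(\lambda)+s)$ and $\mu_\lambda(\lip_1^{<s})=\lambda F^-(t(\lambda)+s)$. This holds because $\iota_t(x)(\origin)=d(x,\origin)-t$, and the delay of $\iota_t(x)$ in the coordinates $\lip_1^0\times\mathbb R$ is exactly $d(x,\origin)-t$. Iterating the hypotheses $j$ times gives, for $r$ large, $(p_1-\epsilon)^j F(r)\le F(r+jr_0)\le (p_2+\epsilon)^jF(r)$. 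Applying this with $r=t(\lambda)$ (resp.\ $r=t(\lambda)-jr_0$) and using \eqref{eq:F}, which gives $\lambda F(t(\lambda))\ge 1$ and $\lambda F^-(t(\lambda))\le1$, we get for small $\lambda$ the four inequalities with $p_i\pm\epsilon$. For the $F^-$ versions, the same ratio bounds pass to left limits. Note that $t(\lambda)\to\infty$ as $\lambda\to0$ since $\mu(E)=\infty$.

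For relative compactness in $\mathcal M$ (i.e.\ vague compactness in $\mathcal M'$ on $\lip_1^0\times[-\infty,\infty)$), it suffices to bound the mass of each set $\lip_1^0\times[-\infty,s]$, since these exhaust the space by relatively compact sets. Here we use that $\lip_1^0$ is compact by Arzelà--Ascoli, as $E$ is proper. The bound $\mu_\lambda(\lip_1^{\le s})\le (p_2+\epsilon)^{\lceil s/r_0\rceil+1}$ for small $\lambda$ gives uniform boundedness. For $\lambda$ bounded away from $0$ the family is handled trivially, or one restricts to $\lambda\to0$. Then vague relative compactness follows from the standard criterion (uniformly bounded mass on compacts in a locally compact Polish space).

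To pass the inequalities to subsequential limits $\nu$, I would use portmanteau along the delay coordinate. The set $\lip_1^{<s}$ corresponds to an open set $\lip_1^0\times[-\infty,s)$, so $\nu(\lip_1^{<s})\le\liminf\mu_\lambda(\lip_1^{<s})$. Likewise $\lip_1^{\le s}$ is compact, so $\nu(\lip_1^{\le s})\ge\limsup\mu_\lambda(\lip_1^{\le s})$. These are exactly the directions needed: lower bounds for $\le$ and upper bounds for $<$. One subtlety is that $\nu$ might charge delay $-\infty$. The upper bound $\nu(\lip_1^{<-jr_0})\le p_1^{-j}\to0$ shows $\nu(\{-\infty\})=0$, so $\nu\in\mathcal M$. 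This is precisely why the condition $p_1>1$ is needed, and it is the step I expect to be the main point of care. Letting $\epsilon\to0$ gives \ref{prop:precompact:ineq}. In the case where $p$ exists, $\nu(\lip_1^{\le jr_0})\le\nu(\lip_1^{<(j+1)r_0})\le p^{j+1}$ gives the final display.

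For \ref{prop:precompact:tessellation}, tightness of the Poisson processes $\iota_{t(\lambda)}X^{(\lambda)}$ on $\lip_1^0\times[-\infty,\infty)$ follows from relative compactness of their intensities: the number of points in each compact set is Poisson with bounded mean. Each subsequential intensity limit is nonzero by the lower bound. The space $\mathcal F(E)$ is compact in the Fell topology, and the cells form a locally finite family controlled by the admissible nuclei. Hence the pair $(\iota_{t(\lambda)}X^{(\lambda)},\Vor(X^{(\lambda)}))$ is tight as a point process of pointed closed subsets, its marks lying in a compact space. Prokhorov then gives a convergent subsequence. Alternatively, \Cref{prop:subseq} and \Cref{prop:diagramConvergence} can be applied along a subsequence where $\mu_{\lambda_n}$ converges. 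In either case a limit, and hence an IPVT, exists.
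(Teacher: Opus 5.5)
Your proofs of \ref{prop:precompact:precompact} and \ref{prop:precompact:ineq} follow the paper's argument. It rests on the identities $\mu_\lambda(\lip_1^{<s})=\lambda F^-(t(\lambda)+s)$ and $\mu_\lambda(\lip_1^{\le s})=\lambda F(t(\lambda)+s)$ together with \eqref{eq:F}. Portmanteau is then applied on the open set $\lip_1^0\times[-\infty,s)$ and the compact set $\lip_1^0\times[-\infty,s]$, and the uniform mass bound on the latter gives relative compactness. One small correction: it is $\lip_1^0\times[-\infty,s]$, not $\lip_1^{\le s}$, that is compact. So the lower bound on $\nu(\lip_1^{\le s})$ is available only after the open-set bound has shown $\nu(\lip_1^0\times\{-\infty\})=0$; your ordering allows this.

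Part \ref{prop:precompact:tessellation} has a genuine gap. An IPVT is a limit of $\Vor(X^{(\lambda_n)})$ as a point process on $\mathcal F(E)$. That space is not compact; only $\mathcal F(E)\cup\{\emptyset\}$ is. Its compact subsets are contained in sets $\{C:\ C\cap B_r(\origin)\neq\emptyset\}$. What must be shown is therefore that the number $N_r^{(\lambda)}$ of cells meeting $B_r(\origin)$ is tight \emph{uniformly} as $\lambda\downarrow0$.
\begin{itemize}
\item Admissibility gives only $N_r^{(\lambda)}<\infty$ for each fixed $\lambda$, with no uniformity.
\item Treating cells as marks in the compact space $\mathcal F(E)\cup\{\emptyset\}$ does make the marked nuclei process tight. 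But the vague topology on $\lip_1^0\times[-\infty,\infty)$ does not see nuclei whose delay tends to $+\infty$, so forgetting the nuclei is not continuous. Cells of such nuclei could keep meeting $B_r(\origin)$ while leaving no trace in the marked limit.
\item Your fallback via \Cref{prop:subseq,prop:diagramConvergence} also fails. \Cref{prop:diagramConvergence} requires $E$ to be a graph or $\Vor(\Phi)$ to be nondegenerate a.s., and neither is assumed here. That is exactly why the paper gives a separate argument.
\end{itemize}

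The missing estimate uses only bounds you already proved. Write $\Delta_i\coloneqq d(X^{(\lambda)}_i,\origin)-t(\lambda)$. If $y\in B_r(\origin)\cap C_i$, then $d(X_i,\origin)\le d(X_i,y)+r\le d(X_1,y)+r\le d(X_1,\origin)+2r$. Hence $N_r^{(\lambda)}$ is at most the number of nuclei with delay at most $\Delta_1+2r$. It follows that, for every fixed $l\ge0$,
\[
\limsup_{\lambda\downarrow0}\myprob{N_r^{(\lambda)}\ge k}\le \exp\left(-(p_1-\epsilon)^{l}\right)+\frac{(p_2+\epsilon)^{l+\lceil 2r/r_0\rceil+1}}{k}.
\]
Here the first term bounds $\myprob{\Delta_1>lr_0}$, and the second is a Markov bound on a Poisson count. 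Taking $l=l(k)\to\infty$ slowly (e.g.\ $l=\ln\ln k$, as in the paper) sends the right-hand side to $0$.

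This is the actual content of \ref{prop:precompact:tessellation}. It uses $p_1>1$ to keep $\Delta_1$ from drifting to $+\infty$, and $p_2<\infty$ to bound the number of nuclei in a window above it. With it, the standard tightness criterion for point processes on the locally compact space $\mathcal F(E)$ (jointly with the nuclei) gives precompactness, and every limit is automatically locally finite.
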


    It can be seen that the growth condition mentioned in \Cref{prop:precompact} holds for all non-amenable groups endowed with the Haar measure. 	
	\begin{remark}\label{rem:lb}
		Items \ref{prop:precompact:precompact} and \ref{prop:precompact:ineq} generalize~\cite[Proposition 3.3]{MiMe23} and are proved similarly. In fact, in \cite{MiMe23} (preprint version), the lower bound $p_1$ is not assumed. But this is clearly necessary; e.g., in $\mathbb R^n$. This issue has been fixed in the published version of~\cite[Proposition 3.3]{MiMe23} (personal communication with Mikolaj Fraczyk). Also, the specific topology of $\mathcal M$ (compactification at delay $-\infty$) does not seem to be considered in~\cite{MiMe23}. 
	\end{remark}

	\begin{remark}
		In item ~\ref{prop:precompact:tessellation}, if $((\Theta_i,\Delta_i)_{i\geq 1}, (C'_i)_{i\geq 1})$ is a subsequential weak limit of $(\iota_{t(\lambda)}X^{(\lambda)}, \Vor(X^{(\lambda)}))$, then $(C'_i)_{i\geq 1}$ is not necessarily equal to $\Vor((\Theta_i,\Delta_i)_{i\geq 1})\eqqcolon(C_i)_{i\geq 1}$. In the nondegenerate case, the equality is guaranteed in \Cref{prop:diagramConvergence}. In general, one can prove that $C_i^{\circ}\subseteq C'_i\subseteq C_i$.
	\end{remark}

    \begin{proof}[Proof of \Cref{prop:precompact}]
		One has 
		\begin{eqnarray*}
			\mu_{\lambda}(\lip_1^{<t}) = &\lambda F^-(t+t(\lambda))& \leq F^-(t+t(\lambda))/F^-(t(\lambda)),\\
			\mu_{\lambda}(\lip_1^{\leq t}) = &\lambda F(t+t(\lambda))& \geq F(t+t(\lambda))/F(t(\lambda)).
		\end{eqnarray*}
		Fix $\epsilon>0$.
		By letting $t=jr_0$ and $t=-jr_0$, one obtains for small enough $\lambda$ that
		\begin{eqnarray}
			\label{eq:prop:precompact:0}	
			(p_1-\epsilon)^{j}\leq \mu_{\lambda}(\lip_1^{\leq jr_0}), &&\mu_{\lambda}(\lip_1^{<jr_0}) \leq (p_2+\epsilon)^j,\\
			\nonumber(p_2+\epsilon)^{-j}\leq \mu_{\lambda}(\lip_1^{\leq -jr_0}), && \mu_{\lambda}(\lip_1^{<-jr_0}) \leq (p_1-\epsilon)^{-j}.
		\end{eqnarray}
		Recall that the topology on $\mathcal M$ is induced from the vague topology on $\mathcal M'$.
		Since $\lip_1^0\times [-\infty,t)$‌ is open in $\lip_1^0\times [-\infty,\infty)$, one can pass to the limit and deduce that $\nu(\lip_1^0\times [-\infty,jr_0)) \leq (p_2+\epsilon)^j$ and $\nu(\lip_1^0\times [-\infty,-jr_0)) \leq (p_1-\epsilon)^{-j}$. Since $\epsilon>0$ is arbitrary, one obtains
		\begin{equation}
			\label{eq:prop:precompact:1}
			\nu(\lip_1^0\times [-\infty,jr_0)) \leq p_2^j,\quad \nu(\lip_1^0\times [-\infty,-jr_0)) \leq p_1^{-j}.
		\end{equation}
		Similarly, since $\lip_1^0\times [-\infty,t]$ is closed, one obtains 
		$\nu(\lip_1^0\times [-\infty, jr_0]) \geq p_1^j$ and $\nu(\lip_1^0\times [-\infty, -jr_0]) \geq p_2^{-j}$. In addition, \eqref{eq:prop:precompact:1} implies that $\nu(\lip_1^0\times \{-\infty\})=0$. Hence,
		\begin{equation*}
			\label{eq:prop:precompact:2}
			\nu(\lip_1^{\leq jr_0}) \geq p_1^j,\quad \nu(\lip_1^{\leq -jr_0}) \geq p_2^{-j}.
		\end{equation*}
		In particular, $\nu$ is nonzero. 
		Now, by letting $\epsilon\downarrow 0$, \ref{prop:precompact:ineq} is proved.  
		
		For~\ref{prop:precompact:precompact}, note that the compact set  $\lip_1^0\times [-\infty, jr_0]$ has $\mu_{\lambda}$-measure at most $p_2^{j+1}$ uniformly in $\lambda$ (when $j\geq 0$). Since these sets exhaust $\lip_1^0\times [-\infty,\infty)$, the precompactness in $\mathcal M'$ is implied. By~\eqref{eq:prop:precompact:1}, every subsequential limit assigns zero measure to $\lip_1^0\times\{-\infty\}$; i.e., is in $\mathcal M$. This implies precompactness in $\mathcal M$‌ as well and~\ref{prop:precompact:precompact} is proved.

		\ref{prop:precompact:tessellation}. 
		Note that nondegeneracy is not assumed, and so, \Cref{prop:diagramConvergence} is not available.  Let $\Delta_i^{(\lambda_n)}\coloneqq d(X_i^{(\lambda_n)},\origin)-t(\lambda_n)$.  
		Let $A_{j,k}$ be the event that at least $k$ cells intersect $B_{jr_0}^\circ(\origin)$, where $B_{jr_0}^\circ(\cdot)$ denotes the open ball of radius $jr_0$. Since $A_{j,k}$ is open, for arbitrary $l\geq 0$, one has
		\begin{eqnarray*}
			\myprob{T\in A_{j,k}}&\leq&\liminf_n \myprob{\Vor(X^{(\lambda_n)})\in A_{j,k}}\\
			&\leq& \liminf_n \myprob{\Delta_k^{(\lambda_n)}<\Delta_0^{(\lambda_n)}+jr_0}\\
			&\leq& \liminf_n \myprob{\Delta_0^{(\lambda_n)}> lr_0} + \myprob{\Delta_k^{(\lambda_n)}<(j+l)r_0}\\
			&=& \liminf_n \myprob{\mathrm{Poiss}(\mu_{\lambda_n}(\lip_1^{\leq lr_0}))=0} + \myprob{\mathrm{Poiss}(\mu_{\lambda_n}(\lip_1^{<(j+k)r_0}))\geq k}.
		\end{eqnarray*} 
		By choosing $l=\ln\ln k$, one obtains from~\eqref{eq:prop:precompact:0} that $\lim_{k \to \infty} \myprob{T\in A_{j,k}}=0$, and hence, $\myprob{A_{j,\infty}}=0$. This implies the desired precompactness.
		\end{proof}

        According to the above results, the most interesting cases are the following:
	\begin{defn}[\textsc{Gentle spaces}]
		\label{def:generic}
		We say that $(E,\origin, \mu)$ is a \dfn{gentle space} if it satisfies the growth conditions of \Cref{prop:precompact} and, either $E$ is a graph, or $\lim_{r_0\to 0}\limsup_{r\to\infty} F(r+r_0)/F(r)=1$.
	\end{defn}
	In fact, more general than graphs, it is enough to assume that the points of $E$ have integer distances.
	\begin{cor}
		\label{cor:generic}
		Under the conditions of \Cref{def:generic}, the distribution of $(\iota_{t(\lambda)}X^{(\lambda)}, \Vor(X^{(\lambda)}))$ is precompact for both ordered and unordered versions (discussed in \Cref{subsubsec:ipvt}). 
		Moreover, if a subsequence $(\iota_{t(\lambda_n)}X^{(\lambda_n)}, \Vor(X^{(\lambda_n)}))$ converges in one sense, say to $(\Phi,T)$, then it converges to $(\Phi,T)$ in the other sense as well. If, in addition, either $E$ is a graph or $\Phi$ is nondegenerate a.s., then $T=\Vor(\Phi)$ a.s.
	\end{cor}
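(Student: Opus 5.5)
Precompactness comes first. By \Cref{prop:precompact}\ref{prop:precompact:tessellation}, the pairs $(\iota_{t(\lambda)}X^{(\lambda)}, \Vor(X^{(\lambda)}))$ are precompact when the diagram is regarded as a point process of pointed closed subsets (the unordered version). For the ordered version, viewed in $\left(\widehat{\partial E}\times(\mathcal F(E)\cup\{\emptyset\})\right)^{\mathbb N}$, I would argue coordinatewise. The product of compact spaces is compact, so tightness of each coordinate suffices. The $i$-th nucleus has delay $\Delta_i^{(\lambda)}$. This delay is tight from below because of the compactification at $-\infty$ together with~\eqref{eq:prop:precompact:0}. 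It is tight from above because $\mu_\lambda(\lip_1^{\le jr_0})\ge (p_1-\epsilon)^j\to\infty$, so with high probability at least $i$ points have delay at most $jr_0$. The component in $\lip_1^0$ lives in the compact set $\bar E$. Finally, $\mathcal F(E)\cup\{\emptyset\}$ is compact in the Fell topology.

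Next I would pass between the two senses. Suppose a subsequence converges in the ordered sense to $(\Phi,T)$. The forgetful map from ordered sequences to unordered point processes is continuous at configurations that are locally finite. This holds because admissibility is guaranteed: $\Delta_i\to\infty$, by the lower bounds of \Cref{prop:precompact}\ref{prop:precompact:ineq} and the fact that the limit intensity is finite on $\lip_1^{\le t}$. The continuous mapping theorem then gives unordered convergence to the same limit. For the converse, start from unordered convergence and use precompactness of the ordered version. Every ordered subsequential limit projects to the unordered limit, so it suffices to show that the ordering can be recovered continuously from the unordered limit. This is the content of \Cref{prop:subseq}\ref{prop:subseq:orderedpoints}, and it requires ties among delays to have probability zero, or that $E$ is a graph.

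This is where the gentleness assumption enters. In the graph case, tied delays are resolved continuously by the marks. In the non-graph case, I would show that every subsequential limit intensity $\nu$ has no atoms in the delay coordinate. Take $p\coloneqq\lim_{r_0\to0}\limsup_r F(r+r_0)/F(r)=1$, and argue as in \Cref{prop:precompact}\ref{prop:precompact:ineq}. This gives $\nu(\lip_1^{\le t+r_0})\le \nu(\lip_1^{<t})\cdot(\text{a factor tending to }1)$ as $r_0\to0$, so $\nu(\{\delta=t\})=0$ for every $t$. Since $\Phi$ is Poisson, its delays are then a.s.\ distinct. Consequently the ordered and unordered limits determine each other, and the two senses of convergence coincide along any subsequence.

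The final claim, $T=\Vor(\Phi)$ a.s., follows directly from \Cref{prop:diagramConvergence}\ref{prop:diagramConvergence:unordered} and~\ref{prop:diagramConvergence:ordered}, applied along the converging subsequence with the shifts $t_n=t(\lambda_n)$. These give convergence of the diagrams to $\Vor(\Phi)$ whenever $E$ is a graph or $\Phi$ is nondegenerate. Uniqueness of weak limits then identifies $T$ with $\Vor(\Phi)$. I expect the main obstacle to be the atomlessness step. One must check carefully that the ``$\limsup$ ratio tends to $1$'' condition passes to the limit, using open/closed sets in the vague topology, and yields continuity of $t\mapsto\nu(\lip_1^{\le t})$. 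This step is exactly what makes the ordering map continuous almost surely.
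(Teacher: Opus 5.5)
Your proposal is correct and follows the paper's argument in essence: unordered precompactness comes from \Cref{prop:precompact}; the gentleness condition (or the marks, in the graph case) gives a.s.\ distinct delays in every subsequential limit; continuity of the ordering map on that event lets you pass between the two senses; and \Cref{prop:diagramConvergence} gives $T=\Vor(\Phi)$. The only differences are that you prove precompactness of the ordered version directly by coordinatewise tightness, whereas the paper deduces it from unordered precompactness plus the transfer of convergence, and that you actually verify the atomlessness of the delay marginal, which the paper only asserts. One small correction: admissibility of an ordered subsequential limit rests on the uniform upper bounds $\mu_\lambda(\lip_1^{<jr_0})\le (p_2+\epsilon)^j$ from \eqref{eq:prop:precompact:0}, not on the lower bounds you cite.
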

	\begin{proof}
		Precompactness of the unordered version was proved in \Cref{prop:precompact}. The assumptions of \Cref{def:generic} imply that, in any subsequential limit $(\Phi,T)$, the delays of $\Phi$ are distinct a.s., unless $E$ is a graph. Similarly to \Cref{prop:diagramConvergence}, one can deduce that convergence to $(\Phi,T)$ holds in the ordered version as well (nondegeneracy is not needed at this stage). Since any subsequence contains a convergent subsequence in the unordered sense, this implies precompactness of the ordered version as well, and that the subsequential limits are the same in both senses. The last claim is also proved in \Cref{prop:diagramConvergence}.
	\end{proof}

    \begin{remark}[\textsc{\textsc{Trivial Cases}}]
		If  $\lim_r F(r+r_0)/F(r)$ is equal to 1 or $\infty$, then no subsequence of $\lambda\iota^*_{t(\lambda)}$ can converge to a nonzero limit for any choice of $t(\lambda)$. More specifically:
		\begin{enumerate}[label=(\roman*)]
			\item If $\lim_r F(r+r_0)/F(r)=1$ (e.g., the Euclidean case), then the IPVT exists and has a single cell that covers the whole $E$.
			\item If $\limsup_r F(r+r_0)/F(r)=\infty$, then $\Vor(X^{(\lambda)})$ is not tight. Also, if $\lim_r F(r+r_0)/F(r)=\infty$, then $\Vor(X^{(\lambda)})$ has no converging subsequence as $\lambda \downarrow 0$.
		\end{enumerate}
		The other cases might have other behaviors; e.g., for graphs when $r_0<1$ (see \Cref{sec:graphs}).
	\end{remark}
	
	\subsection{Full Convergence as $\lambda\downarrow 0$}
	\label{subsec:fullConvergence}
	
	In this subsection, we prove \Cref{thm.scfc}, which provides a criterion for full-convergence of the shifted Poisson point process $\iota_{t(\lambda)}X^{(\lambda)}$ as $\lambda\downarrow 0$ (not along an unknown subsequence). This criterion builds on (a logarithmic version of) the theory of regularly varying functions (see \emph{e.g.}~\cite{bookFoKoZa11heavytail}).

    \begin{defn}[\textsc{Log-Regularly-Varying Functions}]
		\label{def:reg}
		A function $f:\mathbb N\to\mathbb R^+$ is \textbf{log-regularly-varying} if 
		$\lim_{n\to\infty} f(n+1)/f(n)$ exists and is positive. This clearly implies that $\forall r_0\in\mathbb Z: \lim_{n\to\infty} f(n+r_0)/f(n) = e^{br_0}$ for some $b\in\mathbb R$, which is called the \textbf{index} of $f$. We also use $p\coloneqq e^b$ in the formulas.\\
		Similarly, a measurable function $f:\mathbb R\to\mathbb R^+$ is called \textbf{log-regularly-varying} (at $+\infty$) if $f\circ\log$ is regularly-varying; i.e., for every $r_0\in\mathbb R$, the limit $\lim_{r\to\infty} f(r+r_0)/f(r)$ exists and is positive. 
		By well-known results about regularly-varying functions,  the limit is equal to $e^{b r_0}$‌ for some $b\in\mathbb R$, which is called the \textbf{index} of $f$. Hence, $f(r)=l(r) e^{br}$, where $l$ is a \textbf{long-tail} function (see \cite[Definition~2.14]{bookFoKoZa11heavytail}); i.e., $\lim_{r\to\infty} l(r+r_0)/l(r)=1, \forall r_0$.
		
	\end{defn}

    \begin{lem}
		\label{lem:criterion1}
		The following statements are equivalent:
		\begin{enumerate}[label=(\roman*)]
			\item \label{thm:criterion:X} There exists a function $t(\lambda)$ such that the shifted Poisson point process $\iota_{t(\lambda)}X^{(\lambda)}$ converges weakly to a nonempty point process on $\lip_1$ (with respect to the topology of $\mathcal M$) as $\lambda\downarrow 0$.
			\item \label{thm:criterion:mu} There exists a function $t(\lambda)$ such that $\mu_{\lambda}\coloneqq \lambda \iota_{t(\lambda)}^*\mu$‌ converges in $\mathcal M$ and its limit is nonzero.
			\item \label{thm:criterion:F} $F(r)$‌ is log-regularly-varying with index $b>0$ and the probability measure $F(r)^{-1} \restrict{\mu}{B_r(\origin)}$ (regarded as a probability measure on $\bar E$) converges weakly to a probability measure, namely $\nu_0$, on $\partial E$ as $r\to\infty$.
		\end{enumerate}
	\end{lem}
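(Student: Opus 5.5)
The equivalence of \ref{thm:criterion:X} and \ref{thm:criterion:mu} follows exactly as in \Cref{prop:subseq}: for each fixed $\lambda$, $\iota_{t(\lambda)}X^{(\lambda)}$ is a Poisson point process with intensity $\mu_\lambda$. Moreover, weak convergence of Poisson processes on $\lip_1^0\times[-\infty,\infty)$ is equivalent to vague convergence of their intensities, where we use Laplace functionals $\exp(-\int(1-e^{-f})d\mu_\lambda)$ with $f$ continuous and compactly supported. Nonemptiness of the limit corresponds to the limit intensity being nonzero. The real content is therefore \ref{thm:criterion:mu}$\Leftrightarrow$\ref{thm:criterion:F}. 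The key identity is
\[
\mu_\lambda\bigl(\{f\in\lip_1: f\in A,\ f(\origin)\le s\}\bigr)=\lambda\,\mu\bigl(\{x\in B_{t(\lambda)+s}(\origin): \iota(x)\in A\}\bigr),
\]
valid for $A\subseteq\bar E$ (via $\lip_1\to\lip_1^0$). Here the delay of $\iota_t(x)$ is $d(x,\origin)-t$. So $\mu_\lambda$ restricted to delays $\le s$ is $\lambda F(t(\lambda)+s)$ times the normalized ball measure $\pi_{r}:=F(r)^{-1}\restrict{\mu}{B_r(\origin)}$ at $r=t(\lambda)+s$, pushed to $\bar E$.

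\textbf{\ref{thm:criterion:F}$\Rightarrow$\ref{thm:criterion:mu}.} Take $t(\lambda)$ as in \eqref{eq:t}. Log-regular variation with $b>0$ implies $F(r+\epsilon)/F(r)\to e^{b\epsilon}$, so $F$ has asymptotically negligible jumps. Combined with \eqref{eq:F}, this gives $\lambda F(t(\lambda))\to1$, hence $\lambda F(t(\lambda)+s)\to e^{bs}$ for each $s$. Then for each $s$, the total mass of $\{\delta\le s\}$ converges to $e^{bs}$, and the $\bar E$-marginal there is $\pi_{t(\lambda)+s}\to\nu_0$. Because the limit total mass is continuous in $s$, the sets $\{\delta\le s\}$ are continuity sets. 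Distribution functions in $s$ together with weak convergence of the $\partial E$-parts then determine vague convergence to the product $\nu_0\otimes\beta$ with $\beta((-\infty,s])=e^{bs}$. A convenient route is to test against products $g(\theta)h(\delta)$, since such products are convergence-determining. The bound $\mu_\lambda(\delta\le s)\to e^{bs}\to0$ as $s\to-\infty$ guarantees no mass at $-\infty$, as in \Cref{prop:precompact}.

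\textbf{\ref{thm:criterion:mu}$\Rightarrow$\ref{thm:criterion:F}.} Let $\nu\ne0$ be the limit. By \Cref{prop:subseq}\ref{prop:subseq:corona}, $t(\lambda)\to\infty$ and $\nu$ lives on $\corona E$. Set $G(s):=\nu(\lip_1^{\le s})$, which is finite for $\nu\in\mathcal M$, nondecreasing and nonzero. At continuity points $s$ of $G$, $\lambda F(t(\lambda)+s)\to G(s)$. Taking ratios gives $F(t(\lambda)+s)/F(t(\lambda)+s')\to G(s)/G(s')$ whenever $G(s')>0$.

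The main obstacle is converting this statement along the curve $r=t(\lambda)$ into a statement for all $r\to\infty$, since $t(\lambda)$ could a priori skip values. First I will show that $t$ can be replaced by a function with small jumps. If $G(s)>0$ for some $s$ but $G$ vanished on a half-line, I will derive a contradiction from monotonicity in $\lambda$. Specifically, comparing $\lambda$ with $c\lambda$ gives $\mu_{c\lambda}$ as the shift of $c\,\mu_\lambda$ by $t(c\lambda)-t(\lambda)$. By convergence of both, $t(c\lambda)-t(\lambda)\to\tau(c)$, and $G(s+\tau(c))=cG(s)$ must hold. This Cauchy-type functional equation for a monotone $G$ forces $G(s)=Ce^{bs}$ with $b>0$ ($b=0$ is impossible since $c\ne1$ changes mass, and $b<0$ contradicts monotonicity). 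Moreover $\tau$ is continuous with $\tau(c)=-b^{-1}\ln c$. Hence $t(\lambda)$ is asymptotically continuous on the logarithmic scale: every large $r$ lies within $o(1)$ of some $t(\lambda)+s$ with $s$ in a fixed window. It follows that $F(r+r_0)/F(r)\to e^{br_0}$ for all $r$, i.e.\ $F$ is log-regularly-varying with index $b$. Finally, the $\lip_1^0$-marginal of $\nu$ on $\{\delta\le s\}$, divided by $G(s)$, is the limit of $\pi_{t(\lambda)+s}$. By the scaling relation this limit is independent of $s$; call it $\nu_0$. It is supported on $\partial E$ by \Cref{prop:subseq}\ref{prop:subseq:corona}. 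The same interpolation argument upgrades this to convergence of $\pi_r$ along all $r\to\infty$, because $\pi_r$ and $\pi_{r'}$ differ in total variation by at most $1-F(r)/F(r')\to0$ when $|r-r'|\to0$.
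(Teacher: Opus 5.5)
Your argument is correct. For (i)$\Leftrightarrow$(ii) and (iii)$\Rightarrow$(ii) it is essentially the paper's: the paper tests on rectangles $A\times[a_1,a_2]$ via annulus measures and Billingsley's product criterion, while you subtract ball marginals and test against $g(\theta)h(\delta)$, which is the same computation. For (ii)$\Rightarrow$(iii) you take a genuinely different route.
\begin{itemize}
\item \emph{The paper} treats the zeros of $g$, the jumps of $g$ and the jumps of $t(\lambda)$ separately in Claims~1--3. Claims~2--3 compare $\lambda_i$ with $\lambda_i'$, $\lambda_i'/\lambda_i\to1$, and use that $\nu=\sigma_R^*\nu$ forces $\nu=0$. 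It then gets $F(r+s)/F(r)\to g(s)/g(0)$ only at continuity points of $g$, and invokes standard regular-variation results both to extend to all $s$ and to conclude $g(s)=g(0)e^{bs}$.
\item \emph{You} fix $c\in(0,1)$ and use $\mu_{c\lambda}(\lip_1^{\le s})=c\,\mu_\lambda(\lip_1^{\le s+s_\lambda})$, with $s_\lambda:=t(c\lambda)-t(\lambda)$, to get $G(s)=c\,G(s+\tau(c))$. Your display puts $c$ on the wrong side relative to $\tau(c)=-b^{-1}\ln c$. Since $\tau$ is additive in $\ln c$ and positive, it is linear. So you prove the characterization theorem directly for $G$.
\end{itemize}
Positivity, continuity and the exponential form of $G$ then come out at once, every $s$ is a continuity point, and no external regular-variation input is needed. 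That is a cleaner organization.

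Two steps still have to be written out.
\begin{itemize}
\item \emph{Convergence of $s_\lambda$.} The limit $s_\lambda\to\tau(c)$ does not follow merely ``by convergence of both''. You must rule out $s_\lambda\to\pm\infty$ along a subsequence. For instance, $s_\lambda\to+\infty$ gives $G^-(s+M)\le G(s)/c$ for all $M$, and letting $s\to-\infty$ forces $\nu=0$. You must also show that finite subsequential limits are unique, since two distinct ones would make $\nu$ invariant under a nonzero delay shift. This is exactly the content of the paper's Claims~2--3, transplanted to a fixed ratio.
\item \emph{Passing from $r=t(\lambda)$ to all $r$.} The function $t$ in (ii) is arbitrary, not monotone, so continuity of $\tau$ alone does not give small jumps of $t$. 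Fortunately you only need something weaker, which does follow. The sequence $t(2^{-k}\lambda_0)$ has increments tending to $b^{-1}\ln 2$, so every large $r$ lies in $t(\lambda)+[-K,K]$ for some small $\lambda$. The nondecreasing functions $s\mapsto\lambda F(t(\lambda)+s)$ converge pointwise to the continuous positive $G$, hence uniformly on compact intervals. This gives $F(r+r_0)/F(r)\to e^{br_0}$. Your total-variation estimate, applied on a finite grid in $[-K,K]$, then gives $F(r)^{-1}\restrict{\mu}{B_r(\origin)}\to\nu_0$ along all $r\to\infty$.
\end{itemize}
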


	\begin{proof}
		\ref{thm:criterion:X}$\Leftrightarrow$\ref{thm:criterion:mu}. Similarly to \Cref{prop:subseq}, since $\iota_{t(\lambda)} X^{(\lambda)}$ is a marked Poisson point process with intensity measure $\mu_{\lambda}$, the equivalence of~\ref{thm:criterion:X} and~\ref{thm:criterion:mu} is immediate. Note that the limit of $X^{(\lambda)}$‌ is nonempty with positive probability. Also, \eqref{eq:nu} (proved later) will imply that $\nu$ has total mass $\infty$, and hence, the limiting point process is nonempty a.s.
		
		\ref{thm:criterion:F}$\Rightarrow$\ref{thm:criterion:mu}. 
		We first show that, for every $r_0>0$, the normalized restriction of $\mu$ to the annulus $B_{r+r_0}(\origin)\setminus B_r(\origin)$ converges to $\nu_0$; i.e., 
		\begin{equation}
			\label{eq:annulus}
			(F(r+r_0)-F(r))^{-1} \restrict{\mu}{B_{r+r_0}(\origin)\setminus B_r(\origin)} \Rightarrow \nu_0.
		\end{equation}
		Let $\alpha_r$‌ and $\alpha'_r$ be the normalized restrictions of $\mu$ to $B_r(\origin)$ and $B_{r+r_0}(\origin)\setminus B_r(\origin)$. One has
		\[
		\alpha'_r = \frac{F(r+r_0)}{F(r+r_0)-F(r)} \alpha_{r+r_0} - \frac{F(r)}{F(r+r_0)-F(r)} \alpha_r.
		\]
		By assumption, $\alpha_r$‌ and $\alpha_{r+r_0}$ converge to $\nu_0$ as $r\to\infty$. Also, $\lim_r F(r+r_0)/F(r) = g(r_0)\coloneqq  e^{b r_0}$ for all $r_0\in\mathbb R$. Thus, the above equation implies that $\alpha'_r$‌ converges to $\nu_0$ and~\eqref{eq:annulus} is proved.
		
		Also, $\lim_{\epsilon\downarrow 0} \lim_r F(r-\epsilon)/F(r) = \lim_{\epsilon\downarrow 0} e^{-b \epsilon} = 1$. This implies that the jumps of $F$‌ converge to zero as $r\to \infty$. So, if $t(\lambda)$‌ is defined by~\eqref{eq:t}, then~\eqref{eq:F} implies that
		\begin{equation}
			\label{eq:criterion:1}
			\lim_{\lambda \downarrow 0}\lambda F(t(\lambda)) = 1.
		\end{equation}
		Let $\nu\coloneqq \nu_0\otimes \beta$ as in~\eqref{eq:nu}. To prove~\ref{thm:criterion:mu} and~\eqref{eq:nu}, one should prove $\nu=\lim_{\lambda} \mu_{\lambda}$. 
		For this, it is enough to show that for every $t\in\mathbb R$, the restriction of $\mu_{\lambda}$ to $\lip_1^0\times [-\infty,t]$ converges weakly to that of $\nu$. By Theorem~2.3 of~\cite{billingsley2013convergence} (see also Theorem~2.8 of~\cite{billingsley2013convergence}), it is enough to prove that
		\begin{equation}
			\label{eq:criterion:2}
			\lim_{\lambda \downarrow 0} \mu_{\lambda}(A\times [a_1,a_2]) = \nu_0(A) \beta([a_1,a_2])
		\end{equation}
		for every $\nu_0$-continuity set $A\subseteq \partial E$ and every $-\infty\leq a_1\leq a_2<\infty$. If $a_1>-\infty$, letting $r_0\coloneqq a_2-a_1$, one has
		\begin{eqnarray*}
			\lim_{\lambda \downarrow 0} \mu_{\lambda}(A\times (a_1,a_2]) &=& \lim_{\lambda \downarrow 0} \lambda\left(F(t(\lambda)+a_2)-F(t(\lambda)+a_1) \right) \alpha'_{t(\lambda)+a_1}(A)\\
			&=& \lim_{\lambda \downarrow 0} \frac{F(t(\lambda)+a_2)-F(t(\lambda)+a_1)}{F(t(\lambda))} \alpha'_{t(\lambda)+a_1}(A)\\
			&=& (g(a_2)-g(a_1))\nu_0(A) = \nu(A),
		\end{eqnarray*}
		where the third {equality holds} by~\eqref{eq:annulus}. In addition, since the jumps of $F$ converge to zero, one has $\lim_{\lambda \downarrow 0} \mu_{\lambda}(\lip_1^0\times \{a_1\}) = 0$. So, \eqref{eq:criterion:2} is proved when $a_1>-\infty$.
		
		Monotonicity of $\mu_{\lambda}(A\times [a_1,a_2])$ in $a_1$ implies that $\liminf_{\lambda \downarrow 0} \mu_{\lambda}(A\times [-\infty,a_2])\geq \nu(A\times [-\infty,a_2])$. For the other side, given $\epsilon>0$, choose $a_1$‌ small enough such that $g(a_1)\leq \epsilon/2$. One has $\mu_{\lambda}(\lip_1^0\times [-\infty,a_1]) = \lambda F(t(\lambda)+a_1) \to g(a_1)$. So, $\mu_{\lambda}(\lip_1^0\times [-\infty,a_1])<\epsilon$‌ for small enough $\lambda$. Therefore,
		\[
		\limsup_{\lambda \downarrow 0} \mu_{\lambda}(A\times [-\infty ,a_2]) \leq \epsilon + \limsup_{\lambda \downarrow 0} \mu_{\lambda}(A\times [a_1,a_2]) \leq \epsilon + \nu(A\times [-\infty,a_2]),
		\]
		where the last inequality is by the case $a_1>-\infty$ of \eqref{eq:criterion:2}. The claim is then implied by letting $\epsilon\downarrow 0$. Note that~\eqref{eq:nu} is also proved.
		
		\ref{thm:criterion:mu}$\Rightarrow$\ref{thm:criterion:F}.
		Assume that~\ref{thm:criterion:mu} holds and let $\nu\coloneqq\lim_{\lambda \downarrow 0} \mu_{\lambda}$. Let $g(t)\coloneqq\nu(\lip_1^{\leq t})$, which is a non-decreasing function. Note that, since $\lip_1^{\leq t}$ has compact closure in $\lip_1^0\times [-\infty,\infty)$, one has $g(\cdot)<\infty$. 
		Also, by the definition of convergence in $\mathcal M$, one has $\lim_{t\to-\infty} g(t) = 0$.
		We prove~\ref{thm:criterion:F} in the following steps. First, note that
		\begin{equation}
			\label{eq:criterion:g}
			\begin{array}{rcl}
				\limsup_{\lambda \downarrow 0}\mu_{\lambda}(\lip_1^{\leq t})&\leq& g(t),\\
				\liminf_{\lambda \downarrow 0}\mu_{\lambda}(\lip_1^{< t})&\geq& g^-(t).
			\end{array}
		\end{equation}
		This implies that, if $g(r_0)\neq 0$‌ and $g(r'_0)\neq 0$, then
		\begin{eqnarray*}
				\limsup_{\lambda \downarrow 0} \frac {F(t(\lambda)+r_0)}{F(t(\lambda)+r'_0)} &\leq & \frac{g(r_0)}{g^-(r'_0)},\\
				\liminf_{\lambda \downarrow 0} \frac {F(t(\lambda)+r_0)}{F(t(\lambda)+r'_0)} &\geq & \frac{g^-(r_0)}{g(r'_0)}.
		\end{eqnarray*}

		In addition, if $t$ is a continuity point of $g$, then $\lip_1^{\leq t}$ is a continuity set of $\nu$, and hence, the above inequalities are in fact equality. 
		
		There are three remaining issues for deducing that $F$‌ is log-regularly-varying: The zeros of $g$, the jumps of $g$‌ and the jumps of $t(\lambda)$. The next five claims deal with these issues by contradiction.
		
		\begin{claim}
			\label{claim:nonzero}
			$g(t)>0, \forall t$.
		\end{claim} 
		\begin{proof} 
			If the claim is false, there exists $R>0$ such that $g(-R)=0$‌ and $\delta\coloneqq g^-(R)>0$. So, \eqref{eq:criterion:g} implies that $\lambda \mu(B^{\circ}_{t(\lambda)+R})>\delta/2$ for small enough $\lambda$ and $\lim_{\lambda} \lambda \mu(B_{t(\lambda)-R})=0$. In particular, given $k\in\mathbb N$, one has $(\lambda/k) \mu(B_{t(\lambda/k)-R})<\delta/(2k)$ for small enough $\lambda$. These facts imply that $t(\lambda/k)-R\leq t(\lambda)+R$ for small enough $\lambda$. So,
			\[
			\frac{\delta}2<\frac{\lambda}k\mu\left(B^{\circ}_{t(\frac{\lambda}k)+R}\right)\leq \frac{\lambda}k \mu\left(B^{\circ}_{t(\lambda)+2R}\right)
			\]
			for small enough $\lambda$. By~\eqref{eq:criterion:g}, the limsup of the right hand side is at most $g(2R)/k$. This is a contradiction if $k>2g(2R)/\delta$ from the beginning. 
		\end{proof}
		
		\begin{claim}
			The jumps of $t(\lambda)$ are bounded; i.e., $\sup_{\lambda} \limsup_{\epsilon\to 0} \norm{t(\lambda+\epsilon)-t(\lambda)} <\infty$.	
		\end{claim}
		\begin{proof}
			Assume the claim is false. So, there exist $\lambda_i$ and $\lambda'_i$ converging to 0 such that $\lambda'_i/\lambda_i\to 1$ and such that $r_i\coloneqq t(\lambda_i)-t(\lambda'_i)\to\infty$. Fix $0<\epsilon<g^-(0)$, which is possible by~Claim~\ref{claim:nonzero}. By~\eqref{eq:criterion:g}, one has $\mu_{\lambda'_i}(\lip_1^{<0})>g^-(0)-\epsilon$ for large enough $i$. So, given arbitrary $R<\infty$, one has for large enough $i$
			\[
			\mu_{\lambda_i}(\lip_1^{\leq -R})\geq \mu_{\lambda_i}(\lip_1^{<-r_i})=\frac{\lambda_i}{\lambda'_i}\mu_{\lambda'_i}(\lip_1^{<0}) >\frac{\lambda_i}{\lambda'_i}(g^-(0)-\epsilon).
			\]
			Using~\eqref{eq:criterion:g} again, one obtains $g(-R)\geq g^-(0)-\epsilon$. This contradicts the fact $\lim_{t\to-\infty}g(t)=0$.
		\end{proof}
		
		\begin{claim}
			\label{claim:jumps0}
			The jumps of $t(\lambda)$ converge to zero as $\lambda\downarrow 0$.	
		\end{claim}
		\begin{proof}
			Assume the claim is false. So, claim 2 implies that there exists $\lambda_i$‌ and $\lambda'_i$ that converge to 0 such that $\lambda'_i/\lambda_i\to 1$ and $r_i\coloneqq t(\lambda_i)-t(\lambda'_i)\to R$ for some $R>0$. Consider the shift on $\lip_1^0\times [-\infty,\infty)$ defined by $\sigma_s(f,r)\coloneqq (f,r-s)$. 
			The definition of $\mu_{\lambda}$‌ gives $\mu_{\lambda_i}={\lambda_i}/{\lambda'_i}\sigma_{r_i}^* \mu_{\lambda'_i}$. By letting $i\uparrow\infty$, one obtains that $\nu=\sigma_{r_i}^*\nu$. It is straightforward to see that this is impossible unless $\nu=0$. So, the claim is proved.		
		\end{proof}

        \begin{claim}
			$F$ is log-regularly-varying with positive index.
		\end{claim}
		\begin{proof}
			Fix two real numbers $r_0$ and $r'_0$‌, and $n\in\mathbb N$.
			Claim~\ref{claim:jumps0} implies that the closed intervals $[t(\lambda)-1/(2n),t(\lambda)+1/(2n)]$ cover all sufficiently large numbers. Therefore, 
			\begin{eqnarray*}
				\limsup_{r\to\infty} \frac{F(r+r_0)}{F(r+r'_0)} &=&
				\limsup_{\lambda\to 0} \sup_s\left\{ \frac{F(t(\lambda)+s+r_0)}{F(t(\lambda)+s+r'_0)}: -\frac 1{2n}\leq s\leq \frac 1{2n}\right\}\\
				&\leq& \frac{g(r_0+\frac 1{2n})}{g^-(r_0-\frac 1{2n})},
			\end{eqnarray*}
			where the last inequality holds by~\eqref{eq:criterion:g}. Similarly, one can show that 
			\begin{eqnarray*}
				\liminf_{r\to\infty} \frac{F(r+r_0)}{F(r+r'_0)} 
				\geq \frac{g^-(r_0-\frac 1{2n})}{g(r_0+\frac 1{2n})}.
			\end{eqnarray*}
			Assume that $r_0$‌ and $r'_0$ are continuity points of $g$. Since $n$ is arbitrary, the above inequalities imply that $\lim_{r\to\infty} {F(r+r_0)}/{F(r+r'_0)} = g(r_0)/g(r'_0)$.
			Equivalently, for $s\coloneqq r_0-r'_0$, one has $\lim_r F(r+s)/F(r)$ exists and is positive. Since this holds for all but countably many $s\in \mathbb R$, it holds for all $s$ by standard results about regularly-varying functions (see \emph{e.g.}~\cite{BGT}). So, $F$ is log-regularly-varying. In particular, $g(r)=g(0)e^{b r}$ for some $b$. Since $\lim_{t\to-\infty} g(t) = 0$, one has $b>0$ and the claim is proved.	
		\end{proof}
		As a corollary, one has $F(r) = e^{b r} L(r)$, where $L$‌ is a long-tailed function. Therefore, $t(\lambda) = \log_b(\lambda)-\log_b(L(\log_b(\lambda)))+o(1)$.
		
		\begin{claim}
			The limit $\nu_0\coloneqq \lim_r F(r)^{-1} \restrict{\mu}{B_r(o)}$ exists and is supported on $\partial E$.
		\end{claim}
		\begin{proof}
			The previous claim shows that $\nu(\lip_1^0\times \{t\})=0, \forall t$. This implies that 
			$\lambda F(t(\lambda)) = \mu_{\lambda}(\lip_1^0\times [-\infty,0]) \to g(0)$ as $\lambda\to 0$. It also implies that
			$\mu'_{\lambda}\coloneqq \restrict{(\mu_{\lambda})}{\lip_1^0\times (-\infty,0]}$ converges to $\nu'\coloneqq \restrict{\nu}{\lip_1^0\times (-\infty,0]}$. Therefore, if $\pi:\lip_1^0\times [-\infty,\infty)$ is the projection onto the first coordinate, $\pi_* \mu'_{\lambda}$ converges to $\pi_*\nu'\coloneqq \nu_0$. The claim then follows from the facts $\pi_*\mu'_{\lambda}= \lambda \restrict{\mu}{B_{t(\lambda)}(\origin)}$ and $\lambda F(t(\lambda))\to g(0)$.
		\end{proof}
		Now, \ref{thm:criterion:F} is proved and the proof of \Cref{lem:criterion1} is completed.
	\end{proof}

\begin{lem}
		\label{lem:criterion2}
		If the conditions of \Cref{lem:criterion1} hold, then:
		\begin{enumerate}[label=(\roman*)]
			\item \label{thm:criterion:unique} $\nu\coloneqq  \lim_{\lambda\to 0}\mu_{\lambda}$ does not depend on the choice of $t(\lambda)$ up to multiplication by constants.
			\item \label{thm:criterion:nu} One necessarily has $t(\lambda) = \inf\{r: \lambda F(r)\geq 1\} + a + o(1)$ for some $a\in\mathbb R$. In this case, $\nu$ exists and 
			\begin{equation}
				\label{eq:nu}
				\nu= \lim_{\lambda\downarrow 0} \mu_{\lambda} = \nu_0\otimes (e^{ab}\beta),
			\end{equation}
			where $\beta$ is the measure on $\mathbb R$ defined by $\beta((-\infty,r])\coloneqq e^{b r}$. 
		\end{enumerate}
	\end{lem}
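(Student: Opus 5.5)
The plan is to reduce everything to the canonical shift $t_0(\lambda)\coloneqq\inf\{r:\lambda F(r)\geq 1\}$ of \eqref{eq:t}. The proof of \Cref{lem:criterion1} (direction \ref{thm:criterion:F}$\Rightarrow$\ref{thm:criterion:mu}) already shows that, under condition~\ref{thm:criterion:F}, $\lambda\iota_{t_0(\lambda)}^*\mu$ converges to $\nu_0\otimes\beta$. The relation $\lambda F(t_0(\lambda))\to1$ from \eqref{eq:criterion:1} fixes the normalization $\beta((-\infty,0])=1$. So the statement reduces to comparing an arbitrary admissible shift $t(\lambda)$ with $t_0(\lambda)$.

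First, the effect of changing the shift. Writing $\sigma_s(f,r)\coloneqq(f,r-s)$ as in Claim~\ref{claim:jumps0}, one has
\[
\lambda\iota_{t(\lambda)}^*\mu=\sigma_{s(\lambda)}^*\bigl(\lambda\iota_{t_0(\lambda)}^*\mu\bigr), \qquad s(\lambda)\coloneqq t(\lambda)-t_0(\lambda).
\]
If $s(\lambda)\to a$, then joint continuity of $(s,m)\mapsto\sigma_s^*m$ in $\mathcal M$ gives that $\mu_\lambda$ converges to $\sigma_a^*(\nu_0\otimes\beta)$. The conventions matter here: the test sets are $\lip_1^0\times[-\infty,c]$, and continuity holds because $\nu_0\otimes\beta$ has no atoms in the delay coordinate. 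Since $\beta$ has density proportional to $e^{br}$, translating by $a$ multiplies it by $e^{ab}$. This gives \eqref{eq:nu} and, once \ref{thm:criterion:nu} is established, also \ref{thm:criterion:unique}.

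The main step is to show that any $t(\lambda)$ for which $\mu_\lambda$ converges to a nonzero $\nu$ satisfies $s(\lambda)\to a$. Suppose first that $s(\lambda)$ is unbounded along some sequence $\lambda_n$, say $s(\lambda_n)\to+\infty$. Then for fixed $c$,
\[
\mu_{\lambda_n}(\lip_1^{\leq c})=\lambda_nF(t_0(\lambda_n)+s(\lambda_n)+c)\approx e^{b(s(\lambda_n)+c)}\to\infty.
\]
This uses log-regular variation, which is uniform on compacts, together with monotonicity of $F$ for the unbounded displacement. It contradicts finiteness of $\nu(\lip_1^{\leq c})$, which holds because this set has compact closure in $\lip_1^0\times[-\infty,\infty)$. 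If instead $s(\lambda_n)\to-\infty$, the same quantity tends to $0$ for every $c$, forcing $\nu=0$.

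So $s$ is bounded. If it had two subsequential limits $a\neq a'$, the first paragraph would give $\nu=e^{ab}(\nu_0\otimes\beta)=e^{a'b}(\nu_0\otimes\beta)$, and since $b>0$ this forces $a=a'$. Hence $s(\lambda)\to a$, which is \ref{thm:criterion:nu}.

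I expect the only delicate point to be justifying $\lambda F(t_0(\lambda)+x)\to e^{bx}$ uniformly for $x$ in compacts. I would get this from monotonicity of $F$ and continuity of the limit $e^{bx}$, via a standard Dini/Pólya-type argument, together with the vanishing of the jumps of $F$ already noted in the proof of \Cref{lem:criterion1}.
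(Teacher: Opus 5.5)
Your proof is correct, but it reaches the conclusion by a different mechanism than the paper.

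The paper's one-line proof relies on the proof of \Cref{lem:criterion1}. For the given shift one has $\lambda F(t(\lambda))=\mu_\lambda(\lip_1^{\le 0})\to g(0)$ with $g(r)=g(0)e^{br}$, and the paper sets $a=\ln(g(0))/b$. Implicitly, two further steps are taken:
\begin{itemize}
\item the ratio statement $F(t(\lambda))/F(t_0(\lambda))\to g(0)$ is converted into $t(\lambda)-t_0(\lambda)\to a$ via log-regular variation;
\item the product form comes from the (iii)$\Rightarrow$(ii) computation, in which only the value of $\lim_\lambda\lambda F(t(\lambda))$ enters.
\end{itemize}

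You instead take $t_0$ as a reference and proceed as follows:
\begin{itemize}
\item you use the equivariance $\mu_\lambda=\sigma_{s(\lambda)}^*(\lambda\iota_{t_0(\lambda)}^*\mu)$ together with joint continuity of the shift;
\item you exclude unbounded displacement by a monotonicity argument contradicting finiteness, respectively nonvanishing, of $\nu$;
\item you identify the limit of $s(\lambda)$ through uniqueness of the limit measure, since $a\mapsto e^{ab}$ is injective for $b>0$.
\end{itemize}

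What each route buys:
\begin{itemize}
\item Your argument is more modular: it only uses the equivalence in \Cref{lem:criterion1} and the explicit limit for $t_0$. It also spells out the step the paper leaves implicit, namely turning the ratio statement into convergence of the displacement, which requires exactly the kind of monotonicity argument you give.
\item The paper's route identifies the constant directly as $e^{ab}=\lim_\lambda\lambda F(t(\lambda))=\nu(\lip_1^{\le 0})$, with no subsequence argument. Your route recovers the same identification from $\beta((-\infty,0])=1$.
\end{itemize}

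Two small remarks. First, the uniform-on-compacts convergence you flag as the delicate point is not needed. Boundedness only uses the pointwise limit $\lambda F(t_0(\lambda)+x)\to e^{bx}$ plus monotonicity of $F$, and the case $s(\lambda)\to a$ is handled by continuity of the shift. Second, that continuity holds in the vague topology of $\mathcal M'$ for arbitrary limits, atoms or not. To see this, test against compactly supported continuous functions, using that $\sigma_s$ is a homeomorphism fixing the level $-\infty$.
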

	\begin{proof}
		The claim is implicitly shown in the proof of \Cref{lem:criterion1}. Note that $\lim_{\lambda \downarrow 0} F(t(\lambda))=g(0)$‌ and let $a=\ln(g(0))/b$.
	\end{proof}

	We are now ready to prove \Cref{thm.scfc}.
	\begin{proof}[Proof of \Cref{thm.scfc}]
		\Cref{lem:criterion1} proves that $X^{(\lambda)}$ converges (after a suitable shift) if and only if \ref{thm.scfc-i} and~\ref{thm.scfc-ii} hold. If these conditions hold, then \Cref{prop:subseq} shows that the limit is a Poisson point process on $\widehat{\partial E}$ with intensity measure $\nu$. Then, \eqref{eq:nu} implies that $(\Theta_i)_{i\geq 1}$ are i.i.d. points with distribution $\nu_0$ and $(\Delta_i)_{i\geq 1}$ is a Poisson point process on $\mathbb R$ with intensity measure $e^{ab}\beta$. Finally, if the nondegeneracy condition holds, then the convergence of the Voronoi diagrams is implied by \Cref{prop:diagramConvergence}.
	\end{proof}

    \begin{example}[\textsc{Real Hyperbolic Space of Dimension $d\geq 2$}]
		In $\mathbb H^d$, the volume function $F(r)= \frac{2\pi^{d/2}}{\Gamma(d/2)}\int_0^r (\sinh \rho)^{d-1}d\rho$ is log-regularly-varying with index $b=(d-1)$, and the uniform probability measure on $B_R(\origin)$ converges to the harmonic measure on $\partial \mathbb H^d$. The nondegeneracy condition is also easy to verify. So, \Cref{thm.scfc} implies full convergence towards IPVT, thus providing an alternative proof to~\cite[Theorem~1.1]{IPVT}.
	\end{example}

    \section{On the IPVT of General Graphs}
	\label{sec:graphs}

    In this section, we extend and prove convergence criteria for vertex-measured and edge-measured graphs (\Cref{thm.ctfg,thm.ctfemg}). We also extend and prove the independence on $\xi$ mentioned in \Cref{thm.ioxi}.

    \subsection{Periodic Convergence}
	\label{subsec:periodic}

    In some examples, like graphs, $F$ cannot be log-regularly-varying and does not satisfy \Cref{thm.scfc}. As mentioned in the paragraph before \Cref{thm.ctfg},  in such cases, we are interested in considering only integer shifts of the delays, and we study the existence of the limit $\iota_{r_0n}(X^{(\lambda_n)})$ (note that if $\lim_{n \to \infty} \iota_{a+r_0n}(X^{(\lambda_n)})$ exists for some $a$, then it exists for all $a$ by continuity of the shift). 
	In \Cref{thm:critArithmetic} below, we obtain the decomposition \eqref{eq:thm:critArithmetic:sum} of the limiting measure, which is a weaker version of~\eqref{eq:nu}. {This implies that the IPVT depends on a natural parameter $\xi$ in a periodic way, which justifies the title of this subsection.}
	\Cref{thm:critArithmetic:converse} is a converse to \Cref{thm:critArithmetic} and provides a sufficient condition for convergence. These results are mainly used for graphs and edge-measured graphs in the next subsections (as another example, the direct product of a graph and a continuum space may also satisfy periodic convergence; e.g., $\mathbb Z\times\mathbb R$, see \Cref{subsec:product}).
	
	First, we need the following preparatory result, which extends \Cref{prop:precompact}.

	\begin{prop}
		\label{prop:precompact2}
		Assume $0\neq\nu=\lim_{n \to \infty} \lambda_n\iota_{t_n}^* \mu$, where $t_n\to\infty$.
		Assume the growth conditions of \Cref{prop:precompact} and that, for some $a\in\mathbb R$, one has $\forall j\in\mathbb Z: \nu(\lip_1^{a_j})=0$, where $a_j\coloneqq a+jr_0$. Then,
		for all $j\in\mathbb N$, 
		\begin{eqnarray}
			\label{eq:prop:precompact2-2}
			(p_1-1)p_1^{j-1}\leq &{\nu\left(\lip_1^{[a_{j-1},a_j]}\right)}/{\nu\left(\lip_1^{\leq a}\right)}&\leq (p_2-1)p_2^{j-1},\\
			\label{eq:prop:precompact2-1}
			p_1^{j}\leq &\nu\left(\lip_1^{\leq a_j}\right)/ \nu\left(\lip_1^{\leq a}\right)& \leq p_2^j.
		\end{eqnarray}
		In particular, if $p_1=p_2=p$, then $\forall j:\nu\left(\lip_1^{[a_{j-1},a_j]}\right) = (p-1)p^{j-1+\xi}$, where $p^{\xi}= \nu(\lip_1^{\leq a})$.
	\end{prop}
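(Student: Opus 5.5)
The plan is to compute $\nu$ of the sublevel sets $\lip_1^{\leq a_j}$ as limits of ratios of the volume function, and then to obtain the annuli by subtraction. Here $\lip_1^{a_j}$ denotes the level set $\lip_1^0\times\{a_j\}$ and $\lip_1^{[a_{j-1},a_j]}$ the corresponding slab. Since $\nu$ charges no level set $\lip_1^{a_j}$, each set $\lip_1^0\times[-\infty,a_j]$ is a $\nu$-continuity set in $\lip_1^0\times[-\infty,\infty)$. It is also relatively compact there. By the vague convergence defining the topology on $\mathcal M$, this gives
\[
\nu\left(\lip_1^{\leq a_j}\right)=\lim_{n\to\infty}\lambda_n F(t_n+a_j), \qquad j\in\mathbb Z .
\]
One point must be checked: $\nu$ puts no mass on $\lip_1^0\times\{-\infty\}$. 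This holds because $\nu\in\mathcal M$ by hypothesis. Alternatively, it follows from the growth lower bound $p_1>1$, exactly as in \eqref{eq:prop:precompact:1}.

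Next I show that the normalizing quantity is positive and finite. Suppose $\nu(\lip_1^{\leq a})=0$. Then $\lambda_nF(t_n+a)\to0$, and hence $\lambda_nF(t_n+a_j)\le (p_2+\epsilon)^j\lambda_nF(t_n+a)\to0$ for every $j$, because $t_n\to\infty$ and the limsup growth condition applies. This forces $\nu=0$, a contradiction. Finiteness holds because the set is relatively compact.

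For \eqref{eq:prop:precompact2-1}, write
\[
\frac{\lambda_nF(t_n+a_j)}{\lambda_nF(t_n+a)}=\prod_{i=1}^{j}\frac{F(t_n+a_{i-1}+r_0)}{F(t_n+a_{i-1})}.
\]
Since $t_n\to\infty$, each factor lies in $[p_1-\epsilon,p_2+\epsilon]$ for large $n$. Passing to the limit and letting $\epsilon\downarrow0$ gives the bounds $p_1^j$ and $p_2^j$.

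For \eqref{eq:prop:precompact2-2}, the level sets are $\nu$-null, so
\[
\nu\left(\lip_1^{[a_{j-1},a_j]}\right)=\nu\left(\lip_1^{\leq a_j}\right)-\nu\left(\lip_1^{\leq a_{j-1}}\right).
\]
Dividing by $\nu(\lip_1^{\le a_{j-1}})$, the right-hand side equals $x_{j-1}$ times the limit of $F(t_n+a_{j-1}+r_0)/F(t_n+a_{j-1})-1$, where $x_{j-1}\coloneqq\nu(\lip_1^{\le a_{j-1}})/\nu(\lip_1^{\le a})$. That limiting ratio minus one lies in $[p_1-1,p_2-1]$. Combining this with the bound $x_{j-1}\in[p_1^{j-1},p_2^{j-1}]$ from the previous step gives the claim.

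The case $p_1=p_2=p$ then follows by defining $\xi$ through $p^\xi=\nu(\lip_1^{\le a})$.

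The only delicate point is the topological step: justifying that the limit passes through the sets $\lip_1^{\leq a_j}$. This requires both the null-level hypothesis and the fact that the vague topology on $\mathcal M'$ controls neighborhoods of delay $-\infty$. The rest is bookkeeping.
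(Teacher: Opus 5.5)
Your proof is correct and follows essentially the same route as the paper. You identify $\nu(\lip_1^{\leq a_j})$ with $\lim_n\lambda_nF(t_n+a_j)$, using that $\lip_1^0\times[-\infty,a_j]$ is a compact $\nu$-continuity set, and then control the consecutive ratios $F(t_n+a_j)/F(t_n+a_{j-1})$ by $p_1$ and $p_2$. The paper bounds the slabs first and then sums; you bound the sublevel sets first by a telescoping product and then subtract, which is the same bookkeeping. Your explicit checks that $\nu$ has no mass at delay $-\infty$ and that $\nu(\lip_1^{\leq a})>0$ fill in points the paper leaves implicit.

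One small slip: in the slab step you should divide by $\nu(\lip_1^{\leq a})$, not by $\nu(\lip_1^{\leq a_{j-1}})$, to obtain $x_{j-1}$ times the limiting ratio minus one.
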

	\begin{proof}
		Let $\mu_n\coloneqq\lambda_n\iota_{t_n}^* \mu$.
		The assumption $\nu(\lip_1^{a_{j}})=0$ implies that $\nu(\lip_1^{\leq a_j}) = \lim_n \mu_n(\lip_1^{\leq a_j})=\lim_n \lambda_n F(t_n+a_j)$. 
		By dividing both sides by $\mu_n(\lip_1^{\leq a})$, one readily obtains~\eqref{eq:prop:precompact2-1}. Also, 
		\[
		\mu_n(\lip_1^{(a_j,a_{j+1}]})=\lambda_n(F(t_n+a_{j+1})-F(t_n+a_j)) = \frac{F(t_n+a_{j+1})-F(t_n+a_j)}{F(t_n+a_j)} \mu_n(\lip_1^{\leq a_j}).
		\]
		By letting $n\to\infty$, we get
		\[
		(p_1-1) \nu(\lip_1^{\leq a_j})\leq \nu(\lip_1^{[a_j,a_{j+1}]})\leq (p_2-1) \nu(\lip_1^{\leq a_j}).
		\]
		This implies~\eqref{eq:prop:precompact2-2} by induction on $j$. In addition, \eqref{eq:prop:precompact2-1} is obtained by summing~\eqref{eq:prop:precompact2-2} over $j$.
	\end{proof}

    In the next result, $\delta_t$ denotes the Dirac measure on $t$ (not to be confused with the delays).
	
	\begin{theorem}[\textsc{Periodic Convergence}]
		\label{thm:critArithmetic}
		Let $r >0$ and the sequence $(\lambda_n)_{n \geq 1}$ be given. 
		If the limit $\nu\coloneqq  \lim_{n \to \infty} \lambda_n \iota_{nr}^*\mu$ exists and is nonzero, then:
		\begin{enumerate}[label=(\roman*)]
			\item \label{thm:critArithmetic:exists} There exist $p>1$, $\xi\in\mathbb R$, a probability measure $\varphi$ on $(0,r]$ and a family of probability measures $\nu_t$ on $\partial E$ for $0<t\leq r$ such that the map $t\mapsto \nu_t$ is measurable (i.e., $\nu_{(\cdot)}$ is a Markov kernel from $(0,r]$ to $\partial E$) and 
			\begin{eqnarray}
				\label{eq:thm:critArithmetic:sum}
				\nu&=& \sum_{j\in\mathbb Z} (p-1)p^{j+\xi} \int_0^r \left(\nu_t\otimes\delta_{jr+t}\right) \varphi(dt),\\
				\label{eq:thm:critArithmetic:periodic}
				\iota_{ir}^* \nu &=& p^i \nu,\quad \forall i\in\mathbb Z,\\
				\label{eq:thm:critArithmetic:limsup}
				\limsup_{n \to \infty} \lambda_n F(nr) &\leq& \nu\left(\lip_1^{\leq 0}\right) = p^\xi,\\
				\label{eq:thm:critArithmetic:liminf}
				\liminf_{n \to \infty} \lambda_n F^-(nr) &\geq & \nu\left(\lip_1^{<0}\right) = p^\xi\left(1-\frac{p-1}{p} \varphi(\{r\})\right).
			\end{eqnarray}
			Also, $p, \xi$ and $\varphi$ are uniquely determined and $\nu_{(\cdot)}$ is unique up to modification on $\varphi$-null sets.
			\item \label{thm:critArithmetic:pp} The shifted Poisson point process $\iota_{nr} X^{(\lambda_n)}$ converges weakly as $n\to \infty$ to a Poisson point process $(\Theta_i,\Delta_i)_{i\geq 1}$ on $\corona{E}$ with intensity measure $\nu$. In other words, the delay process $(\Delta_i)_{i\geq 1}$ is a Poisson point process on $\mathbb R$ with intensity measure $\beta^{(\xi)}(A)\coloneqq \sum_{j\in \mathbb Z} (p-1)p^{\xi+j} \varphi(A-jr)$ for $A\subseteq \mathbb R$. In addition, conditionally on $(\Delta_i)_{i\geq 1}$, the points $(\Theta_i)_{i\geq 1}$ are independent points on $\partial E$ such that $\Theta_i$ has distribution $\nu_{\Delta'_i}$, where $\Delta'_i$ is the unique number in $(0,r]$ such that $\Delta_i-\Delta'_i\in r\mathbb Z$.
			\item \label{thm:critArithmetic:reg} For all except countably many $a\in\mathbb R$, the function $n\mapsto F(a+nr)$‌ is log-regularly-varying with index $b\coloneqq \ln p$ and, if $\mu_n$ is the normalized restriction of $\mu$ to  $B_{a+(n+1)r}(\origin )\setminus B_{a+ n r}(\origin)$, then $\iota_{n r}^*\mu_n$ converges weakly to the normalized restriction of $\nu$ to ${\lip_1^{(a,a+r]}}$.
		\end{enumerate}
	\end{theorem}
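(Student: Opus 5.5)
The plan is to first establish the quasi-periodicity \eqref{eq:thm:critArithmetic:periodic}, then derive the decomposition \eqref{eq:thm:critArithmetic:sum} from it by disintegration, and finally read off (ii) and (iii). Write $\mu_n\coloneqq\lambda_n\iota_{nr}^*\mu$ and $\sigma_s(f,\delta)\coloneqq(f,\delta-s)$, so that $\iota_{(n+i)r}^*\mu=\sigma_{ir}^*\iota_{nr}^*\mu$.

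\textbf{Quasi-periodicity.} From $\mu_{n+i}=(\lambda_{n+i}/\lambda_n)\,\sigma_{ir}^*\mu_n$ and the convergence of both $\mu_n$ and $\mu_{n+i}$ to the same nonzero $\nu$, I will deduce two things. First, along a subsequence, $\lambda_{n+1}/\lambda_n$ must converge to some $c\in(0,\infty)$. Second, this $c$ satisfies $\nu=c\,\sigma_r^*\nu$.

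To get the first point, I evaluate on sets $\lip_1^{\le t}$ with $t$ a continuity point and $\nu(\lip_1^{\le t})>0$. This gives $\lim\lambda_{n+1}/\lambda_n=\nu(\lip_1^{\le t})/\nu(\lip_1^{\le t+r})$, so the limit exists along the whole sequence. I also need that it is positive and finite. The delicate point is that $\nu$ could vanish on some half-line $\lip_1^{\le t}$; I will rule this out as in Claim~\ref{claim:nonzero}, since the relation $\nu=c\,\sigma_r^*\nu$ propagates positivity to all shifts. Setting $p\coloneqq 1/c$ gives \eqref{eq:thm:critArithmetic:periodic}.

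Then $p>1$: if $p\le1$, the mass of $\lip_1^0\times(-\infty,-jr]$ would not tend to $0$ as $j\to\infty$. This contradicts the requirement that $\nu\in\mathcal M$ gives zero mass near $-\infty$ and is finite on $\lip_1^{\le t}$. Hence $\nu(\lip_1^{\le 0})$ is finite and positive, and I define $\xi$ by $p^\xi\coloneqq\nu(\lip_1^{\le 0})$.

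\textbf{Decomposition \eqref{eq:thm:critArithmetic:sum}.} Let $\varphi$ be the normalized projection of $\restrict{\nu}{\lip_1^0\times(0,r]}$ onto the delay coordinate. By quasi-periodicity, $\nu(\lip_1^0\times(jr,(j+1)r])=p^j\,\nu(\lip_1^0\times(0,r])$. Summing over $j<0$ gives
\[
p^\xi=\nu(\lip_1^{\le 0})=\frac{\nu(\lip_1^0\times(0,r])}{p-1},
\]
so the mass of the $j$-th slab is $(p-1)p^{j+\xi}$. Disintegrating $\restrict{\nu}{\lip_1^0\times(0,r]}$ with respect to $\varphi$ yields the Markov kernel $t\mapsto\nu_t$. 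Its support is in $\partial E$ by Proposition~\ref{prop:subseq}\ref{prop:subseq:corona}. Quasi-periodicity then transports this kernel to every slab, giving \eqref{eq:thm:critArithmetic:sum}. Uniqueness of $p$, $\xi$, $\varphi$ and of $\nu_{(\cdot)}$ up to $\varphi$-null sets follows from uniqueness of disintegration.

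For \eqref{eq:thm:critArithmetic:limsup} and \eqref{eq:thm:critArithmetic:liminf}, I apply portmanteau to the closed set $\lip_1^{\le0}$ and the open set $\lip_1^{<0}$ in $\lip_1^0\times[-\infty,\infty)$, noting $\mu_n(\lip_1^{\le 0})=\lambda_nF(nr)$ and $\mu_n(\lip_1^{<0})=\lambda_nF^-(nr)$. The value of $\nu(\lip_1^{<0})$ is $p^\xi$ minus the atom at delay $0$. That atom is the slab $j=-1$ evaluated at $t=r$, namely $(p-1)p^{\xi-1}\varphi(\{r\})$.

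\textbf{Parts (ii) and (iii).} Part (ii) is Proposition~\ref{prop:subseq}\ref{prop:subseq:pp} combined with the product structure just obtained, via standard marking of Poisson processes. For (iii), I choose $a$ such that $\nu(\lip_1^0\times\{a+jr\})=0$ for all $j$; this excludes only countably many $a$, because $\varphi$ has countably many atoms. Proposition~\ref{prop:precompact2} then gives
\[
\frac{F(a+(n+1)r)}{F(a+nr)}\to p .
\]
The convergence of the normalized annulus measures follows from weak convergence restricted to the $\nu$-continuity set $\lip_1^{(a,a+r]}$, together with the convergence of the total masses.

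I expect the main obstacle to be justifying that the ratio $\lambda_{n+1}/\lambda_n$ converges to a finite positive limit and that $\nu$ charges every half-line. This needs the Claim~\ref{claim:nonzero}-type argument adapted to discrete shifts, and care with the topology at $-\infty$.
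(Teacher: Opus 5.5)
Your proof is essentially the paper's. You identify $\lim\lambda_{n+1}/\lambda_n$ by evaluating on the sets $\lip_1^{\le t}$ at continuity points, obtain the quasi-periodicity \eqref{eq:thm:critArithmetic:periodic}, and get \eqref{eq:thm:critArithmetic:sum} by disintegrating the slab $\lip_1^0\times(0,r]$.

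The only organizational difference is how you reach \eqref{eq:thm:critArithmetic:periodic}. You derive it globally, by passing to the limit in $\mu_{n+1}=(\lambda_{n+1}/\lambda_n)\,\sigma_r^*\mu_n$; this is valid because $\sigma_r$ is a homeomorphism of $\lip_1^0\times[-\infty,\infty)$. The paper instead works slab by slab on $\lip_1^{(a_j,a_{j+1}]}$. Your route is slightly cleaner and makes positivity of $g(t)\coloneqq\nu(\lip_1^{\le t})$ for all $t$ immediate, so no Claim~\ref{claim:nonzero}-type argument is needed. When identifying the ratio, you do need both $t$ and $t+r$ to be continuity points of $g$.

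\textbf{Mis-justified step in part (iii).} You cannot invoke \Cref{prop:precompact2} there, for two reasons. It assumes the growth bounds $p_1>1$, $p_2<\infty$ of \Cref{prop:precompact}, which are not hypotheses of this theorem. It also only produces bounds on $\nu$, not a limit for $F$. Replace it by the direct computation the paper uses. Take $a$ with all $a+jr$ continuity points of $g$. Then $\lambda_nF(a+nr)=\mu_n(\lip_1^{\le a})\to g(a)>0$ and $\lambda_nF(a+(n+1)r)=\mu_n(\lip_1^{\le a+r})\to g(a+r)=p\,g(a)$. Hence the ratio tends to $p$.

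\textbf{Minor hypothesis to add.} To apply \Cref{prop:subseq}\ref{prop:subseq:corona} for the support of the $\nu_t$, you need $\lambda_n\to 0$. This is not assumed in the theorem, but it follows from $\lambda_{n+1}/\lambda_n\to 1/p<1$.
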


    \begin{proof}
		{Item} \ref{thm:critArithmetic:pp} is implied by~\eqref{eq:thm:critArithmetic:sum} and Part~\ref{prop:subseq:pp} of \Cref{prop:subseq}.
		We shall then prove \ref{thm:critArithmetic:exists} and~\ref{thm:critArithmetic:reg}. Let $g(t)\coloneqq \nu\left(\lip_1^{\leq t}\right)$. For all except countably many $a\in\mathbb R$, all of the points $a_j\coloneqq a+j r$, $j\in\mathbb Z$ are continuity points of $g$. Fix such an $a$. This implies that, for all $j\in\mathbb Z$,
		\begin{eqnarray}
			\label{eq:thm:criterArithmetic:F}
			\lambda_n F(a_{n+j}) = (\iota_{nr}^* \mu) \left(\lip_1^{\leq a_j} \right) &\underset{n\to \infty}{\to}& \nu \left(\lip_1^{\leq a_j} \right),\\
			\label{eq:thm:criterArithmetic:restrict}
			\lambda_n \restrict{\left(\iota_{nr}^* \mu \right)}{\lip_1^{\leq a_j}} &\underset{n\to \infty}{\Rightarrow}& \restrict{\nu}{\lip_1^{\leq a_j}}.
		\end{eqnarray}
		So, $\lambda_n F(a_{n+1+j})$ converges to $\nu \left(\lip_1^{\leq a_{j+1}} \right)$. On the other hand, $\lambda_n F(a_{n+1+j}) = \frac{\lambda_n}{\lambda_{n+1}}\lambda_{n+1} F(a_{n+1+j})$. By considering the limits of the two sides of this equation, one obtains
		\begin{equation*}
			\label{eq:thm:criterArithmetic:frac}
			\lim_{n\to \infty} \frac{\lambda_n}{\lambda_{n+1}} = \nu \left(\lip_1^{\leq a_{j+1}} \right)/\nu \left(\lip_1^{\leq a_{j}} \right),
		\end{equation*}
		assuming that at least one of $\nu \left(\lip_1^{\leq a_{j}} \right)$ or $\nu \left(\lip_1^{\leq a_{j+1}} \right)$ is nonzero. This is indeed the case for large enough $j$, and hence, $\lim_{n\to \infty} \frac{\lambda_n}{\lambda_{n+1}}$ exists and is positive. This in turn implies that $\forall j: \nu \left(\lip_1^{\leq a_{j}} \right)>0$ (otherwise, choose the largest $j$ such that it is zero, and obtain a contradiction). 
		Thus, \eqref{eq:thm:criterArithmetic:F} implies that $\lim_{n\to \infty}  F(a_{n+j+1})/F(a_{n+j}) = \nu \left(\lip_1^{\leq a_{j+1}} \right) / \nu \left(\lip_1^{\leq a_{j}} \right)>0$ for all $j$. Therefore, $n\mapsto F(a_n)$ is a log-regularly-varying function. So, there exists $p\geq 1$ such that $\lim_{n\to \infty}  F(a_{n+1})/F(a_n) = p$. Thus, $\forall j: \nu \left(\lip_1^{\leq a_{j+1}} \right) / \nu \left(\lip_1^{\leq a_{j}} \right) = p$. Since $\lim_{j\to-\infty} \nu\left(\lip_1^{\leq a_j}\right)=0$, one obtains that $p>1$. 
		
		Let now $\mu'_n$ be the (non-normalized) restriction of $\mu$ to  $B_{a+(n+1)r}(\origin )\setminus B_{a+ n r}(\origin)$. Equation~\eqref{eq:thm:criterArithmetic:restrict} implies that 
		\[
		\forall j: \lambda_n \iota_{nr}^* \mu'_{n+j} \underset{n\to \infty}{\Rightarrow} \restrict{\nu}{\lip_1^{(a_j,a_{j+1}]}}\coloneqq \nu'_j.
		\]
		This proves that $\iota_{nr}^* \mu_{n+j}$ also converges to the normalized version of $\nu'_j$.
		On the other hand, 
		\[\lambda_n \iota_{nr}^* \mu'_{n+j} =\frac{\lambda_n}{\lambda_{n+j}} \iota_{-jr}^* \left(\lambda_{n+j}\iota_{(n+j)r}^* \mu'_{n+j}\right)\underset{n\to \infty}{\Rightarrow} p^j\iota_{-jr}^* \nu'_0.\]
		Therefore, $\forall j: \nu'_j = p^j \iota_{-jr}^* \nu'_0$. This proves~\eqref{eq:thm:critArithmetic:periodic}. Define $\xi$ such that $p^{\xi}=\nu\left(\lip_1^{\leq 0}\right)$. This implies~\eqref{eq:thm:critArithmetic:limsup} and the inequality in~\eqref{eq:thm:critArithmetic:liminf}. It remains to prove~\eqref{eq:thm:critArithmetic:sum} for some $\varphi$ and $\nu_{(\cdot)}$. This is obtained from~\eqref{eq:thm:critArithmetic:periodic} by disintegration as follows.
		
		Let $\nu''_n\coloneqq \restrict{\nu}{\lip_1^{(nr,(n+1)r]}}$. Equation~\eqref{eq:thm:critArithmetic:periodic} implies that $\nu''_n = p^n\iota_{-nr}^* \nu''_0$. In addition, the total mass of $\nu''_0$ is $\nu\left(\lip_1^{\leq r}\right) - \nu\left(\lip_1^{\leq 0}\right) = p^{\xi+1}-p^{\xi}=(p-1)p^{\xi}$. Let $\varphi$ be the probability measure on $(0,r]$ obtained by projecting $\nu''_0$ under the map $(\theta,t)\mapsto t$ and normalizing it. By the disintegration theorem (see \emph{e.g.}~\cite[Theorem 3.4]{kallenberg}) there exists a Markov kernel $\nu_t$, $0<t\leq r$, such that $\nu''_0 = (p-1)p^{\xi}\int_0^r \left(\nu_t\otimes\delta_t\right) \varphi(dt)$. This implies~\eqref{eq:thm:critArithmetic:sum} and the claim is proved.
	\end{proof}

    \begin{prop}
		\label{thm:critArithmetic:converse}
		Conversely to \Cref{thm:critArithmetic}, if for some $a\in\mathbb R$, the function $n\mapsto F(a+nr)$‌ is log-regularly-varying with positive index $b$, $\lambda_n F(a+nr)$ converges to a positive constant when $n\to \infty$ and $\iota_{n r}^*\mu_n$ converges weakly to a probability measure on $\lip_1^{(a,a+r]}$ (in particular, no mass escapes to $\lip_1^a$), then $\lim_n \lambda_n \iota_{nr}^*\mu$ exists and is nonzero.
	\end{prop}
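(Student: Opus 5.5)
Write $a_j\coloneqq a+jr$, let $\mu'_n$ be the non-normalized restriction of $\mu$ to the annulus $A_n\coloneqq B_{a_{n+1}}(\origin)\setminus B_{a_n}(\origin)$, so $\mu_n=\mu'_n/(F(a_{n+1})-F(a_n))$, and let $\rho$ be the weak limit of $\iota_{nr}^*\mu_n$, a probability measure on $\lip_1^{(a,a+r]}$. Set $c\coloneqq\lim_n\lambda_nF(a_n)>0$ and $p\coloneqq e^b>1$.

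The plan is to build $\nu$ piece by piece on the strips $\lip_1^{(a_j,a_{j+1}]}$, then glue them. Since $\lambda_n\iota_{nr}^*\mu=\sum_{j\in\mathbb Z}\lambda_n\iota_{nr}^*\mu'_{n+j}$ (only finitely many terms are nonzero for each $n$), it suffices to control each summand separately and then the tail $j\to-\infty$.

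\emph{Step 1 (each strip).} For fixed $j$, write $\lambda_n\iota_{nr}^*\mu'_{n+j}=\lambda_n\bigl(F(a_{n+j+1})-F(a_{n+j})\bigr)\,\iota_{-jr}^*\bigl(\iota_{(n+j)r}^*\mu_{n+j}\bigr)$. Log-regular variation gives $\lambda_n(F(a_{n+j+1})-F(a_{n+j}))\to c(p-1)p^j$. Since $\iota_{-jr}^*$ is just the delay shift, which is continuous on $\lip_1^0\times[-\infty,\infty)$, this summand converges weakly to $c(p-1)p^j\,\iota_{-jr}^*\rho$, supported on $\lip_1^{(a_j,a_{j+1}]}$.

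\emph{Step 2 (gluing).} Define $\nu\coloneqq\sum_j c(p-1)p^j\iota_{-jr}^*\rho$. It is nonzero and Radon on $\lip_1^0\times[-\infty,\infty)$, and it gives no mass to $\{-\infty\}$ because $\sum_{j\le J}c(p-1)p^j=cp^{J+1}<\infty$ tends to $0$ as $J\to-\infty$. To get convergence in $\mathcal M$, I test against a compactly supported continuous $f$ on $\lip_1^0\times[-\infty,\infty)$, which is supported in $\{\delta\le a_J\}$ for some $J$. Fix $K<J$ and split the sum into the finitely many strips $K\le j<J$, handled by Step 1, and the tail strips $j<K$. The tail has total mass $\lambda_nF(a_{n+K})\to cp^K$, so it contributes at most $\|f\|_\infty cp^{K}$, which is small for $K$ very negative; the same bound holds for $\nu$. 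This gives $\int f\,d(\lambda_n\iota_{nr}^*\mu)\to\int f\,d\nu$.

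\emph{Main obstacle.} The strips overlap only at their endpoints $\lip_1^{a_j}$, and the weak convergence in Step 1 must not leak mass across these boundaries. This is exactly why the hypothesis that $\rho$ charges nothing at $\lip_1^a$ is needed, and I would check it carefully. Weak convergence of finite measures on each closed strip, summed over finitely many strips, converges against any continuous $f$, so closedness of the supports is harmless. The vague limit on compacts then follows from the tail estimate.
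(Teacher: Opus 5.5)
Your proof is correct and follows the paper's own argument: the paper also splits $\mu$ into the annuli $\mu'_{n+j}$, shows $\lambda_n\iota_{nr}^*\mu'_{n+j}\to(p-1)p^{j+\xi}\iota_{-jr}^*\nu'_0$ for each fixed $j$, and then sums over $j$. The only difference is that the paper cites its linear-decomposition lemma (\Cref{lem:sum}) for the summation, whereas you make the tail estimate $\lambda_nF(a_{n+K})\to cp^K$ explicit. One correction to your closing remark: the hypothesis that $\rho$ gives no mass to $\lip_1^{a}$ is not needed for the convergence, since each strip converges weakly on the whole space and a continuous $f$ vanishing on $\{\delta>a_J\}$ also vanishes on $\{\delta=a_J\}$, so no mass can leak across strip boundaries.
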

	\begin{proof}
		Let $\nu'_0 \coloneqq \lim_n \mu_n$. Let $a_n\coloneqq a+nr$ and $\mu'_n$ be the (non-normalized) restriction of $\mu$ to  $B_{a_{n+1}}(\origin )\setminus B_{a_n}(\origin)$. One has $\mu'_n = \big(F(a_{n+1})-F(a_n)\big) \mu_n$. By assumption, $F(a_{n+1})/F(a_n)\to p$ for some $p>1$. 
		Also, $\lambda_n F(a_n)\to p^{\xi}$ for some $\xi\in\mathbb R$. Therefore, $\lambda_n \iota_{nr}^*\mu'_n\Rightarrow (p-1)p^{\xi} \nu'_0$. Similarly, for every $j\in\mathbb Z$, the following limit exists:
		\begin{eqnarray*}
			\lim_{n \to \infty} \lambda_n\iota_{nr}^* \mu'_{n+j} &=& (p-1)p^{j+\xi} \lim_{n \to \infty}  \iota_{nr}^* \mu_{n+j}\\
			&=& (p-1)p^{j+\xi} \iota_{-jr}^* \lim_{n \to \infty}  \iota_{(n+j)r}^* \mu_{n+j}\\
			&=& (p-1)p^{j+\xi} \iota_{-jr}^* \nu'_0\coloneqq \nu'_j.
		\end{eqnarray*}
		Note that $\nu'_j$ is a finite measure on $\lip_1^{(jr,(j+1)r]}$ and $\mu=\sum_j \mu'_{n+j}$. Now, by summing over $j$, \Cref{lem:sum} implies that $\restrict{\left(\iota_{nr}^*\mu\right)}{\lip_1^{\leq m r}}$ converges weakly to $\sum_{j\leq m} \nu'_j$ for every $m\in\mathbb Z$. This implies that $\iota_{nr}^*\mu$ converges in $\mathcal M$ to $\sum_{j\in\mathbb Z} \nu'_j$ and the claim is proved.
	\end{proof}

    \subsection{Convergence Results for Vertex-Measured Graphs}
	\label{subsec:graphs}
	
	In this subsection, we prove the full-convergence result for graphs announced in \Cref{thm.ctfg} and obtain a decomposition of the intensity measure on $\widehat{\partial G}$ given in \eqref{eq:nu-graph}. Here, $G$ is the vertex set of a connected locally-finite graph, equipped with the graph-distance metric and a measure $\mu$ on the vertices. We call $(G,\mu)$ a \textbf{vertex-measured graph}. As before, we consider a Poisson point process on $G$ with intensity measure $\lambda\mu$, where $\lambda >0$. Alternatively, one may consider the Bernoulli point process on $G$ with probabilities $x\mapsto \min\{1,\lambda\mu(x)\}$ for $x\in G$. In \Cref{prop:bernoulli}, we will show that the two choices are generally equivalent in the limit.
	
	It is easy to see that, if $\lambda_n\iota^*_{t_n}\mu$ converges, then the fractional part of $t_n$ converges in $\mathbb R/\mathbb Z$. Since applying a constant shift does not affect the tessellation, we may thus assume $t_n\in\mathbb Z$ without loss of generality. The following corollary is a direct application of \Cref{prop:precompact2}:
	\begin{cor}
		\label{cor:precompactGraph}
		In \Cref{prop:precompact2}, if  $E=G$ is a vertex-measured graph, $\nu=\lim_n \lambda_n\iota^*_{t_n}\mu$ and  $\forall n: t_n\in\mathbb Z$
		then $(p_1-1)p_1^{j-1}\leq {\nu(\lip_1^j)}/{\nu(\lip_1^{\leq 0})}\leq (p_2-1)p_2^{j-1}$ for all $j\in\mathbb N$. In particular, if $p_1=p_2=p$, then $\forall j\in\mathbb Z:\ \nu(\lip_1^j) = (p-1)p^{j-1+\xi}$ for some $\xi\in\mathbb R$.
	\end{cor}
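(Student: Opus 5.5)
The plan is to apply \Cref{prop:precompact2} with $r_0=1$ and a non-integer base point $a$, exploiting integrality of the graph metric. Since $G$ carries the graph distance, every shifted distance function $d(\cdot,x)-t_n$ with $t_n\in\mathbb Z$ takes integer values at $\origin$. So each $\mu_n\coloneqq\lambda_n\iota_{t_n}^*\mu$ is supported on $\lip_1^0\times\mathbb Z$, i.e.\ on $\bigcup_{j\in\mathbb Z}\lip_1^{j}$, where $\lip_1^{j}$ denotes the set of $f$ with $f(\origin)=j$. The set $\lip_1^0\times\mathbb Z$ is closed in $\lip_1^0\times[-\infty,\infty)$ (after adjoining $-\infty$). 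Hence the vague limit $\nu$ is also supported on it, and $\nu$ gives no mass to $\lip_1^{-\infty}$ because $\nu\in\mathcal M$.

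Next, take $a=-\tfrac12$, so that $a_j=j-\tfrac12$. Then $\nu(\lip_1^{a_j})=0$ for all $j$, which is the hypothesis of \Cref{prop:precompact2}; the growth conditions with $r_0=1$ are assumed. Since $\nu$ is carried by integer delays, we have $\lip_1^{[a_{j-1},a_j]}\cap\operatorname{supp}\nu=\lip_1^{j-1}$ and $\lip_1^{\leq a}\cap\operatorname{supp}\nu=\lip_1^{\leq -1}$. Substituting into \eqref{eq:prop:precompact2-2} gives bounds on $\nu(\lip_1^{j-1})/\nu(\lip_1^{\leq -1})$.

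To match the stated normalization by $\nu(\lip_1^{\leq 0})$, shift indices by one. One route is to run the same argument with $a=\tfrac12$, so that $\lip_1^{\le a}$ corresponds to $\lip_1^{\le 0}$ and $[a_{j-1},a_j]$ corresponds to $\lip_1^{j}$. For $j\ge1$ this yields exactly $(p_1-1)p_1^{j-1}\le\nu(\lip_1^j)/\nu(\lip_1^{\le0})\le(p_2-1)p_2^{j-1}$.

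In the case $p_1=p_2=p$, the last assertion of \Cref{prop:precompact2} gives $\nu(\lip_1^{j})=(p-1)p^{j-1+\xi}$ for $j\ge1$, with $p^\xi=\nu(\lip_1^{\le0})$. To extend this to all $j\in\mathbb Z$, apply the proposition again with $a=\tfrac12-m$ for large $m$. The same formula then holds with $\xi$ replaced by some $\xi_m$ satisfying $p^{\xi_m}=\nu(\lip_1^{\le -m})$. Consistency on overlapping indices forces $\xi_m=\xi-m$, since both formulas compute the same masses $\nu(\lip_1^j)$ for $j\geq 1$. This identity gives the formula for every $j>-m$, and letting $m$ grow covers all of $\mathbb Z$.

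The main obstacle is the closedness/support step. One must check that weak limits in the topology of $\mathcal M$ (vague on $\lip_1^0\times[-\infty,\infty)$) cannot create mass at non-integer delays. This follows from the Portmanteau inequality applied to the open sets $\lip_1^0\times(j,j+1)$, each of which has $\mu_n$-measure zero.
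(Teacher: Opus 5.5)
Your proposal is correct and matches the paper's approach: the paper presents this corollary as a direct application of \Cref{prop:precompact2}. Your choice of $r_0=1$ with a half-integer base point $a$ is what makes that application legitimate, justified by the Portmanteau argument that the limit of the integer-delay measures $\lambda_n\iota_{t_n}^*\mu$ charges only $\lip_1^0\times\mathbb Z$ (the same integrality the paper uses in \Cref{lem.scfg2}). Your re-application with $a=\tfrac12-m$ to reach all $j\in\mathbb Z$ is valid; the same conclusion also follows from the recursion $\nu(\lip_1^{\leq a_{j+1}})=p\,\nu(\lip_1^{\leq a_j})$ in the proof of \Cref{prop:precompact2}, which holds for every $j\in\mathbb Z$.
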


    We now proceed to the proof of \Cref{thm.ctfg}. By considering the measure of $\lip_1^{\leq 0}$, one obtains that $\lim_n \lambda_n\iota^*_n\mu$ should exist; i.e., we may assume $t_n=n$.
	
	\begin{lem}
		\label{lem.scfg1}
		Let $(G,\mu)$ be a vertex-measured graph and $\origin\in G$. 
		The following statements are equivalent:
		\begin{enumerate}[label=(\roman*)]
			\item \label{thm:criterion2:mu} There exists a sequence $(\lambda_n)_{n \geq 1}$ s.t.~$\lambda_n \iota_n^*\mu$‌ converges in $\mathcal M$ as $n\to \infty$ and its limit nonzero.
			\item \label{thm:criterion2:F} $\restrict{F}{\mathbb N}$‌ is log-regularly-varying with index $b>0$ and the probability measure $F(n)^{-1} \restrict{\mu}{B_n(\origin)}$ (as a probability measure on $\bar E$) converges weakly as $n\to\infty$ to a probability measure on $\partial E$.
			\item \label{thm:criterion2:Fsphere} $\restrict{F}{\mathbb N}$‌ is log-regularly-varying with index $b>0$ and the normalized restriction of $\mu$ to the sphere  $S_n\coloneqq B_{n}(\origin )\setminus B_{n-1}(\origin)$ converges weakly to a probability measure on $\partial E$ as $n\to\infty$.
		\end{enumerate}
	\end{lem}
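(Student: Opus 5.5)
The plan is to prove the cycle of implications \ref{thm:criterion2:mu}$\Rightarrow$\ref{thm:criterion2:Fsphere}$\Rightarrow$\ref{thm:criterion2:F}$\Rightarrow$\ref{thm:criterion2:mu}, using \Cref{thm:critArithmetic} with $r=1$ for the first arrow and \Cref{thm:critArithmetic:converse} for the last. Since $G$ is a graph, every shifted distance function $\iota_n(x)=d(\cdot,x)-n$ has integer delay $d(x,\origin)-n$. Hence $\lambda_n\iota_n^*\mu$ and any limit $\nu$ are supported on $\lip_1^0\times\mathbb Z$.

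For \ref{thm:criterion2:mu}$\Rightarrow$\ref{thm:criterion2:Fsphere}, apply \Cref{thm:critArithmetic} with $r=1$. Part~\ref{thm:critArithmetic:reg} gives, for all but countably many $a$, that $n\mapsto F(a+n)$ is log-regularly-varying with index $\ln p>0$. We choose $a\in(-1,0)$. Then $F(a+n)=F(n-1)$, because $F$ is constant on $[n-1,n)$ for graphs, so $\restrict{F}{\mathbb N}$ is log-regularly-varying. The same part also says that the normalized restriction $\mu_n$ of $\mu$ to $B_{a+n+1}\setminus B_{a+n}=S_n$, pushed by $\iota_n$, converges to the normalized restriction of $\nu$ to $\lip_1^{(a,a+1]}=\lip_1^{0}$ (delay exactly $0$). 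Projecting onto the $\lip_1^0$ coordinate, which is continuous, turns $\iota_n^*\mu_n$ into the normalized restriction of $\mu$ to $S_n$ viewed in $\bar E$. Thus that measure converges to the projection of $\nu$ restricted to delay $0$. This limit lives on $\partial E$ by \Cref{prop:subseq}\ref{prop:subseq:corona}.

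For \ref{thm:criterion2:Fsphere}$\Rightarrow$\ref{thm:criterion2:F}, write $F(n)^{-1}\restrict{\mu}{B_n}=\sum_{k\le n}\frac{\mu(S_k)}{F(n)}\sigma_k$, where $\sigma_k$ is the normalized measure on $S_k$ and $\sigma_k\Rightarrow\sigma$. Log-regular variation gives $\mu(S_{n-j})/F(n)\to(p-1)p^{-j-1}\cdot p$. These weights are summable, with geometrically small tails uniformly in $n$ because $F(n-m)/F(n)\to p^{-m}$. A truncation argument then shows that the mixture converges to $\sigma$. The converse direction, \ref{thm:criterion2:F}$\Rightarrow$\ref{thm:criterion2:Fsphere}, is not needed for the cycle, but it follows the same way as \eqref{eq:annulus}: $\sigma_n$ is a linear combination of the ball measures at radii $n$ and $n-1$, with coefficients $p/(p-1)$ and $-1/(p-1)$ in the limit.

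For \ref{thm:criterion2:F}$\Rightarrow$\ref{thm:criterion2:mu}, first derive \ref{thm:criterion2:Fsphere} as just indicated. Then choose $\lambda_n\coloneqq F(n)^{-1}$ and $a=-1/2$, so that $F(a+n)=F(n-1)$, and $\lambda_nF(a+n)\to p^{-1}>0$. The measures $\iota_n^*\mu_n$, with $\mu_n$ normalized on $S_n$, are supported on delay $0\in(a,a+1]$. Their $\lip_1^0$-components converge to a measure on $\partial E$, so they converge to a probability measure on $\lip_1^0\times\{0\}$ and no mass escapes to delay $a$. \Cref{thm:critArithmetic:converse} then yields the nonzero limit.

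The main obstacle is the bookkeeping in the first implication. One must check that the delay window $(a,a+1]$ with $a\in(-1,0)$ picks out exactly the sphere $S_n$. One must also justify that weak convergence in $\mathcal M$ (restricted to $\lip_1^0\times\{0\}$, a continuity set since the delays are integer-valued) passes to the projected measures on $\bar E$. This uses the identification $\lip_1^0\cong\lip_1/\mathbb R\supseteq\bar E$ and the fact that $\iota_n(S_n)$ lies in the closed set $\bar E\times\{0\}$.
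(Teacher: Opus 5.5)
Your proposal is correct and follows the paper's route: (i) is connected to the ball and sphere conditions through \Cref{thm:critArithmetic} and \Cref{thm:critArithmetic:converse} with $r=1$, and the ball and sphere conditions are interchanged by the elementary linear-combination/mixture argument behind \eqref{eq:annulus}; your choice of a window $(a,a+1]$ with $a\in(-1,0)$, which isolates exactly $S_n$ at delay $0$, spells out bookkeeping the paper leaves implicit. There is one harmless slip: $\mu(S_{n-j})/F(n)\to (p-1)p^{-j-1}$, and these weights sum to $1$, whereas you wrote $(p-1)p^{-j}$.
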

	\begin{proof}
		The equivalence of~\ref{thm:criterion2:F} and~\ref{thm:criterion2:Fsphere} can be proved similarly to the arguments in the proof of \ref{thm:criterion:F}$\Rightarrow$\ref{thm:criterion:mu} of \Cref{lem:criterion1}. The rest of the claims are directly implied by \Cref{thm:critArithmetic,thm:critArithmetic:converse}.
	\end{proof}

    \begin{lem}
		\label{lem.scfg2}
		If the conditions of \Cref{lem.scfg1} hold, then the limiting measures in~\ref{thm:criterion2:F} and~\ref{thm:criterion2:Fsphere} are identical, namely $\nu_0$. In addition, letting $p\coloneqq e^b$,  one has $\lim_{n\to \infty} \lambda_{n} F(n)= p^{\xi}$ for some $\xi\in\mathbb R$ and the limiting measure can be decomposed as
		\begin{equation}
			\label{eq:nu-graph}
			\nu^{(\xi)}\coloneqq \lim_{n} \lambda_n \iota_n^*\mu = \nu_0\otimes \beta^{(\xi)},
		\end{equation}
		where $\beta^{(\xi)}$ is the measure on $\mathbb Z$ defined by $\beta^{(\xi)}(i)\coloneqq(p-1)p^{i-1+\xi}$ for $i\in \mathbb Z$. In particular, one has
		\begin{equation*}
			\label{eq:nu-graph2}
			\iota_i^*\nu^{(\xi)} = p^{i} \nu^{(\xi)},\quad \forall i\in\mathbb Z.
		\end{equation*} 
	\end{lem}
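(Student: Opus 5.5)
We would derive \Cref{lem.scfg2} from \Cref{thm:critArithmetic} applied with $r=1$ and $t_n=n$. On a graph, every shifted distance function $\iota_n(x)$ takes integer values and has integer delay $d(x,\origin)-n$. Hence $\lambda_n\iota_n^*\mu$ is supported on $\lip_1^0\times\mathbb Z$, and so is its limit $\nu$. In the decomposition \eqref{eq:thm:critArithmetic:sum}, the measure $\varphi$ on $(0,1]$ must therefore be $\delta_1$, and only the kernel value $\nu_1$ is relevant. Rewriting \eqref{eq:thm:critArithmetic:sum} with $\delta_{j+1}$ and reindexing $i=j+1$ gives
\[
\nu=\sum_{i\in\mathbb Z}(p-1)p^{i-1+\xi}\,\nu_1\otimes\delta_i=\nu_1\otimes\beta^{(\xi)}.
\]
The relation $\iota_i^*\nu=p^i\nu$ is then exactly \eqref{eq:thm:critArithmetic:periodic}. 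Alternatively, it can be checked directly from the formula for $\beta^{(\xi)}$.

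It remains to identify $\nu_1$ and to show that $\lambda_nF(n)\to p^\xi$. Since the delays are integer-valued, we may choose $a=1/2$ in item~\ref{thm:critArithmetic:reg} of \Cref{thm:critArithmetic}. The points $a+j$ are then continuity points of $g(t)=\nu(\lip_1^{\le t})$, because $\nu$ charges only integer delays. Also $F(1/2+n)=F(n)$ and $\lambda_nF(n)=\mu_n(\lip_1^{\le 0})=\mu_n(\lip_1^{\le 1/2})$. By \eqref{eq:thm:criterArithmetic:F}, this converges to $\nu(\lip_1^{\le 0})=p^\xi$, which is the claimed limit. For the same value of $a$, item~\ref{thm:critArithmetic:reg} states that the normalized restriction of $\mu$ to $B_{n+1}\setminus B_n=S_{n+1}$, pushed forward by $\iota_n$, converges to the normalized restriction of $\nu$ to $\lip_1^{(1/2,3/2]}=\lip_1^{1}$, which is $\nu_1\otimes\delta_1$. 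Projecting to $\lip_1^0$, the normalized sphere measures converge to $\nu_1$. Hence $\nu_1$ is the limit appearing in \Cref{lem.scfg1}\ref{thm:criterion2:Fsphere}.

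Next we show that the limits in \ref{thm:criterion2:F} and \ref{thm:criterion2:Fsphere} coincide. The measure $F(n)^{-1}\restrict{\mu}{B_n}$ is a convex combination of the normalized sphere measures on $S_k$, $k\le n$, with weights $(F(k)-F(k-1))/F(n)$. Since $F(n-m)/F(n)\to p^{-m}$, these weights concentrate, up to an error of size $p^{-m}$, on the last $m$ spheres. Each of those spheres has distribution close to $\nu_1$ when $n$ is large. So the ball measures converge to $\nu_1$ as well. Alternatively, one can project $\restrict{\nu}{\lip_1^{\le 0}}=\nu_1\otimes\restrict{\beta^{(\xi)}}{\mathbb Z_{\le 0}}$ onto its first coordinate, exactly as in the last claim in the proof of \Cref{lem:criterion1}. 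Either way, $\nu_0=\nu_1$, and this proves \eqref{eq:nu-graph}.

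The main obstacle is purely bookkeeping: aligning the conventions of \Cref{thm:critArithmetic}, namely the half-open intervals $(jr,(j+1)r]$, the choice of $a$, and the indexing of spheres with $\beta^{(\xi)}(i)=(p-1)p^{i-1+\xi}$. The choice $a=1/2$ should make all of these align without further argument.
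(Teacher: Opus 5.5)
Your proposal is correct and follows essentially the paper's route: the paper also proves the lemma as a direct corollary of \Cref{thm:critArithmetic} with $r=1$ and $t_n=n$. Its one extra remark, that $\lip_1^{\leq 0}$ is clopen among integer-valued $1$-Lipschitz functions so that $\lambda_nF(n)\to\nu(\lip_1^{\leq 0})=p^{\xi}$, is exactly what your choice $a=1/2$ achieves, and the details you add ($\varphi=\delta_1$, identifying $\nu_1$ with the sphere limit via item~\ref{thm:critArithmetic:reg}, and the convex-combination comparison of ball and sphere measures) are left implicit in the paper and are all sound.
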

	\begin{proof}
		The claim is a direct corollary of \Cref{thm:critArithmetic}. The only additional argument is that, in the space of 1-Lipschitz \underline{integer-valued} functions on $G$, the set $\lip_1^{\leq 0}$ is clopen. So, $\lim_{n \to \infty} \lambda_{n} F(n) = \nu(\lip_1^{\leq 0})$.
	\end{proof}
	
	\begin{proof}[Proof of \Cref{thm.ctfg}]
		By \Cref{prop:subseq}, $\iota_{t_n}X^{(\lambda_n)}$ converges if and only if $\lambda_n\iota^*_{t_n}\mu$ does so. By considering the measure of $\lip_1^{\leq 0}$, one obtains that $t_n$ should be asymptotically equal to $n+c$ for some constant $c$. So, we may assume $t_n=n$. Now, the claims of \Cref{thm.ctfg} are implied by \Cref{lem.scfg1,lem.scfg2,prop:subseq}.
	\end{proof}

    \begin{example}[\textsc{Regular Tree and Canopy Tree}]
		\label{ex:canopy}
		If $G$ is the $(p+1)$-regular tree, then $\nu_0$ is the harmonic measure on $\partial G$ and $b=\ln p$ (see \Cref{subsec:regularTree}). If $G$ is the canopy tree, there are significant differences. Since every tree is CAT(0) and the canopy tree is one-ended, the discussion in \Cref{subsec:cat0} shows that 
		$\partial G$ consists of a singe point, call it $\theta$. Therefore, the uniform measure on the balls converge to the Dirac measure on $\theta$. Hence, the limiting Poisson point process satisfies $\Theta_i^{(\xi)}=\theta, \forall i$ and $(\Delta_i^{(\xi)})_{i\geq 1}$ is a Poisson point process on $\mathbb Z$‌ with intensity measure $e^{b\xi}\beta$. If $\Delta_1^{(\xi)}=\cdots = \Delta_k^{(\xi)}$ are the minimum values of $(\Delta_i)_{i\geq 1}$, then in the Voronoi tessellation, the first $k$ cells are the whole $G$ and the rest of the cells are empty. Roughly speaking, some cells \textit{escape to infinity}.
	\end{example}

    \begin{remark}[\textsc{Dependence on $\xi$}]
		\label{rem:dependenceOnXi}
		In both convergence results of \Cref{thm.scfc,thm.ctfg}, the limiting intensity measure is of the form $\nu^{(\xi)}\coloneqq \nu_0\otimes(p^{\xi}\beta^{(0)})$ (see~\eqref{eq:nu} and~\eqref{eq:nu-graph}). However, there is a fundamental difference: In \Cref{thm.scfc}, all the measures $\nu^{(\xi)}$ are equivalent under shift (in the sense that $\nu^{(\xi)}=\iota^*_t\nu^{(0)}$ for some $t\in\mathbb R$), and hence, they lead to the same IPVT. But in \Cref{thm.ctfg}, $\nu^{(\xi_1)}$ is equivalent under shift to $\nu^{(\xi_2)}$ if and only if $\xi_1-\xi_2\in\mathbb Z$. We guess that the IPVT indeed depends on the fractional part of $\xi$. This has been proved for regular trees in~\cite[Proposition~6.4]{IPVT} in the stronger sense that the fractional part of $\xi$ is a measurable function of the IPVT.
	\end{remark}
	
	\begin{lem}[\textsc{Poisson VS Bernoulli}]
		\label{prop:bernoulli}
		Let $(G,\mu)$ be a vertex-measured graph that satisfies the growth conditions of \Cref{prop:precompact}. For each $\lambda>0$, let $X^{(\lambda)}$ be a Poisson point process with intensity measure $\lambda{\mu}$ and let $Y^{(\lambda)}$ be a Bernoulli point process on $G$ with probabilities $x\mapsto \min\{1,\lambda\mu(x)\}$. Then, $d_P(\Vor(X^{(\lambda)}),\Vor(Y^{(\lambda)})) \to 0$ as $\lambda\to 0$ ($d_{P}$ being the Prokhorov distance), provided that $\lim_{n \to \infty} \mu^2(B_n(\origin))/\mu(B_n(\origin))^2 = 0$, where $\mu^2(\cdot)\coloneqq \left[\sum_{x\in \cdot} \mu(x)^2\right]$. As a result, $\Vor(X^{(\lambda)})$ and $\Vor(Y^{(\lambda)})$ have the same subsequential weak limits.
	\end{lem}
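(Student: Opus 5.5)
We will couple $X^{(\lambda)}$ and $Y^{(\lambda)}$ vertex by vertex and show that, with probability tending to one, the two point processes agree on a ball large enough to determine the Voronoi diagram on any fixed compact window. Since $G$ is a graph, the Fell topology on closed subsets of the vertex set reduces to agreement on finite balls. It therefore suffices to show the following: for every $T$ and $\epsilon>0$, for small $\lambda$ there is a coupling under which $\Vor(X^{(\lambda)})$ and $\Vor(Y^{(\lambda)})$ restricted to $B_T(\origin)$ coincide (as unordered collections of traces of cells) with probability at least $1-\epsilon$. This bounds the Prokhorov distance by roughly $\epsilon$ plus a term from the metric weighting of the window $T$.

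\emph{Coupling.} Independently for each vertex $x$, let $N_x\sim\mathrm{Poiss}(\lambda\mu(x))$ be the multiplicity of $X^{(\lambda)}$ at $x$, and couple the Bernoulli indicator $Y_x=\mathbf 1\{N_x\geq 1\}$. This indicator has parameter $1-e^{-\lambda\mu(x)}$ rather than $\min\{1,\lambda\mu(x)\}$. The discrepancy between these two Bernoulli parameters is at most $(\lambda\mu(x))^2$, so a further maximal coupling makes the two indicators differ with probability $O(\lambda^2\mu(x)^2)$. Multiplicity $N_x\geq 2$ also occurs with probability $O(\lambda^2\mu(x)^2)$. Multiple points at one vertex do not change the set of cells, except for duplicated identical cells. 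To handle these I would either note that duplicates occur on the same bad event, or regard the point process of cells up to multiplicity. Hence, on the ball $B_R(\origin)$, the two nuclei configurations coincide as sets except on an event of probability at most $C\lambda^2\mu^2(B_R(\origin))$.

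\emph{Choosing $R$.} Take $R=t(\lambda)+K$ with $K$ large and fixed. By the growth conditions of \Cref{prop:precompact}, $\lambda F(t(\lambda)+K)\leq p_2^{K/r_0+1}$ stays bounded. The hypothesis $\mu^2(B_n)/F(n)^2\to0$ then gives
\[
\lambda^2\mu^2(B_R)=\bigl(\lambda F(R)\bigr)^2\cdot\frac{\mu^2(B_R)}{F(R)^2}\to 0 .
\]
Next I need that, with high probability, the cells meeting $B_T(\origin)$ are determined by the nuclei in $B_{t(\lambda)+K}$. A vertex $y\in B_T$ is assigned to nuclei at distance at most $d(y,X_1)\leq T+d(\origin,X_1)$. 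So it suffices that $d(\origin,X_1)\leq t(\lambda)+K-2T$ in both processes; any competing nucleus outside $B_R$ is then at distance greater than $R-T$ from $y$, and thus cannot be closest. The probability that this fails is at most $\exp(-\lambda F(t(\lambda)+K-2T))\leq \exp(-p_1^{\lfloor (K-2T)/r_0\rfloor})$ by \eqref{eq:prop:precompact:0}. The same bound holds for $Y$, since $P(Y\cap B=\emptyset)=\prod(1-\min\{1,\lambda\mu(x)\})\leq e^{-\lambda\mu(B)}$. Choosing $K$ large makes this at most $\epsilon$.

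\emph{Conclusion.} On the good event, the two diagrams agree on $B_T$ for each $T$, giving $d_P\to0$. Equality of the subsequential limits then follows because precompactness, from \Cref{prop:precompact}, transfers between processes at vanishing Prokhorov distance. The main obstacle I expect is bookkeeping the exact Fell/Prokhorov metric on point processes of closed sets, in particular the treatment of multiplicities and of cells that meet $B_T$ only through nuclei far away. I would address this by working with the local-agreement characterization of Fell convergence for discrete spaces and by showing that, on the good event, the collections of cells intersecting $B_T$ together with their traces are equal.
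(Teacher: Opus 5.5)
Your proposal is correct and follows the paper's proof essentially step for step. The shared ingredients are a vertexwise coupling under which multiplicities differ with probability $O((\lambda\mu(x))^2)$, and a radius $t(\lambda)+O(1)$ on which $\lambda F$ stays bounded, so that $\lambda^2\mu^2(B_R(\origin))\to 0$. They also include a separate bound on the event that the nearest nucleus is too far from $\origin$; the paper makes this at most $\epsilon$ directly by choosing $t_\epsilon(\lambda)=\min\{n:\lambda F(n)\geq -\ln\epsilon\}$, while you get it from the lower growth constant $p_1$, which is an inessential difference.

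Of your two options for duplicated nuclei, keep the first one (counting $N_x\geq 2$ in the bad event, as you already do). The point process of cells is not assumed simple, so it cannot be taken up to multiplicity.
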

	Note that if $\mu$ is the counting measure on $G$, then the assumption of this lemma always holds (assuming the growth conditions of \Cref{prop:precompact}).
	\begin{proof}
		We construct a coupling of $X^{(\lambda)}$ and $Y^{(\lambda)}$ as follows. Let $(U(x))_{x\in G}$ be i.i.d. uniform random variables in the interval $[0,1]$. For each $x\in G$, let $x\in Y^{(\lambda)}$ if $U(x)> 1- \min\{1,\lambda\mu(x)\}$. Let the multiplicity of $x$ in $X^{(\lambda)}$ be zero if $U(x)<e^{-\lambda \mu(x)}$, be 1 if $e^{-\lambda\mu(x)}\leq U(x)<(1+\lambda\mu(x))e^{-\lambda\mu(x)}$, and be larger than 1 otherwise. It follows that the multiplicities of $x$ in $X^{(\lambda)}$ and $Y^{(\lambda)}$ are different with probability at most $c_1(\lambda \mu(x))^2$, for some constant $c_1$.
		
		Fix $\epsilon>0$ and let $A_{\epsilon}$ be the event that the restrictions of $\Vor(X^{(\lambda)})$ and $\Vor(Y^{(\lambda)})$ to the ball $B_{1/\epsilon}(\origin)$ are identical. We will prove that $\myprob{A_\epsilon}\geq 1-2\epsilon$ for small enough $\lambda$. This implies the claim. For this, it is enough to show that, with probability at least $1-2\epsilon$, the closest points of $X^{(\lambda)}$ and $Y^{(\lambda)}$ to $\origin$ are identical, namely at distance $\Delta_1$, and that $X^{(\lambda)}$ and $Y^{(\lambda)}$ are identical in $B_{\Delta_1+2/\epsilon}(\origin)$.
		
		Let $t_{\epsilon}(\lambda)\coloneqq \min\{n: \lambda F(n)\geq -\ln\epsilon \}$ and $t'_{\epsilon}(\lambda)\coloneqq t_{\epsilon}(\lambda)+2/\epsilon$. Assume for simplicity that the growth conditions of \Cref{prop:precompact} hold with $r_0=1$ (the general case is similar). One gets for small enough $\lambda$ that $\lambda F(t'_{\epsilon}(\lambda))\leq \lambda F(t_{\epsilon}(\lambda)) (p_2+\epsilon)^{2/\epsilon}\leq -(\ln \epsilon) (p_2+\epsilon)^{1+2/\epsilon}$. So, the assumption in Lemma \ref{prop:bernoulli} implies that $\lambda^2\mu^2(B_{t'_{\epsilon}(\lambda)})$ converges to zero as $\lambda\to 0$. Hence, the previous paragraph implies that $X^{(\lambda)}$ and $Y^{{(\lambda)}}$ are identical in $B_{t'_{\epsilon}(\lambda)}(\origin)$ with high probability. Also, one has $\myprob{\Delta_1> t_{\epsilon}(\lambda)}\leq e^{\ln\epsilon} = \epsilon$. This implies that $\myprob{A_{\epsilon}}\geq 1-2\epsilon$ for small enough $\lambda$, and the claim is proved.
	\end{proof}

    \subsection{Convergence Results for Weighted Edge-Measured Graphs}
	\label{subsec:edgeMeasured}

	In this subsection, we prove an extended version of \Cref{thm.ctfemg}, which gives a convergence criterion for \textit{weighted edge-measured graphs} (not necessarily bipartite). 
	Here is the formal definition of the latter. Let $G$ be a given undirected connected locally-finite graph in which every edge $e$ has a \dfn{weight} $w(e)\geq 0$. We allow multiple edges and self-loops here. For each edge $e$, consider a copy $I_e$‌ ofthe unit interval $[0,1]$. Let $E$ be the quotient of the disjoint union $\sqcup_e I_e$ when gluing an endpoint of $I_e$ with an endpoint of $I_{e'}$ if the corresponding vertices of $e$ and $e'$ are the same. 
	One can equip $E$ with the quotient metric. Let also $\mu$ be the measure on $E$‌ such that $\restrict{\mu}{I(e)} = w(e) \mathrm{Leb}_e$ for every edge $e$. Then $(E,\mu)$ is called the \dfn{weighted edge-measured graph} corresponding to $G$.

	To avoid ambiguity, let $B_r(x)\coloneqq B_r(E,x)$ be the ball in $E$ with radius $r$ centered at $x$ and define $B_r(G,x)$ similarly. As before, we let $F(r)\coloneqq \mu(B_r(\origin))$ be the volume function. Note that $F$ is linear on every interval $[n,n+1/2]$ and $[n+1/2,n+1]$, where $n\in \mathbb{N}^{*}$. 

    The above paragraph implies that $F$ cannot be log-regularly-varying, and hence, cannot satisfy the conditions of \Cref{thm.scfc}. Therefore, $\lim_{n \to \infty} \iota_{t(\lambda)}X^{(\lambda)}$ cannot exist for any function $t(\lambda)$. As in \Cref{subsec:graphs}, we are interested in integer shifts only. In this case, the results of periodic convergence in \Cref{subsec:periodic} are applicable, but we are able to say more by understanding the relation of $\partial E$‌ and $\partial G$, which is discussed below.

    \subsubsection{The Relation of $\partial E$ and $\partial G$ for Edge-Measured Graphs}
	\label{subsec:partialEgraphs}
	
	It can be seen that a sequence in  $G$ converges in $\overline G$‌ if and only if it converges in $\overline{E}$. This induces a topological embedding of $\partial G$ into $\partial E$. Assume $(x_ny_n)_{n\geq 1}$ is a sequence of edges such that $\iota(x_n)\to f\in \lip_1^0$ and $\iota(y_n)\to g\in \lip_1^0$. If $d(\origin, y_n)=d(\origin,x_n)\eqqcolon m_n$ (i.e., $x_ny_n$ is a \dfn{same-level edge}), then the image $\iota_{m_n}(x_ny_n)$ of this edge converges to the set of functions of the form $\edgezero{f,g,t}\in\lip_1$, $0\leq t\leq 1$, where 
	\begin{equation*}
		\edgezero{f,g,t}\coloneqq\left[(f+t)\wedge (g+1-t)\right]\in \lip_1^{t\wedge (1-t)} \; .
	\end{equation*}
	If $d(\origin,x_n)=d(\origin,y_n)-1\eqqcolon m_n$ (i.e., $x_ny_n$ is an \dfn{inter-level edge}), then $\iota_{m_n}(x_ny_n)$ converges to the set of functions of the form $\edgeone{f,g,t}$, where
	\begin{equation*}
		\edgeone{f,g,t} \coloneqq \edgezero{f,g+1,t} = \left[(f+t)\wedge (g+1+(1-t))\right]\in\lip_1^t \; .
	\end{equation*}
	\noindent	
	By regarding $\edge{0}{}$ and $\edge{1}{}$ as maps from $(\lip_1^0)^2\times [0,1]$ to $\lip_1$, we have obtained:
	
	\begin{lem}[\textsc{$\partial E$ vs $\partial G$}]
		\label{lem:edge-measured-partialE}
		Let $K_0\subseteq (\partial G)^2$ (resp.~$K_1\subseteq (\partial G)^2$) be the set of $(f,g)\in(\partial G)^2$ such that there is a sequence of same-level edges (resp. inter-level edges) $x_ny_n$ in which $\iota(x_n)\to f$ and $\iota(y_n)\to g$ as $n \to \infty$. Then, $K_0$ and $K_1$ are compact subsets of $(\partial G)^2$ and 
		\begin{equation}
			\label{eq:edge-measured-partialE}
			\partial E = \partial G \cup \pi \left(\edge{0}{} (K_0\times [0,1])\right) \cup \pi\left(\edge{1}{}(K_1\times [0,1])\right),
		\end{equation}
		where $\pi:\lip_1\to \lip_1^0$ is the projection $f\mapsto f-f(\origin)$.
		%
		%
	\end{lem}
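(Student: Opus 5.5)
The plan is to prove compactness of $K_0,K_1$ first and then the decomposition \eqref{eq:edge-measured-partialE} by a case analysis on where a sequence $z_n\in E$ with $d(\origin,z_n)\to\infty$ lies.

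\emph{Compactness.} Since $\partial G$ is compact, it suffices to show $K_0$ and $K_1$ are closed. We use a diagonal argument. Suppose $(f_k,g_k)\in K_0$ and $(f_k,g_k)\to(f,g)$. For each $k$, pick a same-level edge $x^{(k)}y^{(k)}$ with $d(\origin,x^{(k)})\geq k$ and with $\iota(x^{(k)})$, $\iota(y^{(k)})$ within $1/k$ of $f_k$, $g_k$ in a fixed metric on $\bar G$. Then $\iota(x^{(k)})\to f$ and $\iota(y^{(k)})\to g$, so $(f,g)\in K_0$. The same argument works for $K_1$.

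\emph{Inclusion $\supseteq$.} We already know that $\partial G\subseteq\partial E$ via the topological embedding. Now take $(f,g)\in K_0$ with witnessing edges $x_ny_n$ and $t\in[0,1]$, and let $z_n$ be the point of the edge $x_ny_n$ at distance $t$ from $x_n$. Every geodesic from a vertex $v$ to $z_n$ passes through $x_n$ or $y_n$. Hence
\[
d(v,z_n)=\min\{d(v,x_n)+t,\ d(v,y_n)+1-t\},
\]
and this also holds for points $v\in E$ on edges, after writing $d(v,\cdot)$ in terms of the endpoints of the edge containing $v$. Subtracting $m_n$ and letting $n\to\infty$, $\iota_{m_n}(z_n)$ converges pointwise to $\edgezero{f,g,t}$. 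Here $f,g$ are understood as extended to $E$; this extension is automatic, since a limit of distance functions to vertices is determined by its values on $G$ through the same min formula, which is why the embedding $\partial G\hookrightarrow\partial E$ is well defined. The inter-level case is identical, with $d(\origin,y_n)=m_n+1$, and gives $\edgeone{f,g,t}$. Projecting by $\pi$ gives points of $\partial E$.

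\emph{Inclusion $\subseteq$.} Let $\theta\in\partial E$, so $\theta=\lim\iota(z_n)$ with $d(\origin,z_n)\to\infty$. Each $z_n$ lies on some edge $x_ny_n$ at parameter $t_n$. We pass to a subsequence along which four things hold: the edge type is constant (same-level, or inter-level with $x_n$ closer to $\origin$), $t_n\to t$, $\iota(x_n)\to f$ and $\iota(y_n)\to g$, the last two by compactness of $\bar G$.

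If $t\in(0,1)$, or $t\in\{0,1\}$ in general, the min formula converges to $\edge{i}{f,g,t}$ up to the additive constant. The formula is jointly continuous in $(f,g,t)$ because $\wedge$ and translation are continuous under pointwise convergence. Thus $\theta=\pi(\edge{i}{}(f,g,t))$ with $(f,g)\in K_i$.

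The endpoint values $t\in\{0,1\}$ need no special treatment, since they are already covered by the images of $K_i\times[0,1]$. However, one may note that these values also land in $\partial G$: for example $\edgezero{f,g,0}=f\wedge(g+1)=f$, because $f\leq g+1$ holds when $x_n\sim y_n$. The case where vertices themselves converge is exactly $\partial G$. This gives \eqref{eq:edge-measured-partialE}.

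\emph{Main obstacle.} The delicate point is justifying the min formula uniformly and checking that the projection $\pi$ interacts correctly with the shifts $\iota_{m_n}$. Since $\pi$ just normalizes at $\origin$, convergence in $\lip_1$ of the shifted functions implies convergence of the projections. The real concern is to ensure that the shifts $m_n$ stay bounded relative to $d(\origin,z_n)$, so that the limit lies in $\lip_1$ and not at $\pm\infty$. This holds because $|d(\origin,z_n)-m_n|\leq 1$.
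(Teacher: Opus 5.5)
Your proposal is correct and takes essentially the same route as the paper. The paper deduces the lemma from the observation that the shifted images $\iota_{m_n}(x_ny_n)$ of converging same-level or inter-level edges converge to the curves $t\mapsto\edge{0}{f,g,t}$ or $t\mapsto\edge{1}{f,g,t}$. Your min formula for distances to a point on an edge, your subsequence extraction, and your diagonal argument for the closedness of $K_0$ and $K_1$ (which the paper does not address explicitly) just make explicit the steps the paper leaves implicit.
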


    Note that $K_0$ is a subset of $\{(f,g)\in (\partial G)^2: \sup\mynorm{f(\cdot)-g(\cdot)}\leq 1 \}$ and $K_1$ is a subset of $\{(f,g): \sup\mynorm{f(\cdot)-(g(\cdot)+1)}\leq 1 \}$. However, these subsets can be smaller. 
	If $G$ is bipartite, then $K_0$ is empty and if $G$ is a tree, then $K_1$ is the diagonal of $(\partial G)^2$.
	
	We now study the limiting measures on $\partial E$. Let $\mathrm{edges}(x,y)$ denote the set of edges with endpoints $\{x,y\}$.
	Let $\alpha_1^n$ be the measure on $S_{n}(G,\origin)\times S_{n+1}(G,\origin)$ defined by
	\begin{equation}
		\label{eq:inter-level}
		\alpha_1^n(x,y)\coloneqq \sum_{e\in \mathrm{edges}(x,y)} w(e).
	\end{equation}
	Let also $\alpha_0^n$ be the measure on $S_{n}(G,\origin)^2$ defined by
	\begin{equation}
		\label{eq:same-level}
		\alpha_0^n(x,y)\coloneqq
		\begin{cases}
			\frac 12\sum_{e\in \mathrm{edges}(x,y)} w(e), & \text{$x\neq y$},\\
			\sum_{e\in \mathrm{edges}(x,x)} w(e), & \text{$x=y$.}
		\end{cases}
	\end{equation}
	
	By pushing forward under the map $\iota$, these measures induce measures on $(\overline{G})^2$.
	\begin{lem}
		\label{lem:edge-measured-measure}
		Let $E$ be the weighted edge-measured graph corresponding to a graph $G$. Assume $\nu=\lim_n \lambda_n\iota_{t_n}^*\mu$, where $\mathbb Z\ni t_n\to\infty$. 
		Then, by passing to a subsequence if necessary, the measures $\alpha_0\coloneqq \lim_n \lambda_n\iota^*\alpha_0^{t_n}$ and $\alpha_1 \coloneqq \lim_{n \to \infty} \lambda_n\iota^*\alpha_1^{t_n}$ are well defined as measures on $K_1$ and $K_1$ respectively, and 
		\begin{equation}
			\label{eq:edge-measured-decomposition}
			\restrict{\nu}{\lip_1^{[0,1]}} = \edge{0}{}{}^*\left(\alpha_0\otimes \mathrm{Leb}\right) + \edge{1}{}{}^*\left(\alpha_1\otimes \mathrm{Leb}\right).
		\end{equation}
	\end{lem}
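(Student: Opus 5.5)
We decompose the measure $\mu$ on $E$ edge by edge and parametrize each edge by arc length. The idea is to see that, on the window $\lip_1^{[0,1]}$, the pushforward $\iota_{t_n}^*\mu$ is exactly the pushforward of $\alpha_0^{t_n}\otimes\mathrm{Leb}$ and $\alpha_1^{t_n}\otimes\mathrm{Leb}$ under explicit finite-$n$ versions of the maps $\edge{0}{}$ and $\edge{1}{}$.

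\textbf{Step 1 (exact identity at finite $n$).} For an edge $e$ with endpoints $x,y$ and a point $z_s\in I_e$ at distance $s$ from $x$, we have $d(\cdot,z_s)=(d(\cdot,x)+s)\wedge(d(\cdot,y)+1-s)$ on $E$, at least on vertices; extending to all of $E$ is routine. Hence $\iota_{m}(z_s)=\edgezero{d(\cdot,x)-m,\,d(\cdot,y)-m,\,s}$. The points of $E$ with $d(\origin,z)\in[t_n,t_n+1]$ lie on edges of two types: same-level edges in $S_{t_n}(G)$, with both endpoints at level $t_n$, and inter-level edges between $S_{t_n}(G)$ and $S_{t_n+1}(G)$. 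Edges meeting lower levels only touch the boundary value $t_n$ at endpoints, which are null sets for Lebesgue measure; the same-level edges of level $t_n+1$ also contribute, and are handled by the same map after the shift $\iota_{1}$. On an inter-level edge with $x\in S_{t_n}$, we get $\iota_{t_n}(z_s)=\edgeone{\iota(x),\iota(y)-1,s}$ up to the normalization, matching the definition. The factor $\tfrac12$ in \eqref{eq:same-level} compensates for counting an edge $x\neq y$ in both orientations. Summing over edges gives
\[
\restrict{\left(\lambda_n\iota_{t_n}^*\mu\right)}{\lip_1^{[0,1]}}=\edge{0}{n}{}^*\left(\lambda_n\iota^*\alpha_0^{t_n}\otimes\mathrm{Leb}\right)+\edge{1}{n}{}^*\left(\lambda_n\iota^*\alpha_1^{t_n}\otimes\mathrm{Leb}\right),
\]
where the maps $\edge{i}{n}$ are the exact finite versions, and these converge uniformly on compacts to $\edge{i}{}$.

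\textbf{Step 2 (tightness and passage to a subsequence).} The total masses satisfy $\lambda_n\alpha_1^{t_n}(G^2)+\lambda_n\alpha_0^{t_n}(G^2)\le\lambda_n F(t_n+1)$, up to including the level-$(t_n+1)$ same-level edges. This quantity is bounded because $\nu(\lip_1^{\le 1})<\infty$ and we have convergence in $\mathcal M$. Since $\overline G^2$ is compact, a subsequence gives weak limits $\alpha_0,\alpha_1$. They are supported on $(\partial G)^2$ because $\lambda_n\mu(B_R)\to0$, and in fact on $K_0$ and $K_1$ respectively, by the definition of those sets via \Cref{lem:edge-measured-partialE}.

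\textbf{Step 3 (passing to the limit).} The maps $\edge{i}{}$ are continuous on $(\lip_1^0)^2\times[0,1]$ and $\edge{i}{n}\to\edge{i}{}$ uniformly. Therefore, by the continuous mapping theorem with converging maps, the right side of the Step 1 identity converges weakly to the right side of \eqref{eq:edge-measured-decomposition}. The left side is where the main difficulty lies. Convergence of $\nu_n$ in $\mathcal M$ only gives convergence of restrictions to continuity sets, and the boundary $\lip_1^{0}\cup\lip_1^{1}$ may carry $\nu$-mass. I would handle this by working directly with the restriction to $\lip_1^{[0,1]}$ as the image of the compactly supported pieces. First, prove the identity restricted to the open window $\lip_1^{(0,1)}$ by testing against continuous functions supported there. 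Second, observe that the mass of $\nu$ on the levels $\lip_1^{0}$ and $\lip_1^{1}$ coming from the $\edge{i}{}$ images is zero, because $\mathrm{Leb}$ has no atoms and $\edge{1}{f,g,t}\in\lip_1^t$. Finally, apply the same identity at the shifted window, via periodicity in the manner of \Cref{thm:critArithmetic}, to cover the endpoints.
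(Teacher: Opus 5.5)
Your Step 1 identity and the extraction of $\alpha_0,\alpha_1$ in Step 2 are essentially what the paper does. Testing against compactly supported continuous functions does give the identity on the open window $\lip_1^{(0,1)}$. But the step you single out as the crux is not closed. The right-hand side of \eqref{eq:edge-measured-decomposition} charges neither $\lip_1^0$ nor $\lip_1^1$, so the lemma is equivalent to $\nu(\lip_1^0)=\nu(\lip_1^1)=0$, and your proposal never proves this.

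Your second sub-step only shows that the \emph{limit of the restrictions} $\lambda_n\restrict{(\iota_{t_n}^*\mu)}{\lip_1^{[0,1]}}$ has no mass on these levels. The \emph{restriction of the limit} $\nu$ can a priori pick up mass there from outside the window, which your decomposition never sees:
\begin{itemize}
\item for level $0$, from the upper ends of the edges between $S_{t_n-1}$ and $S_{t_n}$;
\item for level $1$, from the same-level edges at level $t_n+1$.
\end{itemize}
Your third sub-step does not repair this. The periodicity $\iota_{ir}^*\nu=p^i\nu$ of \Cref{thm:critArithmetic} is not available here: $t_n$ is an arbitrary integer sequence and nothing relates $\lambda_n$ to $\lambda_{n+1}$. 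In any case, integer shifts of the open window $(0,1)$ are the windows $(k,k+1)$, and their union misses exactly the integer levels you need.

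The missing ingredient is the paper's key observation. Because $t_n\in\mathbb Z$ and $\mu$ is Lebesgue on each edge, $t\mapsto(\lambda_n\iota_{t_n}^*\mu)(\lip_1^{\le t})=\lambda_nF(t_n+t)$ is continuous and linear on each interval $[k/2,(k+1)/2]$, and hence so is $t\mapsto\nu(\lip_1^{\le t})$. In particular $\nu(\lip_1^t)=0$ for all $t$, so $\lip_1^{[0,1]}$ is a $\nu$-continuity set and the restrictions converge to $\restrict{\nu}{\lip_1^{[0,1]}}$.

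Quantitatively, fix $c\in\{0,1\}$ and $0<\epsilon<1/2$. Linearity on the two half-intervals adjacent to $c$ gives $\lambda_n\bigl(F(t_n+c+\epsilon)-F(t_n+c-\epsilon)\bigr)\le 4\epsilon\,\lambda_nF(t_n+2)$. Moreover $\limsup_n\lambda_nF(t_n+2)\le\nu(\lip_1^0\times[-\infty,2])<\infty$, since that set is compact. As $\lip_1^{(c-\epsilon,c+\epsilon)}$ is open, vague convergence yields $\nu(\lip_1^c)\le 4\epsilon\,\nu(\lip_1^0\times[-\infty,2])$ for every $\epsilon$, so $\nu(\lip_1^c)=0$. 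With this in place your Step 3 goes through. A half-integer shifted open window such as $\lip_1^{(c-1/2,c+1/2)}$ would also work, but it needs a new decomposition that includes the edges between $S_{t_n-1}$ and $S_{t_n}$; it is not the same identity.

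Two small slips:
\begin{itemize}
\item Same-level edges at level $t_n+1$ meet $\lip_1^{[0,1]}$ only at their endpoints, so they do not contribute; your displayed identity rightly omits them.
\item On an inter-level edge, $d(\cdot,y)-t_n=\iota(y)+1$, so $\iota_{t_n}(z_s)$ coincides with $\edgeone{\iota(x),\iota(y),s}$ (not $\iota(y)-1$) away from the edge itself. No separate finite-$n$ maps are needed.
\end{itemize}
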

	Roughly speaking, $\restrict{\nu}{\lip_1^{[0,1]}}$ is a combination of the Lebesgue measure on the intervals of the form $\{(f,g)\}\times [0,1]$, where $(f,g)$ is an element of $K_0$ or $K_1$. Also, unless in exotic examples, there is no need to pass to a subsequence.

    \begin{proof}
		By pre-compactness and passing to a subsequence, we may assume that $\alpha_0\coloneqq \lim_{n \to \infty} \lambda_n\iota^*\alpha_0^{t_n}$ and $\alpha_1\coloneqq \lim_n \lambda_n\iota^*\alpha_1^{t_n}$ exist.
		Since $\mu(\lip_1^{\leq t})$ is continuous and linear on the intervals $[n/2,(n+1)/2]$, so is $\nu$. Therefore, $\restrict{\nu}{\lip_1^{[0,1]}}$ is the limit of $\lambda_n\restrict{(\iota_{t_n}^*{\mu})}{\lip_1^{[0,1]}}$. The latter is the sum of $\lambda_n\edge{0}{}{}^*\left(\iota^*(\alpha_0^{t_n})\otimes {\mathrm{Leb}}\right)$ and $\lambda_n\edge{1}{}{}^*\left(\iota^*(\alpha_1^{t_n})\otimes {\mathrm{Leb}}\right)$.
		This implies the claim.
	\end{proof}

    \subsubsection{Convergence Criterion for Edge-Measured Graphs}
	
	Using the above descriptions of $\partial E$, one can refine \Cref{thm:critArithmetic,thm:critArithmetic:converse} for edge-measured graphs as follows. \Cref{thm:criterion-edge} below extends part of \Cref{thm.ctfemg} to weighted graphs (not necessarily bipartite). Its converse is studied in  \Cref{lem:criterion-edge-converse}.
	{Recall the measures $\alpha_0^n$ and $\alpha_1^n$ defined in~\eqref{eq:same-level} and~\eqref{eq:inter-level} respectively, and note that $F(n)= \sum_{i=0}^{n-1}(\mynorm{\alpha_0^i} + \mynorm{\alpha_1^{i}})$.}
	
	\begin{theorem}[\textsc{Convergence Criterion for Nuclei in Edge-Measured Graphs}]
		\label{thm:criterion-edge}
		Let $E$ be the weighted edge-measured graph corresponding to a weighted graph $G$, and let $\origin\in G$.
			If the following conditions hold for some sequence $(\lambda_n)_{n \geq 1}$, then the limit measure $\nu \coloneqq \lim_{n \to \infty}\lambda_n \iota_n^*\mu$‌ exists in $\mathcal M$.
			\begin{enumerate}[label=(\roman*)]
				\item \label{thm:criterion-edge:1} $\restrict{F}{\mathbb N}$ is log-regularly varying with index $b>0$, and
				\item \label{thm:criterion-edge:2} the measures $\lambda_n\alpha_0^n$ and $\lambda_n\alpha_1^n$, defined in~\eqref{eq:inter-level} and~\eqref{eq:same-level} (representing same-level and inter-level edges), regarded as measures on $\overline{G}^2$, converge weakly to finite measure $\alpha_0$ and $\alpha_1$ on $(\partial G)^2$ respectively.
			\end{enumerate}
			Moreover, under \ref{thm:criterion-edge:1} and~\ref{thm:criterion-edge:2}, $\nu$ exists and is nonzero if and only if $\lim_{n \to \infty} \lambda_n F(n) = p^{\xi}$ for some $\xi\in\mathbb R$, where $p=e^b$. 
			In this case, if the total mass of $\alpha_i$ is $c_i$ for $i=0,1$, then the claims of \Cref{thm:critArithmetic}  hold for $\varphi \coloneqq \frac{2 c_0+c_1}{c_0+c_1} \restrict{\mathrm{Leb}}{[0,1/2]} + \frac{c_1}{c_0+c_1} \restrict{\mathrm{Leb}}{[1/2,1]}$ and 
			\begin{equation}
				\label{eq:thm:criterion-edge:nu}
				\nu_t \coloneqq
				\begin{cases}
					\frac {2}{2 c_0+c_1} \left(\pi\circ\edge{0}{\cdot,\cdot,t}\right)^*(\alpha_0) + \frac {1}{2 c_0+c_1} \left(\pi\circ\edge{1}{\cdot,\cdot,t}\right)^*(\alpha_1), & 0\leq t\leq 1/2\\
					\frac {1}{c_1} \left(\pi\circ\edge{1}{\cdot,\cdot,t}\right)^*(\alpha_1), & 1/2<t\leq 1.
				\end{cases}
			\end{equation}
			
			%
	\end{theorem}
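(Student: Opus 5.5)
The strategy is to decompose $\mu$ edge by edge into shells between consecutive integer spheres and compute the limit of each shell separately. The shells are then summed with \Cref{lem:sum}, exactly as in the proof of \Cref{thm:critArithmetic:converse}. Write $\mu'_m$ for the restriction of $\mu$ to $B_{m+1}(\origin)\setminus B_m(\origin)$. This set is, up to endpoints, the union of the same-level edges in $S_m(G,\origin)$ and the inter-level edges from $S_m$ to $S_{m+1}$. By definition of $\alpha_0^m,\alpha_1^m$ and of the maps $\edge{0}{},\edge{1}{}$, we have the exact identity
\[
\iota_m^*\mu'_m=\edge{0}{}{}^*\big(\iota^*\alpha_0^m\otimes \mathrm{Leb}\big)+\edge{1}{}{}^*\big(\iota^*\alpha_1^m\otimes \mathrm{Leb}\big).
\]
Here, for a point at position $t$ on a same-level edge $xy$, the shifted distance function is $(d(\cdot,x)-m+t)\wedge(d(\cdot,y)-m+1-t)$, and similarly for inter-level edges. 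A self-loop or same-level edge is counted with the factor $\tfrac12$ to compensate for the symmetric double counting in $\alpha_0^m$. The maps $\edge{i}{}$ are continuous on $\overline{G}^2\times[0,1]$, since the minimum of 1-Lipschitz functions is continuous under pointwise convergence. Hence assumption (ii) gives $\lambda_m\iota_m^*\mu'_m\Rightarrow \edge{0}{}{}^*(\alpha_0\otimes\mathrm{Leb})+\edge{1}{}{}^*(\alpha_1\otimes\mathrm{Leb})\eqqcolon\nu'_0$, a finite measure supported on $\lip_1^{[0,1]}$.

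Next, fix $j\in\mathbb Z$ and look at the shell $m=n+j$ seen from shift $n$. We have $\lambda_n\iota_n^*\mu'_{n+j}=\frac{\lambda_n}{\lambda_{n+j}}\iota_{-j}^*(\lambda_{n+j}\iota_{n+j}^*\mu'_{n+j})$. The limit $\lim\lambda_nF(n)=p^\xi$ together with (i) gives $\lambda_n/\lambda_{n+j}\to p^j$, so this converges to $p^j\iota_{-j}^*\nu'_0$. For tightness at $-\infty$ we bound the mass below level $-k$: it is $\lambda_nF(n-k)\to p^{\xi-k}$, uniformly small as $k\to\infty$. Then \Cref{lem:sum} shows that $\lambda_n\iota_n^*\mu$ restricted to $\lip_1^{\le m}$ converges weakly to $\sum_{j<m}p^j\iota_{-j}^*\nu'_0$. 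This gives the limit $\nu$ in $\mathcal M$.

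For the ``moreover'' part, the nonzero limit forces $\lambda_nF(n)=\nu(\lip_1^{\le0})$ to converge to a positive number, which we write as $p^\xi$. Integer levels carry no atoms, because $F$ is continuous. Conversely, if $\lambda_nF(n)\to p^\xi$, the construction above applies. Note also that (ii) alone already controls $\lambda_n|\alpha_i^n|$, and so is consistent with the growth of $F$ via $F(n)=\sum_{i<n}(|\alpha_0^i|+|\alpha_1^i|)$. The identities in \Cref{thm:critArithmetic} then hold by that theorem, since the limit exists and is nonzero.

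It remains to identify $\varphi$ and $\nu_t$ by disintegrating $\nu'_0$ in the delay coordinate. The key computation is $\edge{0}{f,g,t}\in\lip_1^{t\wedge(1-t)}$ and $\edge{1}{f,g,t}\in\lip_1^{t}$. So same-level mass of total $c_0$ is spread over delays in $[0,1/2]$ with density $2c_0$, via the two halves $t$ and $1-t$, while inter-level mass $c_1$ is uniform on $[0,1]$. Normalizing by $c_0+c_1$, which is the total mass and equals $(p-1)p^\xi$, gives $\varphi$. The conditional law at delay $s\le1/2$ then mixes the pushforward of $\alpha_0$ through both preimages $t=s$ and $t=1-s$ with that of $\alpha_1$, in proportions $2c_0:c_1$, and only $\alpha_1$ contributes for $s>1/2$. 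The two preimages give the same function up to swapping $f,g$, and $\alpha_0$ is symmetric, so the pushforward is well defined. This yields \eqref{eq:thm:criterion-edge:nu} after projecting by $\pi$. The main obstacle is this bookkeeping: the factor $\tfrac12$ in $\alpha_0^n$, the symmetry of $\alpha_0$, and the fact that delays are measured relative to the shift. A secondary issue is the endpoint conventions of the shells, but these carry zero mass because $F$ is continuous.
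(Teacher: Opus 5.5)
Your proposal is correct and follows the paper's route. The paper's proof only cites \Cref{thm:critArithmetic}, \Cref{thm:critArithmetic:converse} and \Cref{lem:edge-measured-measure}, together with the symmetry $\edge{0}{f,g,t}=\edge{0}{g,f,1-t}$; your shell decomposition, your shift-and-sum argument via \Cref{lem:sum}, and your disintegration are these same steps written out. The one point to state explicitly is that the first assertion needs no extra hypothesis on $\lambda_nF(n)$: by (ii), $\lambda_n(F(n+1)-F(n))=\lambda_n(|\alpha_0^n|+|\alpha_1^n|)\to c_0+c_1$, so (i) gives $\lambda_nF(n)\to(c_0+c_1)/(p-1)$, which equals $p^{\xi}$ when $c_0+c_1>0$ and forces $\nu=0$ otherwise.
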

	\begin{proof}
		The claims follow from \Cref{thm:critArithmetic,thm:critArithmetic:converse,lem:edge-measured-measure}. Note that the factor 2 and the different behavior in the intervals $[0,1/2]$ and $[1/2,1]$ are due to the fact that $\edge{0}{f,g,t} = \edge{0}{g,f,1-t}$, which implies that the image of $\edge{0}{}$ is a subset of $\lip_1^{[0,1/2]}$.
	\end{proof}

    \begin{lem}
		\label{lem:criterion-edge-converse}
		Conversely to \Cref{thm:criterion-edge}, if $\nu \coloneqq \lim_{n \to \infty}\lambda_n \iota_n^*\mu$‌ exists and is nonzero and the measures $\alpha_0$ and $\alpha_1$ in the decomposition~\eqref{eq:edge-measured-decomposition} of $\nu$ are uniquely determined, then the assumptions of \Cref{thm:criterion-edge} hold.
	\end{lem}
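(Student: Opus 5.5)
We have to show that, if $\nu\coloneqq\lim_n\lambda_n\iota_n^*\mu$ exists and is nonzero and the pair $(\alpha_0,\alpha_1)$ in~\eqref{eq:edge-measured-decomposition} is uniquely determined, then conditions~\ref{thm:criterion-edge:1} and~\ref{thm:criterion-edge:2} of \Cref{thm:criterion-edge} hold. The plan has three steps: apply \Cref{thm:critArithmetic} with $r=1$ to get log-regular variation of $\restrict{F}{\mathbb N}$; use precompactness of the edge measures together with uniqueness of the decomposition to get convergence of $\lambda_n\alpha_i^n$; and check that the limits are supported on $(\partial G)^2$.

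\textbf{Step 1: log-regular variation.} Since $\nu\neq 0$, \Cref{thm:critArithmetic} applies and gives $p>1$ and $\xi$, together with part~\ref{thm:critArithmetic:reg}: for all but countably many $a$, the sequence $n\mapsto F(a+n)$ is log-regularly-varying with index $\ln p$. The level sets $\lip_1^{t}$ carry no atoms of $\nu$, because $F$ is continuous and piecewise linear on half-integer intervals. Hence in~\eqref{eq:thm:criterArithmetic:F} we may take $a=0$, which gives $\lambda_nF(n+j)\to\nu(\lip_1^{\leq j})=p^{j+\xi}$. It follows that $F(n+1)/F(n)\to p$, so $\restrict{F}{\mathbb N}$ is log-regularly-varying with $b=\ln p>0$, which is condition~\ref{thm:criterion-edge:1}. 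As a by-product, $\lambda_nF(n)\to p^\xi$.

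\textbf{Step 2: convergence of the edge measures.} Since $|\alpha_0^n|+|\alpha_1^n|=F(n+1)-F(n)$, the total masses of $\lambda_n\alpha_i^n$ are bounded, with $\lambda_n(F(n+1)-F(n))\to(p-1)p^\xi$. The space $\overline G^2$ is compact, so the family $\{\lambda_n\alpha_i^n\}$ is precompact. \Cref{lem:edge-measured-measure} shows that every subsequential limit pair $(\alpha_0',\alpha_1')$ satisfies~\eqref{eq:edge-measured-decomposition} with the same $\nu$ (here $t_n=n$). By the uniqueness hypothesis every such pair equals $(\alpha_0,\alpha_1)$, and the standard subsequence argument then gives convergence of the full sequence.

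\textbf{Step 3: support on the boundary.} Any fixed finite set of vertices has $\lambda_n\alpha_i^n$-mass tending to $0$, because $\lambda_n\to0$ and $\mu$ is locally finite. Therefore the limits live on $(\partial G)^2$, which completes condition~\ref{thm:criterion-edge:2}.

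The step I expect to be the main obstacle is the one inside Step 2 where \Cref{lem:edge-measured-measure} is applied to subsequential limits along a subsequence rather than the full sequence. The point to check is that the identity $\lambda_n\restrict{(\iota_n^*\mu)}{\lip_1^{[0,1]}}=\lambda_n\edge{0}{}^*(\iota^*\alpha_0^n\otimes\mathrm{Leb})+\lambda_n\edge{1}{}^*(\iota^*\alpha_1^n\otimes\mathrm{Leb})$ passes to the limit. This needs the maps $\edge{0}{}$ and $\edge{1}{}$ to be continuous, including on $\overline G^2$ where a vertex pair may be near the boundary, so that weak convergence is preserved under pushforward. It also needs $\lip_1^{[0,1]}$ to be a $\nu$-continuity set, which holds because $\nu$ has no atoms on level sets. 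Once both facts are verified, everything else is bookkeeping.
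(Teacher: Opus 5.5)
Your proposal is correct and follows the paper's own argument. \Cref{thm:critArithmetic} gives log-regular variation of $\restrict{F}{\mathbb N}$. Any subsequential limit of $(\lambda_n\alpha_0^n,\lambda_n\alpha_1^n)$ then yields, via \Cref{lem:edge-measured-measure}, a decomposition~\eqref{eq:edge-measured-decomposition} of $\nu$, so the uniqueness hypothesis forces the whole sequence to converge. The continuity issues you flag as the main obstacle, and your Step~3 on the support, are already covered by \Cref{lem:edge-measured-measure}, which is stated for subsequences and places the limits on $K_0,K_1\subseteq(\partial G)^2$.
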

	It seems difficult to construct an example where the decomposition~\eqref{eq:edge-measured-decomposition} is not unique. Note that, for uniqueness, it is sufficient that the restriction of $\edgezero{}{}$ to $K_0\times\{t\}$ and the restriction of $\edgeone{}{}$ to $K_1\times\{t\}$ are injective for some $\frac 1 2 <t<1$. We will also show (in the proof of \Cref{thm.ctfemg}) that bipartiteness is sufficient for this uniqueness.
	\begin{proof}
		Assume that $\nu$ exists and is nonzero. \Cref{thm:critArithmetic} implies that $\restrict{F}{\mathbb N}$ is log-regularly-varying and the restriction of $\lambda_n \iota_n^*\mu$ to $\lip_1^{[0,1]}$ converges to $\restrict{\nu}{\lip_1^{[0,1]}}$ as $n\to \infty$. Also, \Cref{lem:edge-measured-measure} gives a decomposition~\eqref{eq:edge-measured-decomposition} of $\nu$. If $(\lambda_n\alpha_0^n,\lambda_n\alpha_1^n)$ does not converge to $(\alpha_0,\alpha_1)$ when $n\to \infty$, then it has a subsequential weak limit other than $(\alpha_0,\alpha_1)$. Now, \Cref{lem:edge-measured-measure} gives another decomposition~\eqref{eq:edge-measured-decomposition} of $\nu$, which contradicts the assumption. So, the claim is proved.
        \end{proof}
	
	\begin{proof}[Proof of \Cref{thm.ctfemg}]
		Similarly to the proof of \Cref{thm.ctfg}, $X^{(\lambda_n)}$ converges (after a suitable shift) when $n\to \infty$ if and only if $\lambda_n\iota_n^*\mu$ converges to a nonzero measure. Assume that~\ref{thm.ctfemg-item1} to~\ref{thm.ctfemg-item3} hold. Bypartiteness implies that $\alpha_0^n=0$ and that $\alpha_1^n$ converges to a multiple of $\alpha$. So, \Cref{thm:criterion-edge} implies that $\lambda_n\iota_n^*\mu$ converges. The rest of the claims are also implied by \Cref{thm:criterion-edge}, noting that $c_0=0$ and $c_1=1$ here. Note that nondegeneracy holds for edge-measured graphs, and hence, the convergence of Voronoi diagrams is implied by \Cref{prop:diagramConvergence}.
		
		For the converse, by \Cref{lem:criterion-edge-converse}, it is enough to show that bipartiteness implies that the decomposition~\eqref{eq:edge-measured-decomposition} is unique. Assume $f,g\in \corona{G}$ are distinct and $\sup \mynorm{f(x)-g(x)}\leq 1$. Bipartiteness implies that $f(x)=f(y)\pm 1$ if $x$ is adjacent to $y$ and $f(x)-g(x)=\pm 1$ for all $x\in G$. In this case, given $t\in[0,1]\setminus\{0,1/2,1\}$, $f$ and $g$ can be recovered from $h\coloneqq (f+t)\wedge (g+1-t)$ because, for $x\in G$, $h(x)= n+t$ if $f(x)=g(x)-1=n$ and $h(x)=n-t$ if $f(x)=g(x)+1 = n$. It is straightforward to deduce that $\alpha_0$ and $\alpha_1$ can be recovered from $\nu$, and the claim is proved.
	\end{proof}

    \subsection{Independence on $\xi$}
	\label{subsec:xi}	
	
	In \Cref{thm:xi} below, we prove an extended version of \Cref{thm.ioxi}, which gives an affirmative answer to the research question stated just above Proposition 6.3 of \cite{IPVT}. {We also provide a second proof of \Cref{thm.ioxi} using a coupling technique. The interested reader might jump directly to this proof.}
	
	Let $E$ be the edge-measured graph corresponding the a graph $G$. More generally than \Cref{subsec:edgeMeasured}, fix a {diffuse} probability measure $\psi$ on $[0,1]$ that is symmetric with respect to $t\mapsto 1-t$, and let the measure on each edge $e$ be $w(e)\psi$. 
	Then, \Cref{thm:criterion-edge} can be extended to obtain a one-parameter family of limiting point processes $\Phi=(\Theta_i,\Delta_i)_{i\geq 1}$. {Note that ${\rm IPVT}_{\xi}(E) = \Vor(\Phi)$ by \Cref{thm:criterion-edge}.} The following theorem extends \Cref{thm.ioxi}.
	
	\begin{theorem}[\textsc{Independence on $\xi$ (Extended Version)}]
		\label{thm:xi}
		Let $E$ be an edge-measured graph corresponding to a graph $G$ and a measure $\psi$, as described above. Let $G^{1/2}$ be the set of vertices and midpoints of the edges of  $G$. If:
		\begin{itemize}
			\item the same assumptions of \Cref{thm:criterion-edge} hold,
			\item $\alpha_0$ is a multiple of $\alpha_1$ and they are both supported on the diagonal of $(\partial G)^2$, and \item the function $t\mapsto \nu(\lip_1^{\leq t})$ has a continuous derivative,
		\end{itemize}
		then the distribution of $\left((\Theta_i)_{i\geq 1}, \restrict{\Vor(\Phi)}{G^{1/2}} \right)$ does not depend on $\xi$. In addition, $\restrict{\Vor(X^{(\lambda)})}{G^{1/2}}$  converges weakly to $\restrict{\Vor(\Phi)}{G^{1/2}}$ as $\lambda\downarrow 0$.
		%
			%
	\end{theorem}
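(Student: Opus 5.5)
The plan is to reduce the restricted diagram $\restrict{\Vor(\Phi)}{G^{1/2}}$ to a functional of $(\Theta_i)_{i\geq 1}$ and of the delays modulo a discrete structure, and then to compare the laws for different $\xi$ through an explicit coupling of the delay processes. Throughout I take the standing assumptions of the statement: those of \Cref{thm:criterion-edge}, $\alpha_0$ a multiple of $\alpha_1$, both supported on the diagonal, and $t\mapsto\nu(\lip_1^{\leq t})$ continuously differentiable.

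\emph{Step 1 (decoupling $\Theta$ from $\Delta$).} Since $\alpha_0,\alpha_1$ are supported on the diagonal, for $(f,f)$ the functions $\pi\circ\edge{0}{f,f,t}$ and $\pi\circ\edge{1}{f,f,t}$ restricted to $G$ are both equal to $f$. Hence, in \eqref{eq:thm:criterion-edge:nu} (adapted to the measure $\psi$ on edges), the measures $\nu_t$ restricted to $G$ do not depend on $t$. Because $\alpha_0$ is a multiple of $\alpha_1$, the same holds for their full mixture. By \Cref{thm:critArithmetic}\ref{thm:critArithmetic:pp}, $\Phi$ is then a Poisson process with intensity $\nu_0\otimes\beta^{(\xi)}$, where $\beta^{(\xi)}=p^{\xi}\beta^{(0)}$. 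So $(\Theta_i)$ are i.i.d.\ $\nu_0$ and independent of the delay process $(\Delta_i)$.

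\emph{Step 2 (the nucleus functions on $G^{1/2}$).} I will compute, for a nucleus $(\theta,\delta)$ arising as a limit of points on edges, its values on $G^{1/2}$. On a vertex $x$ the value is $d_\theta(x)+\delta$. On the midpoint of an edge $uv$ it is $\min(d_\theta(u),d_\theta(v))+\tfrac12+\delta$, using that $|d_\theta(u)-d_\theta(v)|\leq 1$. Consequently every comparison $f_i(y)\leq f_j(y)$ with $y\in G^{1/2}$ has the form $\Delta_i-\Delta_j\leq c$ with $c\in\tfrac12\mathbb Z$ depending only on $\Theta_i,\Theta_j,y$. Thus $\restrict{\Vor(\Phi)}{G^{1/2}}$ is a measurable function of $(\Theta_i)$ together with the pattern
\[
\Pi\coloneqq\big(\lfloor 2(\Delta_i-\Delta_j)\rfloor\big)_{i,j}.
\]
It therefore suffices to prove that the joint law of $((\Theta_i),\Pi)$ is the same for all $\xi$.

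\emph{Step 3 (coupling and invariance).} Let $G_0(t)\coloneqq\beta^{(0)}((-\infty,t])$, which is $C^1$ and increasing by the third assumption, and set $\varphi_\xi\coloneqq G_0^{-1}\circ(p^{\xi}G_0)$. If $(\Delta_i)$ is Poisson with intensity $\beta^{(0)}$, then $(\varphi_\xi^{-1}(\Delta_i))$ is Poisson with intensity $\beta^{(\xi)}$; keeping the same $\Theta_i$ gives a coupling that is continuous in $\xi$. It is $1$-periodic up to the integer shift $\iota_1$, using $G_0(t+1)=pG_0(t)$. Along this flow, $\Pi$ changes only at the instants when some difference $\Delta_i-\Delta_j$ crosses a point of $\tfrac12\mathbb Z$. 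The velocity of each delay is $\partial_\xi\varphi_\xi^{-1}$, which is continuous by the $C^1$ assumption, so the flow is regular and such crossings are a.s.\ isolated. I then intend to show that $\frac{d}{d\xi}\mathbb P[((\Theta_i),\Pi)\in A]=0$ for every cylinder event $A$. This amounts to checking that, given the current configuration, the rate at which a given pair crosses a level $c\in\tfrac12\mathbb Z$ in one direction equals the rate of the reverse transition. The argument would combine the exchangeability of the i.i.d.\ $\Theta$'s from Step 1 with the self-similarity $\iota_1^*\nu=p\nu$ from \eqref{eq:thm:critArithmetic:periodic} and the symmetry $\psi(t)=\psi(1-t)$, the latter pairing the half-intervals $[k,k+\tfrac12]$ and $[k+\tfrac12,k+1]$.

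\emph{Step 4 (convergence).} Once the law is independent of $\xi$, I would take any $\lambda\downarrow0$ and pass to subsequences along which $\lambda F(n)\to p^{\xi}$ for some $\xi$; this is possible by precompactness (\Cref{prop:precompact}). \Cref{thm:criterion-edge} together with \Cref{prop:diagramConvergence} then gives convergence of $\restrict{\Vor(X^{(\lambda)})}{G^{1/2}}$ to $\restrict{\Vor(\Phi^{(\xi)})}{G^{1/2}}$. Since every subsequential limit has the same law, the full limit exists.

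The main obstacle is Step 3. The rate-balance identity for crossings of half-integer levels is not automatic, because $\varphi_\xi$ does not commute with translation by $\tfrac12$. If the balance fails for general $\psi$, my fallback is to replace $\varphi_\xi$ by a piecewise map defined separately on each half-interval $[k/2,(k+1)/2]$, which transports $\beta^{(0)}$ to $\beta^{(\xi)}$ while preserving the half-integer grid. The extra mass would then be handled by thinning or adding points whose cells restricted to $G^{1/2}$ are empty with probability tending to one.
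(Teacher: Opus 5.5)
\textbf{The gap.} Step~2 miscounts the thresholds, and Step~3 then targets a false statement. Your own midpoint formula gives, for the midpoint $m$ of an edge $uv$,
\[
f_i(m)-f_j(m)=\min\bigl(d_{\Theta_i}(u),d_{\Theta_i}(v)\bigr)-\min\bigl(d_{\Theta_j}(u),d_{\Theta_j}(v)\bigr)+\Delta_i-\Delta_j .
\]
The $+\tfrac12$ cancels, and $\Theta_i\in\partial G$ is integer-valued on $G$, so the thresholds $c$ lie in $\mathbb Z$, not merely in $\tfrac12\mathbb Z$. By passing to the finer pattern $\Pi=(\lfloor 2(\Delta_i-\Delta_j)\rfloor)$ you reduced the theorem to a statement whose conclusion fails: the law of $\Pi$ does depend on $\xi$.

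Take the edge-measured $3$-regular tree, where $p=2$, $\tilde\varphi\equiv 1$ and $G_0(t)=2^{\lfloor t\rfloor}(1+t-\lfloor t\rfloor)$. Then
\[
\mathbb P\bigl[\Delta_2-\Delta_1\geq \tfrac12\bigr]=\int_0^\infty \exp\bigl(-2^{\xi}h(2^{-\xi}u)\bigr)\,du,\qquad h(v)\coloneqq G_0\bigl(G_0^{-1}(v)+\tfrac12\bigr),
\]
with $h(v)=v+\tfrac12$ on $[1,\tfrac32]$ and $h(v)=2v-1$ on $[\tfrac32,2]$. For the integer shift the corresponding map is $v\mapsto 2v$, and the integral equals $\tfrac12$ for every $\xi$. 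Since $h$ is not homogeneous, the half-shift integral is a non-constant (though nearly constant) function of $\xi$. Its $k$-th Fourier coefficient in $\xi$ is, up to a nonzero constant, $\Gamma(1-2\pi i k/\ln 2)\,(e^{2\pi i k\log_2(3/2)}-1)/k$, which is nonzero for every $k\neq 0$.

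So no rate-balance identity can make $\frac{d}{d\xi}\mathbb P[((\Theta_i),\Pi)\in A]$ vanish for all cylinder events. Exchangeability, self-similarity and the symmetry of $\psi$ are not the relevant mechanism, and Step~3 is in any case only stated as an intention. Your fallback also fails. A map acting separately on each half-interval $[k/2,(k+1)/2]$ cannot push $\beta^{(0)}$ to $\beta^{(\xi)}=p^{\xi}\beta^{(0)}$, because these measures give different masses to every such interval. The points you would have to add or thin carry a fixed positive fraction of the mass at every scale, including the lowest delays that determine the first cells, so they are not negligible.

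\textbf{The fix.} Stay on the integer grid. Almost surely no difference $\Delta_i-\Delta_j$ is an integer, so $\restrict{\Vor(\Phi)}{G^{1/2}}$ is a function of $(\Theta_i)$ and $(\lfloor\Delta_i-\Delta_j\rfloor)_{i,j}$. The coupling you already wrote down then works pathwise. From $G_0(t+1)=pG_0(t)$ you get $\varphi_\xi^{-1}(t+n)=\varphi_\xi^{-1}(t)+n$ for $n\in\mathbb Z$, and $\varphi_\xi^{-1}$ is increasing. Hence
\[
\Delta_i-\Delta_j\geq n \iff \varphi_\xi^{-1}(\Delta_i)-\varphi_\xi^{-1}(\Delta_j)\geq n .
\]
Keeping the same $\Theta_i$, the restricted diagram is literally identical for all $\xi$. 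This is the paper's second proof. The first proof instead computes the law of $(\lfloor\Delta_i-\Delta_1\rfloor)_i$ through the telescoping change of variables in \Cref{lem:xi}, and then uses a uniform-order argument for the remaining differences.

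In Step~4 you should also note that $G^{1/2}$ almost surely avoids the cell boundaries of $\Vor(\Phi^{(\xi)})$. That is what makes restriction to $G^{1/2}$ continuous at the limit, and so lets weak convergence of the diagrams pass to their restrictions.
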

	
	In fact, the proof shows that $t\mapsto \nu(\lip_1^{\leq t})$ has a continuous derivative if and only if the map $t\mapsto \psi([0,t])$ has a continuous derivative and $\mynorm{\alpha_0}\frac d{dt} \psi([0,t])=0$ at $t=1/2$. This justifies why $\alpha_0$ was assumed to vanish in \Cref{thm.ioxi}.

    \begin{proof}
		It is clear that $\edge{0}{f,f,t} = f+(t\wedge (1-t))$ and $\edge{1}{f,f,t} = f+t$. This implies that $\pi(\edge{0}{f,f,t})=\pi(\edge{1}{f,f,t})=f$. 
		Hence, if $\alpha_0$ is a multiple of $\alpha_1$ and both are supported on the diagonal of $(\partial G)^2$, then the measures $\nu_t$ given by \Cref{thm:criterion-edge} do not depend on $t$. Indeed, \eqref{eq:thm:criterion-edge:nu} implies that $\nu_t=\frac {1}{c_1} \pi^*(\alpha_1)$. So, the decomposition~\eqref{eq:thm:critArithmetic:sum} gives 
		\begin{equation}
			\label{eq:thm:xi-beta}
			\nu=\nu_1\otimes \beta^{(\xi)}, \text{ where } \beta^{(\xi)} \coloneqq \sum_{j\in\mathbb Z} (p-1)p^{j+\xi} \int_0^1 \delta_{j+t} \tilde{\varphi}(t) dt,
		\end{equation}
		where $\tilde{\varphi}(t)=(p-1)^{-1}p^{-\xi} \frac d{dt} \nu(\lip_1^{\leq t})$ for $0\leq t\leq 1$. Similarly to \Cref{thm:criterion-edge}, one has 
		$$
		\tilde{\varphi}(t) = \left(\frac {2c_0+c_1}{c_0+c_1} 1_{\{t\leq 1/2\}} + \frac{c_1}{c_0+c_1}1_{\{t>1/2\}}\right) \frac d{dt} \psi([0,t]) \; .
		$$
		Note that this function is continuous and does not depend on $\xi$.
		By this decomposition of $\nu$, $(\Theta_i)_{i\geq 1}$ are i.i.d.~points on $\partial E$ with distribution $\nu_1$ and, independently of $(\Theta_i)_{i\geq 1}$, the delays $(\Delta_i)_{i\geq 1}$ form a Poisson point process on $\mathbb R$ with intensity measure $\beta^{(\xi)}$. Note that $\nu_1$ is supported on $\partial G$, and hence, $\Theta_i\in \partial G$ a.s.
		In addition, a point $x\in E$ is in the $i$'th cell if and only if $\forall j: d_{\Theta_i}(x)+\Delta_i \leq d_{\Theta_j}(x)+\Delta_j$. If $x\in G^{1/2}$, then $d_{\Theta_i}(x)-d_{\Theta_j}(x)\in \mathbb Z$. Therefore, $\restrict{\Vor(\Phi)}{G^{1/2}}$ is determined by $(\Theta_i)_{i\geq 1}$ and $(\floor{\Delta_i-\Delta_j})_{i,j\geq1}$. Thus, the claim is implied by \Cref{lem:xi} below.
		
		The last claim is implied by \Cref{thm:criterion-edge} as follows. Let $T$ have the same distribution as $\restrict{\Vor(\Phi)}{G^{1/2}}$ which does not depend on $\xi$. If the claim is false, then there exists a sequence $\lambda'_n\to 0$ such that $\restrict{\Vor(X^{(\lambda'_n)})}{G^{1/2}}$ does not converge weakly to $T$. By passing to a subsequence, one might assume that $\lim_n \lambda'_n F(t_n) = p^{\xi}$ for some $t_n\in\mathbb Z$ and $\xi\in[0,1]$. So, \Cref{thm:criterion-edge,prop:diagramConvergence} imply that $\iota_{t_n}X^{(\lambda_n)}$ converges to $\Phi^{(\xi)}$ and that $\Vor(X^{(\lambda'_n)})$ converges to $\Vor(\Phi^{(\xi)})$. Since the points of $G^{(1/2)}$ are not on the boundaries of the cells of $\Vor(\Phi^{(\xi)})$ a.s., one can deduce that $\restrict{\Vor(X^{(\lambda'_n)})}{G^{1/2}}$ converges weakly to $\restrict{\Vor(\Phi^{(\xi)})}{G^{1/2}}$, which is a contradiction. So, the claim is proved.
	\end{proof}
	
	We now provide the main ingredient needed to complete the proof of \Cref{thm.ioxi}. 
	The key is that the terms depending on $\xi$ are cancelled in a 
	``magic telescopic sum'' (see~\eqref{eq.mts} below), which explains the independence on $\xi$ under our hypotheses.

    \begin{lem}
		\label{lem:xi}
		Let $\tilde{\varphi}$ be a continuous function such that $\int_0^1 \tilde{\varphi}(s)ds = 1$.
		Let $(\Delta_i)_{i\geq 1}$ be a Poisson point process on $\mathbb R$ with intensity measure $\beta^{(\xi)}$, where the latter is defined in~\eqref{eq:thm:xi-beta}. Then, the distribution of $(\floor{\Delta_i-\Delta_j})_{i,j\geq 1}$ does not depend on $\xi$.
	\end{lem}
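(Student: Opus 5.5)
The plan is to construct, for each $\xi$, a deterministic map $h_\xi:\mathbb R\to\mathbb R$ that pushes $\beta^{(0)}$ forward to $\beta^{(\xi)}$ and preserves every quantity $\floor{x-y}$. By the mapping theorem for Poisson processes, $(h_\xi(\Delta_i))_{i\ge1}$ is then a Poisson process with intensity $\beta^{(\xi)}$ whenever $(\Delta_i)_{i\geq 1}$ has intensity $\beta^{(0)}$. Since $h_\xi$ does not change the matrix $(\floor{\Delta_i-\Delta_j})_{i,j}$, the lemma follows. This also yields the coupling of all $\xi$ announced in \Cref{thm.ioxi}.

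\emph{Step 1: a criterion on $h$.} Let $h$ be strictly increasing with $h(x+1)=h(x)+1$ for all $x$. If $x-y\in[k,k+1)$ with $k\in\mathbb Z$, then $y+k\le x<y+k+1$. Monotonicity and the translation rule give $h(y)+k\le h(x)<h(y)+k+1$, so $\floor{h(x)-h(y)}=\floor{x-y}$. Such an $h$ also preserves the order of the points, so the labelling by increasing delay, as in \Cref{def:PPP}, is unchanged. Ties have probability zero because the intensity is diffuse.

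\emph{Step 2: the cumulative function.} Let $\Phi(s)\coloneqq\int_0^s\tilde\varphi$. From \eqref{eq:thm:xi-beta},
\[
\beta^{(\xi)}((-\infty,x]) = p^{\xi}\Psi(x),\qquad \Psi(x)\coloneqq p^{\floor{x}}\bigl(1+(p-1)\Phi(\{x\})\bigr).
\]
This is the telescopic identity $\sum_{j<n}(p-1)p^{j}=p^{n}$. The function $\Psi$ is continuous, nondecreasing, and satisfies $\Psi(x+1)=p\Psi(x)$, with $\Psi(-\infty)=0$ and $\Psi(+\infty)=\infty$. Define
\[
h_\xi(x)\coloneqq \Psi^{-1}\bigl(p^{-\xi}\Psi(x)\bigr),
\]
where $\Psi^{-1}$ is the right-continuous generalized inverse. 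Then $\Psi(h_\xi(x)) = p^{-\xi}\Psi(x)$, which gives $(h_\xi)_*\beta^{(0)}=\beta^{(\xi)}$. The relation $\Psi^{-1}(py)=\Psi^{-1}(y)+1$ gives $h_\xi(x+1)=h_\xi(x)+1$. Moreover $h_{\xi+1}=h_\xi-1$, so the coupling is $1$-periodic up to a relabelling shift, and it is continuous in $\xi$.

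\emph{Step 3: the main obstacle, strict monotonicity.} I expect the only delicate point to be the set where $\tilde\varphi$ vanishes, since there $\Psi$ has flat pieces and $h_\xi$ may fail to be strictly increasing. Flat pieces of $\Psi$ are $\beta^{(0)}$-null, so almost surely no $\Delta_i$ lies in their interior. On the complement $S$ of these interiors, $\Psi$ is strictly increasing and $h_\xi$ is strictly increasing on $S$. The argument of Step 1 only uses points of $S$ together with their integer translates, and $S+1=S$. So it applies almost surely. If $\tilde\varphi>0$ everywhere, this issue does not arise and $h_\xi$ is a homeomorphism commuting with $x\mapsto x+1$.
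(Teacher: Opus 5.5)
Your proof is correct, but it does not follow the paper's proof of this lemma. The paper argues by direct computation. It first computes the probabilities of the cylinder events for $(\floor{\Delta_i-\Delta_1})_{i\geq 1}$ by conditioning on $\Delta_1=j+s$. The substitution $u=p^{j+m+\xi}(1+(p-1)H(s))$ then collapses the sum over $j$ into $\int_0^\infty u^n e^{-u}\,du$, the ``magic telescopic sum''~\eqref{eq.mts}. It finally shows that, given $\Delta_1$ and these floors, the remaining information is the relative order of i.i.d.\ diffuse fractional parts, which is uniform and hence free of $\xi$.

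Your map $h_\xi=\Psi^{-1}(p^{-\xi}\Psi(\cdot))$ is exactly the map $u(\cdot,\xi)=g_\xi^{-1}\circ g_0$ that the paper introduces only later, in its second proof of \Cref{thm.ioxi}, and only for $\tilde\varphi\equiv 1$. You use it instead to prove \Cref{lem:xi} itself, for general $\tilde\varphi$. Your route gives more:
\begin{itemize}
\item a pathwise identity rather than a distributional one;
\item the $1$-periodic coupling of \Cref{thm.ioxi} at no extra cost;
\item the observation that continuity of $\tilde\varphi$ is never used, only $\beta^{(\xi)}=p^{\xi}\beta^{(0)}$, diffuseness, and $\Psi(x+1)=p\Psi(x)$.
\end{itemize}
The paper's computation, in exchange, yields explicit formulas for the law of the floor differences relative to the first delay.

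One small inaccuracy in Step 3: $\Psi$ is not strictly increasing on $S$. Both endpoints $a<b$ of a maximal flat interval lie in $S$ and satisfy $\Psi(a)=\Psi(b)$, so $h_\xi(a)=h_\xi(b)$. The repair is immediate. Step 1 only needs $\Psi(x)<\Psi(y+k+1)$ whenever $x<y+k+1$, and this holds as soon as $x$ avoids the closed flat intervals. These intervals are $\beta^{(0)}$-null (their interiors carry no mass and their countably many endpoints are null for a diffuse measure), and they are invariant under $x\mapsto x+1$. So the argument holds almost surely. Similarly, your remark that the coupling is continuous in $\xi$ requires $\Psi^{-1}$ to be continuous, i.e.\ no flat pieces. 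This does not matter for the lemma, and it does hold in the setting of \Cref{thm.ioxi}.
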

	
	\begin{proof}
		We first prove that the distribution of {the difference of the delays w.r.t.~the first point}, that is, $\bar\Delta_1\coloneqq(\floor{\Delta_i-\Delta_1})_{i\geq 1}$, does not depend on $\xi$. The latter is determined by the probability of the events of the form
		\[
		A\coloneqq \left\{\forall 0\leq k < m: \mynorm{(\Delta_i)_{i\geq 1}\cap (\Delta_1+k, \Delta_1+k+1)} = n_k \right\},
		\]
		where $m\in\mathbb N$ and $n_0,\ldots,n_{m-1}\in\mathbb Z^{\geq 0}$ are given integers. By conditioning on $\Delta_1$, one has
		\begin{eqnarray*}
			\myprob{A} &=& \int \myprob{(\Delta_i)_i\cap (-\infty,t)=\emptyset}\times \myprob{\mynorm{(\Delta_i)_i\cap (t,t+dt)}=1}\\
			&&\quad\times \prod_k \myprob{\mynorm{(\Delta_i)_i\cap (t+k,t+k+1)}=n_k}\\
			&=& \int e^{-\beta^{(\xi)}(-\infty,t)} \beta^{(\xi)}(t,t+dt) \prod_{0\leq k<m} e^{-\beta^{(\xi)}(t+k,t+k+1)} \frac{\left(\beta^{(\xi)}(t+k,t+k+1)\right)^{n_k}}{n_k!}\\
			&=& c_1 \int \beta^{(\xi)}(t,t+dt) e^{-\beta^{(\xi)}(-\infty,t+m)}\prod_{0\leq k<m} \left(\beta^{(\xi)}(t+k,t+k+1)\right)^{n_k},
		\end{eqnarray*}
		where $c_1\coloneqq \prod_k 1/{n_k!}$. In the following lines, each time a variable $c_i$ is used, it means a constant that does not depend on $\xi$ (but depends on the other variables).
		If $t=j+s$, where $j\in \mathbb Z$‌ and $0\leq s<1$, then $\beta^{(\xi)}(t,t+dt) = (p-1)p^{j+\xi}\tilde{\varphi}(s)ds$ and $\beta^{(\xi)}(-\infty,t+k) = p^{j+k+\xi}(1+(p-1)H(s))$, where $H(s)\coloneqq \int_0^s \tilde{\varphi}(u)du$. So, $\beta^{(\xi)}(t+k,t+k+1) = (p-1)p^{j+k+\xi}(1+(p-1)H(s))$, and thus,
		\begin{eqnarray*}
			\myprob{A} &=& c_2 \sum_{j\in\mathbb Z}\int_0^1
			p^{j+\xi}
			e^{-p^{j+m+\xi}(1+(p-1)H(s))} 
			\prod_{0\leq k<m} \left(p^{j+k+\xi}(1+(p-1)H(s))\right)^{n_k}\tilde{\varphi}(s)ds\\
			&=& c_3 \sum_{j\in\mathbb Z}\int_0^1
			\left(p^{j+\xi}(1+(p-1)H(s))\right)^n e^{-p^{j+m+\xi}(1+(p-1)H(s))} p^{j+\xi}\tilde{\varphi}(s)ds,
		\end{eqnarray*}
		where $n\coloneqq \sum_k n_k$.  
		Use the change of variables $u\coloneqq p^{j+m+\xi}(1+(p-1)H(s))$, which is valid since $H$ has a continuous derivative by assumption. By $du =(p-1) p^{j+m+\xi} \tilde{\varphi}(s)ds$, one gets
		\begin{eqnarray}\label{eq.mts}
			\myprob{A}&=& c_4 \sum_{j\in\mathbb Z}\int_{p^{j+m+\xi}}^{p^{j+m+1+\xi}} u^n e^{-u} du = c_4\int_{0}^{+\infty} u^ne^{-u} du= c_{4} \cdot (n!).
		\end{eqnarray}
		This clearly does not depend on $\xi$. So, it is proved that the distribution of $\bar\Delta_1$ does not depend on $\xi$.
		
		To complete the proof, it remains to show that the distribution of $\bar\Delta\coloneqq (\floor{\Delta_i-\Delta_j})_{i,j\geq 1}$ conditional on $\bar\Delta_1$ does not depend on $\xi$. First, we study the conditional distribution of $\bar\Delta$ given $\Delta_1$ and $\bar\Delta_1$. Assume $\Delta_1= j+s$, where $j=\floor{\Delta_1}$, and $\forall i: z_i\coloneqq \floor{\Delta_i-\Delta_1}$. 
		Note that, given this information, one has $\forall i: \Delta_i\in [z_i+j+s, z_i+j+1+s)$. By the definition of $\beta^{(\xi)}$, the restriction of $\beta^{(\xi)}$ to this interval is a multiple of the measure $\beta_s$ shifted by $z_i+j$, where $\beta_s$ is the measure on $[s,s+1)$ with density function $\tilde{\varphi}(t) 1_{[s,1]}(t) + p \tilde{\varphi}(t-1)1_{[1,s+1)}(t)$ (note that $\beta_s$ does not depend on $\xi$). Now, construct $s_2,s_3,\ldots\in [s,s+1)$ i.i.d. with distribution $\beta_s$. For each $m\geq 0$, shuffle $\{s_i: z_i=m\}$ by sorting them {in increasing order,} and let $s'_2,s'_3,\ldots,$ be the result. Finally, {define} $\Delta_i\coloneqq \Delta_1+z_i+s'_i$ for every $i\geq 2$. It can be seen that this constructs $\Delta_2,\Delta_3,\ldots,$ with the right distribution conditionally on $\Delta_1$ and $\bar\Delta$. Unfortunately, the distribution of $(s'_2,s'_3,\ldots)$ depends on $s$ and the distribution of $s=\{\Delta_1\}$ depends on $\xi$. However, note that the order of $s'_2,s'_3,\ldots$ (from smallest to largest) determines $\bar\Delta$. Since $(s_i)_{i\geq 1}$ are i.i.d.~with a diffuse distribution, one obtains that, for every $M$, the order of $(s'_i)_{\{i: z_i\leq M\}}$ is uniform among all possible orders (given $\Delta_1$ and $(z_i)_{i\geq 1}$), and this distribution does not depend on the triplet $(j,s,\xi)$. This implies that the distribution of $\bar\Delta$ conditionally on $\bar\Delta_1$ does not depend on $\xi$, and the proof is completed.
	\end{proof}

    \begin{proof}[Proof of \Cref{thm.ioxi} (First Proof)]
		In the setting of \Cref{thm:xi}, if $\mu$ is a multiple of the Lebesgue measure on each edge, then $\psi=\restrict{\mathrm{Leb}}{[0,1]}$. Also, by~\eqref{eq:thm:xi-beta} and \Cref{thm:criterion-edge}, one has $\tilde{\varphi}(t) = \frac{2 c_0+c_1}{c_0+c_1}$ for $0<t<\frac 1 2$ and $\tilde{\varphi}(t)= \frac{c_1}{c_0+c_1}$ for $\frac 1 2 <t<1$. In this case, continuity of $\tilde{\varphi}$ is equivalent to $\alpha_0=0$. Now, the claim is implied by \Cref{thm:xi}.
	\end{proof}

    \begin{proof}[Proof of \Cref{thm.ioxi} (Second Proof)]
		Let $(\Theta^{(\xi)}_i,\Delta^{(\xi)}_i)_{i\geq 1}$ be distributed as ${\rm IPVT}_{\xi}(E)$, where $\Delta^{(\xi)}_1<\Delta^{(\xi)}_2<\cdots$. Since $\alpha_0=0$ and $\alpha_1$ is supported on the diagonal, the decomposition~\eqref{eq:thm:xi-beta} is valid. Thus, $(\Theta^{(\xi)}_i)_{i\geq 1}$ are i.i.d. points on $\partial G$ with distribution $\nu_1$ and, independently, $(\Delta^{(\xi)}_i)_{i\geq 1}$ form a Poisson point process on $\mathbb R$ with intensity measure $\beta^{(\xi)}$. We couple these processes as follows. First, construct $\Theta_i:=\Theta^{(0)}_i$ and $\Delta_i:=\Delta^{(0)}_i$ for all $i\geq 1$. Then, for any $\xi\in\mathbb R$, let $\Theta^{(\xi)}_i:=\Theta_i$ and $\Delta^{(\xi)}_i:=u(\Delta_i,\xi)$, where the function $u$ is defined as follows: $u(\cdot,\xi):=g_{\xi}^{-1}\circ g_0$, where $g_{\xi}(t):=\beta^{(\xi)}((-\infty,t)) = p^{\xi+\lfloor t\rfloor}(1+(p-1)(t-\lfloor t\rfloor))$. {Since $u(\cdot,\xi)^* \beta^{(0)}=\beta^{(\xi)}$}, the Poisson mapping theorem implies that $(\Delta^{(\xi)})_{i\geq 1}$ is indeed a Poisson point process with intensity measure $\beta^{(\xi)}$, and the coupling is constructed. {Since $g_{\xi+1}(t) = g_{\xi}(t+1)$, one obtains that $\forall i: \Delta^{(\xi+1)}_i=\Delta^{(\xi)}_i-1$. Therefore, ${\rm IPVT}_{\xi+1}(E)={\rm IPVT}_{\xi}(E)$ a.s. in this coupling. Also, since $\Delta^{(\xi)}_i$ is continuous in $\xi$, the intersection of two cells moves continuously until hitting a vertex. Below, we will prove that the intersection does not hit any point of $G^{1/2}$ a.s. This implies the continuity of ${\rm IPVT}_{\xi}(E)$.}
		
		We now prove that, in this coupling, the restrictions of ${\rm IPVT}_{\xi}(E)$ and ${\rm IPVT}_0(E)$ to $G^{1/2}$ are identical. 
		Since $g_{\xi}(t+n) = p^n g_{\xi}(t)$ for $n\in\mathbb Z$, one obtains that $u(t+n,\xi) = u(t,\xi)+n$.
		Let $f^{(\xi)}_i:= \Delta^{(\xi)}_i+d_{\Theta_i}(x)$ denote the horofunction corresponding to $(\Theta_i,\Delta^{(\xi)}_i)$. If $x$ is a vertex, then $d_{\Theta_i}(x)$ is an integer (since $\Theta_i\in\partial G$), and hence, $f^{(\xi)}_i(x) = u(f^{(0)}_i(x),\xi)$.
		Fixing $i, j$, and an edge $xy$, consider the condition that, in ${\rm IPVT}_{0}(E)$, the cells of $(\Theta_i,\Delta_i)$ and $(\Theta_j,\Delta_j)$ intersect on the edge $xy$ at a point closer to $x$ than to $y$. Assuming $f^{(0)}_i(x)>f^{(0)}_j(x)$, this condition is equivalent to the occurence of all of the following:  
		$d_{\Theta_i}(y) = d_{\Theta_i}(x)-1$, $d_{\Theta_j}(y) \geq d_{\Theta_j}(x)$ and $f^{(0)}_i(x)<f^{(0)}_j(x)+1$.
		By changing $f^{(0)}_\cdot$ to $f^{(\xi)}_\cdot$, the values of $d_{\Theta_i}$ and $d_{\Theta_j}$ are not changed. Also, one has $f^{(\xi)}_i(x)>f^{(\xi)}_j(x)$ by monotonicity.
		Hence, to prove that the two cells in ${\rm IPVT}_{\xi}(E)$ also intersect on the same edge $xy$,  
		it is enough to prove that the condition $f^{(0)}_i(x)<f^{(0)}_j(x)+1$ is preserved under the mentioned change. This is proved as follows: 
		\begin{eqnarray*}
			f^{(\xi)}_i(x) = u(f^{(0)}_i(x),\xi)< u(f^{(0)}_j(x)+1,\xi) = u(f^{(0)}_j(x),\xi)+1 = f^{(\xi)}_j(x)+1.
		\end{eqnarray*}
		So, the claim is proved. 
	\end{proof}

	\begin{remark}
		\label{rem:off-diagonal}
		Here is the problem with the off-diagonal points of $K_0$ and $K_1$. Assume $f,g\in \corona{G}$ such that $\sup\mynorm{f(x)-g(x)}\leq 1$, $f\neq g$, $f\neq g+1$ and $f\neq g-1$. So, there exist adjacent vertices $x$ and $y$ such that $f(x)-g(x)\neq f(y)-g(y)$. In this case, it can be seen that, for $h_t\coloneqq (f+t)\wedge (g+(1-t))$, the function $t\mapsto h_t(x)-h_t(y)$ is not constant, and hence, $h_t$ depends on $t$. So, if $\alpha_0$ and $\alpha_1$ are not supported on the diagonal, we cannot deduce that $\nu_t$ is independent of $t$ in general, which is essential in the proof of \Cref{thm:xi}.
	\end{remark}

    The next lemma gives an intuitive criterion for proving that $\alpha_1$ is supported on the diagonal. In the statement, by a \dfn{$1\times n$ rectangle}, we mean vertices $(x_1,y_1,x_2,y_2)$ such that $d(x_1,y_1)=d(x_2,y_2)=1$, $d(x_1,x_2)=d(y_1,y_2)=n$ and $d(x_1,y_2)=d(x_2,y_1)=n+1$.
		
		\begin{lem}[\textsc{Thin Rectangles}]
			\label{lem:rectangle}
			In the setting of \Cref{thm:criterion-edge}, assuming $\alpha_0=0$, $\alpha_1$ is supported on the diagonal if and only if for every edge $e$, the number of $1\times n$ rectangles with side $e$, divided by $F(n)$, converges to zero as $n\to\infty$. 
		\end{lem}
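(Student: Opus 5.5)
The plan is to show that ``$\alpha_1$ is off the diagonal'' is detected by a countable family of clopen events, each tied to one fixed edge $e$. For each such $e$, the mass of the corresponding event should equal, up to bounded factors, $\lim_n \lambda_n$ times the number of $1\times n$ rectangles with side $e$. Since $\lambda_n F(n)\to p^{\xi}\in(0,\infty)$ by \Cref{thm:criterion-edge}, dividing by $F(n)$ is the same as multiplying by $\lambda_n$, up to a constant. The ratios $F(n+c)/F(n)$ are also bounded, since $\restrict{F}{\mathbb N}$ is log-regularly-varying. So bounded shifts of the level $n$ coming from $d(\origin,e)$ are harmless.

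\emph{Step 1: local criterion for off-diagonal pairs.} Let $xy$ be an inter-level edge with $x\in S_n(G,\origin)$ and $y\in S_{n+1}(G,\origin)$. Then $\pi\iota(x)\neq \pi\iota(y)$ iff there is a vertex $z$ with $d(z,y)-d(z,x)\neq 1$, while this difference equals $1$ at $z=\origin$. Walking along a path from $\origin$ to $z$, I will find an adjacent pair $z_1z_2$ with $d(z_1,y)=d(z_1,x)+1$ and $d(z_2,y)-d(z_2,x)\in\{0,-1\}$. The same dichotomy holds for limit pairs $(f,g)\in K_1$: $f\neq g$ iff some edge $z_1z_2$ has $g(z_1)-f(z_1)=1$ and $g(z_2)-f(z_2)\le 0$. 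For fixed $z_1z_2$, this is a clopen condition on integer-valued functions, hence a continuity set. It follows that $\alpha_1$ charges the off-diagonal set iff, for some edge $e=z_1z_2$, one has $\limsup_n \lambda_n \alpha_1^n(\text{edges } xy \text{ switching at } e)>0$. The reduction uses countable subadditivity over edges, and weak convergence on clopen sets.

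\emph{Step 2: removing the ``$0$'' case using $\alpha_0=0$.} The value $0$ means $x,y$ are at the same distance from $z_2$, i.e.\ $xy$ is a same-level edge seen from base point $z_2$. I expect this to be the main obstacle. My first attempt: the count of such edges is controlled by the same-level edges seen from $\origin$, because levels seen from $z_2$ and from $\origin$ differ by a bounded amount. More precisely, I would change the origin to $z_2$. The limiting measures for base point $z_2$ are obtained from those for $\origin$ by the homeomorphism $f\mapsto f-f(z_2)$ together with a bounded shift of $n$. Since $\alpha_0=0$ is a statement about the limit, it should transfer to base point $z_2$. Hence $\lambda_n\cdot\#\{xy: d(z_2,x)=d(z_2,y)\}\to 0$.

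\emph{Step 3: identifying the remaining case with rectangles.} In the remaining case $d(z_2,y)=d(z_2,x)-1$, set $m=d(z_1,x)$. Then $d(z_1,y)=m+1$ and $d(z_2,x)=m\pm1$ or $m$. The case $d(z_2,x)=m$ is again excluded by Step~2, now applied to the edge $z_1z_2$ seen from $x$, via the symmetric version. The case $d(z_2,x)=m-1$ forces $d(z_2,y)=m-2$, which is impossible since $d(z_1,y)=m+1$ and $z_1z_2$ is an edge. Thus $d(z_2,x)=m+1$ and $d(z_2,y)=m$, i.e.\ $(z_1,z_2,x,y)$ is a $1\times m$ rectangle with side $e$, where $|m-n|\le d(\origin,z_1)+1$. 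Conversely, every $1\times m$ rectangle with side $e$ yields an edge $xy$ whose two endpoints have distinct limit directions. Such an edge is, up to the negligible same-level edges, inter-level with respect to $\origin$.

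Combining the steps: each far side $xy$ comes from boundedly many rectangles with side $e$, and local finiteness together with bounded weights handles the multiplicities. Hence $\lambda_n\cdot\#\{1\times n \text{ rectangles with side } e\}\to 0$ for all $e$ iff the switching mass at every $e$ vanishes, which by Step~1 holds iff $\alpha_1$ is supported on the diagonal.
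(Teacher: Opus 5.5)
The gap is Step 2, which is exactly what you need beyond bipartite graphs (the paper's own proof is written only for the bipartite case).

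The hypothesis $\alpha_0=0$ is about levels around $\origin$. Moving the base point to $z_2$ does not merely shift levels by a bounded amount: it reclassifies inter-level edges as same-level ones, so nothing transfers. In fact, given $\alpha_0=0$, vanishing of the same-level mass seen from $z_2$ amounts to $\alpha_1$ giving no mass to pairs $(f,g)$ with $f(z_2)=g(z_2)+1$ ($f,g$ normalized at $\origin$). That is, it amounts to the absence of type-$0$ switches at $z_2$, which is precisely what Step 2 is supposed to prove. So the step is circular.

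And the step genuinely fails. Here is a counterexample.
\begin{itemize}
\item Let $T$ be the rooted tree (root $r$) in which every vertex has $p\geq 2$ children. Let $G$ have vertex set $T\times\{a,b\}$, with edges $(v,a)(v,b)$ and, for every child $w$ of $v$, $(v,a)(w,a)$, $(v,b)(w,b)$ and $(v,b)(w,a)$. Give the last kind weight $1/|w|$ and all others weight $1$.
\item Give $(v,a)$ position $2|v|$ and $(v,b)$ position $2|v|+1$. Every edge changes the position by $1$ or $2$, and $d((s,\cdot),(v,\cdot))=\lceil \Delta/2\rceil$ whenever $s$ is an ancestor of $v$, with $\Delta$ the difference of positions.
\item From $\origin=(r,a)$ the only same-level edges are the light ones, so $\alpha_0=0$.
\item The rungs $(v,a)(v,b)$ carry a fraction $1/(p+2)$ of $\mynorm{\alpha_1^n}$ and are same-level seen from $z=(r,b)$. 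So their limit pairs have $f(z)=0\neq -1=g(z)$, and $\alpha_1$ is not diagonal.
\item Yet, routing through confluents, every vertex $u$ of bounded depth has $d(u,y)-d(u,x)\in\{0,1\}$ for every deep edge $xy$ oriented by increasing position. Hence, for each fixed $e$, there is no $1\times n$ rectangle with side $e$ once $n$ is large.
\end{itemize}
So the ``if'' direction needs bipartiteness, or an extra hypothesis such as $\alpha_0=0$ for every choice of base point (automatic for unweighted vertex-transitive graphs). Under such a hypothesis your Step 2 becomes correct.

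Apart from this, your argument is sound. In the bipartite case it is the paper's proof with the details filled in. There $d(z,y)-d(z,x)$ is always odd, so Step 2 is vacuous. Steps 1 and 3 then show that the off-diagonal part of $K_1$ is the union over edges $e$ of the clopen sets $R(e)$ (closures of far sides of $1\times n$ rectangles with side $e$), and weak convergence on clopen sets concludes.

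Two small corrections:
\begin{itemize}
\item In Step 3, the case $d(z_2,x)=m$ is excluded by the triangle inequality, not by Step 2: it forces $d(z_2,y)=m-1$ while $d(z_1,y)=m+1$.
\item Comparing weighted $\alpha_1^n$-mass with unweighted rectangle counts requires the weights to be bounded above and away from $0$, which the statement leaves implicit.
\end{itemize}
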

		\begin{proof}
			For simplicity of the proof, we prove the claim in the bipartite case.
			For every edge $e=(x,y)$, let $R_n(e)$ be the set of edges $(x',y')$ that form a $1\times n$ rectangles together with $e$, and let $R(e)$ be the closure of $\cup_n R_n(e)$. It can be seen that the set of off-diagonal elements of $K_1$ can be written as $\cup_e R(e)$. So, $\alpha_1$ is supported on the diagonal if and only if $\forall e: \alpha_1(R(e))=0$. Since $R(e)$ is clopen, the latter is equivalent to $\forall e: \lim_n \alpha_1^n(R(e))=0$. This easily implies the claim.
		\end{proof}

        \section{Examples and Applications of the General Results}
	\label{sec:ex}
	
	{
		In this section, we apply the general results of \Cref{sec:full,sec:graphs} to prove convergence towards IPVT in various settings. We also study in \Cref{subsec:regularTree} further properties of the IPVT of regular trees and their products. The case of Diestel--Leader graphs will be studied in \Cref{sec:dl}.
		
		To use the general theorems, usually the more difficult part is proving that the normalized restriction of $\mu$ to a large ball converges. This task is highly model-dependent. For this goal, we will use the following lemma frequently, which helps proving the convergence by partitioning the ball into some \textit{layers} or \textit{regions}:
	}
	
	\begin{lem}[\textsc{{Weak Convergence via Linear Decomposition}}]
		\label{lem:sum}
		\ 
		\begin{enumerate}[label=(\roman*)]
			\item 
			For $n\in\mathbb N$, let $\mu_n$ be a probability measure on a separable metric space $E_1$. Assume $\mu_n = \sum_i c_{n,i}\mu_{n,i}$, where $c_{n,i}\geq 0$ and $\mu_{n,i}$ is a probability measure. Assume also that, for all $i$, $\lim_{n\to \infty} c_{n,i}=c'_i$ and $\lim_n \mu_{n,i} = \mu'_i$ in the weak topology. Then 
			$$
			\lim_{n\to \infty} \mu_n = \mu'\coloneqq \sum_i c'_i\mu'_i \; .
			$$
			\item  \label{lem:sum:cont}  In the previous part, assume $\mu_n = \int k_n(y,\cdot)\alpha_n(dy)$, where $\alpha_n$ is a probability measure on a measurable space $E_2$ and $k_n$ is a Markov kernel from $E_2$ to $E_1$. Assume also that $\tv{\alpha_n-\alpha}\to 0$ and $\lim_n k_n(y,\cdot)=k(y,\cdot)$ (in the topology of weak convergence) for all $y\in E_2$. Then, $$
			\lim_{n\to \infty} \mu_n = \mu'\coloneqq \int k(y,\cdot)\alpha(dy) \; .
			$$
		\end{enumerate}
	\end{lem}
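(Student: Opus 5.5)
The plan is to test weak convergence against bounded continuous functions $h$ on $E_1$. By the portmanteau theorem, this characterizes convergence of probability measures on a separable metric space.

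For part (i), write $\int h\,d\mu_n=\sum_i c_{n,i}\int h\,d\mu_{n,i}$. Each summand converges to $c'_i\int h\,d\mu'_i$. The only issue is exchanging limit and sum when there are infinitely many indices, and this is the main obstacle. To handle it, use Fatou's lemma for series: since $\sum_i c_{n,i}=1$, we get $\sum_i c'_i\le 1$. I will also need tightness of the tail, i.e.\ $\sum_i c'_i=1$, so that no mass is lost. Presumably the intended reading is that $\mu'$ is a probability measure, or equivalently that $\sum_i c'_i=1$. If the index set is finite this is automatic; otherwise I would add it as the implicit hypothesis under which $\mu'$ is a probability measure.

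Given $\sum_i c'_i=1$, fix $\epsilon>0$ and choose $N$ with $\sum_{i\le N}c'_i>1-\epsilon$. Then $\sum_{i\le N}c_{n,i}>1-2\epsilon$ for large $n$, so the tail $\sum_{i>N}c_{n,i}<2\epsilon$. Splitting the sum at $N$ and using $|\int h\,d\mu_{n,i}|\le\|h\|_\infty$, we get
\[
\Bigl|\int h\,d\mu_n-\int h\,d\mu'\Bigr|\le \sum_{i\le N}\Bigl|c_{n,i}\int h\,d\mu_{n,i}-c'_i\int h\,d\mu'_i\Bigr|+3\epsilon\|h\|_\infty .
\]
The finite sum on the right tends to $0$. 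Letting $\epsilon\downarrow0$ then proves part (i).

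For part (ii), write
\[
\int h\,d\mu_n=\int H_n\,d\alpha_n,\qquad H_n(y)\coloneqq\int h(x)\,k_n(y,dx).
\]
Then
\[
\Bigl|\int H_n\,d\alpha_n-\int H_n\,d\alpha\Bigr|\le \|h\|_\infty\,\tv{\alpha_n-\alpha}\to0 .
\]
Next, $H_n(y)\to H(y)\coloneqq\int h(x)\,k(y,dx)$ pointwise, because $k_n(y,\cdot)\Rightarrow k(y,\cdot)$. The functions $H_n$ are bounded by $\|h\|_\infty$, so dominated convergence with respect to the fixed probability measure $\alpha$ gives $\int H_n\,d\alpha\to\int H\,d\alpha=\int h\,d\mu'$.

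Measurability of $H$ follows because it is a pointwise limit of the measurable functions $H_n$, each of which is measurable since $k_n$ is a Markov kernel. This also shows that $\mu'$ is well defined. Part (ii) thus needs no tightness argument, because the mixing measure is fixed up to a total-variation error.
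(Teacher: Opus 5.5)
Your proof is correct, but it takes a different route from the paper. The paper works with the Prokhorov metric and gets (i) as the special case $E_2=\mathbb N$, $\alpha_n=\sum_i c_{n,i}\delta_i$, $\alpha=\sum_i c'_i\delta_i$ of (ii). You instead test against bounded continuous $h$, prove (i) directly by truncating the series, and prove (ii) by dominated convergence for $H_n(y)=\int h\,dk_n(y,\cdot)$.

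Your caveat in (i) is necessary, not cosmetic. With infinitely many indices the statement fails as written: take $c_{n,i}=1_{\{i=n\}}$ and $\mu_{n,i}=\delta_x$, so $\mu_n=\delta_x$ for all $n$ while $\mu'=0$. The paper's reduction silently assumes the same thing, since by Scheff\'e's lemma $\tv{\alpha_n-\alpha}\to0$ holds exactly when $\sum_i c'_i=1$. So, under your hypothesis, your (i) is also a one-line corollary of your (ii).

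The Prokhorov route aims at an explicit bound on $d_P(\mu_n,\mu')$, but it needs more care than the paper's sketch shows. The inequality to prove is $\mu'(A)\le\mu_n(A_\epsilon)+\epsilon$; as written, the enlargement is on the other side. The radius given by $\epsilon_n(y)=d_P(k_n(y,\cdot),k(y,\cdot))$ also depends on $y$, so one must split off $\{y:\epsilon_n(y)\ge\epsilon\}$, whose $\alpha$-measure tends to $0$. Finally, separability of $E_1$ is needed to get $\epsilon_n(y)\to0$ from weak convergence.

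Your argument avoids all of this and never uses separability. The only step you leave implicit is that the limit $k$ is a genuine kernel. This follows by approximating indicators of closed sets with bounded Lipschitz functions and then applying a $\pi$--$\lambda$ argument.
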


    \begin{proof}
		The first statement is a special case of the second one.
		We use the Prokhorov metric $d_P$ for weak convergence.
		Fix $\epsilon>0$. For Borel sets $A\subseteq E_1$, let $A_{\epsilon}\coloneqq \cup_{x\in A} B_{\epsilon}(x)$. It is enough to show that, for large enough $n$, one has $\forall A: \mu'(A_{\epsilon})\leq \mu_n(A)+\epsilon$. Let $\epsilon_{n}(y)\coloneqq d_P(k_n(y,\cdot),k(y,\cdot))$. One has
		\begin{eqnarray*}
			\mu'(A_{\epsilon}) = \int k(y,A_{\epsilon})\alpha(dy) \leq  \int \left( k_n(y,A) + \epsilon_{n}(y)\right) \alpha(dy).
		\end{eqnarray*}
		Since $\epsilon_n(y)\leq 1$, bounded convergence implies that $\int \epsilon_n(y) \alpha(dy)\to 0$. For the other term, since $k_n(y,A)\leq 1$, one has
		\begin{eqnarray*}
			\int k_n(y,A)\alpha(dy) &\leq & \int k_n(y,A) \alpha_n(y) + \tv{\alpha_n-\alpha} = \mu_n(A) + \tv{\alpha_n-\alpha}.
		\end{eqnarray*}
		This implies the claim.
	\end{proof}

	\begin{remark}
		One can extend \Cref{lem:sum} by allowing $\alpha_n$ converge weakly to $\alpha$, assuming that the kernels $k$ and $k_n$ are continuous in $y$, and that $k_n$ converges uniformly to $k$ (the latter always holds when $E_2$ is compact).
	\end{remark}

	\subsection{CAT(0) Spaces}
	\label{subsec:cat0}
	
	Recall from \Cref{subsec:Busemann} that, in complete CAT(0) spaces, the horoboundary coincides with the visual boundary. We now obtain the following useful criterion for non-degeneracy of Voronoi diagrams. In this result, a metric space is called \dfn{non-branching} if every geodesic has at most a unique prolongation, see \emph{e.g.}~\cite[Definition 2.1]{AR14}.
	
	\begin{lem}[\textsc{Non-degeneracy in CAT(0) Spaces}]
		\label{lem:nondegenerate-cat0}
		Let $E$ be a proper non-branching CAT(0) space (in particular, any CAT(0) Riemannian manifold). If $(f_i)_{i\geq 1}$ is an admissible sequence of distinct horofunctions, then $\Vor((f_i)_{i\geq 1})$ is non-degenerate.
	\end{lem}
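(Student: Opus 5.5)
To prove \Cref{lem:nondegenerate-cat0}, I need to show $C_i\subseteq\overline{C_i^\circ}$ for every $i$; the reverse inclusion is automatic, since $C_i^\circ\subseteq C_i$ and $C_i$ is closed. So fix $y\in C_i$. The aim is to find points arbitrarily close to $y$ where $f_i$ is strictly smaller than every other $f_j$.

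Since $E$ is a complete CAT(0) space, $f_i$ is a Busemann function (\Cref{subsec:Busemann}). Hence there is a geodesic ray $\gamma$ from $y$ representing the boundary point $\theta_i$ of $f_i$, with $f_i(\gamma(t))=f_i(y)-t$ for all $t\geq 0$. For every $j$, the function $f_j$ is 1-Lipschitz, so $f_j(\gamma(t))\geq f_j(y)-t\geq f_i(y)-t=f_i(\gamma(t))$. The cell property is therefore preserved along $\gamma$, and I claim it becomes strict for each $t>0$.

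Suppose $f_j(\gamma(t))=f_i(\gamma(t))$ for some $t>0$ and $j\neq i$. Then $f_j$ decreases at unit speed along $\gamma|_{[0,t]}$, so $\gamma|_{[0,t]}$ is an initial segment of a gradient line of $f_j$. In a proper CAT(0) space, the unit-speed ray from $y$ along which $f_j$ decreases at rate 1 is the ray from $y$ to $\theta_j$; this holds because the Busemann function is convex and the descending geodesic is unique. Thus $\gamma|_{[0,t]}$ coincides with the initial segment of the ray $\gamma_j$ from $y$ to $\theta_j$. By the non-branching hypothesis, two geodesic rays sharing a nontrivial initial segment coincide, so $\gamma=\gamma_j$ and hence $\theta_i=\theta_j$. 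Two horofunctions with the same boundary point differ by a constant, and equality at $\gamma(t)$ forces $f_i=f_j$, contradicting distinctness.

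So for every $t>0$ and every $j\neq i$, we have $f_i(\gamma(t))<f_j(\gamma(t))$. By admissibility, only finitely many $f_j$ can be $\leq f_i+1$ on a bounded set; this guarantees that the strict inequalities hold for all $j$ simultaneously and that $\gamma(t)$ lies in the open set $C_i^\circ$. Letting $t\downarrow 0$ gives $y\in\overline{C_i^\circ}$.

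The main obstacle is the uniqueness claim: that a unit-speed geodesic along which a Busemann function decreases at rate 1 must be a ray to its own boundary point. I plan to derive it from the fact that in CAT(0) spaces $\nabla$-flow lines of Busemann functions are exactly the asymptotic rays (\cite[II.8]{BH}). Concretely, take $z=\gamma(t)$ and compare the segment $[y,z]$ with the ray from $y$ to $\theta_j$ via the projection to horoballs. The equality $f_j(z)=f_j(y)-d(y,z)$ forces $z$ to be the unique nearest-point projection of $y$ onto the horoball $\{f_j\leq f_j(z)\}$, which is convex, and that projection lies on the ray to $\theta_j$.
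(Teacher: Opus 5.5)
Your proposal is correct and takes the same route as the paper: follow the $f_i$-descending ray from $y\in C_i$, use the 1-Lipschitz property to keep it inside $C_i$, and rule out equality at positive times via non-branching. Your projection-onto-horoballs argument supplies the uniqueness of the descending ray, and the distinctness case, that the paper leaves implicit; the only superfluous step is the appeal to admissibility at the end, since $\gamma(t)\in C_i^\circ$ is exactly the pointwise strict inequality $f_i(\gamma(t))<f_j(\gamma(t))$ for each $j\neq i$, which you have already proved.
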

	\begin{proof}
		If $x\in C_i$, let $\gamma_i$ be the unique geodesic starting from $x$ such that $f_i(\gamma_i(t)) = f_i(x)-t$. The triangle inequality implies that, if $f_i\neq f_j$, then $f_j(\gamma_i(t))\geq f_j(x)-t\geq f_i(x)-t$. If equality holds for some $t>0$, then the geodesic $\restrict{\gamma}{[0,t]}$ has two different extensions, which violates the assumption. This implies that $\gamma(t)\in C_i^\circ$ for every $t>0$ and the claim is proved.
	\end{proof}

	\begin{cor}[\textsc{Convergence Towards IPVT in CAT(0) Spaces}]
		\label{cor:cat0}
		Let $E$ be a proper non-branching CAT(0) space (in particular, any CAT(0) Riemannian manifold). Under the assumptions of \Cref{prop:subseq}, if $\nu$ is diffuse, then $\Vor(\Phi)$ is non-degenerate a.s. Hence, $\lim_n \Vor(X^{(\lambda_n)})$ exists and is equal to $\Vor(\Phi)$.
	\end{cor}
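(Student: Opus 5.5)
The plan is to reduce the first claim to \Cref{lem:nondegenerate-cat0} and the second to \Cref{prop:diagramConvergence}. By \Cref{prop:subseq}, $\Phi=(\Theta_i,\Delta_i)_{i\geq 1}$ is a Poisson point process on $\corona{E}$ with intensity measure $\nu$. The first thing to verify is that $\Phi$ is a.s.\ admissible, i.e.\ that $f_i(\origin)=\Delta_i\to+\infty$. This follows because $\nu$ lies in $\mathcal M$, so $\nu(\lip_1^{\leq t})<\infty$ for every $t$. Consequently only finitely many points of $\Phi$ have delay at most $t$, for each $t$, almost surely.

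The second step is to show that the horofunctions $f_i=d_{\Theta_i}+\Delta_i$ are a.s.\ pairwise distinct. A Poisson point process whose intensity measure is diffuse (no atoms) is simple a.s. So if $\nu$ is diffuse, no two points of $\Phi$ coincide as elements of $\corona{E}\cong\partial E\times\mathbb R$. Under the identification of $\corona{E}$ with a subset of $\lip_1$, distinct pairs $(\theta,\delta)$ correspond to distinct functions, so the $f_i$ are distinct.

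With admissibility and distinctness in hand, \Cref{lem:nondegenerate-cat0} applies on an event of probability one. This uses the standing assumption that $E$ is a proper non-branching CAT(0) space, in which horofunctions are Busemann functions and the descending geodesics are unique. The conclusion is that $\Vor(\Phi)$ is non-degenerate a.s.

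The final step invokes Part~\ref{prop:diagramConvergence:unordered} of \Cref{prop:diagramConvergence}. Since non-degeneracy holds a.s., $\Vor(X^{(\lambda_n)})$ converges weakly to $\Vor(\Phi)$. If the delays are also a.s.\ distinct, which holds for instance when the delay marginal of $\nu$ is diffuse, the ordered version follows from Part~\ref{prop:diagramConvergence:ordered}.

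The main obstacle is a possible gap in the second step. \Cref{lem:nondegenerate-cat0} needs distinct horofunctions, and diffuseness of $\nu$ rules out exact coincidences. Even so, I would double-check that the lemma's argument does not implicitly require the $\Theta_i$ themselves to be distinct. If it did, two points with the same $\theta$ but different delays would give nested, not overlapping, cells. The cell with the larger delay would then be empty, and I would handle that trivial case separately. Rereading the proof of \Cref{lem:nondegenerate-cat0}, it only uses $f_i\neq f_j$, so I expect no issue.
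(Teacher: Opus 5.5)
Your proposal is correct and follows essentially the same route as the paper, which simply notes that diffuseness of $\nu$ makes the points of $\Phi$ a.s.\ distinct and then invokes \Cref{lem:nondegenerate-cat0} for non-degeneracy and \Cref{prop:diagramConvergence} for the convergence. Your additional checks are sound details that the paper leaves implicit: admissibility follows from $\nu\in\mathcal M$, and coinciding $\Theta_i$ with different delays only produce an empty cell, which is harmless.
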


    \begin{proof}
		If $\nu$ is diffuse, then the points of $\Phi$ are distinct a.s. So, the proof is implied by \Cref{lem:nondegenerate-cat0,prop:diagramConvergence}.
	\end{proof}
	
	\begin{remark} 
		\label{rem:cat0-diffuse}
		Under the assumptions of the full-converge result (\Cref{thm.scfc}), $\nu$ is always diffuse. But otherwise, one can cook up an example where $\nu$ has atoms and the convergence does not hold, for instance, this happens if $E$ is the hyperbolic plane and $\mu$ is a suitable measure on the union of circles of integer radius about $\origin$.
	\end{remark}
	
	\subsection{Symmetric Spaces}
	\label{subsec:symmetric}
	
	We now prove \Cref{prop:symmetric} using the results of \Cref{subsec:cat0}.
	
	\begin{proof}[Proof of \Cref{prop:symmetric}]
		In Theorem~3.6 of~\cite{MiMe23}, it is proved that $\lim_{t\to\infty} (F(t))^{-1}\iota_t^*\mu$ has a unique subsequential limit. So, by \Cref{lem:criterion1}, the conditions of the full convergence theorem (\Cref{thm.scfc}) are satisfied (note also that Lemma~4.4 of~\cite{MiMe23} shows that $F(r)$ is asymptotically of the form $r^a p^r$ for some explicit $a$ and $p$). 
		Hence, as shown in \Cref{rem:cat0-diffuse}, the limiting measure is diffuse. Now, since the space is a CAT(0) Riemannian manifold, the claim is implied by \Cref{cor:cat0}.
	\end{proof}

	\subsection{On the IPVT of Trees}
	\label{subsec:regularTree}
	
	In this subsection, we apply the results of \Cref{sec:graphs} to regular trees and provide new streamlined proofs of some of the results of \cite{bhupatiraju} and of \cite[Section 6]{IPVT}. 
	We prove that cells are one-ended in all unimodular random trees.
	As a simple byproduct, we prove the ``folklore'' result that the IPVT cells have pairwise finite intersection. We also provide a description of the zero cell by an edge-percolation which works for an arbitrary base tree, and which appears to be new. {Note that trees are CAT(0). This implies that the horocompactification is identical to the end compactification (see \Cref{subsec:cat0}).}
	
	Given a natural number $p\geq 2$, denote by $\rtree{p}$ the infinite $(p+1)$-regular tree,\footnote{Note that in the literature, $\rtree{p}$ usually denotes the $p$-regular tree. We use the $(p+1)$-regular tree for having nicer formulas.} 
	endowed with the graph distance $\treedist{p}$. Fix an arbitrary vertex $\origin \in \rtree{p}$ as the origin. 
	For the vertex-measured tree $\rtree{p}$ and $n>0$, one can calculate the volume function as $F(n)= \frac{p+1}{p-1} p^n -\frac{2}{p-1}$. Hence, $\restrict{F}{\mathbb N}$ is log-regularly-varying with index $b=\ln p$. Also, it is a classical fact {(see e.g.~\cite[Theorem 20.3]{woess2000})} that the uniform measure on $B_n(\origin)$ converges to the \textit{harmonic measure} (seen from $\origin$) on $\partial\rtree{p}$, namely $\nu_0$. So, the full-convergence in \Cref{thm.ctfg} holds and one obtains a one-parameter family of IPVTs.
	
	For the edge-measured version of $\rtree{p}$ (which is the one studied in \cite{IPVT,bhupatiraju}), one similarly gets the volume function $F(n)= \frac{p+1}{p-1} p^n -\frac{p+1}{p-1}$, which has the same index $b=\ln p$. 
	In the setting of \Cref{thm.ctfemg}, one can show that $\alpha$ exists and is supported on the diagonal of $(\partial\rtree{p})^2$. More precisely, $\alpha$ is the push-forward of the harmonic measure $\nu_0$ under the diagonal map $\mathrm{diag}(\theta):=(\theta,\theta)$. Therefore the full-convergence in \Cref{thm.ctfemg} holds and one obtains a one-parameter family of IPVTs again. Moreover, the distribution of delays provided in \Cref{thm.ctfemg} gives~\cite[Proposition~6.1]{IPVT}. In addition, the conditions of \Cref{thm:xi} hold, and hence, the independence on $\xi$ mentioned in \Cref{thm:xi} holds. This implies Theorem~6.2 of~\cite{IPVT}. 

    It is also proved in~\cite[Proposition~6.3]{IPVT} that the cells of ${\rm IPVT}(\rtree{p})$ are one-ended. We extend this result to all unimodular random trees by providing a proof based solely on the mass transport principle:
	
	\begin{lem}[\cite{IPVT}]
		\label{lem:one-ended2}
		Let $[\bs G, \bs o]$ be a unimodular random tree that satisfies the growth conditions of \Cref{prop:precompact}. Then, in any IPVT of $[\bs G, \bs o]$, the cells are one-ended trees a.s.
	\end{lem}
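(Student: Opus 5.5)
We argue via the mass transport principle on the unimodular tree. Fix an IPVT, which by \Cref{prop:precompact} and \Cref{cor:generic} is the Voronoi diagram $\Vor(\Phi)$ of a limiting process $\Phi=(\Theta_i,\Delta_i)_{i\geq 1}$ on $\corona{G}$ (graphs need no nondegeneracy). By \Cref{lem:autInv}, and because the construction commutes with re-rooting, the decorated graph $[\bs G,\bs o,\Vor(\Phi)]$ is unimodular. Since trees are CAT(0), every $\Theta_i$ is an end of $\bs G$ and $d_{\Theta_i}$ is the Busemann function of that end. The proof has three steps: each cell is a connected subtree containing the ray to its nucleus, so it has at least one end; no cell has three or more ends; and no cell has exactly two ends.

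For the first step, we use the star-like property noted after \Cref{def:voronoi}. If $x\in C_i$, then the whole ray $[x,\Theta_i)$ lies in $C_i$. Since two rays to the same end eventually merge, $C_i$ is a connected subtree. It has at least the end $\Theta_i$, and any further end $\omega\neq\Theta_i$ of $C_i$ gives a ray in $C_i$ along which $d_{\Theta_i}$ increases. For the second step, cells with at least three ends are excluded by the standard MTP argument. A cell is an invariantly defined subgraph, so if cells with $\geq3$ ends existed with positive probability, each such cell would contain branch points of its end-tree. Each vertex can send mass to the nearest branch point, and a unimodular forest with infinitely many ends per component must have components that are non-amenable-like. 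More directly, one can use \cite[Ch.~8]{bookLyPe16}: in a unimodular random forest, each component has $0$, $1$, $2$ or $\infty$ ends, and if it has $\infty$ ends then its expected degree exceeds $2$ via the end-forest.

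The main obstacle is to exclude $\infty$ ends and $2$ ends, and for this we transport mass toward the ends. Suppose $C_i$ contains a ray $\gamma$ toward an end $\omega\neq\Theta_i$. Along $\gamma$, $d_{\Theta_i}$ eventually increases by $1$ per step while staying minimal among all $f_j$. This forces every other nucleus $\Theta_j$ with $\Theta_j\neq\omega$ to satisfy $d_{\Theta_j}(\gamma(t))+\Delta_j\geq d_{\Theta_i}(\gamma(t))+\Delta_i$, which increases linearly in $t$. Define the transport that sends unit mass from each vertex $x\in C_i$ to the vertex where the geodesic from $x$ toward $\Theta_i$ first meets the \emph{trunk} of $C_i$, meaning the union of bi-infinite geodesics in $C_i$ ending at $\Theta_i$. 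If any cell had an end other than $\Theta_i$, then trunk vertices would receive infinite mass: each trunk vertex with a subtree escaping to $\omega$ collects the whole infinite subtree hanging off it toward $\omega$ side. Meanwhile every vertex sends mass at most $1$. Under the mass transport principle this gives an expectation of infinity on one side and at most $1$ on the other, a contradiction.

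To make the receiving mass genuinely infinite, we choose the transport so that every vertex of the trunk ray toward $\omega$ sends to the last vertex before it on the path to $\Theta_i$ where the cell branches. This needs care; the alternative is to invoke the classical fact that in a unimodular random forest, no component has an end that is ``isolated'' relative to an invariantly chosen end. The component $C_i$ comes with the invariantly chosen end $\Theta_i$, and the MTP forces components with a marked end to be one-ended. This is the step I expect to be delicate: one must check that selecting $\Theta_i$ is equivariant, which holds because the nucleus is determined by the pair $(\Phi,\Vor(\Phi))$ jointly, and then the usual argument rules out every end other than the selected one.
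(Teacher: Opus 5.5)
Your proposal has a genuine gap at exactly the step you flag as the main obstacle: nothing in it excludes two-ended cells.

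\textbf{The mass transport towards the trunk gives no contradiction.} With your definition of the trunk (the union of bi-infinite geodesics in $C_i$ ending at $\Theta_i$), any infinite subtree of $C_i$ hanging off the trunk away from $\Theta_i$ contains a ray by K\"onig's lemma. It therefore lies in the trunk. So what hangs off the trunk consists of finite bushes, and every trunk vertex receives only finite mass.

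\textbf{The fallback ``classical fact'' is false.} $\mathbb Z$ with parent map $n\mapsto n+1$ is a unimodular tree with an equivariantly marked end, and it is two-ended.

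\textbf{What intrinsic arguments actually give.} Working on the cell with its marked end, you can exclude three or more ends. Regard the cell as an eternal family tree, with parent equal to the next vertex towards $\Theta_i$. The MTP forces the expected number of children to be $1$, and the classification of unimodular eternal family trees in~\cite{eft} leaves only the one-ended case and the two-ended case. The two-ended case is a bi-infinite path with finite bushes, which is perfectly compatible with unimodularity. Your argument also never uses the growth conditions of \Cref{prop:precompact} beyond the existence of an IPVT, which signals that an ingredient is missing.

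\textbf{The missing ingredient is the ambient geometry of $\bs G$.} In the classification above, the one-ended case has level sets that are infinite or empty, while the two-ended case has level sets that are finite and nonempty. So it suffices to show that some level set of the cell is infinite or empty, for instance its intersection with the zero-horosphere $\{f_i=0\}$ of its nucleus. The paper does this in four steps:
\begin{enumerate}
\item It passes to a typical zero-horosphere $L'\ni\origin$, biasing by the number of zero-horospheres through $\origin$.
\item It notes that $[L',\origin]$ is unimodular, keeping the rest of the structure as a decoration.
\item It shows $L'$ is infinite a.s., because the growth conditions force $[\bs G,\bs o]$ to have infinitely many ends and no isolated ends.
\item It observes that $L'\cap C'$ is an equivariant subset of $L'$, hence empty or infinite by Lemma~2.9 of~\cite{eft}.
\end{enumerate}
Without some such use of the infinite horospheres of $\bs G$, two-ended cells cannot be ruled out.
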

	\begin{remark}
		\label{rem:unimodular}
		In \Cref{lem:one-ended2}, one should use the space $\mathcal G_*$ of tuples $[G,o;\varphi]$, where $G$ is a connected locally finite graph, $o\in G$, and $\varphi$ is a discrete set (in the Fell topology) of nonempty (pointed) closed subsets of $G$. Also, isomorphic tuples are considered the same. Using the framework of~\cite{Kh19generalization}, one can define a suitable topology on $\mathcal G_*$, which is skipped for brevity. Then, an IPVT of $[\bs G, \bs o]$ should be defined as a subsequential weak limit of $[\bs G, \bs o; \Vor(X^{(\lambda)})]$, considered as a random element of $\mathcal G_*$. Similarly to \Cref{lem:autInv}, one can show that unimodularity is preserved by adding the IPVT. This can also be extended to \textit{unimodular random measured metric spaces} using the framework of~\cite{Kh23unimodularspaces}.
	\end{remark}
	\begin{proof}[Proof of \Cref{lem:one-ended2}]
		{Let $(C_i)_i$ be an IPVT of $[\bs G, \bs o]$ as described in \Cref{rem:unimodular} above. By passing to a subsequence if necessary, we may assume that the collection of pointed cells also converge. Denote by $\bs T=(f_i, C_i)_i$ the resulting (unordered) collection, where $f_i\in\partial G$ and $C_i$ is the corresponding cell in $\Vor((f_i)_i)$. As mentioned in \Cref{rem:unimodular}, $[\bs G, \bs o; \bs T]$ is unimodular. 
			By a \textit{typical cell} $C$, we mean a random cell that contains $\origin$, biased by $\mynorm{\{i: \origin\in C_i\}}$.}
		By verifying the mass transport principle, one can show that $C$ is a unimodular random tree. Also, it is enough to prove that $C$ is one-ended a.s. 
		One may regard $C$ as a family tree (called and \textit{eternal family tree} in~\cite{eft}) and consider its level-sets. By the general classification of unimodular eternal family trees in~\cite{eft}, if $C$ is one-ended, then all of its level-sets are either infinite or empty. Otherwise, $C$ is two-ended and all of its level-sets are finite and nonempty. So, it is enough to prove that $C$ has a level-set that is infinite or empty a.s.
		
		Let $L_i \coloneqq \{x\in \bs G: f_i(x)=0\}$ and call it a \textit{zero-horosphere}. By the previous paragraph, it is enough to show that $L_i\cap C_i$ is infinite or empty, for all $i$, a.s. Consider a typical zero-horosphere as follows: Let $A\coloneqq \{(f_i,L_i,C_i): \origin\in L_i\}$. Bias the probability measure by $\mynorm{A}$, and then, let $(f',L',C')$ be a random element of $A$. By unimodularity, it is enough to prove that $L'\cap C'$ is infinite or empty a.s. By verifying the mass transport principle, it is straightforward to see that $[L', \origin]$ is unimodular. More generally, unimodularity of $[L', \bs o]$ still holds if one keeps the rest of the structure as a decoration (see \textit{local unimodularity} in \cite{H20}, or unimodular measured graphs in \cite{Kh23unimodularspaces}). By the growth conditions, $[\bs G, \bs o]$ is a unimodular graph with infinitely many ends and no isolated ends (see~\cite{processes}). This implies that $L'$ is infinite a.s. Finally, $L'\cap C'$ is an equivariant subset of $L'$. The claim is then implied by the fact that an infinite unimodular graph cannot have a finite nonempty equivariant subset (see Lemma~2.9 of~\cite{eft}).
	\end{proof}

    We now study the intersection of cells. In the edge-measured version, it is clear that any two adjacent cells intersect in a single non-vertex point (since the delays are distinct) a.s. In the vertex-measured case, note that if the difference of delays $\Delta_i-\Delta_j$ is odd, then the corresponding cells cannot intersect. More generally:
	\begin{cor}
		In any IPVT of the vertex-measured regular tree, for any two cells $C_i$ and $C_j$ and for all $r\geq 0$, $N_r(C_i)\cap C_j$ is finite, where $N_r(C_i)$ is the $r$-neighborhood of $C_i$.
	\end{cor}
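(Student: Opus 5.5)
We reduce to the structure of ends in a tree. Let $C_i, C_j$ be two distinct cells with nuclei $(\theta_i,\Delta_i)$ and $(\theta_j,\Delta_j)$, where $\theta_i,\theta_j\in\partial\rtree{p}$ are ends. By \Cref{lem:one-ended2} the cells are one-ended a.s. Each cell is star-like towards its nucleus: for $x\in C_i$, the geodesic ray $[x,\theta_i)$ lies in $C_i$. Hence the unique end of $C_i$ is $\theta_i$. The ends of the IPVT are a.s. distinct, since $\nu_0$ is the diffuse harmonic measure and the $\Theta_k$ are i.i.d. So $\theta_i\neq\theta_j$ a.s.

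Suppose $N_r(C_i)\cap C_j$ is infinite for some $r$. We would extract a contradiction as follows.

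First, by local finiteness and König's lemma, the infinite set $N_r(C_i)\cap C_j$ contains a sequence $y_n\to\infty$. Passing to a subsequence, $y_n$ converges to some end $\eta$ in the end compactification, which coincides with $\overline{\rtree{p}}$ by the CAT(0) remark.

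Next, we locate $\eta$. Since $C_j$ is a connected subtree whose only end is $\theta_j$, any sequence of points of $C_j$ tending to infinity converges to $\theta_j$. To see this, note that the subtree spanned by $C_j$ together with its limit points has $\theta_j$ as its sole end, and every unbounded sequence in a locally finite tree accumulates at ends of the subtree containing it. Hence $\eta=\theta_j$.

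Now pick $x_n\in C_i$ with $d(x_n,y_n)\le r$. Then $x_n\to\theta_j$ as well, since bounded perturbations do not change the limiting end. By the same argument applied to $C_i$, we get $x_n\to\theta_i$. Thus $\theta_i=\theta_j$, a contradiction.

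The key step that needs care is the claim that in a one-ended connected subtree $C$, every sequence of vertices going to infinity converges to its unique end. If $y_n\in C$ and $y_n\to\infty$, then the geodesics from a fixed $o'\in C$ to $y_n$ lie in $C$. A compactness (König) argument yields a ray in $C$ along which a subsequence of the $y_n$ branches off at arbitrarily large depth. That ray represents an end of $C$, hence it is $\theta$. Since this holds for every subsequence, the whole sequence converges to $\theta$.

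An alternative route, if one wants to avoid the end-convergence argument, is purely metric. For $y\in C_j$ far away, $d_{\theta_i}(y)+\Delta_i-(d_{\theta_j}(y)+\Delta_j)\ge 0$. Along rays towards $\theta_j$ this difference grows linearly, because $\theta_i\neq\theta_j$ and the Busemann functions differ by $2\cdot$(distance past the branch point). Meanwhile, points of $N_r(C_i)$ satisfy the reverse inequality up to $2r$. Either way, the main obstacle is justifying the measure-zero exceptional events: distinctness of the nuclei and one-endedness, both of which hold a.s. by \Cref{lem:one-ended2} and the diffuseness of $\nu_0$.
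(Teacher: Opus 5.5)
Your main argument is correct. It uses the same two inputs as the paper, namely one-endedness of the cells (\Cref{lem:one-ended2}) and a.s.\ distinctness of the nuclei, but organizes them differently.

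The paper's proof splits into cases.
\begin{itemize}
\item If $C_i\cap C_j=\emptyset$, finiteness is pure tree geometry. Every geodesic from a point of $C_i$ to a point of $C_j$ passes through the endpoint $b\in C_j$ of the unique bridge between the two subtrees, so $N_r(C_i)\cap C_j\subseteq B_r(b)$.
\item Otherwise $C_i\cap C_j$ is a subtree. If it were infinite it would contain a ray, which would force a second end on $C_i$.
\end{itemize}
The step from finiteness of $C_i\cap C_j$ to finiteness of $N_r(C_i)\cap C_j$ is left implicit there. It needs a tripod argument: for $x\in C_i$, $y\in C_j$ with $d(x,y)\le r$ and $z\in C_i\cap C_j$, the center of the tripod on $x,y,z$ lies in $C_i\cap C_j$ within distance $r$ of $y$.

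Your route replaces the case split and this step by a single compactness argument that handles the $r$-neighborhood directly. An unbounded sequence in $N_r(C_i)\cap C_j$, together with its $r$-close partners in $C_i$, must converge to both $\theta_j$ and $\theta_i$.

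The one ingredient you need is that an unbounded sequence in a one-ended connected subtree $C$ converges to its end. This has a shorter proof than your K\"onig construction. If $y_n\in C$ converges to $\eta\in\partial\rtree{p}$, then the geodesics from a fixed vertex of $C$ to $y_n$ lie in $C$ and converge to the ray towards $\eta$. Hence that ray lies in $C$, and $\eta$ is an end of $C$.

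As for what each approach buys: the paper's argument is more finitary and only uses that infinite subtrees contain rays. Yours only uses that each cell accumulates at a single boundary point and that bounded perturbations do not change boundary limits, so it adapts more easily beyond trees.

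Your sketched metric alternative does not close on its own. The constraint $0\le d_{\theta_i}+\Delta_i-d_{\theta_j}-\Delta_j\le 2r$ only confines the projection onto the geodesic line from $\theta_i$ to $\theta_j$ to a bounded segment. Infinitely many vertices project onto such a segment, so one-endedness of $C_j$ is needed again to finish.
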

	\begin{proof}
		If $C_i\cap C_j=\emptyset$, the claim follows from the fact that $C_i$ and $C_j$ are subtrees of $\rtree{p}$. Otherwise, $C_i\cap C_j$ is also a subtree of $\rtree{p}$. If the latter is infinite, one finds that $C_i$ has more than one end, which is a contradiction. This proves the claim.
	\end{proof}

	In general trees, one can describe the first cell $C_1$ (i.e. the cell of $\origin$) by an edge-percolation as follows (recall that if more than one cell contains $\origin$, then a uniform random order is chosen on those cells).
	
	\begin{prop}[\textsc{Description of the Zero Cell}]
		Let $G$ be any tree and let $\nu$ be any nonzero subsequential limit of $\lambda_n \iota_{t_n}^*\mu$. Consider the corresponding ${\rm IPVT}(G)$ and let $\gamma$ be the unique path from $\origin$ to $\Theta_1$. Then, conditionally on $(\Theta_1,\Delta_1)$, the first cell $C_1$ is the connected component of $\origin$ in an edge percolation on $G$ such that each edge $(x,y)$ is open independently from the other edges (but not with equal probabilities). 
	\end{prop}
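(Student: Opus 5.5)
The plan is to condition on $(\Theta_1,\Delta_1)$ and use that, by \Cref{prop:subseq}, the remaining nuclei $(\Theta_i,\Delta_i)_{i\geq 2}$ form a Poisson point process on $\corona{G}$ with intensity $\nu$ restricted to the region $\{f: f(\origin) \geq \Delta_1\}$ (more precisely, to those $f$ that come after $(\Theta_1,\Delta_1)$ in the ordering; ties are handled by the uniform random order, which only thins points at the same level independently). A vertex $x$ lies in $C_1$ iff no other nucleus $f$ satisfies $f(x)<f_1(x)$, where $f_1\coloneqq d_{\Theta_1}+\Delta_1$, and ties are broken appropriately. Using the star-like property (Remark on star-like cells), $C_1$ is a subtree containing $\origin$; in a tree, $x\in C_1$ iff every vertex on the path from $\origin$ to $x$ is in $C_1$. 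So $C_1$ is the component of $\origin$ once we declare an edge $(y,x)$, with $y$ the parent of $x$ (closer to $\origin$), open iff $x$ is not strictly beaten given that $y$ is not beaten.

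The key step is the tree geometry of horofunctions. For an edge $(y,x)$ with $x$ the child, let $T_x$ be the set of ends whose ray from $\origin$ passes through $x$ (a clopen subset of $\partial G$). For a horofunction $f=d_\theta+\delta$ with $\theta\notin \overline{T_x}$-side, i.e. $\theta \notin T_x$, one has $f(x)=f(y)+1$, and comparisons at $x$ reduce to comparisons at $y$; whereas $f_1(x)=f_1(y)\pm1$ depending on whether $x$ lies on $\gamma$. Hence the only new competitors that can capture $x$ but not $y$ are nuclei $(\theta,\delta)$ with $\theta\in T_x$ (for $x$ off $\gamma$ such competitors are exactly those with $d_\theta(x)+\delta<f_1(x)$ and $d_\theta(y)+\delta\geq f_1(y)$). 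I would thus define, for each edge $(y,x)$ (with $x$ the child), the event
$$O_{yx}\coloneqq\bigl\{\text{no nucleus }(\theta,\delta),\ \theta\in T_x,\ \text{satisfies } d_\theta(x)+\delta<f_1(x)\bigr\},$$
and check that $C_1=\{x: O_e \text{ holds for every edge } e \text{ on } [\origin,x]\}$. The inclusion uses that a nucleus beating $x$ must have $\theta$ in $T_z$ for some $z$ on the path where the geodesic to $\theta$ branches off, and then it already beats that $z$, or it beats $x$ through $T_x$ itself.

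Independence: the events $O_e$ depend on the Poisson process restricted to $T_x\times\mathbb R$ intersected with a region. The sets $T_x$ are nested, not disjoint, so independence is the main obstacle. I would fix it by noting that for nested $T_{x'}\subset T_x$, a nucleus in $T_{x'}$ beating $x'$ without beating $x$ must have $d_\theta(x)+\delta\ge f_1(x)$; so I redefine $O_{yx}$ to involve only nuclei in $T_x$ whose branching point from the path is exactly $x$, i.e. $\theta\in T_x\setminus\bigcup_{x'} T_{x'}$ for children $x'$ along a fixed descent. Since $C_1$ is only explored along $\gamma$ and off it, I would instead use the decomposition of $\partial G$ into the disjoint clopen sets $T_x\setminus T_{x^*}$, where $x^*$ is the child of $x$ on $\gamma$ (or $T_x$ itself if $x\notin\gamma$, in which case only the ``edge into $x$'' test matters by the monotonicity $d_\theta(x')=d_\theta(x)+1$ for $\theta\notin T_{x'}$). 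After this reassignment, each edge event is measurable with respect to the Poisson process on a distinct region, and Poisson restrictions to disjoint sets are independent, giving the edge percolation; the open probability $\exp(-\nu(\cdot))$ of the corresponding region varies with the edge, hence ``not with equal probabilities''.
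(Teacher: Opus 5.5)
Your reduction to a tree-indexed structure is sound: the one-sided events $O_{yx}$ do satisfy $C_1=\{x: O_e \text{ holds for all edges } e \text{ of } [\origin,x]\}$, by the branching-point argument you give. The gap is the independence step. The events $O_{yx}$ are in general dependent. If $x'$ is a child of $x$ off $\gamma$, then $T_{x'}\subsetneq T_x$, and a single nucleus with $\theta\in T_{x'}$ and small delay makes both $O_{yx}$ and $O_{xx'}$ fail.

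Your proposed repair does not remove this dependence. Splitting $\partial G$ into the sets $T_x\setminus T_{x^*}$ only disjointifies along $\gamma$. For vertices off $\gamma$ you keep $T_x$ itself, and these sets are nested, not disjoint, along every branch hanging off $\gamma$. That is exactly where all the randomness lives: edges of $\gamma$ are open deterministically, since for $x\in\gamma$ every nucleus $f$ satisfies $f(x)\ge f(\origin)-d(\origin,x)\ge \Delta_1-d(\origin,x)=f_1(x)$. The alternative ``branching point exactly at $x$ along a fixed descent'' depends on the target vertex, so it does not define edge variables at all. Moreover, for $x\notin\gamma$, removing from $T_x$ the sets $T_{x'}$ of all its children leaves the empty set. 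Finally, ``distinct regions'' is not enough; Poisson independence needs disjoint regions.

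The missing idea is to disjointify in the delay coordinate rather than in $\partial G$, and you had already written down the right set: the competitors that beat $x$ but not its parent $y$. Take $x$ at distance $n\ge 1$ from $\gamma$, with projection on $\gamma$ at distance $m$ from $\origin$, and $\theta\in T_x$. Then $d_\theta(x)=-(m+n)$, $d_\theta(y)=-(m+n-1)$, $f_1(x)=\Delta_1-m+n$ and $f_1(y)=\Delta_1-m+n-1$. So this set is $B(x)=T_x\times[\Delta_1+2n-2,\Delta_1+2n)$, and you declare $(y,x)$ closed iff some nucleus lies in $B(x)$.

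Then $x\in C_1$ iff $y\in C_1$ and $(y,x)$ is open. Nuclei in $T_x$ with delay below $\Delta_1+2n-2$ already beat $y$, and nuclei outside $T_x$ compare at $x$ exactly as at $y$. The sets $B(x)$ are pairwise disjoint: if $T_x\cap T_{x'}\neq\emptyset$ with $x\neq x'$, one is a strict descendant of the other, so $n\neq n'$ and the two delay windows do not overlap. Given $(\Theta_1,\Delta_1)$, the other nuclei form a Poisson process with intensity $\nu$ restricted to delays $\ge\Delta_1$, which contains every $B(x)$. Hence the edges are independent, with $(y,x)$ open with probability $\exp(-\nu(B(x)))$. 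This is exactly the paper's argument.
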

	\begin{proof}
		Assume $(\Theta_1,\Delta_1)$ is known.
		If $(x,y)\in \gamma$, then let it be open with probability 1. Otherwise, assume that $d(x,\gamma)=n>0$ and $d(y,\gamma)=n-1$. Let $A(x)$ be the set of $\theta\in\partial G$ such that $x$ separates $\theta$ and $\Theta_1$. Let $B(x)\coloneqq \{(\theta,\delta): \theta\in A(x), \Delta_1+2n-2\leq \delta < \Delta_1+2n\}$. Note that the sets $B(x)$ are all disjoint. Also, $x\in C_1$ if and only if $y\in C_1$ and $B(x)\cap (\Theta_i,\Delta_i)_{i\geq 1}=\emptyset$. 
		Note that, conditionally on $(\Theta_1,\Delta_1)$, the rest of the process; i.e., $(\Theta_i,\Delta_i)_{i\geq 2}$, is a Poisson point process whose intensity measure is the restriction of $\nu$ to ${\lip_1^{\geq \Delta_1}}$.
		Therefore, the claim is implied by letting $(x,y)$ be open with probability $\exp(-\nu(B(x)))$.
	\end{proof}
	
	In the special case of the $(p+1)$-regular infinite tree $\rtree{p}$, {one can use the last result to describe the level sets of $C_1$ as follows, whose proof is left to the reader:}
	\begin{lem}[\textsc{Distinguishable Level-Sets}]
		In ${\rm IPVT}(\rtree{p})$, the level-sets of the first cell $C_1$ are distinguishable. Indeed, the \emph{asymptotic density} of the level-set of $x\in C_1$ is $\exp(-\beta^{(\xi)}(-\infty,f(x)))$, {where $f$ is the horofunction corresponding to $(\Theta_1,\Delta_1)$.}
	\end{lem}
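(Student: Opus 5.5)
We will derive the lemma from the edge-percolation description of the zero cell in the preceding proposition, specialized to $\rtree{p}$. Condition on $(\Theta_1,\Delta_1)$ and let $f=d_{\Theta_1}+\Delta_1$, so that $\gamma$ is the geodesic ray from $\origin$ to $\Theta_1$. For a vertex $x\in C_1$, its level set is $\{y\in C_1: f(y)=f(x)\}$. The plan is to show that the proportion of vertices of $C_1$ within a large horoball (or a large subtree hanging off $\gamma$) lying on this level converges a.s. to $\exp(-\beta^{(\xi)}(-\infty,f(x)))$. Since this quantity is strictly increasing in $f(x)$, distinct levels have distinct densities, which gives distinguishability.

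First, we compute the open probabilities explicitly. In $\rtree{p}$, $\nu=\nu_0\otimes\beta^{(\xi)}$ with $\nu_0$ the harmonic measure. For an edge $(x,y)$ off $\gamma$, with $x$ the vertex farther from $\gamma$, the set $A(x)$ is the shadow of $x$ seen from $\Theta_1$. We will rewrite $B(x)$ in terms of the values of $f$: $\theta\in A(x)$ and $(\theta,\delta)$ reaches $x$ first precisely when $d_\theta(x)+\delta<f(x)$. Hence the open probability is $\exp(-\nu(\{(\theta,\delta):\theta\in A(x),\ d_\theta(x)+\delta<f(x)\}))$. By rescaling the harmonic measure on the shadow (harmonic measure seen from $x$ restricted to $A(x)$) and using $\iota_i^*\nu=p^i\nu$ from \Cref{lem.scfg2}, this equals $\exp\!\big(-\tfrac{p}{p+1}\beta^{(\xi)}(-\infty,f(x))\big)$ up to the exact normalization of the shadow. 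The point is that it depends only on $f(x)$, not on the distance from $\gamma$.

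Second, we analyze the level sets. Inside a subtree hanging off $\gamma$, every vertex $x$ has $p$ children $z$ in the direction away from $\Theta_1$, with $f(z)=f(x)+1$. Each child edge is open independently with probability $q_{f(x)+1}$, where $q_k:=\exp(-c\,\beta^{(\xi)}(-\infty,k))$. Thus $C_1$ restricted to such a subtree is an inhomogeneous Galton--Watson tree, and the expected size of level $k$ under a vertex at level $k_0$ is $\prod_{j=k_0+1}^{k}p\,q_j$. Since $\beta^{(\xi)}(-\infty,k)$ grows like $p^k$, these generations die out superexponentially. Horoballs far from $\gamma$ in the direction of $\Theta_1$, however, contain vertices at all levels. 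We define the asymptotic density of a level $k$ as the limit, as $m\to\infty$, of the fraction of $C_1$-vertices with $f\le k$ among those in the subtree hanging from $\gamma(m)$, or as a ratio of consecutive level counts. Summing along $\gamma$ and applying a law of large numbers over the $\approx p^m$ independent branches then yields ratios of consecutive level counts equal to $p\,q_{k+1}$, which we will massage into the stated density $\exp(-\beta^{(\xi)}(-\infty,f(x)))$. The precise constant will come out of the calculation, via $(p-1)p^{j-1+\xi}$ summed against the shadow measures.

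The main obstacle is pinning down the correct notion of asymptotic density and the exact constant, that is, matching the shadow's harmonic mass with $\beta^{(\xi)}$. The law of large numbers itself is routine, by independence across disjoint branches together with Borel--Cantelli. I would first verify the constant by computing the expected density on $\rtree{p}$ in the simplest case, a vertex adjacent to $\gamma$, and then propagate it by the shift identity $\iota_i^*\nu=p^i\nu$.
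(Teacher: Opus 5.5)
Your plan is the intended one: specialize the zero-cell percolation to $\rtree{p}$ and average over horospheres. Two steps fail as written, and these are the core of the lemma, not bookkeeping.

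\emph{Edge probabilities.} The set $\{(\theta,\delta):\theta\in A(x),\ d_\theta(x)+\delta<f(x)\}$ is not $B(x)$. It contains \emph{all} shadow nuclei that would beat $f$ at $x$. Its $\nu$-mass is exactly $\frac{p}{p+1}\beta^{(\xi)}(-\infty,f(x))$, so the discrepancy is not a matter of normalizing the shadow.

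The open probability of $(x,y)$ is the probability of $x\in C_1$ given $y\in C_1$ and $(\Theta_1,\Delta_1)$. Nuclei with $\delta<\Delta_1$ are excluded by the conditioning. Nuclei with $d_\theta(x)+\delta<f(x)-2$ (equivalently $d_\theta(y)+\delta<f(y)$) are excluded because $y\in C_1$. What remains is $B(x)=\{\theta\in A(x),\ f(x)-2\le d_\theta(x)+\delta<f(x)\}$, whose mass is $\beta^{(\xi)}([f(x)-1,f(x)))$. Hence $q_k=\exp(-\beta^{(\xi)}([k-1,k)))$. Your cumulative sets are nested along a branch, so the edge events you describe would not even be independent.

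The key fact is the telescoping identity $\prod_{j=k_0+1}^{k}q_j=\exp(-\beta^{(\xi)}(-\infty,k)+\beta^{(\xi)}(-\infty,k_0))$. Equivalently, $\mathbb P[y\in C_1\mid\Theta_1,\Delta_1]=\exp(-\beta^{(\xi)}(-\infty,f(y))+\beta^{(\xi)}(-\infty,f(\gamma(m))))$ when the ray from $y$ to $\Theta_1$ meets $\gamma$ at $\gamma(m)$. You can also get this without any shadow computation, since $\nu$ is $\mathrm{Aut}$-invariant (the shift does not depend on $\origin$). Then $\nu\{g:g(y)<f(y)\}=\beta^{(\xi)}(-\infty,f(y))$, and the part of this set with $g(\origin)<\Delta_1$ is exactly $\{g:g(\gamma(m))<f(\gamma(m))\}$. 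With your $q_k$, the same product equals $\exp(-\frac{p^2}{p^2-1}\beta^{(\xi)}(-\infty,k))$, so no massaging can produce the stated formula.

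\emph{The density.} Neither of your candidate definitions gives the stated value. The ratio of consecutive level counts tends to $p\,q_{k+1}$. The proportion of $C_1$-vertices with $f\le k$ is a ratio of sums over all levels.

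The intended quantity is the density of $C_1\cap H_k$ inside the horosphere $H_k=\{y:f(y)=k\}$ of $\Theta_1$. Let $D_m$ be the set of vertices whose ray to $\Theta_1$ passes through $\gamma(m)$. Then $|H_k\cap D_m|=p^{k-f(\gamma(m))}$, and you must show $|C_1\cap H_k\cap D_m|/|H_k\cap D_m|\to e^{-\beta^{(\xi)}(-\infty,k)}$ a.s.

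\begin{itemize}
\item \emph{Mean.} It converges by the formula above. Vertices meeting $\gamma$ at $\gamma(m')$ form a fraction of order $p^{-(m-m')}$, while their correction term is $O(p^{-m'})$.
\item \emph{Variance.} Two vertices $y,y'$ are correlated only through their confluent $z$, with covariance $\mathbb P[y\in C_1]\,\mathbb P[y'\in C_1]\,(\mathbb P[z\in C_1]^{-1}-1)$. Pairs with $f(z)=\ell$ make up at most a fraction $p^{-(\ell-f(\gamma(m)))}$ of all pairs. For $\ell\le k$, $\mathbb P[z\in C_1]^{-1}-1\le C_k p^{\ell}$. So the normalized variance is $O(mp^{-m})$, and Chebyshev plus Borel--Cantelli give a.s.\ convergence.
\end{itemize}

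Finally, $k\mapsto e^{-\beta^{(\xi)}(-\infty,k)}$ is strictly \emph{decreasing}, not increasing. Strict monotonicity is all that distinguishability needs.
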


    \subsection{Products}
	\label{subsec:product}
	
	{The study of the IPVT of products is initiated in \cite{MiMe23}, where it is proved that, in the product of two regular trees equipped with the $L^2$ metric, every pair of IPVT cells have nonempty intersection or adjacency. This is used to prove Gaboriau's fixed price problem in the case of products of regular trees. Also, in~\cite{AGKRW25}, a similar property is used to show that the direct product of two regular trees with the same degrees has the following \textit{sparse unique infinite cluster property}: In the Bernoulli percolation on the set of Poisson--Voronoi cells with intensity $\lambda$, the uniqueness threshold converges to 0 as $\lambda\to 0$.
		
		In this section, we study convergence towards IPVT for products of two spaces equipped with the $L^1$ or $L^{\infty}$ metrics. See \Cref{prob:l2} regarding the $L^2$ metric.}	
	Let $(E,\origin,\mu)$ and $(E',\origin',\mu')$ be rooted measured metric spaces as in \Cref{sec:ipvt}. Let $E''\coloneqq E\times E'$, $\origin''\coloneqq (\origin,\origin')$ and $\mu'\coloneqq \mu\otimes \mu'$. 
	We assume that either both of $E$ and $E'$ satisfy the assumptions of \Cref{thm.scfc}, or that both are graphs that satisfy the assumptions of \Cref{thm.ctfg}.
	First, the following lemma describes the horoboundary of $E''$. 
	
	\begin{lem}[\textsc{Boundary of Products}]
		\label{lem:productBoundary}
		Let $E''=E\times E'$ as above.
		\begin{enumerate}[label=(\roman*)]
			\item \label{lem:productBoundary:1} Under the $L^1$ metric, one has $\overline{E''}\equiv \overline{E}\times\overline{E'}$, and hence, $\partial E''\equiv (\partial E\times\partial E')\sqcup (E\times\partial E')\sqcup (\partial E\times E')$. More precisely, $\partial E''$ consists of the following functions: $(x,x')\mapsto d_{x_0}(x)+d_{x'_0}(x')$, where $(x_0,x'_0)\in (\overline{E}\times\overline{E'})\setminus (E\times E')$.
			\item \label{lem:productBoundary:inf} Under the $L^{\infty}$ metric, one can write $\partial E''\equiv (\partial E\times \partial E' \times I) \sqcup \partial E\sqcup \partial E'$, where $I=\mathbb Z$ if both $E$ and $E'$ are graphs, and $I=\mathbb R$ otherwise. More precisely, $\partial E''$ consists of the following functions: $(x,x')\mapsto \max\{d_{\theta}(x), d_{\theta'}(x')+ c\}$, $(x,x')\mapsto d_{\theta}(x)$ and $(x,x')\mapsto d_{\theta'}(x')$, where $\theta\in\partial E$, $\theta'\in\partial E'$ and $c\in I$.
		\end{enumerate}
	\end{lem}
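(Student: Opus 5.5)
The plan is to compute the horoboundary directly from the definition: $\partial E''$ consists of the limits in $\lip_1^0(E'')$ of the normalized distance functions $d''(\cdot,y_n)-d''(\origin'',y_n)$ with $y_n=(x_n,x'_n)$ and $d''(\origin'',y_n)\to\infty$. Since $\overline{E}$ and $\overline{E'}$ are compact, we may pass to a subsequence along which $x_n\to x_0\in\overline E$ and $x'_n\to x'_0\in\overline{E'}$. We may also assume that $a_n\coloneqq d(x_n,\origin)$ and $a'_n\coloneqq d'(x'_n,\origin')$ each either converge or tend to $+\infty$.

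For the $L^1$ metric, $d''((x,x'),y_n)-d''(\origin'',y_n)$ splits as the sum $[d(x,x_n)-a_n]+[d'(x',x'_n)-a'_n]$. Each bracket converges pointwise to the normalized function $d_{x_0}$ (resp. $d_{x'_0}$) representing the limit point in $\overline E$ (resp. $\overline{E'}$). The limit is therefore $d_{x_0}(x)+d_{x'_0}(x')$, and since $d''(\origin'',y_n)\to\infty$, at least one of $x_0,x'_0$ lies on the boundary. For the converse, any such pair arises as a limit of pairs $(x_n,x'_n)$ chosen independently. The identification $\overline{E''}\equiv\overline E\times\overline{E'}$ follows once we check injectivity and continuity of the map $(x_0,x'_0)\mapsto d_{x_0}\oplus d_{x'_0}$. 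Injectivity holds because restricting to $E\times\{\origin'\}$ recovers $d_{x_0}$, and similarly for the other factor. Continuity holds because the map is a sum of continuous coordinate maps. A continuous bijection between compact Hausdorff spaces is a homeomorphism, so this settles part (i).

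For the $L^\infty$ metric, write $d''(\cdot,y_n)-d''(\origin'',y_n)=\max\{d(x,x_n)-m_n,\ d'(x',x'_n)-m_n\}$ with $m_n\coloneqq\max\{a_n,a'_n\}\to\infty$. Rewrite the two entries as $[d(x,x_n)-a_n]+(a_n-m_n)$ and $[d'(x',x'_n)-a'_n]+(a'_n-m_n)$. Pass to a subsequence so that $c_n\coloneqq a'_n-a_n$ converges in $[-\infty,\infty]$. The argument then splits into cases.
\begin{enumerate}[label=(\alph*)]
\item If $c_n\to c\in\mathbb R$, both $a_n$ and $a'_n$ tend to infinity, so $x_0=\theta\in\partial E$ and $x'_0=\theta'\in\partial E'$. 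The limit, up to an additive constant, is $\max\{d_\theta(x),d_{\theta'}(x')+c\}$.
\item If $c_n\to-\infty$, the second entry tends to $-\infty$ uniformly on bounded sets, since each $d'(\cdot,x'_n)-a'_n$ is $1$-Lipschitz and normalized. Hence the limit is $d_\theta(x)$ with $\theta\in\partial E$.
\item If $c_n\to+\infty$, the symmetric argument gives $d_{\theta'}(x')$ with $\theta'\in\partial E'$.
\end{enumerate}
When both factors are graphs, $c_n\in\mathbb Z$, so $c\in\mathbb Z$. In general, every $c\in\mathbb R$ is attained by choosing $x_n\to\theta$ and $x'_n\to\theta'$ with prescribed distances. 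Here we use that the log-regular-variation assumptions of \Cref{thm.scfc} make $F$ continuous-ish at large scales, so all large radii carry points and distances can be tuned; in the geodesic setting this is immediate. Conversely, every listed function is realized this way.

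The main obstacle is the disjointness and injectivity of the parametrization in (ii). The three families must be distinct, and $(\theta,\theta',c)$ must be recoverable from the function. I would restrict to slices $E\times\{x'\}$ with $x'$ moving toward $\theta'$, so that $d_{\theta'}(x')\to-\infty$ and the max reduces to $d_\theta(x)$ up to a constant; this recovers $\theta$, and symmetrically $\theta'$. The constant $c$ is then read off the asymptotics along the opposite slices. This requires $d_{\theta}$ and $d_{\theta'}$ to be unbounded below, which holds since horofunctions of boundary points satisfy $d_\theta(\gamma(t))=-t$ along a descending ray (\Cref{rem:descendingpath}), or in graphs by a discrete analogue. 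The same slicing shows that the pure families (b) and (c) differ from family (a), since only in (a) are both slices nontrivial.
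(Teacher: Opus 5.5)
Your computation follows the paper's proof. For $L^1$ the normalized distance splits as a sum. For $L^\infty$ the paper writes exactly your identity $d_{(y_n,y'_n)}(x,x')=\max\{d_{y_n}(x),d_{y'_n}(x')+c_n\}-\max\{0,c_n\}$ with $c_n=a'_n-a_n$, and then says this ``easily implies the claim''. The following parts of your proposal are correct and supply what the paper leaves implicit:
\begin{itemize}
\item your case split on $\lim c_n$;
\item the compact--Hausdorff argument for $\overline{E''}\equiv\overline E\times\overline{E'}$;
\item the recovery of $(\theta,\theta',c)$ via $\inf_{x'}f(x,x')=d_\theta(x)$, valid wherever horofunctions are unbounded below (e.g.\ geodesic spaces and graphs).
\end{itemize}
Together these prove (i) completely, and prove that $\partial E''$ is \emph{contained} in the family listed in (ii).

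The step that fails is the converse in (ii): that every $c\in I$ occurs for every pair $(\theta,\theta')$. Positive mass in all thin annuli gives points at all large radii, but not points near a prescribed $\theta$. The claim is also not immediate for geodesic spaces, as the following example shows.
\begin{itemize}
\item Take the staircase of \Cref{subsec:Busemann}: the union of the boundaries of the squares $[0,n]^2$ with the shortest-path metric. Put the measure $e^t\,dt$ on the positive $x$-axis, so that \Cref{thm.scfc} applies.
\item Let $b_x,b_y$ be the Busemann functions of the two axes, and let $\theta=\lim_n(n,n)$, so $d_\theta=\min\{b_x,b_y\}$. For $m$ large, the point $(m,m-s)$, at distance $2m-s$ from $\origin$, normalizes to $\min\{b_x,b_y+2s\}$. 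Symmetrically, $(m-s,m)$ normalizes to $\min\{b_x+2s,b_y\}$, and points far out on the axes normalize to $b_x$ or $b_y$.
\item Evaluate at $(0,v)$ and $(v,0)$, where $b_x=\pm v$ and $b_y=\mp v$ respectively. This shows that a sequence converges to $\theta$ only if $s\to 0$, i.e.\ only along points at distance $2m+o(1)$ from $\origin$.
\item Hence in $E\times E$ with the $L^\infty$ metric, $c_n\in 2\mathbb Z+o(1)$ whenever both coordinates tend to $\theta$. By your own injectivity argument, $\max\{d_\theta(x),d_\theta(x')+c\}\notin\partial E''$ for $c\notin 2\mathbb Z$.
\item The integer-point version of this graph, with weight $e^k$ at $(k,0)$, breaks $I=\mathbb Z$ in the same way.
\end{itemize}
So in this generality only the inclusion holds, and the paper's proof does not supply the converse either.

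Two things are provable. Under \Cref{thm.scfc}, the thin-annulus convergence of \Cref{lem:annulus} and the portmanteau theorem give, for $\theta\in\mathrm{supp}\,\nu_0$, points $x_r\to\theta$ with $d(x_r,\origin)=r+o(1)$. Hence every $c\in\mathbb R$ is realized for $(\theta,\theta')\in\mathrm{supp}\,\nu_0\times\mathrm{supp}\,\nu'_0$, which covers the support of $\nu''_0$ in \Cref{prop:product-Linf}. In CAT(0) spaces, all pairs and all $c$ are realized by running along geodesic rays.
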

	\begin{proof}
		\ref{lem:productBoundary:1}. If $y_n\to x_0\in \overline{E}$ and $y'_n\to x'_0\in\overline{E'}$, one obtains that $d_{(y_n,y'_n)}$ converges to the function $(x,x')\mapsto d_{x_0}(x)+d_{x'_0}(x')$. This implies the claim.
		
		\ref{lem:productBoundary:inf}. For $(y_n,y'_n)\in E''$, one has $d_{(y_n,y'_n)}(x,x') = \max\{d_{y_n}(x), d_{y'_n}(x')+c_n\} - \max\{0,c_n\}$, where $c_n = d(y'_n,\origin')-d(y_n,\origin)$. This easily implies the claim.
	\end{proof}
	
	We now consider the IPVT in each metric separately. Let $F, F'$ and $F''$ be th volume functions of $E,E'$ and $E''$. Let $b=\ln p$ and $b'=\ln p'$ be the growth indices of $E$ and $E'$ respectively (see \Cref{def:reg}). Let $\nu_0$ and $\nu'_0$ be the limits of the measures on balls, as assumed in \Cref{thm.scfc} or \Cref{thm.ctfg}. Define $\nu''_0$ similarly on $\partial E''$, if it exists.

	\begin{prop}[\textsc{Full Convergence in the $L^{\infty}$ Metric}]
		\label{prop:product-Linf}
		Consider $E''=E\times E'$ and $\mu''=\mu\otimes\mu'$ defined above.
		\begin{enumerate}[label=(\roman*)]
			\item \label{prop:product-Linf-1} Under the $L^{\infty}$ metric, $E''$ always satisfies the full convergence in \Cref{thm.scfc} or \Cref{thm.ctfg} (the latter if $E''$ is a graph). In addition, $F''$ has index $b+b'$. Also, $\nu''_0$ is supported on $\partial E\times \partial E'\times I\subseteq\partial E''$ and is equal to $\nu_0\times\nu'_0\times \gamma$, where $\gamma$ is a probability measure on $I$.
			
			\item \label{prop:product-Linf-2} If we modify the conditions on $E'$ by letting $b'=0$ (still assuming the existence of $\nu'_0$), then the last claim holds, except that $\nu''_0=\nu_0$, which is supported on $\partial E\subseteq\partial E''$.
		\end{enumerate}
	\end{prop}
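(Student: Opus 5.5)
The plan is to verify the two conditions of \Cref{thm.scfc} (or of \Cref{thm.ctfg} in the graph case) for $E''$ directly. Under the $L^\infty$ metric, $B''_r(\origin'')=B_r(\origin)\times B_r(\origin')$, so $F''(r)=F(r)F'(r)$.

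\textbf{Step 1: growth.} Since $F$ and $F'$ are log-regularly-varying (on $\mathbb R$, or on $\mathbb N$ in the graph case) with indices $b$ and $b'$, the product $F''=FF'$ is log-regularly-varying with index $b+b'$. This gives condition (i), and in part~\ref{prop:product-Linf-2} the index is $b+0=b>0$.

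\textbf{Step 2: limit of the ball measure.} I would decompose the normalized measure on $B''_r$ by the pair of radii. Take a point $(x,x')$ uniform on $B_r\times B_r'$, and set $s=r-d(x,\origin)$ and $s'=r-d(x',\origin')$. Then $(x,x')$ converges along the sequence to the function $\max\{d_\theta(x)-(r-s),\,d_{\theta'}(x')-(r-s')\}+\text{const}$. After normalizing by the value at $\origin''$, this is $\max\{d_\theta,\,d_{\theta'}+(s-s')\}$.

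Under the normalized measure on $B_r$, the law of $(\iota(x),s)$ converges to $\nu_0\otimes\mathrm{Exp}(b)$. This follows from the annulus convergence \eqref{eq:annulus} in the proof of \Cref{lem:criterion1}, together with $F(r-s)/F(r)\to e^{-bs}$. In the graph case, $s$ is geometric on $\mathbb Z_{\ge0}$ instead. The same holds for $E'$, and the two coordinates are independent because the measure is a product.

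I would then apply \Cref{lem:sum}, conditioning on $(s,s')$ and approximating by finitely many annuli of fixed width. This gives convergence of the law of the boundary point to the image of $\nu_0\otimes\nu_0'\otimes\mathrm{Law}(S-S')$ under $(\theta,\theta',c)\mapsto\max\{d_\theta,d_{\theta'}+c\}$. Here $\gamma$ is the law of $S-S'$, a difference of independent exponentials (geometrics in the graph case). The identification of these functions with $\partial E''$ is \Cref{lem:productBoundary}\ref{lem:productBoundary:inf}.

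\textbf{Main obstacle.} The delicate point is continuity of the map $(\theta,\theta',c)\mapsto\max\{d_\theta,d_{\theta'}+c\}$ jointly with the $r$-dependent approximations. I must justify that
\[
\max\{d_{x}(\cdot)-d(x,\origin),\ d_{x'}(\cdot)-d(x',\origin')+(s-s')\}
\]
converges uniformly on compacts when $\iota(x)\to\theta$, $\iota(x')\to\theta'$ and $s-s'$ is fixed. This holds because the maximum is $1$-Lipschitz in both arguments.

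\textbf{Degenerate case and conclusion.} In part~\ref{prop:product-Linf-2}, with $b'=0$, the volume ratio $F'(r-s')/F'(r)\to1$ for fixed $s'$. Hence $s'$ escapes to $+\infty$ in probability (the annulus masses vanish), so $c=s-s'\to-\infty$. The maximum then converges to $d_\theta$, giving $\nu_0''=\nu_0$ on $\partial E$. Finally, \Cref{thm.scfc} or \Cref{thm.ctfg} yields full convergence.
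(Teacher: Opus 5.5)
Your proof is correct, but it is organized differently from the paper's.

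\textbf{The paper's route.} It verifies the sphere/annulus form of the criterion. In the graph case it splits $S_n(\origin'')$ into the products $S_{n-i}(\origin)\times S_n(\origin')$ and $S_n(\origin)\times S_{n+i}(\origin')$, indexed by the radius difference $i$. It then computes the weights $c_{n,i}\to cp^{-i}$ (resp.\ $c(p')^{i}$) and the piecewise limits $\nu_0\otimes\nu'_0\otimes\delta_i$, and concludes with \Cref{lem:sum}. In the continuum case this forces it through thin annuli of width $s(r)\to 0$ (\Cref{lem:annulus}) and a continuous re-parametrization $i_r(t)$ of the pieces.

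\textbf{Your route.} You work on the ball itself, where $F''(r)^{-1}\restrict{\mu''}{B_r(\origin'')}$ is exactly the product of the two normalized ball measures. So the joint law of $(\iota(x),s,\iota(x'),s')$ converges to the product of the one-factor limits. You then have the exact identity
$\iota''(x,x')=\max\{\iota(x),\iota(x')+c\}-\max\{0,c\}$, with $c=s-s'$.
Its right-hand side is a $1$-Lipschitz function of $(\iota(x),\iota(x'),c)$. Hence the continuous mapping theorem finishes the proof at once, uniformly for graphs and continuum spaces.

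\textbf{What this buys.} The ``finitely many annuli'' approximation via \Cref{lem:sum} that you mention is not needed, and neither is \Cref{lem:annulus}. Your $\gamma=\mathrm{Law}(S-S')$ agrees with the paper's: for instance, in the graph case $\mathbb P(S-S'=i)=\frac{(p-1)(p'-1)}{pp'-1}p^{-i}$ for $i\geq 0$.

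\textbf{Points to tidy.}
The normalized limit is $\max\{d_\theta,d_{\theta'}+c\}-\max\{0,c\}$, not $\max\{d_\theta,d_{\theta'}+c\}$. These define the same point of $\partial E''$, so this is cosmetic.

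In part~\ref{prop:product-Linf-2} you should state two things explicitly. First, the map extends continuously to $c=-\infty$ by $(f,g,-\infty)\mapsto f$, uniformly on compacts, because $\mynorm{g(y')}\leq d(y',\origin')$. Second, $d(x,\origin)=r-s\to\infty$ since $s$ is tight. This is the continuous analogue of the paper's pieces $S_n(\origin)\times B_{n-m_n}(\origin')$ with $m_n\to\infty$ slowly. It also shows that $\nu'_0$ is not needed in that part.
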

	\begin{proof}[Proof of \Cref{prop:product-Linf} for Graphs]
		One has $B_n(\origin'')=B_n(\origin)\times B_n(\origin')$. Hence, $F''(n)=F(n) F'(n)$, which implies that $F''$ is log-regularly-varying with index $b+b'$. 
		
		\ref{prop:product-Linf-1}.
		Let $\mu''_n$ be the normalized restriction of $\mu''$ to $S_n(\origin'')=B_n(\origin'')\setminus B_{n-1}(\origin'')$. To use \Cref{lem:sum}, we partition $B_n(\origin'')$ as $\cup_{i=-n}^n T_{n,i}$, where $T_{n,i}:= S_{n-i}(\origin)\times S_{n}(\origin')$ when $i\geq 0$, and $T_{n,i}:=S_{n}(\origin)\times S_{n+i}(\origin')$ when $i\leq 0$. Let $c_{n,i}:=\mu''(T_{n,i})/(F''_n-F''_{n-1})$ and let $\mu''_{n,i}$ be the normalized restriction of $\mu''$ to $T_{n,i}$. So, $\mu''_n = \sum_{i} c_{n,i} \mu''_{n,i}$. For $i\geq 0$, the assumptions imply that 
		\begin{eqnarray*}
			c_{n,i} = &{\mu(S_{n-i}(\origin)) \mu'(S_{n}(\origin'))}/{(F''_n-F''_{n-1})} &\to c p^{-i},\\
			\mu''_{n,i} = &(\restrict{\mu}{S_{n-i}(\origin)})\otimes (\restrict{\mu'}{S_{n}(\origin')}) &\to \nu_0\otimes\nu'_0\otimes \delta_{i},
		\end{eqnarray*}
		as $n\to\infty$, where $c:= {(p-1)(p'-1)}/{(pp'-1)}$. Similarly, for $i<0$, one can obtain that $c''_{n,i}\to c(p')^i$ and $\mu''_{n,i}\to \nu_0\otimes\nu'_0\otimes \delta_{i}$. Thus, \Cref{lem:sum} implies that $\mu''_n$ converges to $\nu_0\otimes\nu'_0\otimes\gamma$, where $\gamma:=\sum_{i\geq 0} cp^{-i}\delta_{i} + \sum_{i<0} c(p')^{i} \delta_{i}$.
		
		\ref{prop:product-Linf-2}. 
		In this case, for most points $(x,x')\in S_n(\origin'')$, the difference $d(x,\origin)-d(x',\origin')$ is very large. This is formalized as follows.
		Let $(m_n)_{n\geq 0}$ be an increasing sequence that will be determined later.
		In the above proof, partition $S_n(\origin'')$ by sets $U_{n,1}:=S_n(\origin)\times B_{n-m_n}(\origin')$ and $U_{n,2}:=S_n(\origin'')\setminus U_{n,1}$. For $i=1,2$, let $C_{n,i}:=\mu''(U_{n,i})$ and let $\mu''_{n,i}$ be the normalized restriction of $\mu''$ to $U_{n,i}$. If $m_n\to\infty$, then $\mu''_{n,1}$ converges to $\nu_0$, where $\nu_0$ is regarded as a measure on $\partial E\subseteq\partial E''$. Also, the calculations in the previous case show that, if $m_n$ converges slow enough to infinity, then $c_{n,1}\to 1$ and $c_{n,2}\to 0$. So, \Cref{lem:sum} implies that $\mu''_n$ converges to $\nu_0$, and the claim is proved.
	\end{proof}
	
	For the non-graph case of \Cref{prop:product-Linf}, if we assume that the \textit{conditioning} of $\mu$ and $\mu'$ to spheres (defined via disintegration) converge to $\nu$ and $\nu'$ respectively, then we can mimic the above proof by using the continuous-case of \Cref{lem:sum}. But this assumption is not always satisfied (e.g., when $E=\mathbb Z^2$ with the Euclidean metric). We need the convergence of the uniform measure on a thin annulus as follows:
	
	\begin{lem}
		\label{lem:annulus}
		If $(E,\origin,\mu)$ satisfies the assumptions of \Cref{thm.scfc}, then there exists a non-increasing function $s=s(r)\geq 0$ such that $\lim_{r \to \infty} s(r) = 0$ and the normalized restriction of $\mu$ to $B_{r+s(r)}(\origin)\setminus B_r(\origin)$ converges to $\nu$ as $r\to\infty$.
	\end{lem}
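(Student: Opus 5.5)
The plan is a diagonal argument built on the fixed-width annulus convergence~\eqref{eq:annulus} from the proof of \Cref{lem:criterion1}, together with the log-regular variation of $F$. Here $\nu$ in the statement means the limiting boundary measure $\nu_0$. For every fixed $r_0>0$, \eqref{eq:annulus} gives that the normalized restriction of $\mu$ to $B_{r+r_0}(\origin)\setminus B_r(\origin)$ converges weakly to $\nu_0$ as $r\to\infty$. Moreover $F(r+r_0)/F(r)\to e^{br_0}>1$, so the annulus has mass of order $(e^{br_0}-1)F(r)$, which is positive for large $r$. The normalization is therefore well defined for every fixed $r_0>0$ once $r$ is large.

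\textbf{Step 1.} Fix a metric $d_P$ (Prokhorov) for weak convergence of probability measures on $\bar E$. This is possible since $\bar E$ is compact metrizable as a closed subset of $\lip_1^0$. Take $r_0=1/k$ for $k\in\mathbb N$. By~\eqref{eq:annulus} there is $R_k$ such that for $r\geq R_k$ the normalized restriction $\alpha'_{r,1/k}$ of $\mu$ to $B_{r+1/k}(\origin)\setminus B_r(\origin)$ satisfies $d_P(\alpha'_{r,1/k},\nu_0)<1/k$. Choose the $R_k$ strictly increasing to $\infty$.

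\textbf{Step 2.} Define $s(r)\coloneqq 1$ for $r<R_1$ and $s(r)\coloneqq 1/k$ for $R_k\leq r<R_{k+1}$. Then $s$ is non-increasing, positive, and tends to $0$. For $r\in[R_k,R_{k+1})$ we get $d_P(\alpha'_{r,s(r)},\nu_0)<1/k$, and $k\to\infty$ as $r\to\infty$. This proves the convergence.

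\textbf{Obstacle.} The only delicate point is the uniformity hidden in choosing $R_k$. Equation~\eqref{eq:annulus} is a statement for each fixed $r_0$ as $r\to\infty$, and this is exactly what Step 1 uses, so no uniformity in $r_0$ is required. It also matters that we never need $s(r)\to 0$ at a prescribed rate, which is why the diagonal choice works.

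If one wants $s$ continuous, one can interpolate between the values $1/k$ instead. In that case one first checks that for $r_0$ ranging over a compact subinterval of $(0,\infty)$, say $[1/(k+1),1/k]$, the convergence in~\eqref{eq:annulus} is uniform. This follows from the same linear-combination identity used to prove~\eqref{eq:annulus}: uniform convergence theorems for regularly varying functions give $F(r+r_0)/F(r)\to e^{br_0}$ uniformly on compacts, and the measures $\alpha_{r+r_0}$ converge to $\nu_0$ uniformly in $r_0$ in a compact set because they are the normalized ball measures for all large radii. The step-function choice already satisfies the statement as written, so this refinement is optional.
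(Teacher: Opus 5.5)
Your proof is correct, but it follows a somewhat different route from the paper's.

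The paper works directly from the convergence of the normalized ball measures $\mu_r$. It sets $\epsilon_r\coloneqq\sup_{r'\geq r}d_{BL}(\mu_{r'},\nu)$ and $\Delta F(r)\coloneqq F(r+s(r))-F(r)$. It writes the annulus measure as a signed combination of $\mu_{r+s(r)}$ and $\mu_r$, which yields a bound of the form $\epsilon_r\,(F(r+s(r))+F(r))/\Delta F(r)$ against $1$-bounded $1$-Lipschitz test functions. It then chooses $s(r)$ so that $F(r)/\Delta F(r)\to\infty$, which forces $s(r)\to 0$, but more slowly than $\epsilon_r^{-1}$.

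You instead use the fixed-width convergence \eqref{eq:annulus} from the proof of \Cref{lem:criterion1} as a black box. That result rests on the same linear identity. You then diagonalize over the widths $1/k$.

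Your version is softer and needs no rates. It has three concrete advantages:
\begin{itemize}
\item It makes $s$ non-increasing for free, which the paper leaves implicit.
\item It avoids having to realize a prescribed growth of $F(r)/\Delta F(r)$ through the choice of $s$. That step needs some care when $F$ has jumps or flat stretches.
\item It immediately gives a single $s$ serving two spaces at once: take the maximum of the two families of thresholds $R_k$. This is exactly what the general-case proof of \Cref{prop:product-Linf} later requires.
\end{itemize}

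The paper's argument is quantitative in exchange: it ties the admissible annulus width to the rate $\epsilon_r$ at which the ball measures converge.
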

	\begin{proof}
		Let $\mu_r$ be the normalized restriction of $\mu$ to $B_r(\origin)$, which converges weakly to $\nu$ by assumption. Fixing an arbitrary metrization of $\overline{E}$, denote by $d_{BL}$ the bounded-Lipschitz distance of probability measures on $\overline{E}$. Let $\epsilon_r \coloneqq\sup\{d_{BL}(\mu_{r'},\nu): r'\geq r\}$. So, $\lim_r \epsilon_r = 0$. For a function $s=s(r)$, to be determined later, let $\mu'_r$ be the normalized restriction of $\mu$ to $B_{r+s(r)}(\origin)\setminus B_r(\origin)$, and let $\Delta F(r)\coloneqq F(r+s(r))-F(r)$. For any 1-bounded 1-Lipschitz function $g$ on $\overline{E}$, one has
		\begin{eqnarray*}
			\int g d\mu'_r - \int g d\nu &=& \frac{F(r+s(r))}{\Delta F(r)}\left(\int g d\mu_{r+s(r)}-\int g d\nu \right) - \frac{F(r)}{\Delta F(r)} \left(\int g d\mu_{r}-\int g d\nu \right)\\
			&\leq& \epsilon_r\frac{F(r+s(r))+F(r)}{\Delta F(r)}.
		\end{eqnarray*}
		Choose the function $s(r)$ such that $F(r)/{\Delta F(r)}$ converges to infinity but slower than $\epsilon_r^{-1}$. Then, the last formula converges to 0 as $r\to\infty$, and hence, $d_{BL}(\mu'_r,\nu)\to 0$. Also, $s(r)$ also converges to 0. So, the claim is proved.
	\end{proof}
	
	\begin{proof}[Proof of \Cref{prop:product-Linf}  (General Case)]
		If $b'=0$, the claim can be proved similarly to the graph case. So, assume $b,b'>0$. 
		Consider a function $s(r)$ that satisfies the claim of \Cref{lem:annulus} for both $\mu$ and $\mu'$. Let $\Delta B_r(\origin):=B_{r+s(r)}(\origin)\setminus B_r(\origin)$ and $\Delta F_r:=F({r+s(r)})-F(r)$.
		Let $\mu''_r$ be the normalized restriction of $\mu''$ to $\Delta B_{r}(\origin'')$. It is enough to prove that $\mu''_r$ converges to the desired distribution. Partition the last annulus as $\cup_{i=-\lceil r\rceil}^{\lceil r\rceil} T_{r,i}$, where $T_{r,i}:= \Delta B_{(r-i s(r))}(\origin)\times \Delta B_{r}(\origin')$ when $i\geq 0$, and $T_{r,i}:=\Delta B_{r}(\origin)\times \Delta B_{(r+i s(r))}(\origin')$ when $i\leq 0$. We parametrize these cubes continuously as follows:
		For $t\geq 0$, let $i_r(t)$ be the largest $i\geq 0$ that satisfies $\mu''(\cup_{j=0}^i T_{r,j})/\Delta F''_r \leq t$. For $t<0$, let $i_r(t)$ be the largest $i<0$ such that $\mu''(\cup_{j=i}^{-1}T_{r,j})/\Delta F''_r >-t$. Note that $i_r(\cdot)$ is well defined only on an interval containing $0$.
		Let $\mu''_{r,t}$ be the normalized restriction of $\mu''$ to $T_{r,i_r(t)}$. Now, one has $\mu''_r = \int c_{r,t}\mu''_{r,t} dt$, where $c_{r,t}=1$ if $i(t)$ is well defined, and $c_{r,t}=0$ otherwise. As $r\to\infty$, since $s(r)\to 0$, one can show that $i_r(t)$ converges pointwise. Indeed, the limit is the unique function $i(t)$ that satisfies $\int_0^{i(t)} p^{-t}dt = ct$ for $t\geq 0$ and $\int_0^{-i(t)} (p')^{-t}dt = -ct$ for $t<0$, where $c$ is some constant. Also, since the cube $T_{r,i_r(t)}$ is thin, one can show that $\mu''_{r,t}$ converges to $\nu_0\otimes\nu'_0\otimes\delta_{i(t)}$ when $t$ is fixed and $r\to\infty$. So, the claim is implied by part~\ref{lem:sum:cont} of \Cref{lem:sum}.
	\end{proof}

	\begin{prop}[\textsc{Full Convergence in the $L^1$ Metric}]
		\label{prop:product-L1}
		Under the $L^1$ metric, {in the following cases,} $E''$ satisfies the full convergence in \Cref{thm.scfc} or \Cref{thm.ctfg} (the latter if $E''$ is a graph):
		\begin{enumerate}[label=(\roman*)]
			\item \label{prop:product-L1:=} When $b=b'$ {and $\int_0^{\infty} p^{-s} dF(s) = \int_0^{\infty}p^{-s}dF'(s)=\infty$. In this case,} $E''$ has the same growth index $b$ and $\nu''_0=\nu_0\otimes \nu'_0$, which is supported on $(\partial E\times \partial E')\subseteq\partial E''$.
			\item \label{prop:product-L1:<} When $b>b'$. In this case, $E''$ has growth index $b$ and $\nu''_0=\nu_0\otimes \tilde{\mu}'$, where $\tilde{\mu}'$ is the probability measure on $E'$ such that $d\tilde{\mu}'(x')= c\times p^{-d(x',\origin')}d\mu'(x')$ for some constant $c$. Note that $\nu''_0$ is supported on $(\partial E\times E')\subseteq\partial E''$.
			\item \label{prop:product-L1:1}The last claim still holds if one relaxes the conditions of $E'$ by allowing $b'=0$ (assuming the existence of $\nu'_0$); e.g., when $E'=\mathbb Z^k$ equipped with the counting measure, {or when $E'=\mathbb R^k$}.
		\end{enumerate}
	\end{prop}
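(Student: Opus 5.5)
Under the $L^1$ metric, $B_r(\origin'')=\{(x,x'): d(x,\origin)+d(x',\origin')\le r\}$. Hence $F''$ is the convolution of the measures $dF$ and $dF'$:
\[F''(r)=\int F(r-s)\,dF'(s).\]
By \Cref{lem:productBoundary}\ref{lem:productBoundary:1}, the points of $\overline{E''}$ are pairs $(x_0,x_0')\in\overline E\times\overline{E'}$. So it suffices to understand the joint law of $(x,x')$ under the normalized restriction of $\mu''$ to $B_r(\origin'')$. The plan is to condition on $s=d(x',\origin')$, which is a disintegration in the sense of \Cref{lem:sum}\ref{lem:sum:cont}. Given $s$, the point $x$ is distributed as the normalized restriction of $\mu$ to $B_{r-s}(\origin)$, and $x'$ as $\mu'$ conditioned on level $s$. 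In the graph case the sums are discrete and one uses \Cref{lem:sum} directly; otherwise, as in the general proof of \Cref{prop:product-Linf}, I would bin $s$ into thin annuli via \Cref{lem:annulus}. Finally, \Cref{thm.scfc} (resp.\ \Cref{thm.ctfg}) is applied once log-regular variation of $F''$ and convergence of $\nu''_0$ are established.

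Case~\ref{prop:product-L1:<}, $b>b'$ (and also $b'=0$, which is case~\ref{prop:product-L1:1}). Write
\[\frac{F''(r)}{F(r)}=\int \frac{F(r-s)}{F(r)}\,dF'(s).\]
Since $F(r-s)/F(r)\to p^{-s}$, dominated convergence should give $\int p^{-s}dF'(s)<\infty$ as the limit. The domination is a Potter-type bound $F(r-s)/F(r)\le C(p-\epsilon)^{-s}$ for $s\le r$, combined with $F'(s)\le C(p'+\epsilon)^{s}$ and $p'+\epsilon<p-\epsilon$; the region $s>r$ contributes nothing. Hence $F''\sim cF$, which is log-regularly-varying with index $b$. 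The same domination shows that the weight on $x'$ converges to the measure proportional to $p^{-d(x',\origin')}d\mu'(x')$, which is tight in $E'$. Conditionally on $x'$, the point $x$ is uniform on $B_{r-s}(\origin)$ and so converges to $\nu_0$. \Cref{lem:sum} then yields $\nu''_0=\nu_0\otimes\tilde\mu'$. For $b'=0$ the same bound works, since $F'$ is subexponential.

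Case~\ref{prop:product-L1:=}, $b=b'$. Write $F(r)=p^rL(r)$ and $F'(r)=p^rL'(r)$. Then
\[F''(r)=p^r\int L(r-s)\,p^{-s}\,dF'(s),\]
and the divergence hypothesis $\int p^{-s}dF'=\infty$ (and symmetrically for $F$) says that the mass of this integral is not carried by bounded $s$ nor by bounded $r-s$. I would split $[0,r]$ into $[0,M]$, $[M,r-M]$ and $[r-M,r]$. The two end pieces contribute $O(p^rL(r))$ and $O(p^rL'(r))$, and the divergence assumption should make these negligible relative to the total, so almost all mass has both $d(x,\origin)$ and $d(x',\origin')$ tending to infinity. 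On the middle piece, uniformity of the long-tail property of $L$ and $L'$ on compact sets of shifts should give $F''(r+r_0)/F''(r)\to p^{r_0}$, since a shift of $r$ perturbs the middle integral by a factor $\to p^{r_0}$ (the convolution of long-tailed functions is again long-tailed). For the boundary measure, conditioned on $(s,r-s)$ with both coordinates large, the points $x$ and $x'$ are independent and approximately $\nu_0$ and $\nu'_0$ distributed, by \Cref{lem:annulus}-type annulus convergence. \Cref{lem:sum}\ref{lem:sum:cont}, applied with the kernel indexed by the middle-range proportion, then gives $\nu''_0=\nu_0\otimes\nu'_0$.

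The main obstacle is case~\ref{prop:product-L1:=}. Showing that the end pieces are negligible and that the long-tail property survives convolution requires careful use of the divergence hypothesis. Without it, the mass can concentrate at $s=O(1)$ and give a mixed limit like the one in case~\ref{prop:product-L1:<}. I would first try reducing to the statement that $\int_0^{r}L(r-s)L'(s)\,ds$ is long-tailed and dominates $L(r)+L'(r)$. The domination follows from monotonicity of the partial integrals $\int_0^M p^{-s}dF'\to\infty$.
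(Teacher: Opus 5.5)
Your proposal is correct and follows essentially the same route as the paper. It uses the convolution identity $F''(r)=\int F(r-s)\,dF'(s)$ with a Potter-type domination for cases~(ii)--(iii), and for case~(i) it uses the bound $\liminf_r F''(r)/F(r)\ge\int_0^k p^{-s}\,dF'(s)$ (and its symmetric version) to make the slices near the axes negligible, followed by the shift estimate on $s\in[0,r-M]$ and a slice decomposition for the boundary measure. The only differences are cosmetic: in~(ii) you disintegrate directly along $x'$ (Scheff\'e/total-variation convergence of the $x'$-marginal plus \Cref{lem:sum}\ref{lem:sum:cont}) where the paper sandwiches $\mu''_r$ between $\epsilon$-slice measures, and in~(i) you use thin annuli where the paper uses the squares $S_{i\epsilon}(\origin)\times S_{j\epsilon}(\origin')$ with a bounded-Lipschitz estimate.
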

	\begin{proof}
		We assume that both $E$ and $E'$ satisfy the conditions of \Cref{thm.scfc}. {The case where both are graphs is similar (and easier) and is skipped.}
		One has $F''(r) = \int_0^r F(r-s)dF'(s) = \int_0^r F'(r-s)dF(s)$.
		We first prove~\ref{prop:product-L1:<} and~\ref{prop:product-L1:1}, which are easier than~\ref{prop:product-L1:=}.
		
		\noindent
		\ref{prop:product-L1:<} and~\ref{prop:product-L1:1}. To show that $F''$ is log-regularly-varying, we first show that 
		$$
		\lim_{r\to\infty}F''(r)/F(r)=\int_0^{\infty} p^{-s}dF'(s) \; .
		$$ 
		For every $s$, one has $F'(r-s)/F(r)\to p^{-s}$. To integrate over $s$, we use dominated convergence. Let $\alpha_r$ be the measure on $(0,r)$ defined by $\alpha_r(ds) = F(r-s)/F(r)dF'(s)$. Choose $p''$ such that $p>p''>p'$ and $M$ such that, for all $r\geq M$, one has $F(r)/F(r-1)>p''$. Then $F(r-s)/F(r)<C(p'')^{-s}$ for all $M\leq s\leq r$, where $C$ is a suitable constant ($C$ can be arbitrarily close to 1, this is just Potter's bound (see \emph{e.g.}~\cite[Theorem 1.5.6]{BGT}). Hence, $F(r-s)/F(r)<C(p'')^{-s+M}$ for all $r\geq M$ and all $0\leq s\leq r$. Therefore, the measures $\alpha_r$ are bounded from above by the measure $\alpha$ on $(0,\infty)$ defined by $\alpha(ds)\coloneqq C(p'')^{-s+M}dF'(s)$. Since $\alpha$ has finite mass, dominated convergence implies that $\lim_{r\to\infty}F''(r)/F(r)=\int_0^{\infty} p^{-s}dF'(s)$. Since $F$ is log-regularly-varying, the last convergence implies that $F''$ is also log-regularly-varying with the same index.
		
		It remains to prove that the measure $\mu''_r\coloneqq  F''(r)^{-1}\restrict{\mu''}{B_r(\origin'')}$ converges to $\nu_0\otimes\nu'_0$. We prove this by decomposing the ball $B_r(\origin'')$ into horizontal thin rectangular areas as follows. Fix $\epsilon>0$ {for the moment (for the graph case, it is enough to consider $\epsilon=1$).} For $n\geq 1$, let $S_{n\epsilon}(\origin')\coloneqq B_{n\epsilon}(\origin')\setminus B_{(n-1)\epsilon}(\origin')$.  One has $\mu''_r\geq \alpha_r\coloneqq  \sum_{n=1}^{\floor{ r/\epsilon}}\alpha_{r,n}$, where 
		\begin{eqnarray*}
			\alpha_{r,n}&\coloneqq & F''(r)^{-1}\left(\restrict{\mu}{B_{r-n\epsilon}(\origin)}\right)\otimes\left(\restrict{\mu'}{S_{n\epsilon}(\origin')}\right)\\
			&=& \left(F(r-n\epsilon)^{-1}\restrict{\mu}{B_{r-n\epsilon}(\origin)}\right)\otimes\left(\frac{F(r-n\epsilon)}{F''(r)}\restrict{\mu'}{S_{n\epsilon}(\origin')}\right).
		\end{eqnarray*}
		For fixed $n$, the last line converges to $\nu_0\otimes \left(cp^{-n\epsilon}\restrict{\mu'}{S_{n\epsilon}(\origin')} \right)$, where $c\coloneqq \lim_r F(r)/F''(r)$. So, {by \Cref{lem:sum},} $\alpha_r$ converges to $\nu_0\otimes \left(\sum_n cp^{-n\epsilon}\restrict{\mu'}{S_{n\epsilon}(\origin')}\right)$. Similarly, 
		one can show that $\mu''_r$ is bounded from above by a measure $\alpha'_r$ that converges to $\nu_0\otimes \left(\sum_n cp^{-(n-1)\epsilon}\restrict{\mu'}{S_{n\epsilon}(\origin')}\right)$. If $\epsilon$ is small enough, the limiting upper and lower bounds are arbitrarily close to $\nu_0\otimes \tilde{\mu}'$. This proves the claim.
		
		\ref{prop:product-L1:=}. Since $b=b'$, the last Potter-type argument is not available for controlling the middle slices. However, since $F''(r)\geq \int_0^{k} F(r-s)dF'(s)$ for fixed $k$ and for $r\geq k$, one can similarly deduce that $\liminf_r F''(r)/F(r)\geq \int_0^k p^{-s}dF'(s)$. So, the assumptions of \ref{prop:product-L1:=} imply that $\lim_r F''(r)/F(r) = \lim_r F''(r)/F'(r)=\infty$. This does not imply that $F''$ is log-regularly-varying. So, we prove this directly as follows. Fix $k\geq 0$ and $\epsilon>0$. By uniform convergence for regularly-varying functions, there exists $M<\infty$ such that $\forall r\geq M: F(r+r_0) = p^{r_0} F(r) (1\pm \epsilon)$. So, for $r\geq M$, one has
		\begin{eqnarray}
			\nonumber
			\int_0^{r-M} F(r+k-s) dF'(s) &=& \int_0^{r-M} p^k F(r-s) (1\pm \epsilon)dF'(s)\\
			\label{eq:product:1}
			&=& (1\pm \epsilon)p^k \int_0^{r-M} F(r-s) dF'(s).
		\end{eqnarray}
		We show that the remaining terms are negligible. Integration by parts gives that 
		\[
		\int_{r-M}^{r+k} F(r+k-s)dF'(s) \leq \int_0^{M+k} F'(r+k-s)dF(s).
		\]
		The arguments in the beginning of the proof show that the right hand side is negligible compared to $F''(r+k)$. Similarly, $\int_{r-M}^r F(r-s)dF'(s)$ is negligible compared to $F''(r)$. So, \eqref{eq:product:1} implies that $F''(r+k) = (1\pm \epsilon+o(1)) p^k F''(r)$. By letting $\epsilon$ converge to zero, one obtains that $\lim_r F''(r+k)/F''(r) = p^k$. So, $F''$ is log-regularly-varying with index $b=\ln p$.
		
		It remains to prove that $\mu''_r$ converges to $\nu''_0:=\nu_0\otimes\nu'_0$. We prove this by showing that the contribution of the \textit{first} horizontal and vertical slices of $B_r(\origin'')$ is negligible. Fix $\epsilon>0$ and let $\mu''_{i,j}$ be the normalized restriction of $\mu''$ to the square-like set $T_{i,j}:=S_{i\epsilon}(\origin)\times S_{j\epsilon}(\origin')$. Let $c_{i,j}:=\mu''(T_{i,j})$. 
		By assumption, $\mu''_{i,j}$ converges to $\nu''_0$ as $i,j\to\infty$. So, there exists $M<\infty$ such that $d_{BL}(\mu''_{i,j}, \nu''_0)\leq \epsilon$ for all $i,j\geq M$, where $d_{BL}$ denotes the bounded-Lipschitz distance (given an arbitrary metrization of $\overline{E''}$). Now, for every non-negative 1-bounded 1-Lipschitz function $g$ on $E''$, one has
		\begin{eqnarray*}
			\int g d\mu''_r &\leq& \sum_{i,j} \frac{c_{i,j}}{F''(r)} \int g\mu_{i,j}\\
			&=& \sum_{i,j} \frac{c_{i,j}}{F''(r)}\int g d(\mu_{i,j}-\nu''_0) + (\frac{\sum_{i,j} c_{i,j}}{F''(r)})\int g d\nu''_0\\
			&\leq & \sum_{i<M\text{ or }j<M} \frac{c_{i,j}}{F''(r)} + \sum_{i\geq M, j\geq M} \frac{\epsilon c_{i,j}}{F''(r)} + \left(\frac{\sum_{i,j} c_{i,j}}{F''(r)}\right)\int g d\nu''_0,
		\end{eqnarray*}
		where the sum is over all $0\leq i\leq\lceil \frac{r}{\epsilon}\rceil$ and all $1\leq j\leq \lceil \frac{r}{\epsilon}\rceil-i+1$. 
		In the last line, the first term converges to zero as $r\to\infty$. The second term is bounded by $\epsilon F''(r+2\epsilon)/F''(r)$. Also, the last term is at most $F''(r+2\epsilon)/F''(r) \int g d\nu''_0$. So, by choosing $\epsilon$ small enough, $\limsup_r \int g d\mu''_r-\int g d\nu''_0$ is arbitrarily small. By a similar lower bound, one deduces that $\lim \int g d\mu''_r = \int g d\nu''_0$. This implies that $d_{BL}(\mu''_r,\nu''_0)\to 0$, and the claim is proved.
	\end{proof}

    As an example, we apply \Cref{prop:product-L1} to products of trees as follows:

    \begin{proof}[Proof of \Cref{cor:xi-examples} for Products of Regular Trees]
			Consider $\rtree{p}\times\rtree{q}$, where $p\geq q$. Let $\origin''=(\origin,\origin')$. Any point $x'':=(x,x')\in S_n(\origin'')$ has exactly 2 neighbors in $S_{n-1}(\origin'')$ and $p+q$ neighbors in $S_{n+1}(\origin'')$, except when $x=\origin$ or $x'=\origin'$. Assuming $p=q>1$, these exceptions are negligible (see \Cref{prop:product-L1}). This implies that $F''_r$ is log-regularly-varying with index $\ln p$. Also, if $\bs U_n$ is a random element of $S_n(\origin'')$ and $\bs V_n$ is a random neighbor of $\bs U_n$ in $S_{n+1}(\origin'')$, then the distribution of $(\bs U_n,\bs V_n)$ is approximately (in total variation distance) the normalized version of $\alpha_1^n$ defined in~\eqref{eq:inter-level}. One can deduce that $\alpha_1^n$ converges to $\mathrm{diag}(\nu_0)$, where $\nu_0$ is the harmonic measure on $\partial\rtree{p}$ (use \Cref{prop:product-L1} and Skorohod's representation theorem to assume that $\bs U_n$ converges a.s. to a random point of $\partial\rtree{p}\times\partial\rtree{q}$ and note that $\bs V_n$ converges to the same limit).
			
			If $p>q\geq 1$, the exceptions $x'=\origin'$ are not neglibigle, but only a biasing is needed on these exceptions. One can deduce similarly that $F''_r$ is log-regularly-varying with index $\ln p$. Also, it is enough to study the limit of $(\bs U_n,\bs V_n)$. By \Cref{prop:product-L1}, $\bs U_n$ converges to a random element of $\partial\rtree{p}\times \rtree{q}$, namely, to $(\Theta,\bs X')$. Using Skorokhod's representation theorem, we may assume that the last convergence holds a.s. In this case, the second coordinate of $\bs V_n$ converges to a random (non-uniform) element of $B_1(\bs X')$. So, by \Cref{lem:productBoundary}, the limit of $\bs V_n$ is not equal to that of $\bs U_n$ with positive probability. Hence, the limit of $\alpha_1^n$ is not supported on the diagonal, and independence on $\xi$ does not hold.
	\end{proof}
	
	\begin{example}
		{Equip $\rtree{p}\times \rtree{q}$ with the weighted $L^1$ metric $d((x,x'),(y,y')):= d(x,x')+c d(y,y')$, where $c=\ln p/\ln q$. Similarly to the last proof, one can use \Cref{prop:product-L1} and show that the assumptions of \Cref{thm.ctfg} are satisfied, and that the limiting measure is supported on $\partial\rtree{p}\times\partial\rtree{q}$. In fact, one can see that the resulting IPVT is identical to the IPVT with the $L^2$ metric studied in~\cite{MiMe23}. See also \Cref{prob:l2}.}
	\end{example}

    \begin{example} 
          $\HH^2 \times \HH^2$ equipped with the $L^1$ metric has been studied in \cite{IPVTH2H2}.  \Cref{prop:product-L1} applied to this case provides full convergence towards ${\rm IPVT}$, and also that the corona measure is concentrated on $\partial \HH^2 \times \partial \HH^2$. This provides an alternative proof to \cite[Theorem 2.1]{IPVTH2H2}. 
    \end{example}

    \section{On the IPVT of Diestel--Leader Graphs}
	\label{sec:dl}
	
	In this section, we construct the IPVT of Diestel--Leader graphs and prove \Cref{thm.dl-convergence} and the surrounding results announced in the introduction.
	
	Let $p\geq 2$. As in \Cref{subsec:regularTree}, let $T\coloneqq \rtree{p}$ be the $(p+1)$-regular tree. Fix a vertex $\origin\in T$ and an end $\omega_0\in\partial T$. The \dfn{confluent} of two vertices $u,v\in T$ with respect to $\omega_{0}$ is the intersection of the three geodesics $uv, u\omega_0$ and $v\omega_0$ and is denoted by $u\wedge v$ (the dependence on the chosen end $\omega_{0}$ being clear from the context). {We regard $h\coloneqq d_{\omega_0}$ as} the \dfn{height function} on $T$ with respect to $\omega_{0}$; i.e., $h(u)\coloneqq  d(u,u\wedge\origin)-d(\origin,u\wedge\origin)$. 
	Given $q\geq 2$, consider $T'\coloneqq \rtree{q}$, $\origin'\in T'$ and $\omega'_0\in\partial T'$ similarly, and let $h'\coloneqq d_{\omega'_0}$ be the analogous height function on $T'$.
	
	We recall the following definition from~\cite{DL}.

	\begin{defn}[\textsc{Diestel--Leader Graph}]
		\label{def:dl}
		Given natural numbers $p,q\geq 2$, consider $(T\coloneqq \rtree{p},\origin,\omega_0)$ and $(T'\coloneqq \rtree{q},\origin',\omega'_0)$ as above. The \dfn{Diestel--Leader graph} of order $(p,q)$, denoted by ${\rm DL}(p,q)$, is the {directed} graph whose vertex set $V$ is
		$$
		V \overset{\rm def}{=} \{ (v,v') \in T\times T' \stdp  h(v)+h'(v')=0 \}
		$$
		and whose edge set is defined by
		$$
		(v,v') \sim_{\rm DL}(p,q) (w,w') \iff v\sim_{T} w  \quad \text{and} \quad v'\sim_{T'} w' \; .
		$$
		The direction of the edges are defined by adding the condition $h(w)=h(v)-1$ {(i.e., $h'(w')=h'(v')+1$).}
	\end{defn}
	
	As mentioned in \Cref{cor:xi-examples}, ${\rm DL}(q,q)$ is unimodular, transitive and a Cayley graph of the lamplighter group $\mathbb{Z}_{p} \wr \mathbb{Z}$, see Woess~\cite{woess2005}.

\begin{lem}[\textsc{Metric on ${\rm DL}(p,q)$, \cite{DL-randomwalk}}]
		\label{lem:DL-metric}
		One has
		\begin{eqnarray*}
			d((x,x'),(y,y')) &=& - 2 h(x\wedge y) - 2 h'(x' \wedge y') - \mynorm{h(x)- h(y)}\\
			&=& d_T(x,y) + d_{T'}(x',y') -\mynorm{h(x)-h(y)}.	\end{eqnarray*} 
	\end{lem}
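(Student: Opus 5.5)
The plan is to prove the two displayed formulas in turn. First I will check that the two right-hand sides agree, using only tree geometry. Then I will prove the second formula, $d_T(x,y)+d_{T'}(x',y')-|h(x)-h(y)|$, by a matching lower and upper bound for the graph distance in ${\rm DL}(p,q)$.

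For the first equality, fix $\omega_0$. In $T$, the geodesic from $x$ to $y$ passes through the confluent $x\wedge y$, and $h$ decreases by one along each step towards $\omega_0$. Hence
\[
d_T(x,y)=h(x)+h(y)-2h(x\wedge y),
\]
and likewise $d_{T'}(x',y')=h'(x')+h'(y')-2h'(x'\wedge y')$. On vertices of ${\rm DL}(p,q)$ we have $h'(x')=-h(x)$ and $h'(y')=-h(y)$, so the height terms cancel when the two identities are added. This leaves $-2h(x\wedge y)-2h'(x'\wedge y')$, and subtracting $|h(x)-h(y)|$ gives the first line.

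For the lower bound, take any path in ${\rm DL}(p,q)$ of length $L$. It projects to walks of the same length $L$ in $T$ and in $T'$, and each step changes $h$ by exactly $\pm1$.
\begin{itemize}
\item The projected walk in $T$ must reach the confluent $x\wedge y$, so it covers a height range of at least $h(x)+h(y)-2h(x\wedge y)$ in the downward direction, that is, towards larger depth below the starting height.
\item Simultaneously, the walk in $T'$ must reach $x'\wedge y'$. Since $h'=-h$ along the path, this forces $h$ to climb to at least $-h'(x'\wedge y')$.
\end{itemize}
So the height process $h$ along the path is a $\pm1$ walk from $h(x)$ to $h(y)$ that visits both the level $m\coloneqq h(x\wedge y)$ and the level $M\coloneqq -h'(x'\wedge y')$, where $m\le\min(h(x),h(y))\le\max(h(x),h(y))\le M$.

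The minimal length of such a walk is $(M-m)+\min\{\text{length of the two ways of ordering the extreme visits}\}$. Working this out gives $2(M-m)-|h(x)-h(y)|$ minus the overlap adjustment. Concretely, when the walk goes down first, the length is $h(x)-m+M-m+M-h(y)$; when it goes up first, the length is $M-h(x)+M-m+h(y)-m$. The minimum of the two is
\[
2M-2m-|h(x)-h(y)|,
\]
which equals the claimed formula after substituting $M,m$.

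For the upper bound, I will construct a path realising this count. Suppose the walk goes down first (the other case is symmetric).
\begin{enumerate}
\item Move in $T$ from $x$ down to $x\wedge y$ while, in $T'$, moving upward along the unique upward ray from $x'$ (towards larger $h'$). This is always possible because upward moves in $T'$ are forced and unique, so $T'$-coordinates stay on the geodesic ray.
\item Move in $T$ up from $x\wedge y$ along the geodesic towards $y$, continuing beyond height $h(y)$ if needed, while the $T'$ coordinate descends to $x'\wedge y'$. Downward moves in $T'$ can be chosen freely, so we descend to $x'\wedge y'$ and then along the branch towards $y'$.
\end{enumerate}

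The main obstacle is this step-2 synchronization: when $T'$ must go further down than $T$ needs to go up, the $T$-coordinate must be steered along the branch towards $y$. It must not overshoot into a wrong branch, because it has to come back. I would handle this by ordering the phases so that each tree's required descent is done while the other tree ascends along a forced ray. The free choices then always occur in the tree that is descending towards its target. A careful case check on the relative order of $h(x),h(y)$ and the two confluents should confirm that this ordering is consistent.
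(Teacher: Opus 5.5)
The paper quotes this lemma without proof, so your argument has to stand on its own. Your first equality is correct, and so is your lower bound. Any walk in a tree from $x$ to $y$ passes through $x\wedge y$, and likewise in $T'$ through $x'\wedge y'$. So the height process visits both $m=h(x\wedge y)$ and $M=-h'(x'\wedge y')$, and whichever it visits first, the triangle inequality for a $\pm1$ walk gives length at least $2(M-m)-|h(x)-h(y)|$.

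The gap is the upper bound. You defer it to a ``careful case check'', and your sketch rests on a reversed orientation. In both trees the \emph{unique} move is the one towards the distinguished end, i.e.\ the one that decreases $h$ (resp.\ $h'$); a height-increasing move has $p$ (resp.\ $q$) choices. Every edge of ${\rm DL}(p,q)$ changes $h$ by $\pm1$ and $h'$ by $\mp1$, so in each step one coordinate makes a forced descent and the other a free ascent. Your step 1 says the ascent of the $T'$-coordinate is forced, your step 2 says descents in $T'$ are free, and your closing heuristic puts the free choices in the descending tree; all three are backwards. Step 2 also asks the $T'$-coordinate to move from $x'\wedge y'$ towards $y'$ while $T$ is still ascending. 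That is impossible: this segment increases $h'$, so it needs $h$ to decrease. Finally, after $T$ overshoots to height $M$ you never bring it back down to $y$.

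With the correct orientation the construction needs no case analysis. Write $a=h(x)$, $b=h(y)$ and assume $a\le b$; otherwise exchange the two endpoints, since both sides are symmetric.

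\emph{Phase one} ($a-m$ steps). $T$ descends (forced) from $x$ to $x\wedge y$, which lies on the ray from $x$ to $\omega_0$. Meanwhile $T'$ ascends arbitrarily.

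\emph{Phase two} ($M-m$ steps). $T'$ descends (forced). It first retraces its excursion back to $x'$, then reaches $x'\wedge y'$, which lies on the ray from $x'$ to $\omega'_0$ at height $-M\le -a$. Meanwhile $T$ ascends freely, first along the geodesic from $x\wedge y$ up to $y$, then $M-b\ge 0$ further arbitrary steps.

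\emph{Phase three} ($M-b$ steps). $T$ descends (forced) back to $y$, so the overshoot above $y$ is harmless. Meanwhile $T'$ ascends along the geodesic from $x'\wedge y'$ to $y'$, whose length is exactly $h'(y')-h'(x'\wedge y')=M-b$.

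The total length is $(a-m)+(M-m)+(M-b)=2(M-m)-|a-b|$, which matches your lower bound.
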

	
	\subsection{The Horoboundary of ${\rm DL}(p,q)$}

	\begin{figure}[!hbtp]
		\hspace{45pt}
		\begin{overpic}[width=.9\textwidth]{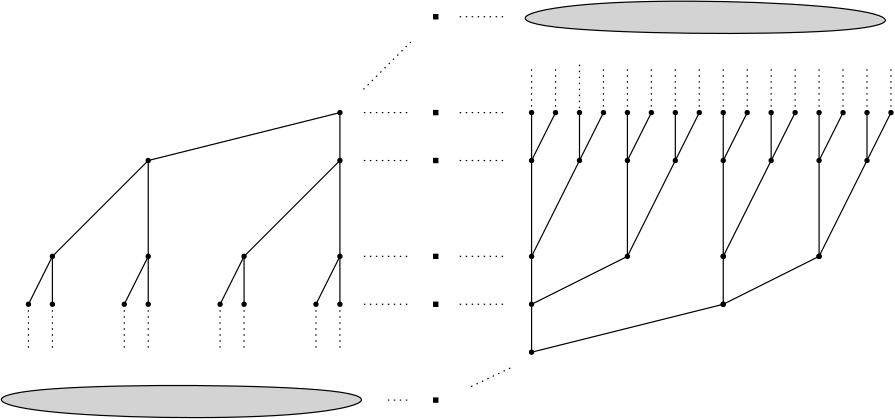}
			\put (15,-2) {\small{$\rho_{\omega}, \, \omega \neq \omega_{0}$}}
			\put (73,48) {\small{$\rho'_{\omega'}, \, \omega' \neq \omega'_{0}$}}
			\put (49,0) {\small{$\rho_{\omega_{0}}$}}
			\put (49,11) {\small{$\rho_{1}$}}
			\put (49,16) {\small{$\rho_{0}$}}
			\put (49,27) {\small{$\rho_{-1}$}}
			\put (49,32) {\small{$\rho_{-2}$}}
			\put (49,47) {\small{$\rho'_{\omega'_{0}}$}}
			\put (1,18) {\small{$\rho_{x_{0}}$}}
			\put (93,18) {\small{$\rho'_{x'_{0}}$}}
		\end{overpic}
		\vspace{5pt}
		\caption{Portrait of the horoboundary of ${\rm DL}(2,2)$ {described in \Cref{prop.gbdl,prop.gbdl-conv,prop:DL-top}}.}
		\label{fig:DLboundary}
	\end{figure}
	
	To study the IPVT of Diestel--Leader graphs, we first identify their horoboundary as follows.

	\begin{prop}{\textsc{(Horoboundary of ${\rm DL}(p,q)$)}}
		\label{prop.gbdl}
		The horoboundary of ${\rm DL}(p,q)$ consists of the following horofunctions:
		\begin{eqnarray*}
			\rho_{\omega}(y,y') &\coloneqq & d(y,\omega) \text{ given } w\in \partial T,\\
			\rho'_{\omega'}(y,y') &\coloneqq & d(y',\omega') \text{ given } w'\in \partial T',\\
			\rho_{x_0}(y,y') &\coloneqq & -2h(y\wedge x_0) - \mynorm{h(y)-h(x_0)} = d(y,x_0)-2\max\{h(y),h(x_0)\} \text{ given } x_0\in T,\\
			\rho'_{x'_0}(y,y') &\coloneqq & -2h'(y'\wedge x'_0) - \mynorm{h'(y')-h'(x'_0)} = d(y',x'_0)-2\max\{h'(y'),h'(x'_0)\} \text{ given } x'_0\in T',\\
			\rho_i(y,y') &\coloneqq & -\mynorm{h(y)-i} = -\mynorm{h'(y')+i} \text{ given } i\in\mathbb Z.
		\end{eqnarray*}
	\end{prop}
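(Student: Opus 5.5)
The plan is to identify every pointwise limit of the normalized distance functions $z\mapsto d(z,(x_n,x'_n)) - d(\origin'',(x_n,x'_n))$ along sequences $(x_n,x'_n)$ of vertices with $d(\origin'',(x_n,x'_n))\to\infty$, where $\origin''=(\origin,\origin')$. The main tool is \Cref{lem:DL-metric}:
\[
d((x_n,x'_n),(y,y')) = -2h(x_n\wedge y) - 2h'(x'_n\wedge y') - \mynorm{h(x_n)-h(y)}.
\]
This decomposes the distance into a $T$-part, a $T'$-part and a height part. Since $h'(x'_n)=-h(x_n)$, the height term can be written either as $\mynorm{h(x_n)-h(y)}$ or as $\mynorm{h'(x'_n)-h'(y')}$. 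We use whichever form is convenient, depending on which tree coordinate escapes.

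By compactness of $\overline T$ and $\overline{T'}$ (the end compactifications) and of $\mathbb Z\cup\{\pm\infty\}$, we pass to a subsequence along which $x_n\to \xi\in\overline T$, $x'_n\to\xi'\in\overline{T'}$ and $h(x_n)\to \eta\in\mathbb Z\cup\{\pm\infty\}$. Since the limit is determined by the subsequential data, it suffices to compute it in each case. Note that $x_n\wedge y$ stabilizes whenever $x_n$ converges in $\overline T$ to something other than $\omega_0$. If $x_n\to\omega_0$, then $x_n\wedge y$ eventually equals $x_n$ (or lies on the ray from $y$ to $\omega_0$ at height $h(x_n)$). The same holds in $T'$.

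The case analysis runs as follows.

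First, suppose $h(x_n)\to+\infty$. Then $x_n\to\omega_0$, so $x'_n$ has height $\to-\infty$ and $x'_n$ cannot tend to $\omega'_0$.
\begin{itemize}
\item If $x'_n\to\omega'\in\partial T'\setminus\{\omega'_0\}$, the $T$-part and height term combine into a constant shift plus $-h'(y')$-type terms. After normalization this yields the Busemann function $\rho'_{\omega'}$, i.e.\ $d(y',\omega')$.
\item If $x'_n\to x'_0\in T'$ this is impossible, since the height is unbounded. Instead, when $x'_n$ stays at bounded distance from a vertex of the tree while its height diverges, the limit is expressed through $x'_0$ as $-2h'(y'\wedge x'_0)-\mynorm{h'(y')-h'(x'_0)}$, i.e.\ $\rho'_{x'_0}$.
\end{itemize}
To be precise, $\rho'_{x'_0}$ arises when $x'_n\wedge\origin'$ stabilizes to a vertex related to $x'_0$ while $x_n\to\omega_0$ and $h(x_n)\to\infty$. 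I would sort out this bookkeeping carefully.

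Symmetrically, $h(x_n)\to-\infty$ gives $\rho_\omega$ and $\rho_{x_0}$.

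Next, suppose $h(x_n)\to i$ is finite. Then both $x_n$ and $x'_n$ must escape horizontally, toward ends different from $\omega_0$ and $\omega'_0$. Both confluent terms eventually equal $-2h(y\wedge x_n)$ with diverging constants, which are absorbed by normalization. The $y$-dependence of the confluents disappears once $x_n\wedge y=x_n\wedge\origin$, which holds for large $n$ for each fixed $y$. What remains is $-\mynorm{h(y)-i}$, which is $\rho_i$.

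Finally, the boundary cases $\xi=\omega_0$ with bounded height, and the intermediate cases where the height tends to $\pm\infty$ but the tree coordinates behave mixedly, need to be checked to fall into the listed families. For example, they may give $\rho_{\omega_0}$ and $\rho'_{\omega'_0}$ as degenerate members of the $\rho_\omega$ family.

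For the converse, each listed function must be shown to be realized. Pick explicit sequences: a geodesic ray in $T$ toward $\omega$ lifted to ${\rm DL}$ for $\rho_\omega$; points $(x_n,x'_n)$ with $x_n$ descending from $x_0$ and $x'_n$ at height $-h(x_n)$ for $\rho_{x_0}$; horizontal escape at fixed height $i$ for $\rho_i$. In each case, verify the limit by the formula above.

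The main obstacle is the bookkeeping for the mixed cases, where one coordinate goes to the special end and the other converges to a vertex or a non-special end. The difficulty is correctly identifying which confluent stabilizes and showing that the resulting expression equals $d(y,x_0)-2\max\{h(y),h(x_0)\}$. I would handle this by first rewriting $-2h(y\wedge x_0)-\mynorm{h(y)-h(x_0)}$ via $d_T(y,x_0)=h(y)+h(x_0)-2h(y\wedge x_0)$, which gives the second form immediately. Then I would match the limits against this normalized form.
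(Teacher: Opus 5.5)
Your overall strategy (extract subsequences using compactness of $\overline T$, $\overline{T'}$ and $\mathbb Z\cup\{\pm\infty\}$, then compute limits with \Cref{lem:DL-metric}) is the paper's. However, the case analysis, which is the entire content of the proof, is wrong, and the errors start with the orientation of $h=d_{\omega_0}$. This is the Busemann function of $\omega_0$, so it tends to $-\infty$ along rays to $\omega_0$. From $h(u)=d(u,u\wedge\origin)+h(u\wedge\origin)$ and $h(u\wedge\origin)=-d(\origin,u\wedge\origin)$ you get three facts:
\begin{itemize}
\item $h(x_n)\to-\infty$ forces $x_n\to\omega_0$;
\item $x_n\to\omega\neq\omega_0$ forces $h(x_n)\to+\infty$;
\item if $h(x_n)$ is constant while $d(x_n,\origin)=h(x_n)-2h(x_n\wedge\origin)\to\infty$, then again $x_n\to\omega_0$.
\end{itemize}
So in your first case, $h(x_n)\to+\infty$ does not give $x_n\to\omega_0$. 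Moreover, $h'(x'_n)=-h(x_n)\to-\infty$ forces $x'_n\to\omega'_0$, which is the opposite of what you assert. Even if your $h$ is read with the reversed sign, ``$x'_n$ cannot tend to $\omega'_0$'' is false: go far down towards $\omega'_0$, then even farther up in another branch. That claim discards precisely the sequences producing $\rho'_{\omega'_0}$ and $\rho_{\omega_0}$, which your last paragraph only gestures at. Similarly, in the bounded-height case, horizontal escape means convergence to $\omega_0$ and $\omega'_0$, not to other ends. The fact you actually need, and only half-state, is that $x_n\wedge y=x_n\wedge\origin$ for large $n$ whenever $x_n\to\omega_0$, whatever $h(x_n)$ does.

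The more serious error is where you place $\rho_{x_0}$ and $\rho'_{x'_0}$. Write the normalized distance as $2I_n+2I'_n+J_n$, with $I_n=h(x_n\wedge\origin)-h(x_n\wedge y)$, $I'_n$ the analogue in $T'$, and $J_n=\mynorm{h(x_n)}-\mynorm{h(x_n)-h(y)}$. If $h(x_n)\to+\infty$, then $J_n\to h(y)$ and $I'_n\to 0$. The limit is therefore $\rho_\omega$, where $\omega\in\partial T$ is the limit of $x_n$ (possibly $\omega_0$). Symmetrically, $h(x_n)\to-\infty$ gives only $\rho'_{\omega'}$.

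So $\rho_{x_0}$ and $\rho'_{x'_0}$ cannot arise with diverging height; your phrase ``stays at bounded distance from a vertex while its height diverges'' is self-contradictory. They arise exactly when the height is eventually a constant $i$ and one coordinate is eventually constant, $x_n=x_0$ or $x'_n=x'_0$. You exclude this by asserting that both coordinates must escape; only when both escape do you get $\rho_i$.

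Your realizing sequences have the same problem. The one for $\rho_{x_0}$, with $x_n$ moving away from $x_0$, has unbounded height, so it converges to some $\rho_\omega$ or $\rho'_{\omega'}$. You need $x_n=x_0$ fixed and $x'_n$ escaping at height $-h(x_0)$. Likewise, a lifted ray towards $\omega_0$ gives a $\rho'_{\omega'}$, not $\rho_{\omega_0}$. For $\rho_{\omega_0}$, take $x_n$ going $n$ steps towards $\omega_0$ and then $2n$ steps up in another branch.

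With the trichotomy $h(x_n)\to+\infty$, $h(x_n)\to-\infty$, $h(x_n)$ eventually constant, and the three subcases of the last one, the computation closes as in \Cref{prop.gbdl-conv}.
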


    As sets, one can write 
	$$\partial {\rm DL}(p,q) = \overline T\sqcup\overline {T'}\sqcup\mathbb Z,$$ but it should be noted that the induced topologies on $\overline{T}$ and $\overline{T'}$ are different from the inherent topologies of $\overline{T}$ and $\overline{T'}$ (see \Cref{prop:DL-top}). The last proposition is implied by the following:
	
	\begin{prop}[\textsc{Convergence Towards $\partial {\rm DL}(p,q)$}]
		\label{prop.gbdl-conv}
		For a sequence $(x_n,x'_n)\in {\rm DL}(p,q)$,
		\begin{itemize}
			\item $(x_n,x'_n)\to \rho_{\omega}$ if $x_n\to \omega\neq\omega_0$, where $\omega\in\partial T$ (and hence $x'_n\to \omega'_0$).
			\item $(x_n,x'_n)\to \rho'_{\omega'}$ if $x'_n\to \omega'\neq\omega'_0$, where $\omega'\in\partial T'$ (and hence $x_n\to \omega_0$).
			\item $(x_n,x'_n)\to \rho_{\omega_0}$ if $x_n\to\omega_0$, $x'_n\to\omega'_0$ and $h(x_n)\to+\infty$. 
			\item $(x_n,x'_n)\to \rho'_{\omega'_0}$ if $x_n\to\omega_0$, $x'_n\to\omega'_0$ and $h'(x'_n)\to+\infty$.
			\item $(x_n,x'_n)\to\rho_{x_0}$ if $x_n=x_0$ and $d(x'_n,\origin')\to\infty$ (s.th. $h'(x'_n)=-h(x_0)$).
			\item $(x_n,x'_n)\to\rho'_{x'_0}$ if $x'_n=x'_0$ and $d(x_n,\origin)\to\infty$ (s.th. $h(x_n)=-h'(x'_0)$).
			\item $(x_n,x'_n)\to \rho_i$ if $d(x_n,\origin)\to\infty$ and $d(x'_n,\origin')\to \infty$ s.th. $h(x_n)=-h'(x'_n)=i$.
		\end{itemize}
	\end{prop}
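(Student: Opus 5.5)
The plan is a direct computation: use the metric formula of \Cref{lem:DL-metric} and compute the pointwise limit of $d((y,y'),(x_n,x'_n)) - d((x_n,x'_n),(\origin,\origin'))$ in each case. Since ${\rm DL}(p,q)$ is a graph, the horofunctions are integer-valued 1-Lipschitz functions. Pointwise convergence therefore means eventual equality at each fixed vertex, so it suffices to fix $(y,y')$ and show that the difference is eventually equal to the claimed expression (up to a normalizing constant independent of $(y,y')$). I write
\[
d((y,y'),(x_n,x'_n)) = d_T(y,x_n)+d_{T'}(y',x'_n)-\mynorm{h(y)-h(x_n)},
\]
and treat the three summands separately. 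Tree distances to a sequence converging to an end become Busemann functions: $d_T(y,x_n)-d_T(\origin,x_n)\to d(y,\omega)$ eventually when $x_n\to\omega$. When $x_n$ is fixed, $d_T(y,x_0)$ is constant.

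In the first case, $x_n\to\omega\neq\omega_0$ forces $h(x_n)\to-\infty$, hence $h'(x'_n)\to+\infty$. The tree-$T$ term then gives $d(y,\omega)$ plus a constant. The absolute value eventually equals $h(y)-h(x_n)$. For the $T'$ term, $x'_n$ goes up towards $\omega'_0$, so eventually $y'\wedge x'_n = y'\wedge\omega'_0$ along the ray, and $d_{T'}(y',x'_n)=h'(x'_n)-h'(y')+(\text{const})$, i.e.\ $-h'(y')=h(y)$ plus a constant. The $h(y)$ contributions then cancel to $0$ modulo constants ($+h(y)$ from $T'$ and $-h(y)$ from the absolute value). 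I must check the sign bookkeeping so that the result is exactly $d(y,\omega)$, i.e.\ the Busemann function $\rho_\omega$. The case $\rho'_{\omega'}$ is symmetric.

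For $\rho_{x_0}$, fix $x_n=x_0$ and let $x'_n\to\infty$ in $T'$ at fixed height $-h(x_0)$. The distance $d_{T'}(y',x'_n)$ then equals $h'(x'_n)+h'(y')-2h'(y'\wedge x'_n)$. Here $y'\wedge x'_n$ stabilizes to a vertex at height $\max(h'(y'),-h(x_0))$ plus a correction, or rather the stabilization happens only up to the point where the common ancestor depends on $y'$. This is the delicate point, since $x'_n$ at fixed height escaping to infinity means its confluent with any fixed $y'$ climbs towards $\omega'_0$. Hence $d_{T'}(y',x'_n)-d_{T'}(\origin',x'_n)\to h'(y')-h'(\origin')$ eventually. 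Combining with $d_T(y,x_0)-\mynorm{h(y)-h(x_0)}$ and $h'(y')=-h(y)$ gives $d(y,x_0)-\mynorm{h(y)-h(x_0)}-h(y)$ plus a constant. Using $d(y,x_0)=h(y)+h(x_0)-2h(y\wedge x_0)$, this becomes $-2h(y\wedge x_0)-\mynorm{h(y)-h(x_0)}$ plus a constant, as claimed.

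The cases $\rho_{\omega_0}$ and $\rho_i$ follow the same mechanism. When both coordinates escape at heights with $h(x_n)=i$ fixed, both confluents climb, so both tree terms reduce to height terms that cancel via $h'=-h$, leaving $-\mynorm{h(y)-i}$. When $h(x_n)\to+\infty$ with $x_n\to\omega_0$ and $x'_n\to\omega'_0$, the $T$ term gives $-h(y)$ (since the confluent climbs), the $T'$ term gives $h'(y')$ plus a constant, i.e.\ $-h(y)$, and the absolute value gives $+h(y)$. The limit is $-h(y)$, which is the Busemann function of $\omega_0$ on the first coordinate, matching $\rho_{\omega_0}$.

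The main obstacle is the mixed regimes, and deducing \Cref{prop.gbdl} itself. By compactness of $\overline{T}$ and $\overline{T'}$ and a subsequence argument on $h(x_n)$, every sequence escaping to infinity has a subsequence falling into one of the listed cases, and I will verify this exhaustively. In particular, if $x_n\to\omega_0$, $x'_n\to\omega'_0$ and $h(x_n)$ is bounded, pass to a constant height $i$, giving $\rho_i$. The listed cases therefore cover all limits, which proves \Cref{prop.gbdl}.
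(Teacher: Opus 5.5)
Your strategy is the same as the paper's. You fix $(y,y')$, expand $d((y,y'),(x_n,x'_n))-d((\origin,\origin'),(x_n,x'_n))$ with \Cref{lem:DL-metric}, and read off eventual values case by case. The paper groups the terms as $2I_n+2I'_n+J_n$ using confluent heights while you use tree distances, but this is the same computation. Your $\rho_{x_0}$ and $\rho_i$ cases are correct, once you drop the false remark that $y'\wedge x'_n$ stabilizes.

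However, you orient $h$ inconsistently, and in one case this gives the wrong limit. In the paper $h=d_{\omega_0}$, so $h\to-\infty$ towards $\omega_0$ and $d_T(y,x)=h(y)+h(x)-2h(x\wedge y)$; you use this formula correctly for $\rho_{x_0}$. Whenever $x_n\to\omega_0$, one eventually has $x_n\wedge y=x_n\wedge\origin$, so the $T$-term contributes $+h(y)$. Likewise the $T'$-term contributes $+h'(y')=-h(y)$ whenever $x'_n\to\omega'_0$.

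In your $\rho_{\omega_0}$ case you assign $-h(y)$ to the $T$-term and obtain the total $-h(y)$. But $-h(y)=h'(y')=\rho'_{\omega'_0}(y,y')$, which is the other boundary point. Nor is it the Busemann function of $\omega_0$; that function is $h$ itself. The correct bookkeeping is $h(y)-h(y)+h(y)=h(y)=\rho_{\omega_0}(y,y')$. The two tree terms cancel exactly as in your $\rho_i$ case, and only the absolute-value term $-\mynorm{h(y)-h(x_n)}+\mynorm{h(x_n)}$ survives. Its eventual value is $+h(y)$ if $h(x_n)\to+\infty$ and $-h(y)$ if $h(x_n)\to-\infty$, and this sign is precisely what separates the third and fourth bullets.

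The first case has the same orientation error. The limit $x_n\to\omega\neq\omega_0$ forces $h(x_n)\to+\infty$, because $x_n\wedge\origin$ eventually stabilizes while $d(x_n,\origin)\to\infty$. Hence $h'(x'_n)\to-\infty$, and it is this, via $h'(x'_n\wedge\origin')\le h'(x'_n)$, that forces $x'_n\to\omega'_0$. With your $h'(x'_n)\to+\infty$, the parenthetical claim of the first bullet would not follow.

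In that case the $T'$-term gives $-h(y)$ and the absolute-value term gives $+h(y)$, the opposite of what you wrote. The two sign errors cancel, so your final answer $d(y,\omega)$ is right, but the justification is not. Once $h$ is consistently oriented as $d_{\omega_0}$, all seven cases go through as you planned.
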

	\begin{proof}
		Fix $(y,y')\in {\rm DL}(p,q)$.
		By \Cref{lem:DL-metric}, $d((x_n,x'_n),(y,y'))-d((x_n,x'_n),(\origin,\origin'))$ is equal to $2 I_n+2 I'_n+J_n$, where $I_n = h(x_n\wedge \origin)-h(x_n\wedge y)$, $I'_n= h'(x'_n\wedge\origin')-h'(x'_n\wedge y')$ and $J_n = \mynorm{h(x_n)}-\mynorm{h(x_n)-h(y)}$. The limit of $I_n$ is as follows: $I_n\to 0$ if $x_n\to\omega_0$, $I_n\to h(x_0\wedge\origin)-h(x_0\wedge y)$ if $x_n=x_0$, and $I_n\to h(\omega\wedge\origin)-h(\omega\wedge y)$ if $x_n\to \omega\neq\omega_0$. Similar limits hold for $I'_n$. The limit of $J_n$ is as follows: $J_n\to h(y)$ if $h(x_n)\to\infty$, $J_n\to h'(y')$ if $h(x_n)\to-\infty$ and $J_n\to \mynorm{h(x_0)}-\mynorm{h(x_0)-h(y)}$ if $h(x_n)=h(x_0)$. The claim follows by considering the different cases of the limit of $(I_n,I'_n,J_n)$.
	\end{proof}
	\begin{proof}[Proof of \Cref{prop.gbdl}]
		It is straightforward to see that every sequence of distinct points in ${\rm DL}(p,q)$ contains a subsequence of one of the seven types mentioned in \Cref{prop.gbdl-conv}. This implies the claim.
	\end{proof}

    \begin{prop}[\textsc{Topology of $\partial {\rm DL}(p,q)$}]
		\label{prop:DL-top}
		The topology of $\partial {\rm DL}(p,q)$ is as follows
		\begin{itemize}
			\item $\rho_{x_0}$ and $\rho'_{x'_0}$ are isolated points. 
			\item $\rho_i\to \rho_{\omega_0}$ if $i\to\infty$, and $\rho_i\to \rho'_{\omega'_0}$ if $i\to-\infty$.
			\item $\rho_{\omega_n}\to\rho_{\omega}$ if $\omega_n\to\omega$, and $\rho'_{\omega'_n}\to\rho'_{\omega'}$  if $\omega'_n\to\omega'$.
			\item $\rho_{x_0}\to\rho_{\omega}$ for $\omega\neq \omega_0$ if $x_0\to\omega$.
			\item $\rho_{x_0}\to\rho_{\omega_0}$ if $x_0\to\omega_0$ and $h(x_0)\to-\infty$.
			\item $\rho_{x_0}\to\rho'_{\omega'_0}$ if $x_0\to\omega_0$ and $h(x_0)\to+\infty$.
			\item $\rho_{x_0}\to \rho_i$ if $x_0\to\infty$ such that $h(x_0)=i$.
		\end{itemize}
		and similarly for $\rho'_{x'_0}$. Also, every sequence of distinct points in $\partial {\rm DL}(p,q)$ has a subsequence of one of these types. See Figure~\ref{fig:DLboundary} for a portrait in the case $(p,q)=(2,2)$.
	\end{prop}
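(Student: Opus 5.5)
We will work directly in $\lip_1^0$, using the pointwise (equivalently, uniform-on-compacts) topology and the explicit normalized formulas obtained in the proof of \Cref{prop.gbdl-conv}. Throughout, we normalize at $(\origin,\origin')$.

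\emph{Computing the limit for each candidate sequence.} For each bullet we fix $(y,y')$ and evaluate the limit of the relevant horofunction at $(y,y')$. The formula for $\rho_{x_0}$ is
\[
d_T(y,x_0)-2\max\{h(y),h(x_0)\},
\]
up to normalization at $(\origin,\origin')$. The main steps are as follows.
\begin{itemize}
\item \emph{$x_0\to\omega\neq\omega_0$.} In this case $x_0\wedge y$ stabilizes to $\omega\wedge y$, and $h(x_0)\to+\infty$ eventually. The $\max$ and the $|\cdot|$ terms then cancel against the normalization, and the limit is exactly the Busemann function of $\omega$.
\item \emph{$x_0\to\omega_0$ with $h(x_0)\to-\infty$.} Here $y\wedge x_0=x_0\wedge\origin$ is eventually very low, so after normalization only $h(y)$ survives. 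We should check that this matches $\rho_{\omega_0}$; by the $J_n$ case "$h(x_n)\to+\infty$" in the previous proof, $\rho_{\omega_0}$ is given by $h(y)$ up to constants. We should double-check the sign convention here, since it decides whether the target is $\rho_{\omega_0}$ or $\rho'_{\omega'_0}$.
\item \emph{$x_0\to\omega_0$ with $h(x_0)\to+\infty$.} This case gives $-h(y)=h'(y')$, which is $\rho'_{\omega'_0}$.
\item \emph{$x_0\to\infty$ with $h(x_0)=i$ fixed.} Then $x_0\wedge y$ is eventually the ancestor of $y$ at a fixed height, and the formula reduces to $-|h(y)-i|$, i.e.\ to $\rho_i$.
\item \emph{The sequence $\rho_i$.} Since $\rho_i(y,y')=-|h(y)-i|$ normalized, the limits as $i\to\pm\infty$ are $\pm h(y)$ up to constants, which identifies them with $\rho_{\omega_0}$ and $\rho'_{\omega'_0}$.
\item \emph{$\rho_{\omega_n}\to\rho_\omega$.} This follows from the continuity of Busemann functions on the tree.
\end{itemize}
The primed cases are symmetric.

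\emph{Isolatedness.} To show that $\rho_{x_0}$ is isolated, we separate it from all other boundary points using finitely many evaluations. The function $\rho_{x_0}$ restricted to the fiber $\{(y,y'): y=x_0\}$ attains its minimum exactly at $y=x_0$, so it "sees" the vertex $x_0$. Every other horofunction in the list differs from $\rho_{x_0}$ on a finite set, namely the ball of radius about $|h(x_0)|+1$ around $x_0$ in the first coordinate. Because all functions are integer-valued 1-Lipschitz, any function agreeing with $\rho_{x_0}$ on this finite set must equal $\rho_{x_0}$. This gives an open neighbourhood containing no other point of the list. The main point to check is the injectivity claim: $\rho_{x_0}$ is determined by its values on such a finite window, and $\rho_{x_0}\neq\rho_{x_1}$ for $x_0\neq x_1$.

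\emph{Subsequence classification.} We start from a sequence of distinct points in $\partial{\rm DL}(p,q)$. By pigeonholing, we may pass to a subsequence lying in a single family.
\begin{itemize}
\item In the $\rho_i$ family, distinctness forces $i\to\pm\infty$.
\item In the $\rho_\omega$ family, compactness of $\partial T$ gives a convergent subsequence.
\item In the $\rho_{x_0}$ family, we pass to a subsequence where $x_0\to\omega\neq\omega_0$, or $x_0\to\omega_0$ with $h\to\pm\infty$, or $h(x_0)$ is constant (since heights are bounded along a sequence converging into the tree's horofiber only when $h$ is constant).
\end{itemize}
The one delicate subcase is $x_0\to\omega_0$ with $h(x_0)$ bounded. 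This forces $h$ to be eventually constant along a subsequence, which is the $\rho_i$ case.

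\emph{Expected obstacle.} We expect the main obstacle to be bookkeeping the signs and normalizations of $\rho_{\omega_0}$ versus $\rho'_{\omega'_0}$ in the height-divergent cases.
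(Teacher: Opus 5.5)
\paragraph{Main gap: the two height-divergent cases.} Your verification of these two cases fails, and it must: in the statement as printed, the targets of the fifth and sixth bullets are interchanged, and you assert the printed targets instead of computing them. Use $\rho_{x_0}(y,y')=-2h(y\wedge x_0)-|h(y)-h(x_0)|$, together with $y\wedge x_0=\origin\wedge x_0$ once $x_0$ is close to $\omega_0$. This gives
\[
\rho_{x_0}(y,y')-\rho_{x_0}(\origin,\origin')=|h(x_0)|-|h(y)-h(x_0)|.
\]
Thus only the $J_n$-type term from the proof of Proposition~\ref{prop.gbdl-conv} survives. Explicitly:
\begin{itemize}
\item it equals $-h(y)=h'(y')$ as soon as $h(x_0)<\min\{h(y),0\}$;
\item it equals $h(y)$ as soon as $h(x_0)>\max\{h(y),0\}$.
\end{itemize}
So $h(x_0)\to-\infty$ gives $\rho'_{\omega'_0}$, and $h(x_0)\to+\infty$ gives $\rho_{\omega_0}$. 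Your claim that ``only $h(y)$ survives'' in the first of these regimes is false.

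Here is a concrete check. Let $x_0=z_m$ be the vertex at distance $m$ from $\origin$ on the ray to $\omega_0$, and let $y$ be a child of $\origin$. The normalized value is $(2m-(m+1))-(2m-m)=-1$ for every $m$, whereas $\rho_{\omega_0}=h$ equals $+1$ there.

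You can also see the problem without computing. You verify the seventh bullet ($\rho_{x_0}\to\rho_i$ when $h(x_0)=i$) and the second bullet ($\rho_i\to\rho'_{\omega'_0}$ as $i\to-\infty$) correctly. A diagonal argument in the metrizable space $\partial{\rm DL}(p,q)$ then yields a sequence with $h(x_0)\to-\infty$ and $\rho_{x_0}\to\rho'_{\omega'_0}\neq\rho_{\omega_0}$. Such a sequence has $x_0\to\omega_0$, since $h(x_0\wedge\origin)\le h(x_0)$.

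So your list of limits is internally inconsistent. What can be proved is the statement with these two targets exchanged, and symmetrically for $\rho'_{x'_0}$. The phenomenon stressed after the proposition, $\rho_{x_0}\to\rho'_{\omega'_0}$ although $x_0\to\omega_0$, survives, but it occurs when $h(x_0)\to-\infty$. A smaller slip: when $h(x_0)=i$ is fixed, $x_0\wedge y$ does not stabilize at a fixed height. It runs off towards $\omega_0$ and coincides with $x_0\wedge\origin$, which is why it cancels.

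\paragraph{Second gap: isolatedness.} The isolatedness step is not established either.
\begin{itemize}
\item Since $\rho_{x_0}$ depends only on the first coordinate, it is constant on the fibre $\{y=x_0\}$. Your minimization claim there is therefore empty. On the horocycle $h^{-1}(h(x_0))$ it is true, but that is a global property, not a finite-window one.
\item An integer-valued $1$-Lipschitz function agreeing with $\rho_{x_0}$ on a finite set need not equal $\rho_{x_0}$.
\end{itemize}
What you need is a finite set on which no other \emph{boundary} function agrees with $\rho_{x_0}$. This is exactly the injectivity you leave unchecked.

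Comparing slopes in $T$ does it. At $x_0$, $\rho_{x_0}$ goes up by $1$ towards the parent and down by $1$ towards each of the $p\ge 2$ children. Compare this with the other unprimed functions:
\begin{itemize}
\item every $\rho_\omega$, $\omega\in\partial T$, has exactly one decreasing neighbour at every vertex;
\item if $h(x_1)>h(x_0)$, then $\rho_{x_1}$ has exactly one decreasing neighbour at $x_0$;
\item if $h(x_1)=h(x_0)$ and $x_1\neq x_0$, then $\rho_{x_1}$ has $p+1$ decreasing neighbours at $x_0$;
\item if $h(x_1)<h(x_0)$, take a sibling $v$ of $x_0$: there $\rho_{x_1}$ goes up towards the parent of $v$, while $\rho_{x_0}$ goes down towards all neighbours of $v$.
\end{itemize}
The functions of the second coordinate alone (the $\rho'_{\omega'}$, $\rho'_{x'_0}$ and $\rho_i$) are excluded because $\rho_{x_0}(v,y')-\rho_{x_0}(x_0,y')=2$ for any $y'$ with $h'(y')=-h(x_0)$.

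Hence a valid window consists of one lift of each vertex of $B_1(x_0)\cup B_1(v)$, together with $(x_0,y')$ and $(v,y')$. Alternatively, isolatedness follows from the corrected sequential classification and metrizability, but that route needs the same distinctness facts.
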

	{Notice the case $\rho_{x_0}\to\rho'_{\omega'_0}$ despited that $x_0\to \omega_0$ (see also Figure~\ref{fig:DLboundary}), which shows that the topology of $\overline{T}$ in $\partial {\rm DL}(p,q)$ is different from the inherent topology of $\overline{T}$.}

    \subsection{Convergence to ${\rm IPVT}({\rm DL}(p,q))$}
	\label{subsec:dl-conv}
	
	Assume $p\geq q$ and let $\delta\coloneqq \ln q/\ln p$. So, $q=p^{\delta}$ and $\delta\leq 1$. We first study the IPVT of the vertex-measured version of ${\rm DL}(p,q)$. The edge measured will be studied at the end of this subsection.
	To prove convergence and to obtain the distribution of the IPVT, by \Cref{thm.ctfg}, we need to estimate the growth of the volume function $F(r)$ and also obtain the limit of the uniform measure on the sphere $S_r(\origin)$. We decompose $S_r$ into the following parts:
	\begin{eqnarray*}
		S_r^{i,j,0}&\coloneqq & {\{ (x,x')\in S_r:\ h(x)= 0,\ h(x\wedge\origin) = -i,\ h'(x'\wedge\origin')=-j\}},\\
		S_r^{i,j,+}&\coloneqq & \{ (x,x')\in S_r:\ h(x)> 0,\ h(x\wedge\origin) = -i,\ h'(x'\wedge\origin')=-j\},\\
		S_r^{i,j,-}&\coloneqq & \{ (x,x')\in S_r:\ h(x)< 0,\ h(x\wedge\origin) = -i,\ h'(x'\wedge\origin')=-j\}.
	\end{eqnarray*}
	{Also, define $S_r^0, S_r^+$ and $S_r^-$ by taking union over all cases of $i$ and $j$.
		If $(x,x')\in S_r^{{i,j,0}}$, then $r = d((x,x'),(\origin,\origin'))= 2i+2j$. 
		In this case,
		\begin{eqnarray}
			\nonumber
			\mynorm{S_r^{i,j,0}} &=& \frac{p-1_{\{i>0\}}}{p} p^{i} \cdot \frac{q-1_{\{j>0\}}}{q} q^{j}\leq p^{(1-\delta)i+ \delta r/2}.
		\end{eqnarray}
		If $r$ is odd, then $S_r^0$ is empty. If $r$ is even, by summing over $i$, one obtains $\mynorm{S_r^0} = O(p^{r/2})$.}
	
	If $(x,x')\in S_r^{i,j,+}$, then $r=d((x,x'),(\origin,\origin')) = 2i + 2j - h(x)$. Therefore, $h(x)=2i+2j-r$. The region of validity of $(i,j)$ is determined by $i=-h(x\wedge\origin)\geq 0$, $2i+2j-r = h(x)> 0$ and $-h(x)=h'(x')\geq h'(x'\wedge\origin')$, which implies that $2i+j\leq r$ (see Figure~\ref{fig.valreg}). 
	{Given $h(x)$, then number of cases for $x$ that satisfy $h(x\wedge\origin)=-i$ is $\frac{p-1}{p}p^{i+h(x)}$, provided that $i>0$ and $i+h(x)>0$. Otherwise, it is $p^{i+h(x)}$.} 
	In the validity region, $i>0$ implies $i+h(x)>0$, and $j+h'(x')>0$ implies $j>0$. So, 
	\begin{eqnarray}
		\nonumber
		\mynorm{S_r^{i,j,+}} &=& \frac{p-1_{\{i>0\}}}{p} p^{i+h(x)} \frac{q-1_{\{{j+h'(x')>0}\}}}{q} q^{j+h'(x')}\\
		\nonumber &=& \frac{p-1_{\{i>0\}}}{p}\cdot \frac{q-1_{\{2i+j<r\}}}{q} p^{3i+2j-r} q^{-2i-j+r}\\
		\label{eq:S+} &=& \frac{p-1_{\{i>0\}}}{p}\cdot \frac{q-1_{\{2i+j<r\}}}{q} p^{(3-2\delta)i+(2-\delta)j+(\delta-1)r}.
	\end{eqnarray}
    
	\begin{figure}
		\centering
		\begin{tikzpicture}[scale=1]
			\def\r{4}
			
			\draw[->, gray] (-0.3,0) -- (\r/2+0.6,0) node[right, black] {$i$};
			\draw[->, gray] (0,-0.3) -- (0,\r+0.6) node[above, black] {$j$};
			
			\coordinate (A) at (0,\r/2);
			\coordinate (B) at (0,\r);
			\coordinate (C) at (\r/2,0);
			
			\filldraw[
			fill=green!80!black,
			fill opacity=0.3,
			draw=black,
			very thick
			] (A) -- (B) -- (C) -- cycle;
			
			\fill (A) circle (1.2pt);
			\fill (B) circle (1.2pt);
			\fill (C) circle (1.2pt);
			
			\node[left]  at (A) {$\frac{r}{2}$};
			\node[left]  at (B) {$r$};
			\node[below] at (C) {$\frac{r}{2}$};
		\end{tikzpicture}
		\caption{{Validity region defined before~\eqref{eq:S+}}.}
		\label{fig.valreg}
	\end{figure}
    
	We claim that $\mynorm{S_r^+}=\Theta(p^r)$. The maximum exponent of $p$ in~\eqref{eq:S+} happens at one of the corners of the region of validity. At $(i,j)=(0,r/2)$ (which might not be an integer point), the exponent is $\delta r/2$. At $(i,j)=(0,r)$, the exponent is $r$ and at $(i,j)=(r/2,0)$, the exponent is $r/2$. 
	So the maximum happens at the corner $(0,r)$. By a change of variables, we choose this corner as the origin and map the triangle to the positive cone: Let $(i,j)= (0,r)+ a(0,-1)+b(1,-2)=(b, r-a-2b)$, which implies that $a,b\geq 0$. One obtains
	\[
	p^{-r}\mynorm{S_r^{i,j,+}} = \frac{p-1_{\{b>0\}}}{p}\cdot \frac{q-1_{\{a>0\}}}{q} p^{-(2-\delta) a - b}.
	\]
	When $r\to\infty$, the sum over all $(a,b)$ in the validity triangle converges to the sum over all $a,b\geq 0$. So, 
	\begin{eqnarray}
		\nonumber p^{-r}\mynorm{S_r^+} &=& \sum_{a\geq 0}\sum_{b\geq 0} \frac{p-1_{\{b>0\}}}{p}\cdot \frac{q-1_{\{a>0\}}}{q} p^{-(2-\delta) a - b} + o(1)\\
		\nonumber &=& \left[\sum_{b\geq 0} \frac{p-1_{\{b>0\}}}{p}p^{-b} \right] \cdot \left[\sum_{a\geq 0} \frac{q-1_{\{a>0\}}}{q} p^{-(2-\delta) a}\right] +o(1)\\
		\nonumber &=& \left[1+ \frac 1 p\right] \cdot \left[1+  \frac{q-1}{(p^{2}-q)}\right] + o(1)\\
		\label{eq:S+:2}&=& \frac{(p+1)(p^2-1)}{p(p^2-q)} + o(1).
	\end{eqnarray}
	
	We now estimate $\mynorm{S_r^-}$. If $\delta=1$, then $p=q$ and $\mynorm{S_r^-} =\mynorm{S_r^+}$ by symmetry. Now, assume $\delta<1$.
	If $(x,x')\in S_r^{i,j,-}$, then $r=d((x,x'),(\origin,\origin')) = 2i + 2j + h(x)$. Therefore, $h(x)=r-2i-2j$. The region of validity of $(i,j)$ is determined by $j=-h'(x'\wedge\origin')\geq 0$, $2i+2j-r = -h(x)> 0$ and $h(x)\geq h(x\wedge\origin)$, which implies that $i+2j\leq r$. In this triangle, one has {$i>0$ and $j+h'(x')>0$. So}
	\begin{eqnarray*}
		\nonumber
		\mynorm{S_r^{i,j,-}} &=& \frac{p-1_{\{{i+h(x)}>0\}}}{p} p^{i+h(x)} \frac{q-1_{\{j>0\}}}{q} q^{j+h'(x')}\\
		\nonumber &=& \frac{p-1_{\{i+2j<r\}}}{p}\cdot \frac{q-1_{\{j>0\}}}{q} p^{r-i-2j} q^{2i+3j-r}\\
		\label{eq:S-}&=& \frac{p-1_{\{i+2j<r\}}}{p}\cdot \frac{q-1_{\{j>0\}}}{q} p^{(2\delta-1)i+(3\delta-2)j+(1-\delta)r}.
	\end{eqnarray*}
	At the three corners $(0, r/2)$, $(r/2,0)$ and $(r,0)$ of the validity region, the exponent of $p$ in the last equation is $\delta r/2$, $r/2$ and $\delta r$ respectively. So, the maximum exponent in the validity region is $\max\{\delta,1/2\}r$. By a change of exponent similarly to the previous case and mapping the validity region to the positive cone, one obtains that $\mynorm{S_r^-} = o(p^r)$. In fact, if $\delta\neq\frac 12$, then it is $\Theta(p^{\max\{\delta,1/2\}r})$, and if $\delta=1/2$, then it is $\Theta(rp^{r/2})$. We are now ready to prove \Cref{thm.dl-convergence}.

	\begin{proof}[Proof of \Cref{thm.dl-convergence} (when $p>q$)]
		The estimates of $\mynorm{S_r^0}$, $\mynorm{S_r^-}$ and $\mynorm{S_r^+}$, mentioned above, imply
		\begin{equation}
			\label{eq:dl-growth1}
			p^{-r}\mynorm{S_r} = \frac{(p+1)(p^2-1)}{p(p^2-q)} + o(1).
		\end{equation}
		This proves that $\restrict{F}{\mathbb N}$ is log-regularly-varying with index $\ln p$. Also, these estimates show that $S_r^+$ occupies asymptotically almost all of $S_r$. For finding the limit of the uniform measure on $S_r$, we use \Cref{lem:sum}. According to the change of variables mentioned after~\eqref{eq:S+}, let $U_r^{a,b}\coloneqq  S_r^{b,r-a-2b,+}$ be the set of $(x,x')\in S_r$ such that $h(x\wedge\origin)=b$ and $h'(x')-h'(x'\wedge\origin')=a$. Let $\mu_r^{a,b}$ be the uniform probability measure on $U_r^{a,b}$ and $c_r^{a,b}\coloneqq \mynorm{U_r^{a,b}}/\mynorm{S_r}$. Letting $\eta$ be the harmonic measure on $\partial T$, it is clear that, as $r\to\infty$, $\mu_r^{a,b}$ converges to $\eta|V_b$ (which reads as $\eta$ conditioned on $V_b$), where $$
		V_b\coloneqq \{\omega\in \partial T: h(w\wedge w_0) = -b\}\subseteq\partial T\subseteq \partial {\rm DL}(p,q) \; .
		$$ Also, the calculation in~\eqref{eq:S+:2} shows that $\sum_a c_r^{a,b}= \frac{p-1_{\{b>0\}}}{p}p^{-b} / (\frac{p+1}p + o(1))$, and the latter converges to $\eta(V_b)$ as $r\to\infty$. So, \Cref{lem:sum} implies that the uniform measure on $S_r$ converges to $\eta$. Therefore, the conditions of \Cref{thm.ctfg} are satisfied and $\nu_0=\eta$.
		
		Now, fix a parameter $\xi$ and let $\Phi\coloneqq(\Theta_i,\Delta_i)_{i\geq 1}$ be the nuclei process given in \Cref{thm.ctfg}. By \Cref{prop:diagramConvergence}, the corresponding ${\rm IPVT}_{\xi}$ is just $\Vor(\Phi)$. By the previous paragraph, $\forall i: \Theta_i\in\partial T$. In addition, the distribution of $\Phi$ is identical to the distribution of the nuclei process for the tree $T$ with parameter {$\xi'=\xi+c$, where $c$ is determined by the ratio of the volume of balls in ${\rm DL}(p,q)$ and $T$}. By \Cref{prop.gbdl}, the horofunctions corresponding to $\Theta_1,\Theta_2,\ldots$ depend only on the first coordinates. This implies that the Voronoi diagram of $\Phi$ in ${\rm DL}(p,q)$ is just the inverse image under $\pi$ of the Voronoi diagram of $\Phi$ in $T$. So, the claims are proved for the vertex-measured case.
		
		For the edge-measured version, since ${\rm DL}(p,q)$ is bipartite, we apply \Cref{thm.ctfemg}. Let $S'_r$ be the set of edges $(z_1,z_2)$ of ${\rm DL}(p,q)$ such that $z_1\in S_r$ and $z_2\in S_{r-1}$. Let $W_r^{a,b}$ be the set of edges $(z_1,z_2)\in S'_r$ such that $z_1\in U_r^{a,b}$. Every vertex in $U_r^{a,b}$ is adjacent to exactly $p$ edges in $W_r^{a,b}$ if $a>0$, and otherwise, it is adjacent to a unique edge in $W_r^{a,b}$. So, $\mynorm{W_r^{a,b}} = p\mynorm{U_r^{a,b}}$ if $a>0$ and $\mynorm{W_r^{0,b}}=\mynorm{U_r^{0,b}}$. Since the rest of the points of $S_r$ have degree at most $p+q$, one obtains that the sets $W_r^{a,b}$ occupy almost all of $S'_r$. Similarly to~\eqref{eq:S+:2}, one can obtain 
		\begin{equation}
			\label{eq:dl-growth1edge}
			p^{-r}\mynorm{S'_r} = \frac{(p+1)(p-1)(p+q)}{p(p^2-q)}+o(1) \;.
		\end{equation}
		Also, one can see that, as $r\to\infty$, the uniform measure on $U_r^{a,b}$ converges to the push forward of $\eta|V_b$ under the diagonal map $\mathrm{diag}(z)\coloneqq(z,z)$. So, by \Cref{lem:sum} again, the uniform measure on $S'_r$ converges to $\mathrm{diag}^*\eta$. Finally, since this measure is supported on the diagonal, as in the proof of \Cref{thm:xi}, the measure $\nu_t$ in \Cref{thm.ctfemg} does not depend on $t$ and is equal to $\eta$, and the claim is proved.
	\end{proof}

    \begin{proof}[Proof of \Cref{thm.dl-convergence} (when $p=q$)]
		We modify the above proof of the case $p>q$ as follows.
		By~\eqref{eq:S+:2} and $\mynorm{S_r^+}=\mynorm{S_r^-}$, one obtains 
		\begin{equation}
			\label{eq:dl-growth2}
			p^{-r}\mynorm{S_r} = \frac{2(p+1)(p^2-1)}{p(p^2-q)} + o(1) = \frac{2(p+1)^2}{p^2}+o(1).
		\end{equation}
		This implies that $\restrict{F}{\mathbb N}$ is log-regularly-varying with index $p$. The same proof as that of the case $p>q$ shows that the uniform measure on $S_r^+$ converges to the harmonic measure $\eta$ on $\partial T$. By symmetry, the uniform measure on $S_r^-$ converges to the harmonic measure $\eta'$ on $\partial T'$. Hence, the uniform measure on $S_r$ converges to $(\eta+\eta')/2$. So, the conditions of \Cref{thm.ctfg} are satisfied and $\nu_0=(\eta+\eta')/2$.
		
		For the edge-measured version, let $S'_{r,\pm}$ be the set of edges in $S'_r$ that have one end point in $S_r^{\pm}$. Define $S'_{r,0}$ similarly. By the same arguments as the case $p>q$, and by symmetry, one obtains that $S'_{r,0}$ is negligible and 
		\begin{equation}
			\label{eq:dl-growth2edge}
			p^{-r}\mynorm{S'_r} = \frac{2(p+1)(p-1)(p+q)}{p(p^2-q)}+o(1) = \frac{4(p+1)}{p} + o(1) \; .
		\end{equation}
		Also, the same arguments imply that the uniform measure on $S'_{r,+}$ converges to $\mathrm{diag}^*\eta$. By symmetry, the uniform measure on $S'_{r,-}$ converges to $\mathrm{diag}^*\eta'$. Hence, the uniform measure on $S'_r$ converges to $(\mathrm{diag}^*\eta + \mathrm{diag}^*\eta')/2$. Since this measure is supported on the diagonal, one obtains again that $\nu_t$ does not depend on $t$ and is equal to $(\eta+\eta')/2$. So, the claim is proved.
	\end{proof}

    	\begin{proof}[Proof of \Cref{cor:xi-examples} for ${\rm DL}(p,q)$]
		Since ${\rm DL}(p,q)$ is bipartite, one has $\alpha_0=0$. \Cref{thm.dl-convergence} and its proof show that $\alpha$ is
		the push forward of $\nu_0$ under the diagonal map $\mathrm{diag}(z)\coloneqq(z,z)$, which
		is supported on the diagonal of $(\partial {\rm DL}(p,q))^2$. So, the claim is implied by \Cref{thm.ioxi}.
	\end{proof}

    \subsection{Geometric Properties of the Cells}\label{sec.ipvtdlpqgeop}
	
	As in the proof of \Cref{thm.dl-convergence}, let $\eta$ and $\eta'$ be the harmonic measures on $\partial T$ and $\partial T'$ respectively.
	\Cref{thm.dl-convergence} and \Cref{prop:diagramConvergence} imply that:
	\begin{cor}[\textsc{IPVT of ${\rm DL}(p,q)$}]
		\label{cor:DL}
		Every IPVT of ${\rm DL}(p,q)$ is of the form $\Vor(\Phi)$, where $\Phi=(\Theta_i,\Delta_i)_{i\geq 1}$ is the nuclei process described as follows: $(\Theta_i)_{i\geq 1}$ are i.i.d. points on $\partial {\rm DL}(p,q)$ with distribution $\nu_0$, where $\nu_0=\eta$ if $p>q$ and $\nu_0=(\eta+\eta')/2$ if $p=q$. Also, independently from $(\Theta_i)_{i\geq 1}$, the delays $(\Delta_i)_{i\geq 1}$ form a Poisson point process on $\mathbb R$ with intensity measure $\beta^{(\xi)}$ described in \Cref{thm.ctfg}, where $\xi\in [0,1)$ is a fixed parameter. For the edge-measured version, the same statement holds but $\beta^{(\xi)}$ is the measure described in \Cref{thm.ctfemg}.
	\end{cor}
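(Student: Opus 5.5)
The corollary follows by combining \Cref{thm.dl-convergence} with the general convergence results, in three steps.

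First, fix an arbitrary IPVT of ${\rm DL}(p,q)$. By \Cref{def:ipvt}, it is the weak limit of $\Vor(X^{(\lambda_n)})$ along some sequence $\lambda_n\downarrow 0$. Since ${\rm DL}(p,q)$ has bounded degree and, by \eqref{eq:dl-growth1}, \eqref{eq:dl-growth2}, satisfies $|S_r|=\Theta(p^r)$, the growth conditions of \Cref{prop:precompact} hold. Passing to a further subsequence, we may therefore assume that $\lambda_n F(t_n)\to p^{\xi}$ for suitable integers $t_n$ and some $\xi\in[0,1)$. Because $F(n+1)/F(n)\to p$, this determines $\xi$ modulo $1$; the integer shift of $t_n$ is absorbed as in the proof of \Cref{thm.ctfg}, since constant shifts do not change the tessellation. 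Re-indexing, we may assume $t_n=n$ and that condition (iii) of \Cref{thm.ctfg} holds along this subsequence. The nontrivial inputs, conditions (i) and (ii), are exactly the content of \Cref{thm.dl-convergence}: $\restrict{F}{\mathbb N}$ is log-regularly-varying with index $\ln p$, and the uniform measure on balls converges to $\nu_0$. Here $\nu_0=\eta$ when $p>q$ and $\nu_0=(\eta+\eta')/2$ when $p=q$; the convergence on balls follows from the convergence on spheres via \Cref{lem.scfg1}.

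Second, \Cref{thm.ctfg}, applied along the subsequence, gives that $\iota_n X^{(\lambda_n)}$ converges to a Poisson process $\Phi=(\Theta_i,\Delta_i)_{i\ge1}$ with intensity $\nu_0\otimes\beta^{(\xi)}$ (see \eqref{eq:nu-graph}). Its points $\Theta_i$ are i.i.d.\ with law $\nu_0$, and independently the delays form a Poisson process with intensity $\beta^{(\xi)}$. Since ${\rm DL}(p,q)$ is a graph, \Cref{prop:diagramConvergence} requires no nondegeneracy assumption and yields $\Vor(X^{(\lambda_n)})\Rightarrow\Vor(\Phi)$. By uniqueness of weak limits, the given IPVT equals $\Vor(\Phi)$ in distribution.

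Third, the edge-measured case runs the same way. One uses \Cref{thm.ctfemg}, which applies because ${\rm DL}(p,q)$ is bipartite, together with the second half of \Cref{thm.dl-convergence}: there $\alpha=\mathrm{diag}^*\nu_0$, so $\nu_t=\nu_0$ for every $t$. The intensity therefore factorizes as $\nu_0\otimes\beta^{(\xi)}$ with the continuous $\beta^{(\xi)}$ of \Cref{thm.ctfemg}, which gives the independence of the $\Theta_i$ from the delays. Nondegeneracy holds automatically in edge-measured graphs, as noted in the proof of \Cref{thm.ctfemg}.

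The only delicate point is the first step: checking that every subsequential limit is covered, i.e.\ that along any sequence $\lambda_n\to0$ we can extract a subsequence on which $\lambda_nF(n)$ converges after an integer re-indexing. This follows from the two-sided bounds of \Cref{prop:precompact} together with $F(n+1)/F(n)\to p$.
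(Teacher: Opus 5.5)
Your proposal is correct and follows the paper's route: the paper obtains the corollary by feeding \Cref{thm.dl-convergence} into \Cref{thm.ctfg} (resp.\ \Cref{thm.ctfemg}) and \Cref{prop:diagramConvergence}, and your subsequence extraction, needed to reach an arbitrary IPVT, is the argument the paper uses implicitly (cf.\ the last part of the proof of \Cref{thm:xi}). One cosmetic point: you cannot literally re-index a subsequence to $t_n=n$, but this is harmless, since the limit of $\lambda_k\iota_{t_k}^*\mu$ depends only on $\lambda_kF(t_k)\to p^{\xi}$ (alternatively, embed the subsequence into a full sequence satisfying condition (iii)).
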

	
	We say that a cell is of the \dfn{first type} if its nucleus belongs to $\partial T\subseteq\partial {\rm DL}(p,q)$, and is of the \dfn{second type} if its nucleus belongs to $\partial T'$.	
	No automorphism of the directed version of ${\rm DL}(p,p)$ can swap $\partial T$ and $\partial T'$. Therefore, using the notion of \textit{indistinguishability} defined in \cite{LySc99}, one obtains: 
	\begin{prop}[\textsc{Distinguishable Cells}]
		\label{prop:distinguishable}
		In the directed version of (the vertex-measured or edge-measured) ${\rm DL}(p,p)$, the cells of an IPVT are distinguishable by automorphism-invariant events. As a result, in the corresponding Cayley graph of $\mathbb{Z}_{p} \wr \mathbb{Z}$, the cells of an IPVT are distinguishable by group-invariant events.
	\end{prop}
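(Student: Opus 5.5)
We will show that the event ``the cell containing a given vertex is of the first type'' is an automorphism-invariant event that separates the cells. By \Cref{cor:DL}, when $p=q$ the nuclei $(\Theta_i)_{i\geq 1}$ are i.i.d.\ with law $(\eta+\eta')/2$. So a.s.\ there are infinitely many cells of the first type, with nucleus in $\partial T$, and infinitely many of the second type, with nucleus in $\partial T'$. Distinguishability in the sense of \cite{LySc99} requires an automorphism-invariant property of cells (a measurable set $\mathcal A$ of pairs (configuration, cell), invariant under the automorphism group) that some cells have and some do not, with positive probability. Our candidate is $\mathcal A\coloneqq\{$the nucleus of the cell lies in $\partial T\}$.

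The first step is to check that the type of a cell is determined by the weak tessellation, in a way that commutes with automorphisms. An automorphism $\gamma$ of the directed graph ${\rm DL}(p,p)$ preserves the direction of edges, so it preserves the height function $h$ up to an additive constant. It therefore maps horofunctions of the form $\rho_\omega$, $\omega\in\partial T$, to horofunctions of the same form and never to some $\rho'_{\omega'}$. To see this concretely, note that $\rho_\omega$ decreases along some directed path in the positive edge direction, i.e.\ with $h$ decreasing, while $\rho'_{\omega'}$ decreases only along paths with $h'$ decreasing, i.e.\ $h$ increasing. This follows from the explicit formulas in \Cref{prop.gbdl}. Hence $\gamma$ acts on $\partial{\rm DL}(p,p)$ preserving $\partial T$ and $\partial T'$ separately.

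The second step is to read the type off the cell itself rather than the nucleus, since \Cref{prop:diagramConvergence} treats pairs (nucleus, cell) but an invariant event should depend on the tessellation. We use the star-like property: a cell with nucleus $\theta$ contains, from each of its points $x$, an infinite geodesic along which $d_\theta$ decreases at unit speed. For $\theta\in\partial T$ this geodesic must have $h\to-\infty$, using the form of $\rho_\omega$ together with $h(x\wedge\omega)$ eventually decreasing. For $\theta\in\partial T'$ it has $h\to+\infty$. So the definition becomes: a cell is of the first type iff it contains a directed infinite path with $h\to-\infty$ and no infinite path with $h\to+\infty$. We have to rule out cells containing both kinds of paths. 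Alternatively, and more simply, we can work with the point process of pointed closed subsets $(\Phi,\Vor(\Phi))$ from \Cref{prop:diagramConvergence}, where the nucleus is part of the data, so invariance follows directly from the first step.

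The third step is to conclude. The event $\mathcal A$ is invariant under the automorphisms of the directed ${\rm DL}(p,p)$, and a.s.\ it holds for some cells and fails for others, so the cells are distinguishable. For the Cayley graph of $\mathbb Z_p\wr\mathbb Z$, the group acts by automorphisms of the directed ${\rm DL}(p,p)$, because left multiplication preserves the $\mathbb Z$-coordinate shift structure, i.e.\ $h$ up to a constant. Group-invariant events therefore include $\mathcal A$. The edge-measured version is identical, since by \Cref{thm.dl-convergence} its $\nu_t$ also equals $(\eta+\eta')/2$.

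We expect the main obstacle to be the first step: verifying rigorously that directed automorphisms preserve $h$ up to an additive constant, and hence fix $\partial T$ setwise. We plan to argue that the direction of every edge determines the sign of the change of $h$ along it, so any direction-preserving automorphism shifts $h$ by a constant on the connected graph.
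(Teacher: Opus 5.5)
This is essentially the paper's argument: cells are typed by whether the nucleus lies in $\partial T$ or $\partial T'$, and both types occur a.s.\ since $\nu_0=(\eta+\eta')/2$. No automorphism of the directed ${\rm DL}(p,p)$ can swap the two classes, hence neither can left multiplication in $\mathbb{Z}_p\wr\mathbb{Z}$. The paper gets this last point from the fact that $\mathrm{Aut}$ of the directed graph sits inside $\mathrm{Aut}(T)\times\mathrm{Aut}(T')$, and your derivation from $h\circ\gamma=h+c$ is a fine, more elementary substitute.

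One correction is needed. Since $h=d_{\omega_0}$ decreases toward $\omega_0$ and edges point toward decreasing $h$, the unit-speed descending rays of $\rho_\omega$ ($\omega\neq\omega_0$) have $h\to+\infty$, running against the edge direction. Those of $\rho'_{\omega'}$ ($\omega'\neq\omega'_0$) have $h\to-\infty$. So the signs in your Step~1 aside and in your Step~2 criterion are reversed; indeed a first-type cell has $h$ bounded below, as used in the proof of \Cref{prop:intersection}. By symmetry the invariance argument goes through once the signs are swapped. The nucleus-based definition of $\mathcal A$ avoids Step~2 altogether.
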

	{Note that, since we have used the directed version of ${\rm DL}(p,q)$,  $\mathrm{Aut}({\rm DL}(p,p))$ is a subgroup of $\mathrm{Aut}(T)\times \mathrm{Aut}(T')$ and is homomorphic to $\mathbb{Z}_{p} \wr \mathbb{Z}$ (see~\cite{woess2005}). This holds in the undirected version only if $p\neq q$.}
	
	\begin{remark}[\textsc{Indistinguishable Cases}]
		In fact, one can prove by the method of~\cite{M26} that there are exactly two indistinguishability classes for the cells. 
		Also, when $p\neq q$ or when ${\rm DL}(p,q)$ is considered as an undirected graph, the cells are indistinguishable.
	\end{remark}
	
	\begin{prop}[\textsc{Intersection of Cells}]
		\label{prop:intersection}
		The pairwise intersections of the cells of ${\rm IPVT}({\rm DL}(p,q))$ satisfy the following almost surely:
		\begin{enumerate}[label=(\roman*)]
			\item \label{prop:intersection:diff} Two cells of different type have finite or empty intersection (resp.~adjacency). 
			\item \label{prop:intersection:same} Two cells of the same type have infinite or empty intersection (resp.~adjacency).
			\item \label{prop:intersection:all} In addition, every cell attains each of these intersection/adjacency types infinitely often.
		\end{enumerate}
	\end{prop}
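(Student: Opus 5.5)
The plan is to reduce everything to the explicit form of the horofunctions in \Cref{prop.gbdl} together with \Cref{cor:DL}. A first-type nucleus $(\Theta_i,\Delta_i)$ with $\Theta_i=\omega\in\partial T$ gives $f_i(x,x')=\rho_\omega(x)+\Delta_i$, which depends only on $x$. A second-type nucleus with $\Theta_k=\omega'\in\partial T'$ gives $g_k(x,x')=\rho'_{\omega'}(x')+\Delta_k$, which depends only on $x'$. Write $A_i\subseteq T$ for the set of $x$ where $f_i$ beats all other first-type functions; this is a tree-Voronoi cell. Define $B_k\subseteq T'$ in the same way for second-type functions. Then
\[
C_i=\{(x,x')\in{\rm DL}(p,q): x\in A_i,\ f_i(x)\le \textstyle\inf_k g_k(x')\},
\]
and symmetrically for $C_k$. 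When $p>q$ there are no second-type cells, so only \ref{prop:intersection:same} is relevant. In that case $C_i=\pi^{-1}(A_i)$, and the claim follows from the tree case together with the fact that fibres of $\pi$ are infinite.

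For \ref{prop:intersection:same}, suppose $(x,x')\in C_i\cap C_j$ with $i,j$ both of first type. The fibre $\{y': h'(y')=-h(x)\}$ is an infinite horosphere in $T'$. On it, $\rho'_{\omega'}(y')\to+\infty$ as $y'\to\infty$, uniformly for $\omega'$ outside a neighbourhood of $\omega'_0$. By admissibility ($\Delta_k\to\infty$), only finitely many second-type cells can compete at height $h(x)$. Hence $\inf_k g_k(x,y')\ge f_i(x)$ for all but finitely many $y'$ in the fibre, so $\{x\}\times(\text{fibre})\subseteq C_i\cap C_j$ up to a finite set, and the intersection is infinite. For adjacency I would apply the same argument to an adjacent pair $(x,x')\sim(y,y')$ and move both second coordinates far out along parallel branches of the fibres.

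For \ref{prop:intersection:diff}, take $i$ of first type with $\Theta_i=\omega$ and $k$ of second type with $\Theta_k=\omega'$. The intersection lies on $\{f_i(x)=g_k(x')\}$ with $h(x)+h'(x')=0$. I would first prove the deterministic inequality
\[
\rho_\omega(x)+\rho'_{\omega'}(x')\ \ge\ d_T(x,\omega\wedge\omega_0)+d_{T'}(x',\omega'\wedge\omega'_0)-c
\]
on ${\rm DL}(p,q)$, using the tree formula for Busemann functions and $h+h'=0$. Then $2f_i(x)=f_i(x)+g_k(x')$ grows with the distance from the finite "corner" $\{\omega\wedge\omega_0\}\times\{\omega'\wedge\omega'_0\}$. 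Nevertheless $f_i$ must stay at most the competing values, and these are controlled because the first-type tree-Voronoi diagram has cells with bounded-below delays locally. If this direct bound is too lossy, the fallback is the mass transport principle on the unimodular transitive graph ${\rm DL}(p,p)$, as the paper suggests. Each vertex of $C_i\cap C_k$ sends unit mass to the vertex of $C_i\cap C_k$ minimising $f_i$, with a tie-break by marks. The minimum exists because $f_i=g_k$ on the intersection forces $f_i$ to be bounded below there. An infinite intersection would then create infinite incoming mass at some vertex, which contradicts the mass transport principle. Adjacency is handled the same way, with $f_i(x)$ and $g_k(y')$ differing by at most $1$. I expect this step, namely getting a clean finiteness argument (the lower bound or the choice of transport), to be the main obstacle.

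For \ref{prop:intersection:all}, each of the types (empty, finite nonempty, infinite) for a pair of cells of given types has positive probability. This can be seen by local modifications of the Poisson configuration, since delays and nuclei can be perturbed independently with positive density. The event that a given cell has only finitely many neighbours realising a given type is automorphism-invariant. By the mass transport principle applied to the typical cell, together with the ergodicity of the Poisson process under $\mathrm{Aut}({\rm DL}(p,q))$, positive probability then upgrades to infinitely many occurrences per cell almost surely.
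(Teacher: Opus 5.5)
\textbf{The main gap is in (ii).} Your argument says that, by admissibility, only finitely many second-type cells compete on the fibre $\{y': h'(y')=-h(x)\}$, so the fibre lies in $C_i\cap C_j$ up to a finite set. This step fails.

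Admissibility gives only local finiteness, and the fibre is an unbounded horosphere of $\omega'_0$. Your uniform divergence of $\rho'_{\omega'}$ is correct for $\omega'$ away from $\omega'_0$. However, every neighbourhood of $\omega'_0$ contains infinitely many second-type nuclei, and these are exactly the competitors far out on the fibre. From $\rho'_{\omega'}(y')=h'(y')-2h'(y'\wedge\omega')+2h'(\origin'\wedge\omega')$, a nucleus whose end lies above an ancestor of $y'$ close to $y'$ has Busemann value at $y'$ of order $-d(\origin',y')$, compensated only by its delay.

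In fact the claim is false almost surely. The law of $\Psi'$ is invariant under the stabiliser of $h'$, which acts transitively on the fibre. Hence $\min_k g_k(y')$ has the same non-degenerate law at every $y'$ of the fibre. By ergodicity (\Cref{lem:ergodic} applied to $T'$), infinitely many points of the fibre lie strictly inside second-type cells, so they are not in $C_i$.

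The conclusion of (ii) is still true, but it needs a genuinely probabilistic argument, as in the paper. Fix the same-type configuration. The set $W$ of good points on the level set is an equivariant subset of an ergodic unimodular discrete space, namely the level set decorated by the independent other-type process. Each point lies in $W$ with positive probability, so $W$ is infinite a.s. Your adjacency variant inherits the same gap.

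\textbf{(i): the deterministic inequality is false.} Let $c=\origin\wedge\omega$ and $c'=\origin'\wedge\omega'$. Run $x$ along the ray from $c$ to $\omega$ and $x'$ along the ray from $c'$ to $\omega'_0$, keeping $h(x)+h'(x')=0$. Then $\rho_\omega(x)+\rho'_{\omega'}(x')=2h(c)+2h'(c')$ stays constant (this is a bi-infinite geodesic from $\omega$ to $\omega'$), while your right-hand side grows linearly.

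Two weaker facts do hold:
\begin{enumerate}
\item $f_i+g_k$ is bounded below on ${\rm DL}(p,q)$, because $h(x\wedge\omega)\le h(x)$ and $h'(x'\wedge\omega')\le h'(x')$.
\item $\{\max(f_i,g_k)\le M\}$ is finite for every $M$.
\end{enumerate}
With these, your mass-transport fallback can be completed. The minimisers of $f_i$ on $C_i\cap C_k$ then form a finite nonempty set, and you should spread the mass uniformly over it rather than tie-breaking by marks, which do not exist on vertices. This would be a legitimate alternative to the paper's shorter route. The paper instead uses one-endedness of the tree cells to bound $h$ below on $C$ and $h'$ below on $C'$, which confines $\pi^{-1}(C)\cap(\pi')^{-1}(C')$ to a bounded height band where both tree cells are finite.

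\textbf{(iii) is not yet an argument.} Mass transport gives at best a $0$-or-$\infty$ dichotomy for the number of partners of a given type. Knowing that a pair has a given type with positive probability does not exclude $0$ for a given cell. The paper needs a specific invariant structure:
\begin{itemize}
\item fix the second-type configuration via Mecke's formula;
\item for $k\ll 0$, use that the first-type nuclei $\Psi_k$ with $\tau=k$ form an ergodic unimodular space under the stabiliser of $h$;
\item on this space, the sets of nuclei whose cells meet or avoid the fixed cell are equivariant and contain the root with positive probability.
\end{itemize}
Since $\Psi$ itself is not unimodular, a generic appeal to the typical cell and ergodicity does not suffice.
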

	\begin{proof}
		Let $\pi:{\rm DL}(p,q)\to \rtree{p}$ and $\pi':{\rm DL}(p,q)\to\rtree{q}$ denote the projection maps onto the first and second coordinates respectively.
		If $p> q$, the claim is implied by the fact ${\rm IPVT}({\rm DL}(p,q)) = \pi^{-1}({\rm IPVT}(T))$ (see \Cref{thm.dl-convergence}). So, assume $p=q$. Fix the parameter $\xi\in\mathbb R$. By \Cref{cor:DL}, the nuclei process can be written as $\Psi\cup\Psi'$, where $\Psi=(\Theta_i,\Delta_i)_{i\geq 1}$ is a Poisson point process on $\widehat{\partial T}$ with intensity measure $\frac 12 \beta^{(\xi)}\eta$, and $\Psi'=(\Theta'_i,\Delta'_i)_{i\geq 1}$ is a Poisson point process on $\widehat{\partial T'}$ with intensity measure $\frac 12 \beta^{(\xi)}\eta'$ independent from $\Psi$ (note that $\frac 12\beta^{(\xi)} = \beta^{(\xi-\log_p(2))}$). Fix $(\theta',\delta')\in\widehat{\partial T'}$ such that $\theta'\neq \omega'_0$ and let $\tilde\Psi'\coloneqq\Psi'\cup\{(\theta',\delta')\}$. By Mecke's theorem, it is enough to prove that the cell of $(\theta',\delta')$ in $\Vor(\Psi\cup \tilde\Psi')$ satisfies the claims a.s.
		
		\ref{prop:intersection:diff}. 
		Fix $(\theta,\delta)\in\widehat{\partial T}$ such that $\theta\neq\omega_0$ and let $\tilde\Psi\coloneqq\Psi\cup\{(\theta,\delta)\}$. To prove~\ref{prop:intersection:diff}, by Mecke's theorem, it is enough to show that the cells of $(\theta,\delta)$ and $(\theta',\delta')$ in $\Vor(\tilde\Psi\cup\tilde\Psi')$ have either empty or finite intersection a.s.
		Let $C$ be the cell corresponding to $(\theta,\delta)$ in $\Vor(\tilde\Psi)$. Since $\theta\neq\omega_0$, we may let $l>-\infty$ be the minimum of $h$ on $C$. Define $C'$ and $l'$ similarly.
		Clearly, the corresponding two cells in $\Vor(\tilde\Psi\cup\tilde\Psi')$ are included in $\pi^{-1}(C)$ and $(\pi')^{-1}(C')$ respectively. So, their intersection is included in $\pi^{-1}(C)\cap (\pi')^{-1}(C')$. The last set is finite since each $(x,x')$ in this set satisfies $l_1\leq h(x)\leq -l_2$ and $l_2\leq h(x')\leq -l_1$, and the set of points in $C\times C'$ with these properties are finite. This implies the claim of~\ref{prop:intersection:diff}.
		
		\ref{prop:intersection:same}. Let $\psi'$ be a sample of $\Psi'$ and fix $f_1,f_2\in\psi'$. Let $C'_i$ (resp. $\hat C'_i$) be the cell of $f_i$ in $\Vor(\psi')$ (resp. in $\Vor(\Psi\cup \psi')$). Clearly, $\hat C'_i\subseteq (\pi')^{-1}(C'_i)$. So, if $C'_1\cap C'_2=\emptyset$, then $\hat C'_1\cap \hat C'_2=\emptyset$. This shows that $\hat C'_1$ avoids infinitely many cells of the same type.
		
		Now, assume that $C'_1\cap C'_2\neq\emptyset$. We prove that $\hat C'_1\cap \hat C'_2$ is infinite (the case where $C'_1$ is adjacent to $C'_2$ is similar). Choose $x'\in C'_1\cap C'_2$ arbitrarily and let $L\coloneqq\{x\in T: h(x)=-h'(x')\}$. The points $(x,x')$, where $x\in L$ are candidate points in $\hat C'_1\cap\hat C'_2$, but we should compare the distance of $(x,x')$ to the other nuclei as well.	
		In \Cref{lem:ergodic}, we will show that $L$ is an ergodic unimodular discrete space (where $\Psi$ is kept as a decoration). The set $W\coloneqq \{x\in L: (x,x')\in \hat C'_1\cap\hat C'_2\}$ is an equivariant subset of $L$. This implies that $W$ is either empty a.s. or infinite a.s. (see Lemma~2.9 of~\cite{eft}). Any point of $L$ has a positive probability of being in $W$ (given the sample $\psi'$ and given $f'_1,f'_2\in\psi'$). This implies that $W$ is infinite a.s.
		
		\ref{prop:intersection:all}.
		The claim for cells of the same type is already proved in~\ref{prop:intersection:same}. So, consider the claim for cells of different types. Using the notations of the proof of~\ref{prop:intersection:diff}, fix a realization $\psi'$ of $\Psi'$ and an element $f'=(\theta',\delta')$ of $\widehat{\partial T'}$. Let $\tilde\psi'\coloneqq \psi'\cup\{f'\}$. We will use unimodularity and ergodicity on a subset of $\Psi$.
		Let $\Gamma\coloneqq \{\gamma\in \mathrm{Aut}(T): h\circ\gamma=h\}$ be the stabilizer of $h$. By~\Cref{lem:autInv}, the action of $\Gamma$ preserves the distribution of $\Psi$. The orbits of the action of $\Gamma$ on $\widehat{\partial T}$ are $\tau^{-1}(k)$ for $k\in\mathbb Z$, where $\tau(f)\coloneqq  \min\{h(x): x\in f^{-1}(0)\}$ for $f\in\widehat{\partial T}$. Fixing $k\in\mathbb Z$, let $\Psi_k\coloneqq \Psi\cap\tau^{-1}(k)$. Let $A$ be the set of $f\in\widehat{\partial T}$ such that the geodesic connecting $\omega_0$ and $f$ passes through $\origin$. Bias the probability measure by $\mynorm{A\cap \Psi_k}$ and let $\bs f$ be a random element of $A\cap \Psi_k$.
		
		\textbf{Claim}. $\Psi_k$, after biasing and choosing $\bs f$ as the root, is a unimodular  discrete space.
		
		To prove this claim, note that $\tau^{-1}(k)$ is a limit of the level set $h^{-1}(n)$ of $h$ as $n\to+\infty$, after an appropriate shift and an appropriate scaling of the counting measure. So, $\tau^{-1}(k)$ is a unimodular continuum space (in fact, a transitive space with a unimodular automorphism group); see~\cite{Kh23unimodularspaces} for further discussion (note that a suitable $\Gamma$-invariant metric should be chosen on $\tau^{-1}(k)$; e.g., $d(f,g)\coloneqq 2^{-h(f\wedge g)}$, where $f\wedge g$ is the confluence of the ideal points corresponding to $f$ and $g$. This metric can be extended to $\{f\in \mathrm{SD}(T): \tau(f)=k\}$ similarly). Since $\Psi_k$ is a Poisson point process on $\tau^{-1}(k)$, the above claim is implied. {It should be noted that $\Psi$ is not itself unimodular since, intuitively, $\Psi_k$ is \textit{coarser} than $\Psi_{k+1}$ for every $k$.}
		
		In addition to the last claim, it holds that $\Psi_k$ is ergodic. Also, this claim holds when keeping the rest of $\Psi$ as a decoration. 
		
		Finally, if $k-\delta'$ is even, let $U$ (resp. $V$) be the set of $f\in \Psi_k$ such that the cells of $f$ and $f'$ in $\Vor(\tilde\Psi\cup\tilde\psi')$ intersect (resp. do not intersect). If $k$ is odd, define the same but replace intersection with adjacency. Noting that $\tilde\psi'$ is fixed, if $k$ is a large negative number (depending on the cell of $f'$ in $\Vor(\tilde\psi')$), then each of $U$ and $V$ is an equivariant subset of $\Psi_k$ and contains the root $\bs f$ with positive probability. So, the last ergodicity implies that $U$ and $V$ are nonempty and infinite a.s. 
	\end{proof}

    \begin{lem}
		\label{lem:ergodic}
		Let $L$ be a level-set of $h$ in $T=\rtree{p}$, choose $x_0\in L$ arbitrarily, and let $\Psi$ be an IPVT of $T$. Then, $[L,x_0;\Psi]$ is an ergodic unimodular discrete space.
	\end{lem}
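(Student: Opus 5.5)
We will show separately that $[L,x_0;\Psi]$ is unimodular and that it is ergodic, using the horocyclic stabilizer $\Gamma\coloneqq\{\gamma\in\mathrm{Aut}(T): h\circ\gamma=h\}$ already introduced in the proof of \Cref{prop:intersection}.

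\textbf{Unimodularity.} $\Gamma$ preserves $L$ and acts transitively on it: two vertices of $L$ have a common ancestor towards $\omega_0$, and the subtrees below it can be swapped. The stabilizer $\Gamma_x$ of $x\in L$ is compact. For $x,y\in L$, $\Gamma_x$ fixes the whole ray from $x$ to $\omega_0$, hence fixes $x\wedge y$, and it permutes transitively the vertices of $L$ lying below $x\wedge y$ at distance $d(x,y)/2$ from it. Therefore
\[
|\Gamma_x y| = |\Gamma_y x| = \frac{p-1}{p}\,p^{d(x,y)/2},
\]
so the action of $\Gamma$ on $L$ is unimodular. This agrees with the Haar-measure interpretation: the Haar measure of $\Gamma$ is unimodular, since the modular function comes from translations along $\omega_0$, and these are excluded from $\Gamma$.

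By \Cref{lem:autInv}, the distribution of $\Psi$ is $\mathrm{Aut}(T)$-invariant, hence $\Gamma$-invariant. Thus the decorated space $[L,x_0;\Psi]$ is a $\Gamma$-invariant decoration of a unimodular transitive space. By the standard fact that invariant decorations of unimodular transitive spaces are unimodular (see \cite{Kh23unimodularspaces,processes}), the mass transport principle holds:
\[
\mathbb E\sum_{y\in L} g(x_0,y)=\mathbb E\sum_{y\in L} g(y,x_0).
\]
One checks this directly by averaging $g$ over $\Gamma$ and using $|\Gamma_x y|=|\Gamma_y x|$.

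\textbf{Ergodicity.} It suffices to show that the action of $\Gamma$ on the law of $\Psi$ is ergodic, or mixing. By \Cref{cor:DL} and the structure given in \Cref{thm.ctfg}, an IPVT is a deterministic, $\Gamma$-equivariant function of the nuclei process $\Phi$. Conditionally on $\xi$, $\Phi$ is a Poisson point process on $\widehat{\partial T}$. Convergence considerations require that $\xi$ be regarded as fixed; if it is not, we first condition on the $\Gamma$-invariant quantity $\{\xi\}$.

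A Poisson process with $\Gamma$-invariant intensity is mixing under any sequence $\gamma_n\in\Gamma$ that moves every compact set off to infinity. This holds because, for cylinder events depending on the restriction of $\Phi$ to a bounded region $\{\delta\le M\}\times U$, the translates become disjoint in intensity measure. For a $\Gamma$-invariant event $A$ we will take $\gamma_n$ fixing $\omega_0$ with $\gamma_n x_0$ far from $x_0$ in $L$. The set $U\subseteq\partial T$ is moved into far regions, but the region near $\omega_0$ is not moved. We handle this by noting that $\nu_0=\eta$ gives zero mass to $\omega_0$, so small neighbourhoods of $\omega_0$ carry negligible intensity.

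We then approximate $A$ by local events, namely events depending on the cells restricted to a finite ball. Because $\Psi$ is admissible, only finitely many nuclei matter in any finite ball. We conclude $\mathbb P[A]=\mathbb P[A]^2$.

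\textbf{Main obstacle.} The delicate step is the locality approximation: the cell structure on a finite ball depends on nuclei whose delays are bounded but whose boundary points may approach $\omega_0$. We will bound the contribution of nuclei in a small $\eta$-neighbourhood of $\omega_0$ with delay at most $M$ by the intensity $\beta^{(\xi)}((-\infty,M])\,\eta(\text{nbhd})$, which tends to $0$.
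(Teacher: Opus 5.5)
Your proposal is correct and follows essentially the paper's route: ergodicity comes from mixing of the Poisson nuclei process on $\widehat{\partial T}$, using exhausting sets of bounded delay whose boundary points stay away from $\omega_0$ (the paper's $K_n$, defined by $f(\origin)\le n$ and $h(f\wedge\origin)\ge -n$), whose far translates are disjoint from them. You additionally verify unimodularity via $|\Gamma_x y|=|\Gamma_y x|$, which the paper takes for granted, and your detour through local cell events is unnecessary: the IPVT is a $\Gamma$-equivariant factor of $\Phi$, so mixing of $\Phi$ under $\Gamma$ already gives the conclusion.
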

	\begin{proof}
		The claim is implied by the fact that the action of $\mathrm{Aut}(\rtree{p})$ on ${\rm IPVT}(\rtree{p})$ is mixing\footnote{Personal communication with Sam Mellick.} (since the nuclei form a Poisson point process on $\widehat{\partial \rtree{p}}$), but we provide a simple direct argument similarly to the proof of the fact that the Poisson point process on $\mathbb R^d$ is mixing. 
		For this goal, it is enough to construct a sequence of compact sets $K_1\subseteq K_2\subseteq \cdots$ that exhaust $\widehat{\partial \rtree{p}}$ such that for each $n$, for all $\gamma$ outside some compact subset of $\mathrm{Aut}(\rtree{p})$, one has $\gamma K_n\cap K_n=\emptyset$. This is satisfied by letting $K_n$ be the set of horofunctions $f$ such that $f(\origin)\leq n$ and $h(f\wedge\origin)\geq -n$. This proves the claim
	\end{proof}
	
	\begin{prop}
		\label{prop:dl-biinfinite}
		No cell of ${\rm IPVT}({\rm DL}(p,q))$ contains a bi-infinite geodesic.
	\end{prop}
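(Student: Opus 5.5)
Let $\gamma=(\gamma_n)_{n\in\mathbb Z}$ be a bi-infinite geodesic in ${\rm DL}(p,q)$ contained in the cell $C$ of a nucleus $(\theta,\delta)$. By \Cref{cor:DL}, a.s.\ every nucleus lies in $\partial T\cup\partial T'$ and is not $\omega_0$ or $\omega'_0$, since harmonic measure has no atoms. By the symmetry between $T$ and $T'$ (available when $p=q$; when $p>q$ only the first type occurs), it suffices to treat $\theta=\omega\in\partial T\setminus\{\omega_0\}$, so that $d_\theta=\rho_\omega$ depends only on the first coordinate. When $p>q$ one could instead reduce to $T$ directly via \Cref{thm.dl-convergence}, since ${\rm IPVT}=\pi^{-1}({\rm IPVT}(T))$. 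However, a bi-infinite geodesic in ${\rm DL}$ need not project to a geodesic in $T$, so I will run a uniform argument rather than rely on that reduction.

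\emph{Step 1: structure of bi-infinite geodesics.} Using \Cref{lem:DL-metric}, the heights $h(\pi(\gamma_n))$ change by $\pm1$ at each step. For $d(\gamma_m,\gamma_n)=|m-n|$ to hold, the first coordinates must form a geodesic in $T$ outside a bounded "turning" region, and similarly for the second coordinates. I expect this to show that the two ends of $\gamma$ converge in $\overline{{\rm DL}}$ to two boundary points of type $\rho_{\omega_1}$ and $\rho'_{\omega'_1}$ with $\omega_1\neq\omega_0$ and $\omega'_1\neq\omega'_0$. Indeed, the height must go to $-\infty$ in one direction and to $+\infty$ in the other, since the horofunctions $\rho_i$, $\rho_{x_0}$, $\rho_{\omega_0}$ and their primed versions cannot be ends of geodesic lines; this can be checked case by case using \Cref{prop.gbdl-conv}. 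Concretely, along one end $h(\pi(\gamma_n))\to+\infty$ with $\pi(\gamma_n)\to\omega_1$, and along the other end $h'\to+\infty$, that is, $h\to-\infty$.

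\emph{Step 2: the cell cannot reach the end where $h\to-\infty$.} On that end, $\pi(\gamma_n)\to\omega_0$ as $n\to-\infty$, so $\rho_\omega(\gamma_n)$ grows like $-h(\pi(\gamma_n))$, i.e.\ linearly in $|n|$. Every $\gamma_n$ lies in $C$, so each competitor $(\Theta_j,\Delta_j)$ must satisfy $\rho_{\Theta_j}(\gamma_n)+\Delta_j\ge\rho_\omega(\gamma_n)+\delta$ for all $n$. I would then show that a.s.\ some nucleus violates this for $n$ very negative. The natural candidates are second-type nuclei $\rho'_{\omega'}$ with $\omega'$ near $\omega'_1$, which decrease along that end, or, when $p>q$, first-type nuclei $\omega''$ whose ray to $\omega_0$ shares the path $\pi(\gamma_n)$ for a long stretch. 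The plan is a Borel--Cantelli / Poisson estimate. For each $k$, consider the region of $\widehat{\partial E}$ of nuclei beating $\omega$ at $\gamma_{-k}$. Its $\nu$-mass is at least about $p^{ck}$ times a factor accounting for the set of boundary points near $\gamma_{-k}$, and I expect this to be bounded below uniformly, so the probability that the region is empty tends to zero. A countable union over the choice of geodesic then has to be controlled. To get there I would replace the arbitrary $\gamma$ by the descending ray structure: the relevant end of $\gamma$ follows the unique ray from a vertex towards $\omega_0$ in $T$, and there are countably many such tails. Each fixed tail a.s.\ exits $C$, by the mass estimate and \Cref{lem:ergodic}-type mixing, or more simply by Mecke's formula applied to the nucleus $(\omega,\delta)$.

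The main obstacle is Step 1, the classification of bi-infinite geodesics through \Cref{lem:DL-metric}, because of the "traps" noted in \Cref{subsec:Busemann}. If the classification fails, I would instead argue directly: along any bi-infinite geodesic, $\rho_\omega(\gamma_n)+\delta$ must equal $\min_j(\rho_{\Theta_j}(\gamma_n)+\Delta_j)$ for all $n$. Since $\rho_\omega$ is $1$-Lipschitz and $|\rho_\omega(\gamma_n)-\rho_\omega(\gamma_0)|\le|n|$, $\rho_\omega$ must increase at rate $1$ in one direction. The minimum over competitors, however, has slope strictly less than $1$ infinitely often, because nuclei are dense in delay. This yields the contradiction.
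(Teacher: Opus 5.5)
Your Step 1 is false, and the case it excludes is exactly one the paper has to treat. \Cref{lem:DL-metric} does force the height profile $n\mapsto h(\pi(\gamma_n))$ of a bi-infinite geodesic to have at most one turning point, but one turn is allowed.

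Here is an explicit example. Let $(x_n)_{n\in\mathbb Z}$ be a bi-infinite geodesic of $T$ between $\omega_1\neq\omega_2$ in $\partial T\setminus\{\omega_0\}$, with lowest vertex $x_0$. Fix $v'\in T'$ with $h'(v')=-h(x_0)$, and let $x'_n$ be the ancestor of $v'$ at distance $|n|$. A step down in $h'$ can only go to the parent, so the second coordinate simply retraces. Then
$$d(\gamma_m,\gamma_n)=d_T(x_m,x_n)+d_{T'}(x'_m,x'_n)-\mynorm{h(x_m)-h(x_n)}=|m-n|,$$
and $h\to+\infty$ at both ends. By \Cref{prop.gbdl-conv} the ends converge to $\rho_{\omega_1}$ and $\rho_{\omega_2}$, both of the first type. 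Symmetrically, there are geodesics joining two points of $\partial T'$; only monotone profiles join $\partial T$ to $\partial T'$.

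Take a first-type cell and such a geodesic, e.g.\ from $\omega$ itself to some $\omega_2$. It has no end on which $h\to-\infty$, so your Step 2 says nothing about it. This is precisely the first case of the paper's proof: a geodesic from $\theta'$ to $\theta'_1\in\partial T'$. There the whole slice $L=\{(x,x'):h(x)=-h'(x')\}$ with $x'=\theta'\wedge\theta'_1$ would lie in the cell, which ergodicity of $L$ rules out.

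Your fallback does not repair this. The delays form a locally finite Poisson process (supported on $\mathbb Z$ in the vertex-measured case), not a dense set. A pathwise slope heuristic also cannot supply a probabilistic statement that holds simultaneously for uncountably many geodesics. The same uncountability affects Step 2 if you use second-type competitors: the second coordinate of a tail has uncountably many continuations, and only first-coordinate tails are countable.

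A clean way to close the gap, which also removes the need for your mass estimate, is to project after all. Let $C$ be the cell of $(\omega,\delta)$ with $\omega\in\partial T\setminus\{\omega_0\}$. Let $C_T$ be its cell in the tree Voronoi diagram of the first-type nuclei $\Psi$ alone; this is a tree IPVT, and it is all of $\Phi$ when $p>q$. Then $C\subseteq\pi^{-1}(C_T)$.

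With the correct classification, $\pi(\gamma)$ contains either a bi-infinite geodesic of $T$ (monotone or V-shaped profile) or a ray to $\omega_0$ ($\Lambda$-shaped profile, where the first coordinate retraces). Since $C_T\supseteq[x,\omega]$ for every $x\in C_T$ and $\omega\neq\omega_0$, either possibility gives $C_T$ two ends, contradicting \Cref{lem:one-ended2}. For second-type cells, use $\pi'$ in the same way. So your remark that DL geodesics need not project to geodesics is correct but harmless.
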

	\begin{proof}
		Similarly to the proof of \Cref{prop:intersection}, let $\Phi=\Psi\cup\Psi'$ be an IPVT of ${\rm DL}(p,q)$.
		Let $C'$ be a cell with nucleus $(\theta',\delta')$, where $\theta'\in\partial T'\setminus\{\omega'_0\}\subseteq \partial {\rm DL}(p,q)$. First, we show that $C'$ does not contain any geodesic between $\theta'$ and $\theta'_1\in\partial T'\setminus\{\omega'_0\}$. If it does, let $x'\coloneqq \theta'\wedge\theta'_1$. It can be seen that every point of $L\coloneqq \{(x,x'): x\in T, h(x)=-h'(x')\}$ is on a geodesic between $\theta'$ and $\theta'_1$, which implies that $L\subseteq C'$. This gives a contradiction as follows: As in the proof of \Cref{prop:intersection}, when $\Psi',\theta'$ and $x'$ are fixed, $L$ is an ergodic unimodular discrete space (while keeping $\Phi$ as a decoration). Every point of $L$ has a positive probability of not being in $C'$. This implies that $L\not\subseteq C'$ a.s.
		
		Second, we prove that $C'$ does not contain any geodesic that connects $\theta'$ to $\partial T\setminus\{\omega_0\}\subseteq \partial {\rm DL}(p,q)$. 
		Define the \dfn{trunk} of $C'$ as the set of $(x,x')\in C'$ that have a unique geodesic towards $\theta'$; i.e. $x'\in [\theta' \omega'_0]$. These geodesics make a family tree structure in the trunk. If we choose $C'$ as a typical cell that contains $(\origin,\origin')$ from the beginning (as described in the proof of \Cref{prop:intersection}), then the trunk becomes a unimodular random family tree. We should prove that the trunk is one-ended. By the general classification of unimodular family trees provided in~\cite{eft}, it is enough to show that the level-sets of the trunk are infinite a.s. This is also implied by the stationarity of $\Psi$ under the action of the stabilizer of $h$ similarly to the proof of \Cref{prop:intersection}.
		
		Finally, if $C'$ contains a bi-infinite geodesic between two other points of $\partial {\rm DL}(p,q)$, one can deduce that $C'$ contains a geodesic between $\theta'$ and another ideal point. So, by the previous arguments, no such geodesics exist.
	\end{proof}

    \section{Problems and Future Directions}		
		\label{sec:problems}
		
		\begin{question}
			Is there a transitive graph (or a Cayley graph) with log-regularly-varying growth, where full convergence of \Cref{thm.ctfg} does not hold? Recall that the volume function of $\mathbb Z_2 * \mathbb Z_3$ is not log-regularly-varying, see \cite[Figure 2.7]{bhupatiraju}.
		\end{question}
		
		
		\begin{question}[\textsc{{Other Point Processes of Nuclei}}]
			{Is it possible to build a non-trivial random ideal Voronoi diagram via low-intensity limits (in some sense) of Voronoi tessellations of other point processes different from the IPVT? A natural candidate to consider in dimension 2 is the $\alpha$-hyperbolic ensemble considered in \cite{BGZ22} in the limit $\alpha \downarrow 0$.}
		\end{question}
		
		\begin{question}[\textsc{{Product with $L^1$ Metric More General than \Cref{prop:product-L1}}}] 
			Without the condition in item~\ref{prop:product-L1:=} of \Cref{prop:product-L1}, what can be said about full convergence towards ${\rm IPVT}$? It seems that the limiting behavior of $F(r)/F'(r)$ would play an important role to address this question.
		\end{question}
		
		\begin{question}[\textsc{{Product with $L^2$ Metric}}] 
			\label{prob:l2}
			Under what conditions the produc of two metric spaces endowed with the $L^2$ metric satisfies \Cref{thm.scfc} or \Cref{thm.ctfg}? And what would be the limiting measure in this case? We believe that under some mild conditions the limit is identical to the limit with the weighted $L^1$ metric discussed in \Cref{prop:product-L1}.
		\end{question}
		\begin{question}[\textsc{Edge-Measured Product Graphs}]
			When does the edge-weighted version of the product of two graphs $G\times G'$ satisfy full convergence of \Cref{thm:criterion-edge}? It seems that the limit of the uniform measure on large balls in both vertex measured and edge measured versions play a role.
		\end{question}
		
		\begin{question}[\textsc{Types of IPVT Cells for Groups}]\label{q:mf}
				In the IPVT of Diestel--Leader graphs, one observes finitely many (but not one) types of cells, see \Cref{sec.ipvtdlpqgeop}. For IPVTs of groups, can one observe infinitely many distinct types of cells, or a finite number greater than two? Natural candidates to consider are Baumslag-Solitar groups or (general) lamplighters.
		\end{question}

\bibliographystyle{alpha}
		
		\bibliography{biblio}
\end{document}